\documentclass[11pt]{amsart}
\usepackage{amsmath,amssymb,amsthm}
\usepackage{enumitem}
\usepackage{mathtools}
\mathtoolsset{showonlyrefs,showmanualtags}
\usepackage[margin=0.9in]{geometry}
\usepackage[final]{microtype}
\usepackage[
  colorlinks=true,
  linkcolor=blue,
  citecolor=blue,
  urlcolor=blue
]{hyperref}

\numberwithin{equation}{section}
\newtheorem{theorem}[equation]{Theorem}
\newtheorem{lemma}[equation]{Lemma}
\newtheorem{corollary}[equation]{Corollary}
\theoremstyle{definition}
\newtheorem{definition}[equation]{Definition}
\newtheorem{remark}[equation]{Remark}

\allowdisplaybreaks
\AtBeginDocument{%
  \setlength{\abovedisplayskip}{4pt plus 1pt minus 1pt}%
  \setlength{\belowdisplayskip}{4pt plus 1pt minus 1pt}%
  \setlength{\abovedisplayshortskip}{2pt plus 1pt minus 1pt}%
  \setlength{\belowdisplayshortskip}{3pt plus 1pt minus 1pt}%
  \setlength{\jot}{1.5pt}%
}
\setlist{leftmargin=*,topsep=2pt,itemsep=0pt,parsep=0pt,partopsep=0pt}

\title[Density Bounds in Hua's Theorem]
{Density Bounds in Hua's Theorem for Prime Squares}
\author[M. T. Lacey et al.]
{Michael T. Lacey, Hamed Mousavi, Yaghoub Rahimi, and Manasa N. Vempati}
\date{}
\subjclass[2020]{Primary 11P32; Secondary 11B13, 11B75}
\keywords{Hua's theorem, prime squares, density version,
Waring--Goldbach problem, transference principle}
\hypersetup{
  pdftitle={Density Bounds in Hua's Theorem for Prime Squares},
  pdfauthor={Michael T. Lacey, Hamed Mousavi, Yaghoub Rahimi,
    and Manasa N. Vempati},
  pdfsubject={Density versions of Hua's theorem for prime squares},
  pdfkeywords={Hua's theorem, prime squares, density version,
    Waring--Goldbach problem, transference principle}
}

\begin{document}

\begin{abstract}
We prove a density version of Hua's theorem in the  five-variable case: if
\(\mathcal P\) is a subset of the primes with relative lower density greater than  $0.82$,
then every sufficiently large \(n\equiv5\pmod{24}\) is a sum of five squares of primes from \(\mathcal P\).
% The proof combines a transference principle adapted to quadratic prime
% weights, estimates for prime-square exponential sums, and new weighted
% covering arguments for power residues based on elementary additive
% combinatorics.
%The same framework also yields improved density bounds for every number of variables \(s\geq5\).
\end{abstract}

%\begin{abstract}
% For \(s\geq5\), let \(\mathfrak d_s\) denote the density threshold in
% Hua's theorem for squares of primes.  We prove
% \[
% \mathfrak d_s\leq
% \begin{cases}
% (6+5\sqrt2)/16,&s=5,6,\\
% (1+3\sqrt2)/7,&s=7,\\
% 1/\sqrt2,&s\geq8.
% \end{cases}
% \]

% {\color{blue}In particular, the \(1/\sqrt{2}\) threshold is attained with eight variables, improving the previously known range of \(s\ge 13\), while the five-variable result gives a new bound in the critical case of Hua's theorem. The proof develops a quadratic transference framework, combining new weighted sumset estimates for reduced quadratic residues with Fourier pseudorandomness and restriction estimates relative to a non-flat measure naturally associated with prime squares.}

% Thus these estimates strictly improve the previously available
% homogeneous bounds listed in the introduction for
% \(5\leq s\leq12\), and the threshold \(1/\sqrt2\) is reached already
% from eight variables onward; for \(s\geq13\) our bound agrees with the
% value obtained by Lim.  The proof combines score-sensitive local
% sumset estimates, an affine truncation of the local mean functions,
% and transference relative to a quadratic reference measure.  We also
% prove the following uniform finite theorem.  For every \(k\geq2\)
% there is an explicit integer \(R_k\) such that, whenever
% \((q,R_k)=1\), \(s\geq6k-2\), and
% \(A\) occupies more than half of the reduced \(k\)-th-power residues
% modulo \(q\), one has \(sA=\mathbb Z_q\).
%\end{abstract}

\maketitle

\tableofcontents

\section{Introduction}\label{sec:introduction}

The Waring--Goldbach problem asks for representations of integers as
sums of fixed powers of primes.  In the quadratic case, Hua proved in
1938 that, for every \(s\geq5\), every sufficiently large integer
\(n\equiv s\pmod {24}\) is a sum of \(s\) squares of primes
\cite{hua}.  The congruence condition is unavoidable, since
\(p^2\equiv1\pmod {24}\) for every prime \(p>3\).  The case \(s=5\)
is the critical one and is usually referred to as Hua's theorem.

A natural density version asks how large a subset of the primes must be
for Hua's conclusion to remain valid.  More precisely, for
\(\mathcal P\subseteq\mathbb P\), we write
\[
    \underline d(\mathcal P)
    =\liminf_{x\to\infty}
      \frac{|\mathcal P\cap[1,x]|}{\pi(x)}
\]
for its relative lower density in the primes.  Such questions have
their prototype in the ternary Goldbach problem.  Li and Pan showed
that relative lower density greater than \(2/3\) is sufficient for a
subset of the primes to represent every sufficiently large odd integer
as a sum of three of its elements \cite{li2010density}.  Shao
subsequently lowered the threshold to the sharp value \(5/8\)
\cite{shao2014density}.  These results belong to a broader line of
work, initiated by Green's transference method
\cite{BenGreen,GreenTao}, in which additive problems in the primes are
reduced to dense problems after the local irregularities of the primes
have been removed; see, for example,
\cite{RamareRuzsa,ChipeniukHamel,Matomaki,salmensuu2020density, LHRV}.

%   The use of lower rather
% than upper relative density is important for a statement concerning
% \emph{every} sufficiently large integer.  Indeed, Yang and Togb\'e
% constructed a set of primes of upper relative density \(1\) for which
% infinitely many odd integers fail to be sums of three elements of the
% set \cite{YangTogbe}.  Thus a limsup density hypothesis, even at its
% largest possible value, does not by itself control the additive basis
% property.

Density versions of Waring or  Waring--Goldbach problems were later studied by
Salmensuu and others, and the quadratic prime problem has seen rapid
recent progress.  Gao showed that if \(\mathcal P\subseteq\mathbb P\) has relative lower
density greater than \(3/4\), then, for every \(s\geq44\), every
sufficiently large integer \(n\equiv s\pmod{24}\) is a sum of \(s\)
squares of primes from \(\mathcal P\) \cite{gao}.  Zhao proved that, for \(s\geq8\),
the density
\[
    \sqrt{1-\frac{\min\{s,16\}}{32}}
\]
is sufficient \cite{zhao}.  Tan proved a
five-variable asymmetric theorem: if
\(\mathcal P_1,\ldots,\mathcal P_5\) have relative lower densities
\(\varepsilon_1,\ldots,\varepsilon_5\) satisfying
\(\sum_i\varepsilon_i^2>4\), then every sufficiently large
\(n\equiv5\pmod {24}\) is a sum
\(p_1^2+\cdots+p_5^2\) with \(p_i\in\mathcal P_i\)
\cite[Theorem~1.1]{Tan}.  In the homogeneous case this gives the
threshold \(2/\sqrt5 \simeq 0.894\).  More recently, Lim obtained a sharp density
theorem for square residues in suitable finite cyclic groups and used
it to improve the global bounds: in particular, his homogeneous
thresholds include \(\sqrt{3/4}\) for \(s=7\),
\(\sqrt{21/32}\) for \(s=8\), and decrease to \(1/\sqrt2\) when
\(s=13\) \cite[Theorems~1.1 and~1.4]{lim2025density}.

To formulate the problem uniformly, for \(s>4\) let
\(\mathfrak d_s\) denote the smallest density threshold in Hua's
theorem.

\begin{definition}\label{def:density-threshold}
For \(s>4\), let \(\mathfrak d_s\) be the infimum of the real numbers
\(d\in[0,1]\) with the following property: whenever
\(\mathcal P\subseteq\mathbb P\) is fixed and
\(\underline d(\mathcal P)>d\), there is
\(X_0=X_0(\mathcal P,s)\) such that every \(n\geq X_0\) satisfying
\(n\equiv s\pmod {24}\) has a representation
\[
    n=p_1^2+\cdots+p_s^2,
    \qquad p_1,\ldots,p_s\in\mathcal P.
\]
%Repetitions among the primes are permitted.
\end{definition}

Thus the basic problem is simply to determine \(\mathfrak d_s\) for
\(s\geq5\).  Our main result gives the following bounds.

\begin{theorem}\label{t:main}
For every integer \(s\geq5\),
\begin{equation}\label{e:csdeltas}
    \mathfrak d_s\leq
    \begin{cases}
        \dfrac{6+5\sqrt2}{16},&s=5,6,\\[1mm]
        \dfrac{1+3\sqrt2}{7},&s=7,\\[1mm]
        \dfrac1{\sqrt2},&s\geq8.
    \end{cases}
\end{equation}
\end{theorem}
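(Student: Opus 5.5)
The approach is a transference argument: reduce the theorem to a local (modular) covering problem for reduced quadratic residues, and obtain the global representation from a dense model by Fourier methods. Here is the plan.

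\emph{Step 1: set up weights and the $W$-trick.} Fix $s$ and $\mathcal P$ with $\underline d(\mathcal P)>\delta>\mathfrak d$, where $\mathfrak d$ is the claimed bound. Let $N\approx\sqrt{n/s}$ and let $W=\prod_{p\le w}p^{a_p}$, with $w$ slowly growing and the exponents chosen so that Hensel lifting works (in particular $8\mid W$ and $3\mid W$). Consider the weight $\nu(m)$ on $m\le n$ given by
\[
\nu(m)=\sum_{p\in\mathcal P,\;p^2=m}\log p\cdot 2p,
\]
which is normalized against the quadratic reference measure $\mu(m)=\sum_{x^2=m}2x$ (this measure is not flat). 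Split $\mathcal P$ into the residue classes $b\bmod W$ with $(b,W)=1$. For each class, the density $\delta_b$ of $\mathcal P$ in that class, averaged over $b$, exceeds $\delta$. The squares $b^2\bmod W$ are what carry the local structure.

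\emph{Step 2: the global-to-local reduction.} I would prove three ingredients.
\begin{enumerate}
\item A restriction estimate $\|\widehat{\nu}\|_{L^{r}}\ll N^{\,r-2}$ for some $r<s$ (Hua's inequality gives $r=4+\epsilon$ for squares, which is why $s\ge5$ suffices).
\item Fourier pseudorandomness of the $W$-tricked prime-square majorant relative to $\mu$, via the Vinogradov/Kumchev minor arc bounds for $\sum p^{2}$ exponential sums and the Siegel--Walfisz theorem on major arcs.
\item A dense-model / transference lemma replacing each $1_{\mathcal P\cap(b+W\mathbb Z)}$ by a bounded function $g_b$ with essentially the same mean $\delta_b$.
\end{enumerate}
Together these reduce counting solutions of $n=\sum p_i^2$ to counting solutions in the dense models. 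A standard Varnavides-type argument (positive proportion of lifts) then reduces the problem to a purely local statement: it suffices to find classes $b_1,\dots,b_s$ with $\sum b_i^2\equiv n\pmod W$ and
\[
\sum_i \delta_{b_i}>s-1+\eta,
\]
or a suitable weighted version of this condition. The weighted version is needed because the square map is $2^{\omega}$-to-one and the reference measure is quadratic; the "mean functions" of the classes must be truncated affinely, since values above $1$ are useless.

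\emph{Step 3: the local covering theorem (main obstacle).} By the CRT, factor the problem into prime powers $p^{a}\,\|\,W$. The small primes $2$ and $3$ are handled by the congruence $n\equiv s\pmod{24}$ directly. For the odd primes $p\ge5$, I need a weighted sumset estimate for reduced quadratic residues. Given a weight $f:\mathbb Z_p^{\times}/\{\pm1\}\to[0,1]$ with mean greater than $\delta$, I would show that every target is attained by some $x_1^2+\dots+x_s^2$ whose scores sum to more than $s-1$. The numbers $\frac{6+5\sqrt2}{16}$, $\frac{1+3\sqrt2}{7}$ and $\frac1{\sqrt2}$ should arise as the worst-case optimizations of this score-sensitive Cauchy--Davenport/Kneser-type inequality in $\mathbb Z_p$ (for instance with $1/\sqrt2$ coming from level sets $\{f>t\}$ of size $>t^{2}$-proportion). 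I would first prove a two-summand statement via Cauchy--Davenport on level sets, parametrizing by thresholds $t$, and then iterate it. The $\sqrt2$ suggests optimizing $t$ against $1-t$ in a quadratic. Finally, the prime-power and multiplicative combination across $p\mid W$ must be made uniform, so that the product of local densities stays bounded below. I expect this combinatorial optimization, and making its thresholds exactly match the three cases (with $s=5,6$ sharing one bound, which suggests a two-layer pairing argument), to be the hard step.

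Once Step 3 holds, the main term from Steps 1--2 is $\gg n^{s/2-1}/(\log n)^s$ times a positive local factor, which exceeds the error for large $n$. This gives the theorem.
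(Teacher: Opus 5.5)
Your architecture matches the paper: a \(W\)-trick, pseudorandomness relative to a quadratic reference measure, restriction above exponent \(4\), transference, then a CRT/prime-power covering argument with affine truncation. The gap is the local target you reduce to in Step~2. It is wrong, and with it the argument cannot reach the stated constants, because the criterion \(\sum_i\delta_{b_i}>s-1+\eta\) cannot be met under the hypotheses.

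For \(s=5\), let \(\mathcal P\) consist of all primes \(\equiv\pm2\pmod5\) together with a random subset of density \(0.64\) of the primes \(\equiv\pm1\pmod5\). Then \(\underline d(\mathcal P)=0.82>(6+5\sqrt2)/16\). For \(n\equiv3\pmod5\), the congruence \(5+3k\equiv3\pmod5\) forces exactly one of the five squares to be \(\equiv4\pmod5\). So every admissible choice of classes has \(\sum_i\delta_{b_i}\approx4(0.64)+1=3.56<4\), and the local statement you propose in Step~3 is already false modulo \(5\). For \(s\geq8\), equidistributed density \(3/4>1/\sqrt2\) gives \(\sum_i\delta_{b_i}\approx3s/4<s-1\). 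Thus your criterion forces a threshold of at least \(1-1/s\), which exceeds \(1/\sqrt2\) for every \(s\geq8\). What the transference actually needs is \(\sum_jm_{b_j}>s/\sqrt2+\zeta\) with every \(m_{b_j}>\zeta\). The example above satisfies this, since \(3.56>5/\sqrt2\approx3.536\).

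The missing idea is that \(\sqrt2\) comes from the analytic side, not from a local Cauchy--Davenport optimization. At a given scale, the counting mass \(m\) of a class can be concentrated on \(n\leq m^2N\): take all primes up to \(m\sqrt{WN}\). So a dense model bounded by \(1\) cannot have mean \(\delta_b\). With your Jacobian weight \(2p\log p\), the class masses become \(p\)-weighted densities, which at such scales are only about \(\underline d(\mathcal P)^2\).

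The paper instead does the following:
\begin{itemize}
\item It keeps the constant normalization \(\frac{|\mathbb Z_{W,2}|}{W}(WN)^{1/2}\log p\), so \(m_b\) is a counting density averaging to at least \(\underline d(\mathcal P)-\varepsilon\).
\item It proves pseudorandomness relative to the decreasing measure \(\mu_N(n)=N^{1/2}(\sqrt n-\sqrt{n-1})\), and shows the smoothed model obeys \(\sum_{n\in E}f^*(n)\leq(1+o(1))N^{1/2}|E|^{1/2}\).
\item Hence mean \(m_i\) forces a superlevel set with \(|A_i|\gtrsim m_i^2N\). Cauchy--Schwarz gives \(\sum_i|A_i|\gtrsim N(\sum_i m_i)^2/s\), while the dense convolution lemma needs \(\sum_i|A_i|>(1+\epsilon)sN/2\).
\end{itemize}

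The local side then only has to supply pairs \((\gamma_s,L_s)\), namely \((4/5,17/5)\), \((5/7,14/3)\), \((5/8,5)\) for \(s=5,7,8\). Mean \(>\gamma_s\) gives every residue with positive product and score \(>L_s\). The affine truncation \(F_t=\max\{h-t,0\}/(1-t)\) then turns average \(d_0\) into a score approaching \(\frac{s(d_0-\gamma_s)+L_s(1-d_0)}{1-\gamma_s}\), which equals \(s/\sqrt2\) exactly at the three thresholds.

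Obtaining these pairs requires several specific inputs:
\begin{itemize}
\item Recognizing \(5\), where \(\mathbb Z_{5,2}=\{1,4\}\), as the bottleneck. It is handled by an inhomogeneous two-point estimate valid only for \(s\leq7\); for \(s=8,9\) that estimate is false, and the CRT factors must be combined in the opposite order.
\item A separate treatment of \(7\) when \(s=5\).
\item At generic primes, a rank-selection lemma (indices with \(\sum_ji_j\geq2|\mathbb Z_{p,2}|\), positive entries, score \(>sc\)) feeding Cauchy--Davenport on superlevel sets, followed by prime-power lifting. This replaces iterated two-summand bounds.
\end{itemize}

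Finally, \(s=6\) and \(s\geq9\) need no separate optimization. They follow from \(s=5\) and \(s=8\) by subtracting fixed copies of \(p_0^2\equiv1\pmod{24}\).
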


Theorem~\ref{t:main} improves the previously available homogeneous bounds for
every \(5\leq s\leq12\).  In the five-variable case it replaces
Tan's \(2/\sqrt5\) by $\frac{6+5\sqrt2}{16}$,
while for \(s=7\) it improves Lim's \(\sqrt{3/4}\).  Moreover,
the threshold \(1/\sqrt2\), previously reached by Lim from \(s=13\),
already holds for \(s=8\).  The case \(s=6\) follows from \(s=5\),
and every \(s\geq9\) follows from \(s=8\): one fixes a prime
\(p_0\in\mathcal P\) and subtracts the required number of copies of
\(p_0^2\).  Since \(p_0^2\equiv1\pmod {24}\), this preserves the
appropriate admissible congruence class.

We do not know the correct values of \(\mathfrak d_s\), and at present
there does not seem to be a compelling conjectural formula for them.
Even the  five-variable threshold is not known exactly.  Combining
Theorem~\ref{t:main} with our construction  in Section~8 gives
the following range.

\begin{theorem}\label{t:d5-range}
One has
\[
    \frac7{10}\leq \mathfrak d_5
    \leq \frac{6+5\sqrt2}{16}
    =0.81694\ldots .
\]
\end{theorem}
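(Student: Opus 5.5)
The plan is to prove the two inequalities separately. The upper bound \(\mathfrak d_5\leq (6+5\sqrt2)/16\) is the case \(s=5\) of Theorem~\ref{t:main}, so nothing further is needed there. All the work is in the lower bound \(\mathfrak d_5\geq 7/10\). For that I will construct a set \(\mathcal P\subseteq\mathbb P\) with \(\underline d(\mathcal P)=7/10\) and infinitely many \(n\equiv5\pmod{24}\) that are not sums of five squares of primes from \(\mathcal P\).

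Such a \(\mathcal P\) gives the bound directly. If \(\mathfrak d_5<7/10\), there would be some \(d\in(\mathfrak d_5,7/10)\) with the property in Definition~\ref{def:density-threshold}. Since \(\underline d(\mathcal P)=7/10>d\), that property would apply to \(\mathcal P\), which the construction rules out.

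The construction is local. Fix a modulus \(q\) coprime to \(6\) and let \(Q_q\) be the set of squares of reduced residues modulo \(q\). Choose \(A\subseteq Q_q\) and put \(\mathcal P=\{p : p^2 \bmod q\in A\}\).

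\begin{itemize}
\item By the prime number theorem in arithmetic progressions, \(\mathcal P\) has relative density equal to the proportion of units \(u\) modulo \(q\) with \(u^2\in A\). When every square class has the same number of square roots, this proportion is \(|A|/|Q_q|\). For instance, for \(q=25\) we have \(|Q_{25}|=10\) and each class has two roots, so \(|A|=7\) gives density \(7/10\).
\item If the fivefold sumset \(5A\) misses some residue \(r\) modulo \(q\), choose \(n\equiv r\pmod q\) and \(n\equiv5\pmod{24}\). By the Chinese remainder theorem these \(n\) form an infinite progression. None of them is a sum of five squares of primes from \(\mathcal P\), apart from finitely many primes dividing \(q\), which we simply exclude from \(\mathcal P\).
\end{itemize}

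The main obstacle is the finite combinatorial step: finding \(q\) and \(A\) with \(|A|/|Q_q|=7/10\) and \(5A\neq\mathbb Z_q\). A first check with \(q=25\) already shows the difficulty. Write the reduced squares modulo \(25\) as two cosets of \(5\mathbb Z_{25}\), namely \(B\equiv1\) and \(C\equiv4\pmod5\), each of size \(5\).
\begin{itemize}
\item If \(A\) contains a full coset, then \(5A\) covers everything: each count \(k\) of summands taken from the other coset fills an entire class modulo \(5\).
\item So \(A\) must meet both cosets in proper subsets, for example \(4+3\) elements. One then has to check directly that some residue is missed. This is a finite computation.
\item If no \(7\)-element subset works modulo \(25\), I would look for another modulus with a subgroup obstruction, such as a modulus that is a product of primes or a higher prime power. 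There \(Q_q\) has index-type substructure, and a union of cosets of a subgroup of relative size \(7/10\) can avoid a class in its fivefold sumset.
\end{itemize}
The paper's Section~8 presumably records such a configuration. I would verify the missed residue by explicit enumeration.
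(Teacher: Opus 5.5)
\paragraph{The construction is missing.} Your reduction is sound. The upper bound is the case $s=5$ of Theorem~\ref{t:main}. A set $\mathcal P=\{p: p^2 \bmod q\in A\}$ with $(q,6)=1$, $|A|/|\mathbb Z_{q,2}|=7/10$ and $5A\neq\mathbb Z_q$ does give $\mathfrak d_5\geq 7/10$, via the prime number theorem in progressions and the Chinese remainder theorem. But the lower bound \emph{is} the construction of such a pair $(q,A)$ together with a verified missed residue, and your proposal does not supply one.

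\paragraph{The modulus $25$ cannot work.} Write the elements of $\mathbb Z_{25,2}$ as $1+5\beta$ with $\beta\in S$ and $4+5\gamma$ with $\gamma\in T$, where $S,T\subseteq\mathbb Z_5$. A $7$-element $A$ forces $|S|,|T|\geq2$. A sum of five elements of $A$ with exactly $k$ of them $\equiv4\pmod 5$ equals $5+3k+5\sigma$, with $\sigma$ ranging over $(5-k)S+kT$ (using the convention $0\cdot S=\{0\}$). By Cauchy--Davenport this sumset has size at least $\min\{5,(5-k)|S|+k|T|-4\}\geq\min\{5,6\}=5$ for every $0\leq k\leq 5$. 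The target modulo $5$ only prescribes $k\pmod 5$, so every residue modulo $25$ is hit. Thus $5A=\mathbb Z_{25}$ for every $7$-element $A$, and your ``finite computation'' would come out negative. Your fallback (a union of cosets of some substructure of relative size $7/10$) is not a construction. If $A$ is a full preimage of a subset at a smaller modulus, you are back to the same problem there.

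\paragraph{What the paper does.} The missing idea is a configuration in which several primes interact. The paper takes $q=385$ and identifies $\mathbb Z_{385,2}\cong\mathbb Z_{35,2}\times\mathbb Z_{11,2}$, with $\mathbb Z_{35,2}=\{1,4,9,11,16,29\}$. It sets $A=(\{1,4,9,29\}\times\mathbb Z_{11,2})\cup\{(11,1)\}$, which has $21$ of $30$ elements.
\begin{itemize}
\item The product part alone already misses $(3,3)$, at density $2/3$. Modulo $5$, exactly one summand must have first coordinate $x\in\{4,9,29\}$. Modulo $7$, the four summands with first coordinate $1$ then force $x\equiv 6\pmod 7$, which no element of $\{4,9,29\}$ satisfies.
\item The single extra point $(11,1)$ raises the density to $7/10$. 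Using it $k$ times forces, modulo $7$, $(k,x)\in\{(3,4),(4,29)\}$.
\item Modulo $11$, the leftover second coordinates must then either sum to $0$ (two terms) or equal $-1$ (one term). Both are impossible because $-1$ is a non-residue modulo $11$.
\end{itemize}
Hence every $n\equiv 773\pmod{9240}$ is unrepresentable. An explicit configuration of this kind, with the verification written out, is what your proposal needs in place of the deferred search.
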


For the lower bound, we construct a set of primes of relative density
\(7/10\) for which infinitely many integers \(n\equiv5\pmod{24}\)
are not sums of five prime squares from the set.  For \(s>5\), the
elementary choice of primes in the two reduced classes \(1,4\pmod5\)
gives \(\mathfrak d_s\geq1/2\), since all of their squares are
\(1\pmod5\).
 In
particular, it is natural to ask whether the value \(1/\sqrt2\)
appearing from \(s=8\) onward reflects the true stable threshold; we
do not make such a conjecture here.

\bigskip

We also require a finite covering theorem for power residues, whose
weighted form provides the arithmetic input for
Theorem~\ref{t:main}.  The result may also be of independent interest,
so we record it here as an auxiliary theorem. For
\(q\geq1\), write
\begin{align}\label{e:zqdef}
    \mathbb Z_q=\mathbb Z/q\mathbb Z,\qquad
    \mathbb Z_q^*=(\mathbb Z/q\mathbb Z)^\times,\qquad
    \mathbb Z_{q,k}=\{x^k:x\in\mathbb Z_q^*\},
\end{align}
with the convention \(\mathbb Z_{1,k}=\{0\}\).  For \(k\geq2\), put
\begin{equation}\label{e:rkdef}
    R_k=
    \prod_{\substack{p\ {\rm prime}\\
        p\mid k\ {\rm or}\ (p-1)\mid2k}}p.
\end{equation}
In particular \(R_2=30\).

\begin{theorem}\label{lemma:arithmetic-general-k}
Let \(k\geq2\).  Suppose that
\[
    (q,R_k)=1,\qquad s\geq6k-2,
    \qquad
    A\subseteq\mathbb Z_{q,k},
    \qquad
    |A|>\frac12|\mathbb Z_{q,k}|.
\]
Then $ sA=\mathbb Z_q.$
\end{theorem}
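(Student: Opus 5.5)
The plan is to use Kneser's theorem in \(\mathbb Z_q\), reduce to the prime case by induction on \(q\), and handle primes directly. The hypothesis \((q,R_k)=1\) will be used locally at each prime, and the real difficulty is uniformity in the number of prime factors of \(q\).

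\emph{Local structure.} Let \(p\) be a prime with \(p^e\parallel q\). Since \(p\nmid k\), Hensel's lemma shows that the reduction map \(\mathbb Z_{p^e,k}\to\mathbb Z_{p,k}\) is surjective with fibres of equal size \(p^{e-1}\). Moreover \(\mathbb Z_{p,k}\) is the subgroup of index \(d_p=\gcd(k,p-1)\) in \(\mathbb Z_p^*\). The condition \((p-1)\nmid 2k\) gives \(|\mathbb Z_{p,k}|=(p-1)/d_p\geq 3\). It also rules out \(\mathbb Z_{p,k}\subseteq\{\pm1\}\), so \(\mathbb Z_{p,k}\) is not contained in a coset of a proper subgroup of \(\mathbb Z_p\) in any degenerate way. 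For the CRT decomposition \(q=\prod p^{e}\) we have \(\mathbb Z_{q,k}=\prod\mathbb Z_{p^e,k}\), but \(A\) itself need not be a product set. This is why a purely local-to-global argument is insufficient.

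\emph{Prime case.} For \(q=p\), take \(A\subseteq\mathbb Z_{p,k}\) with \(|A|>(p-1)/(2d_p)\). Cauchy--Davenport gives
\[
|sA|\geq\min\bigl(p,\ s|A|-s+1\bigr).
\]
Since \(s\geq 6k-2\geq 2d_p+2\) and \(|A|\geq 2\), this bound reaches \(p\) except when \(p\) is small relative to \(k\). In those finitely many cases, the exclusion \((p-1)\nmid 2k\) still forces \(|A|\geq 2\), and a few applications of Cauchy--Davenport finish. For prime powers \(q=p^e\), I would argue by induction on \(e\). The fibres of \(A\) over \(\mathbb Z_{p^{e-1}}\) are dense in the fibres of \(\mathbb Z_{p^e,k}\) on average, and a Kneser argument inside the subgroup \(p^{e-1}\mathbb Z_{p^e}\cong\mathbb Z_p\) lifts coverage from level \(e-1\) to level \(e\).

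\emph{General \(q\).} Apply Kneser's theorem to \(sA\) in \(\mathbb Z_q\), and let \(K\) be the stabilizer of \(sA\), so that
\[
|sA|\geq s|A+K|-(s-1)|K|.
\]
If \(K=\mathbb Z_q\) we are done. Otherwise, pass to \(\mathbb Z_q/K\cong\mathbb Z_{q'}\) with \(q'\mid q\) and \(q'<q\). The image of \(A\) lies in \(\mathbb Z_{q',k}\). The projection \(\mathbb Z_{q,k}\to\mathbb Z_{q',k}\) has fibres of equal size, so the image still has relative density \(>1/2\). By induction the image satisfies \(s\bar A=\mathbb Z_{q'}\), and hence \(sA+K=sA\) is all of \(\mathbb Z_q\).

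It remains to rule out the case \(K=\{0\}\), where Kneser reduces to \(|sA|\geq s|A|-s+1\). This is the step I expect to be the main obstacle. The bound is useless in general, because \(|\mathbb Z_{q,k}|\) may be as small as \(q/k^{\omega(q)}\), which is exponentially smaller than \(q\) in \(\omega(q)\), while \(s\) is fixed. The fix I would try first is multiplicative. Because \(|A|>|H|/2\) with \(H=\mathbb Z_{q,k}\), every \(h\in H\) satisfies \(A\cap hA\neq\emptyset\), that is, \(H\subseteq A\cdot A^{-1}\). Using this, I would show that a bounded sumset such as \(2kA\) contains a nonempty set \(B\) that is invariant under a subgroup of \(H\) whose image in each \(\mathbb Z_{p,k}\) is large. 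Then \(B\) is a union of orbits, and the local structure above together with the prime case shows that some bounded multiple of \(B\) has nontrivial stabilizer. That puts us back in the inductive case. The budget \(s\geq 6k-2\) should be exactly what this argument consumes: about \(2k\) summands for each of three stages, minus overlap.
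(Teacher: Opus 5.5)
Your Kneser reduction is correct as far as it goes. If the stabilizer $K$ of $sA$ is nontrivial, put $q'=q/|K|$. Then $\mathbb Z_{q,k}\to\mathbb Z_{q',k}$ is a surjective homomorphism with equal fibres, so the image of $A$ keeps relative density $>1/2$, and induction gives $sA=sA+K=\mathbb Z_q$.

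When $K=\{0\}$, the bound $|sA|\ge s(|A|-1)+1$ also finishes the proof whenever $\mathbb Z_{q,k}$ has density in $\mathbb Z_q$ of order at least $2/s$. In particular it handles every prime power: $|\mathbb Z_{p^e,k}|=p^{e-1}m$ with $m=(p-1)/\gcd(k,p-1)\ge 3$, and $s\ge 6k-2$ gives $s(|A|-1)\ge p^e-1$ directly. So your separate prime and prime-power paragraphs are unnecessary.

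But once $q$ has many prime factors, the trivial-stabilizer case \emph{is} the theorem, and the proposal does not prove it. For $k=2$ and squarefree $q$, for instance, the density is $\prod_{p\mid q}\frac{p-1}{2p}$, which is far below $2/s$. The multiplicative idea is not yet an argument. It is true that $A\cap hA\neq\emptyset$ for every $h\in H$, but this carries no additive information: $h(a_1+\cdots+a_{2k})=ha_1+\cdots+ha_{2k}$ lies in $2kA$ only if all the $ha_i$ can be taken in $A$ simultaneously. Nothing in the proposal produces a nonempty subset of $2kA$ invariant under a subgroup $H'\le H$. Nothing shows that a bounded multiple of such a set has nontrivial stabilizer. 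And the budget of $6k-2$ summands is asserted rather than derived.

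The underlying obstruction is that a bare covering statement does not pass to fibres. After representing $n_1$ modulo $q_1$ by $a_1,\dots,a_s$, the fibres $\{y:(a_j,y)\in A\}$ over the chosen $a_j$ may be tiny, and then nothing lets you continue modulo $q_2$. The paper therefore proves a weighted statement that does tensorize (Lemma~\ref{lemma:arithmetic-k}). For nonzero $F_1,\dots,F_s:\mathbb Z_{q,k}\to[0,1]$ of total average $>c>1/2$ and any $n$, there are $b_j$ with $\sum_j b_j=n$, $F_j(b_j)>0$, and score $\sum_jF_j(b_j)>sc$.

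In the CRT step (Lemma~\ref{lemma:crt-tensorization}), one averages each $F_j$ over the $q_2$-coordinate and solves modulo $q_1$ with score $>sc$. The score is exactly what makes the fibre functions $y\mapsto F_j(a_j,y)$ nonzero with total average $>c$, so the statement applies again modulo $q_2$. Taking $F_j=\mathbf 1_A$ gives the theorem.

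At a prime, the selection Lemma~\ref{l:s-seq-general} supplies ranks with $\sum_j i_j\ge km\ge p-1$ and $\sum_j a_{i_j,j}>sc$. Cauchy--Davenport applied to the superlevel sets then covers $n$ while keeping the score. The same lemma with $k=1$ lifts from $p^{\alpha-1}$ to $p^\alpha$. What your argument is missing is this kind of quantitative bookkeeping, carried through the induction in place of the bare set $sA$.
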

The theorem is uniform in \(q\), apart from the explicit exceptional
prime factors contained in \(R_k\).  Notice, however, that the density
assumption is relative to \(\mathbb Z_{q,k}\), not to the whole group
\(\mathbb Z_q\).  When \(q\) has many prime factors, the set of reduced
\(k\)-th powers can occupy only a small proportion of \(\mathbb Z_q\).
Thus a set \(A\) containing more than half of \(\mathbb Z_{q,k}\) may
still be very sparse in the ambient cyclic group, so classical dense
sumset results such as Cauchy--Davenport or Kneser's theorem do not
apply directly; see, for example, \cite{TaoVu}.  The theorem is instead
closer in spirit to covering results for dense subsets of sparse
arithmetic sets, such as those arising in
\cite{RamareRuzsa,shao2014density}.

For \(k=2\), Theorem~\ref{lemma:arithmetic-general-k} says in
particular that, whenever \((q,30)=1\), every subset containing more
than half of the reduced quadratic residues satisfies
\(10A=\mathbb Z_q\).  This should be compared with Lim's sharper
quadratic result, which gives the optimal threshold \(1/5\) when the
prime factors of the modulus are sufficiently large
\cite[Theorems~1.1 and~1.3]{lim2025density}.  Our theorem is of a
different nature: it applies uniformly to every \(k\) and excludes only
an explicit finite collection of primes.  
% For the main quadratic
% theorem, ten variables are of course not enough: reaching
% \(s=5,7,8\) requires separate refinements at the small primes, with
% the prime \(5\) playing the decisive role.

\subsection{Overview of the proof}

We explain the argument in the critical case of five prime squares.
The cases of seven and eight variables follow the same analytic
framework, with different local arithmetic estimates, while the
remaining cases of Theorem~\ref{t:main} follow by adjoining fixed
prime squares as above.  Let \(X\equiv5\pmod {24}\) be large.  As
a natural approach, we remove small-prime congruence obstructions by introducing
\[
    W=8\prod_{2<p<w}p=24W_0.
\]
For each reduced quadratic residue \(b\pmod W\), the primes
\(p\in\mathcal P\) satisfying \(p^2=Wn+b\) give a weighted function
\(f_b\) on an interval \([N]\), and we write
\[
    m_b=\frac1N\sum_{n\leq N}f_b(n)
\]
for its local mean.  The scale is chosen so that the integer $X$ corresponds, after division by \(W\), to a point near
\(5N/2\).

At this stage the problem has a simple shape.  If, for
\(r\equiv X\pmod W\), we can choose
\(b_1,\ldots,b_5\in\mathbb Z_{W,2}\) such that
\begin{equation}\label{eq:intro-local-score}
    b_1+\cdots+b_5\equiv r\pmod W,\qquad
    m_{b_j}>\zeta,\qquad
    \sum_{j=1}^5 m_{b_j}>\frac{5}{\sqrt2}+\zeta,
\end{equation}
then a transference argument shows that
\begin{align}\label{e:convest}
    (f_{b_1}*\cdots*f_{b_5})(m)>0
\end{align}
at the required value of \(m\), and hence gives
\(X=p_1^2+\cdots+p_5^2\) with \(p_j\in\mathcal P\).
The proof therefore separates naturally into an analytic problem,
which transfers \eqref{eq:intro-local-score} to a positive
convolution, and an arithmetic problem, which produces the residue
classes \(b_j\).

The analytic side requires three pieces of information.  First, the
majorant \(\nu_b\) for \(f_b\) must be Fourier-pseudorandom.  For
squares the correct comparison object is not the constant function:
the change of variables \(p^2=Wn+b\) leaves a non-flat density in
the \(n\)-variable.  We therefore compare \(\nu_b\) with a natural
quadratic reference measure \(\mu_N\) and prove
\[
    \|\widehat{\nu_b}-\widehat{\mu_N}\|_\infty=o(N).
\]
This is the main difference from a flat transference model such as \cite{salmensuu2020density}.  On the
major arcs, the prime-square sum is evaluated by
Siegel--Walfisz together with the relevant quadratic local factors;
Zhao's finite evaluation identifies their size
\cite{zhao}.  On the minor arcs, Hua's estimate gives the required
saving.  The outcome is that the prime-square majorant has the same
Fourier behaviour, to the precision needed for transference, as the
quadratic reference measure.

Second, one needs a restriction estimate
\[
    \|\widehat h\|_q\ll_q N^{1-1/q},
    \qquad q>4,\qquad |h|\leq\nu_b.
\]
The argument follows Bourgain's large-spectrum method for squares
\cite{BourgainSquares}, but it has to be carried out for the present
prime-square weight.  At a high level there are three ingredients.
A fourth-moment estimate controls the small Fourier levels.  For the remaining levels, the circle-method
estimate above shows that a suitable power of
\(N^{-1}|\widehat{\nu_b}(\alpha)|\) is dominated by a sum of bumps
around rationals \(a/q\).  Finally, the standard large-spectrum
argument converts this rational envelope into a bound for the number
of frequencies at which \(|\widehat h|\) is large.  The only point
where the classical picture has to be changed substantially is the
envelope itself: its local factors and its quadratic decay come from
the non-flat prime-square majorant rather than from the usual flat
sieve weight.

Pseudorandomness and restriction allow us to replace each \(f_{b_j}\)
by a smoother bounded function with essentially the same mean in \eqref{e:convest}.  The
smoothed functions are dense enough that their five-fold convolution
can be bounded from below using a standard dense-set argument, while
the Fourier estimates ensure that this approximation does not change
the original convolution significantly.  Thus the analytic part
reduces the theorem to finding five residue classes satisfying
\eqref{eq:intro-local-score}.

We now turn to that finite problem.  The global density assumption
first implies that the local means \(m_b\), averaged over the reduced
square residues modulo \(W\), have essentially the same average as
\(\underline d(\mathcal P)\).  What is needed is stronger than a
large average: we must choose five particular residues which add to a
prescribed value, all carry positive mass, and retain enough of the
total mass to pass the threshold \(5/\sqrt2\).  
%This is the reason
%for proving functional rather than merely set-theoretic covering
%statements.

The basic finite mechanism is easiest to see in a homogeneous toy
model.  Let \(f\) be a weight on a set of \(n\) residues, and arrange
its values in decreasing order,
\[
    1\geq a_0\geq a_1\geq\cdots\geq a_{n-1}\geq0.
\]
Think of using the same weighted set in each of the five summands.  From
a lower bound for the average of the \(a_i\), we want to choose indices
\(i_1,\ldots,i_5\) such that
\begin{equation}\label{eq:intro-rank-selection}
    i_1+\cdots+i_5\geq2n,\qquad
    a_{i_1}+\cdots+a_{i_5}>5c,\qquad
    a_{i_j}>0.
\end{equation}
There is a simple tradeoff: moving farther down the sequence increases
the size of the corresponding level set, but decreases the weight.
An application of Cauchy--Davenport converts the rank condition in
\eqref{eq:intro-rank-selection} into covering of the required residue,
while the second condition retains the quantitative weight needed for
transference. The selection argument in the linear prime setting of \cite{LHRV} may
be viewed as a simpler model for the present one.  Here the same basic
idea has to be refined to handle different local fibres and to preserve
a quantitative lower bound through the prime-power and
Chinese-remainder steps.

The five-variable quadratic problem requires more care because the
generic half-density argument does not retain enough score at the
small primes.  The main obstruction occurs at \(5\), where
\(\mathbb Z_{5,2}=\{1,4\}\).  Here the finite problem is small enough
that the possible choices can be compared directly, giving the
required inhomogeneous two-point estimate.  Prime-power lifting then
propagates this finite estimate from \(5\) to \(5^\alpha\).

For general \(k\), the same argument gives the linear bound \(6k-2\)
for the number of summands, uniformly over moduli avoiding the finite
set of exceptional primes in \(R_k\).  Salmensuu had conjectured an
\(O(k)\) bound in the corresponding local density Waring problem
\cite[Section~5]{salmensuu2020density}; thus our covering theorem
obtains this linear dependence away from the exceptional primes.

Unlike in the linear prime setting, the final density threshold is constrained
simultaneously by the finite arithmetic covering problem and by the quadratic
transference step.  The constant in Theorem~\ref{t:main} therefore reflects
the interaction of these two features of the prime-square problem.

\subsection{Organization of the paper}
The paper is organized as follows.  Section~2 collects notation and
the elementary analytic and arithmetic tools used later.  Section~3
proves the combinatorial selection estimates illustrated by
\eqref{eq:intro-rank-selection}.  Section~4 develops the arithmetic
of reduced \(k\)-th-power residues and proves
Theorem~\ref{lemma:arithmetic-general-k} in its stronger functional
form.  Section~5 establishes the small-variable quadratic refinements
at the prime \(5\).  Section~6 introduces the prime-square \(W\)-trick,
proves the transference statement, and deduces
Theorem~\ref{t:main} from the analytic and arithmetic inputs.
Section~7 proves pseudorandomness, the restriction estimate, and the
mean-value estimate for the prime-square weights.  Finally, Section~8
constructs the density-\(7/10\) obstruction giving Theorem \ref{t:d5-range}.

\subsection{Acknowledgments}
ChatGPT was used as an auxiliary editorial tool for proofreading and preliminary
literature search assistance.  Claude AI was used in the final editing process. 
Claude AI also prepared a \href{https://github.com/lacey-gatech/Density-Bounds-in-Hua-s-Theorem-for-Prime-Squares}{partial formalization} of this paper, 
with  two theorems taken as true, the Siegel-Walfisz Theorem \ref{SWT} and 
Hua's Theorem as stated in \cite[Lemma~4.5]{zhao}. 

All mathematical statements, references, arguments,
and final wording were checked and approved by the authors.

\section{Preliminaries}\label{sec:preliminaries}

We write \(\mathbb P\) for the primes, \([N]=\{1,\ldots,N\}\),
\(\mathbf1_E\) for the indicator of a set \(E\), and \(\phi\) for
Euler's totient function. The symbols \(O\), \(\ll\), and \(\asymp\),
with subscripts when needed, have their usual meanings; all
uniformities and parameter dependences in the analytic argument are
stated when they arise.

We first record two elementary facts concerning the reduced power
residues \(\mathbb Z_{q,k}\) defined in \eqref{e:zqdef}.

\begin{lemma}\label{l:ntlem}
Let \(q,q'\) be coprime positive integers and let \(k\geq2\). Under the
Chinese remainder isomorphism
\[
    \mathbb Z_{qq'}\cong\mathbb Z_q\times\mathbb Z_{q'},
\]
one has
\[
    \mathbb Z_{qq',k}
    \cong
    \mathbb Z_{q,k}\times\mathbb Z_{q',k}.
\]
\end{lemma}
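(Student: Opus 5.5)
The plan is to transport everything through the Chinese remainder isomorphism \(\psi:\mathbb Z_{qq'}\to\mathbb Z_q\times\mathbb Z_{q'}\), \(x\mapsto(x\bmod q,\,x\bmod q')\). Since \(\gcd(q,q')=1\), \(\psi\) is a ring isomorphism. It therefore restricts to a group isomorphism of unit groups,
\[
    \mathbb Z_{qq'}^*\cong\mathbb Z_q^*\times\mathbb Z_{q'}^*,
\]
because a pair is a unit in a product ring exactly when each coordinate is a unit.

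The key step is that \(\psi\) commutes with the \(k\)-th power map. As \(\psi\) is multiplicative, \(\psi(x^k)=(\psi(x))^k=\bigl((x\bmod q)^k,(x\bmod q')^k\bigr)\). Hence
\[
    \psi(\mathbb Z_{qq',k})=\{(u^k,v^k):(u,v)\in\mathbb Z_q^*\times\mathbb Z_{q'}^*\}.
\]
Here we use that \(\psi(x)\) ranges over all of \(\mathbb Z_q^*\times\mathbb Z_{q'}^*\) as \(x\) ranges over \(\mathbb Z_{qq'}^*\). Since \(u\) and \(v\) vary independently, this set is exactly \(\mathbb Z_{q,k}\times\mathbb Z_{q',k}\). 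That gives the two inclusions at once.

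The only point needing care is the degenerate convention \(\mathbb Z_{1,k}=\{0\}\) when \(q=1\) or \(q'=1\). In that case \(\mathbb Z_1\) is the zero ring, whose unit group is \(\{0\}\), and the identification is trivial: both sides reduce to \(\mathbb Z_{q',k}\) (respectively \(\mathbb Z_{q,k}\)). No real obstacle is expected; the argument is purely algebraic and uses only that \(\psi\) is a multiplicative bijection preserving units.
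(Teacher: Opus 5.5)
Your proof is correct and is essentially the paper's argument. The paper likewise lifts a pair of unit $k$-th roots $(a,b)$ to a single unit $d$ modulo $qq'$ and observes that $d^k$ reduces to the given pair; it builds $d$ explicitly from a B\'ezout identity $rq+tq'=1$, whereas you get the lift abstractly from the CRT ring isomorphism mapping $\mathbb Z_{qq'}^*$ onto $\mathbb Z_q^*\times\mathbb Z_{q'}^*$ and commuting with $k$-th powers (you also spell out the easy inclusion and the degenerate case $q=1$ or $q'=1$, which the paper leaves implicit).
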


\begin{proof}
Only surjectivity requires proof. Let
\(x\in\mathbb Z_{q,k}\) and \(y\in\mathbb Z_{q',k}\), and write
\[
    x\equiv a^k\pmod q,
    \qquad
    y\equiv b^k\pmod{q'},
\]
where \(a\in\mathbb Z_q^*\) and \(b\in\mathbb Z_{q'}^*\). Choose
\(r,t\in\mathbb Z\) with \(rq+tq'=1\), and put
\[
    d=atq'+brq.
\]
Then \(d\equiv a\pmod q\) and \(d\equiv b\pmod{q'}\). In particular,
\((d,qq')=1\), and
\[
    d^k\equiv x\pmod q,
    \qquad
    d^k\equiv y\pmod{q'}.
\]
The result follows from the Chinese remainder theorem.
\end{proof}

\begin{lemma}\label{l:ntlem2}
Let \(p\) be an odd prime, let \(a\geq2\), and let \(k\geq2\).
If \(p\nmid k\), then the reduction map
\[
    \mathbb Z_{p^a,k}\longrightarrow\mathbb Z_{p^{a-1},k}
\]
is surjective and every fiber contains exactly \(p\) elements.
Equivalently, for every \(x\in\mathbb Z_{p^{a-1},k}\), all \(p\)
residues
\[
    x+tp^{a-1},\qquad t\in\mathbb Z_p,
\]
belong to \(\mathbb Z_{p^a,k}\).
\end{lemma}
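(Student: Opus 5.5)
The plan is to prove the equivalent second formulation directly: for every \(x\in\mathbb Z_{p^{a-1},k}\), each lift \(x+tp^{a-1}\) lies in \(\mathbb Z_{p^a,k}\). Surjectivity and the fiber count both follow from this. Reduction modulo \(p^{a-1}\) maps \(\mathbb Z_{p^a,k}\) into \(\mathbb Z_{p^{a-1},k}\), because \(u^k\) reduces to \(\bar u^k\) and units reduce to units. Every fiber of \(\mathbb Z_{p^a}\to\mathbb Z_{p^{a-1}}\) has exactly \(p\) elements, namely \(x+tp^{a-1}\) with \(t\in\mathbb Z_p\). So once all of them are shown to be \(k\)-th powers of units, the fibers of the restricted map are exactly these \(p\)-element sets, and every \(x\) is hit.

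For the main step, fix \(x\equiv b^k\pmod{p^{a-1}}\) with \(b\) a unit, and look for a unit \(u\equiv b\pmod{p^{a-1}}\) with \(u^k\equiv x+tp^{a-1}\pmod{p^a}\). Lift \(b\) to any integer and write \(b^k=x+t_0p^{a-1}\) modulo \(p^a\). Now set \(u=b+sp^{a-1}\). Since \(a\geq2\), we have \(2(a-1)\geq a\), so the binomial expansion gives
\[
u^k\equiv b^k+k b^{k-1}s\,p^{a-1}\pmod{p^a}.
\]
Hence \(u^k\equiv x+(t_0+kb^{k-1}s)p^{a-1}\pmod{p^a}\). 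Because \(p\nmid k\) and \(p\nmid b\), the coefficient \(kb^{k-1}\) is invertible modulo \(p\). As \(s\) runs over \(\mathbb Z_p\), the quantity \(t_0+kb^{k-1}s\) therefore runs over all of \(\mathbb Z_p\), which yields every lift \(x+tp^{a-1}\). Finally, \(u\) is a unit since \(u\equiv b\pmod p\).

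An alternative route is a counting argument in the cyclic group \(\mathbb Z_{p^a}^*\) (using that \(p\) is odd), comparing \(|\mathbb Z_{p^a,k}|=\phi(p^a)/\gcd(k,\phi(p^a))\) with the corresponding count modulo \(p^{a-1}\). Since \(p\nmid k\), the gcd is the same at both levels, so the ratio of the two counts is exactly \(p\). The Hensel-type computation above is more direct, so I would use it.

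The only delicate point is the congruence \(\binom{k}{j}b^{k-j}s^jp^{j(a-1)}\equiv0\pmod{p^a}\) for \(j\geq2\). This needs \(a\geq2\), which is assumed. No step looks like a real obstacle.
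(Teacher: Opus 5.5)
Your proof is correct, but it takes a different route from the paper.

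The paper argues by counting. It uses that \((\mathbb Z/p^a\mathbb Z)^*\) is cyclic, which is where \(p\) odd enters, to get \(|\mathbb Z_{p^a,k}|=(p-1)p^{a-1}/\gcd(k,(p-1)p^{a-1})\). Since \(p\nmid k\), the gcd is the same at levels \(a\) and \(a-1\), so the ratio of the two cardinalities is exactly \(p\). Reduction is surjective onto \(\mathbb Z_{p^{a-1},k}\), and each fiber sits inside the full \(p\)-element fiber of \(\mathbb Z_{p^a}\to\mathbb Z_{p^{a-1}}\). The paper then concludes that every fiber has exactly \(p\) elements, implicitly combining average fiber size \(p\) with the upper bound \(p\). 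This is essentially the alternative you sketched and set aside.

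Your Hensel-type computation instead exhibits every lift \(x+tp^{a-1}\) explicitly as \((b+sp^{a-1})^k\). It uses only that \(kb^{k-1}\) is invertible modulo \(p\) and that \(2(a-1)\geq a\). It therefore needs neither the cyclic structure of the unit group nor the oddness of \(p\), so it also covers \(p=2\) with \(k\) odd. It also makes the fiber count immediate, with no averaging step. The counting proof is shorter once the structure theorem for \((\mathbb Z/p^a\mathbb Z)^*\) is available; yours is more self-contained and constructive.

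Your computation is, in fact, exactly the square-root lifting the paper carries out by hand for \(k=2\), \(p=5\) in the proof of Lemma~\ref{lemma:five-adic-inhomogeneous}.
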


\begin{proof}
The group \((\mathbb Z/p^a\mathbb Z)^*\) is cyclic of order
\((p-1)p^{a-1}\). Hence
\[
    |\mathbb Z_{p^a,k}|
    =
    \frac{(p-1)p^{a-1}}
         {\gcd(k,(p-1)p^{a-1})}.
\]
Projection maps \(k\)-th powers in \(\mathbb Z_{p^a}\) surjectively
onto \(k\)-th powers in \(\mathbb Z_{p^{a-1}}\). Since \(p\nmid k\),
the quotient of the cardinalities at two consecutive levels is \(p\).
Thus every fiber of the restricted reduction map has cardinality
\(p\). Since a unit modulo \(p^{a-1}\) has exactly \(p\) unit lifts
modulo \(p^a\), all of those lifts are \(k\)-th powers.
\end{proof}

All finite groups are written additively. For subsets
\(A_1,\ldots,A_s\) of an abelian group, \(A_1+\cdots+A_s\) denotes
their sumset, and \(sA=A+\cdots+A\). We repeatedly use the following
standard form of Cauchy--Davenport; see \cite[Theorem~5.4]{TaoVu}.

\begin{theorem}[Cauchy--Davenport]\label{thm:cauchy-davenport}
Let \(p\) be prime and let \(A_1,\ldots,A_r\subseteq\mathbb Z_p\) be
nonempty. Then
\[
    |A_1+\cdots+A_r|
    \geq
    \min\left\{p,\sum_{i=1}^r|A_i|-(r-1)\right\}.
\]
In particular, \(A_1+\cdots+A_r=\mathbb Z_p\) whenever
\(\sum_i|A_i|\geq p+r-1\).
\end{theorem}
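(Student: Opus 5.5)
The plan is to prove the two-set case $|A+B|\geq\min\{p,|A|+|B|-1\}$ and then induct on $r$. The ``in particular'' clause is immediate from the main inequality: if $\sum_i|A_i|\geq p+r-1$, then $\sum_i|A_i|-(r-1)\geq p$, so the minimum equals $p$. Hence $|A_1+\cdots+A_r|=p$, i.e.\ the sumset is all of $\mathbb Z_p$.

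\emph{Two sets.} Let $A,B\subseteq\mathbb Z_p$ be nonempty.

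First suppose $|A|+|B|>p$. Fix any $x\in\mathbb Z_p$. The sets $A$ and $x-B$ both lie in $\mathbb Z_p$ and have total size exceeding $p$, so by pigeonhole they intersect. An element $a=x-b$ of the intersection gives $x=a+b\in A+B$, so $A+B=\mathbb Z_p$.

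Now suppose $|A|+|B|\leq p$, and assume for contradiction that $|A+B|\leq |A|+|B|-2$. Choose a set $C\supseteq A+B$ with $|C|=|A|+|B|-2$; this is possible because $|A|+|B|-2<p$. Consider
\[
    f(x,y)=\prod_{c\in C}(x+y-c)\in\mathbb F_p[x,y],
\]
which has total degree $|A|+|B|-2$ and vanishes on all of $A\times B$. Its coefficient of $x^{|A|-1}y^{|B|-1}$ equals $\binom{|A|+|B|-2}{|A|-1}$. This binomial coefficient is nonzero in $\mathbb F_p$, since $|A|+|B|-2<p$ means no factor of $p$ appears in the factorials. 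Alon's Combinatorial Nullstellensatz then yields a point of $A\times B$ at which $f$ does not vanish, a contradiction. Therefore $|A+B|\geq|A|+|B|-1$.

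If one prefers to avoid the Nullstellensatz, the classical alternative is induction on $|B|$ via the Davenport $e$-transform $A\mapsto A\cup(B+e)$, $B\mapsto B\cap(A-e)$. This transform preserves $|A|+|B|$ and does not enlarge the sumset. The only delicate point there is finding $e$ with $B\cap(A-e)$ a proper nonempty subset of $B$; this uses the fact that $\mathbb Z_p$ has no nontrivial proper subgroups.

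\emph{Induction on $r$.} The case $r=1$ is trivial. For $r\geq2$, put $S=A_1+\cdots+A_{r-1}$, which is nonempty. By the two-set case,
\[
    |S+A_r|\geq\min\{p,|S|+|A_r|-1\}.
\]
If $|S|=p$, then $S+A_r=\mathbb Z_p$ and we are done. Otherwise the inductive hypothesis gives $|S|\geq\sum_{i<r}|A_i|-(r-2)$. Substituting,
\[
    |S+A_r|\geq\min\Bigl\{p,\sum_{i=1}^r|A_i|-(r-1)\Bigr\},
\]
which is the claim.

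The only substantive step is the two-set inequality, specifically the nonvanishing of the binomial coefficient mod $p$ (or, in the transform proof, the choice of $e$). Everything else is routine bookkeeping.
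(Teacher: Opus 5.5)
Your proof is correct. The paper does not prove this statement: it quotes it from \cite[Theorem~5.4]{TaoVu} and uses it only in its covering form, for example in Lemma~\ref{lemma:superlevel-covering}, where $\sum_j|D_j|\geq p-1+s$ forces $D_1+\cdots+D_s=\mathbb Z_p$. So your argument does not compete with a proof in the paper; it supplies one.

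Each step checks out:
\begin{itemize}
\item The pigeonhole argument settles the case $|A|+|B|>p$.
\item In the case $|A|+|B|\leq p$, the Combinatorial Nullstellensatz argument (Alon--Nathanson--Ruzsa) goes through. The coefficient $\binom{|A|+|B|-2}{|A|-1}$ is a unit mod $p$, and the degree condition matches $|A|>|A|-1$, $|B|>|B|-1$. The degenerate case $|A|=|B|=1$ is harmless, since the contrary assumption $|A+B|\leq0$ cannot hold.
\item In the induction on $r$, you correctly use $|S|<p$ to discard the $p$ branch of the inductive minimum before substituting.
\end{itemize}

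The trade-off between routes is this. The polynomial method is short but imports the Nullstellensatz as an external black box. The Davenport $e$-transform route you sketch is fully elementary, at the cost of the case analysis needed to find an $e$ with $\emptyset\neq B\cap(A-e)\subsetneq B$. That route is the natural choice if one wanted this input to the paper to be self-contained.
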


We now fix the Fourier-analytic conventions used in the transference
and prime-square estimates. Set
\[
    e(x)=e^{2\pi ix},\qquad \mathbb T=\mathbb R/\mathbb Z,
\]
and equip \(\mathbb T\) with normalized Haar measure. If
\(F\colon\mathbb Z\to\mathbb C\) is finitely supported, define
\[
    \widehat F(\alpha)=\sum_nF(n)e(n\alpha),
    \qquad
    \|\widehat F\|_q
    =\left(\int_{\mathbb T}|\widehat F(\alpha)|^q\,d\alpha\right)^{1/q}.
\]
Functions on \([N]\) are extended by zero. Convolution is
unnormalized, so
\[
    \widehat{F_1*F_2}=\widehat F_1\widehat F_2,
    \qquad
    (F_1*\cdots*F_s)(m)
    =\int_{\mathbb T}\prod_{j=1}^s\widehat F_j(\alpha)
      e(-m\alpha)\,d\alpha.
\]
We also recall Parseval's identity
\[
    \|\widehat F\|_2^2
    =\int_{\mathbb T}|\widehat F(\alpha)|^2\,d\alpha
    =\sum_n|F(n)|^2
    =\|F\|_{\ell^2}^2,
\]
and the elementary estimate
\(\|\widehat F\|_\infty\leq\|F\|_{\ell^1}\).

We shall also use partial summation and the standard geometric-sum
estimate repeatedly. If \(u<v\),
\(C(t)=\sum_{u<n\leq t}c_n\), and \((a_n)\) is any sequence, then
\[
    \sum_{u<n\leq v}a_nc_n
    =a_vC(v)+\sum_{u\leq t<v}(a_t-a_{t+1})C(t).
\]
When \(a_n\) is nonnegative and decreasing, the absolute value of the
left-hand side is at most
\[
    a_{u+1}\max_{u<t\leq v}|C(t)|.
\]
For $\alpha\in\mathbb T=\mathbb R/\mathbb Z$, we write $$\|\alpha\|_{\mathbb T}:= \min_{n\in\mathbb Z}|\widetilde{\alpha}-n|,$$
where $\widetilde{\alpha}\in\mathbb R$ is any representative of $\alpha$. This is well defined, since the value is independent of the choice of representative.
We also have $$\left|\sum_{u<n\leq v}e(n\alpha)\right|
    \ll\min\{v-u,\|\alpha\|_{\mathbb T}^{-1}\},$$

with the usual convention at \(\alpha=0\). 

For the major and minor arc decomposition, we use Dirichlet's approximation theorem in the following standard form. 

\begin{theorem}[Dirichlet's approximation theorem]
Let \(\alpha\in\mathbb T\) and \(N\geq1\). Then there exist coprime
integers \(a,q\) with \(1\leq q\leq N\) such that
\[
    \left\|\alpha-\frac aq\right\|_{\mathbb T}
    \leq\frac1{qN}.
\]
\end{theorem}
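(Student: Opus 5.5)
We argue by the pigeonhole principle applied to the multiples of \(\alpha\). Fix a representative \(\widetilde\alpha\in[0,1)\) of \(\alpha\); since \(\|\cdot\|_{\mathbb T}\) does not depend on the representative, it suffices to find integers \(a,q\) with \(1\leq q\leq N\) and \(|q\widetilde\alpha-a|\leq 1/N\), and then to pass to lowest terms. We may assume \(N\) is an integer, replacing \(N\) by \(\lfloor N\rfloor\) if necessary. This is harmless, because \(q\leq\lfloor N\rfloor\) and \(1/(q\lfloor N\rfloor)\geq 1/(qN)\) both go in the wrong direction only for the target bound, so we instead run the argument directly with \(M=\lfloor N\rfloor\) boxes as follows.

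Consider the \(M+1\) numbers \(\{j\widetilde\alpha\}\in[0,1)\) for \(j=0,1,\ldots,M\), and partition \([0,1)\) into the \(M\) half-open intervals \([t/M,(t+1)/M)\), \(0\leq t<M\). By pigeonhole, two indices \(0\leq j<j'\leq M\) have fractional parts in the same interval. Put \(q=j'-j\), so \(1\leq q\leq M\). Then
\[
|q\widetilde\alpha-a|<\frac1M
\qquad\text{with }a=\lfloor j'\widetilde\alpha\rfloor-\lfloor j\widetilde\alpha\rfloor .
\]
Dividing by \(q\) gives \(|\widetilde\alpha-a/q|<1/(qM)\). To obtain the stated bound \(1/(qN)\) with real \(N\), the cleaner route is to use \(\lceil N\rceil\) boxes when \(N\notin\mathbb Z\), or simply to note that the theorem is only applied with integer \(N\) in the paper. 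I would state this reduction explicitly.

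Finally, for coprimality, write \(d=\gcd(a,q)\), \(a'=a/d\) and \(q'=q/d\). Then \(1\leq q'\leq q\leq N\), \(a'/q'=a/q\), and
\[
\Bigl|\widetilde\alpha-\frac{a'}{q'}\Bigr|\leq\frac1{qN}\leq\frac1{q'N}.
\]
Hence \(\|\alpha-a'/q'\|_{\mathbb T}\leq 1/(q'N)\), as required.

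There is no real obstacle here. The only point needing care is bookkeeping: the handling of non-integer \(N\), and the observation that reducing to lowest terms only improves the inequality, since \(q'\leq q\).
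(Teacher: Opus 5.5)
The paper gives no proof of this statement; it quotes it as standard. Your pigeonhole argument, including the reduction to lowest terms, is correct when \(N\) is an integer. The statement, however, is for real \(N\geq1\), and your treatment of non-integer \(N\) has a gap.

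With \(M=\lfloor N\rfloor\) boxes you only get \(|q\widetilde\alpha-a|<1/M\). Since \(1/M\geq 1/N\), this does not give the bound \(1/(qN)\), so the reduction is not ``harmless''. Your proposed repair with \(\lceil N\rceil\) boxes does not work as stated. For \(N\notin\mathbb Z\) you have \(\lceil N\rceil=M+1\) boxes but only the \(M+1\) points \(\{j\widetilde\alpha\}\), \(0\leq j\leq M\), so pigeonhole forces no collision. You also cannot add the point \(j=M+1\), since that would allow \(q=M+1>N\).

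Nor can you fall back on integer \(N\). In the minor-arc part of Lemma~\ref{lem:nonflat-circle-estimates}, the paper applies the theorem with the bound \(N/Q\), \(Q=(\log N)^A\), which is generally not an integer. It then uses the exact width \(Q/(qN)\), matching the definition of \(\mathfrak M(q,a)\), to conclude \(q>Q\) from \(\alpha\notin\mathfrak M\).

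The fix is the standard extra point. Place the \(M+2\) points \(\{0\cdot\widetilde\alpha\},\{\widetilde\alpha\},\ldots,\{M\widetilde\alpha\}\) and \(1\) into the \(M+1\) intervals \([t/(M+1),(t+1)/(M+1))\), \(0\leq t<M\), and \([M/(M+1),1]\). Equivalently, among \(M+1\) points on \(\mathbb T\) some two are at distance at most \(1/(M+1)\).

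\begin{itemize}
\item A collision between \(\{j\widetilde\alpha\}\) and \(\{j'\widetilde\alpha\}\) gives \(q=j'-j\) as in your argument.
\item A collision with the point \(1\) must involve some \(j\geq1\), because \(0\) and \(1\) lie in different intervals when \(M\geq1\). It gives \(q=j\) and \(a=\lfloor j\widetilde\alpha\rfloor+1\).
\end{itemize}

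In either case \(1\leq q\leq M\leq N\) and \(|q\widetilde\alpha-a|\leq 1/(M+1)<1/N\), since \(M+1>N\). Dividing by \(q\) and reducing to lowest terms as you did then proves the statement for every real \(N\geq1\).
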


Given \(A\geq1\) and sufficiently large \(N\), set
\(Q=(\log N)^A\). For \(1\leq q\leq Q\) and
\(a\in\mathbb Z_q^*\), define
\[
    \mathfrak M(q,a)
    =
    \left\{\alpha\in\mathbb T:
    \left\|\alpha-\frac aq\right\|_{\mathbb T}
    \leq\frac{Q}{qN}\right\}.
\]
For \(q=1\), we take \(a=0\). If \(N>2Q^2\), these arcs are disjoint.
Let
\[
    \mathfrak M=\mathfrak M_Q
    :=\bigcup_{\substack{1\leq q\leq Q\\a\in\mathbb Z_q^*}}
      \mathfrak M(q,a),
    \qquad
    \mathfrak m=\mathbb T\setminus\mathfrak M,
\]
with the preceding convention when \(q=1\).

We shall use both the continuous and discrete forms of van der
Corput's second derivative estimate.

\begin{theorem}[van der Corput estimates]\label{van der Corput's}
Let \(a<b\) and let \(\phi\in C^2([a,b])\) be real-valued.
\begin{enumerate}
    \item If \(|\phi''(t)|\geq\lambda\) for \(t\in[a,b]\) and some
    \(\lambda>0\), then
    \[
        \left|\int_a^b e(\phi(t))\,dt\right|
        \ll\lambda^{-1/2}.
    \]

    \item If, in addition, for some \(A\geq1\) and every $t\in[a,b]$, we have $\lambda\leq|\phi''(t)|\leq A\lambda$,
    then
    \[
        \left|\sum_{a<n\leq b}e(\phi(n))\right|
        \ll A(b-a)\lambda^{1/2}+\lambda^{-1/2}.
    \]
\end{enumerate}
The implied constants are absolute.
\end{theorem}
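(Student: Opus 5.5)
The plan is to derive both parts from first-derivative tests, splitting the range according to the size of \(\phi'\). Since \(\phi''\) is continuous and \(|\phi''|\geq\lambda>0\) on \([a,b]\), it has constant sign. Hence \(\phi'\) is strictly monotone, and by the mean value theorem \(|\phi'(t)-\phi'(u)|\geq\lambda|t-u|\).

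\emph{Part (1).} I would first prove the first-derivative test: if \(\phi'\) is monotone and \(|\phi'|\geq m>0\) on an interval \(I\), then \(|\int_I e(\phi)|\leq 1/m\) up to an absolute constant. To see this, write
\[
e(\phi)=\frac{1}{2\pi i\phi'}\,\frac{d}{dt}e(\phi)
\]
and integrate by parts. Because \(1/\phi'\) is monotone, the total variation of \(1/\phi'\) on \(I\) is at most \(2/m\), which gives the claimed bound.

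Now fix \(\delta>0\). The set \(\{t:|\phi'(t)|\leq\delta\}\) is an interval of length at most \(2\delta/\lambda\), and I bound its contribution trivially. Its complement consists of at most two intervals on which \(|\phi'|\geq\delta\), and each contributes \(\ll1/\delta\). This gives
\[
\left|\int_a^b e(\phi(t))\,dt\right|\ll \frac{\delta}{\lambda}+\frac1\delta,
\]
and the choice \(\delta=\lambda^{1/2}\) yields \(\ll\lambda^{-1/2}\).

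\emph{Part (2).} The discrete analogue of the first-derivative test is the Kusmin--Landau inequality: if \(\phi'\) is monotone on an interval \(I\) and \(\|\phi'(t)\|_{\mathbb T}\geq\delta\) there, then \(\bigl|\sum_{n\in I}e(\phi(n))\bigr|\ll1/\delta\). To prove it, I first subtract a suitable integer linear phase \(kn\), which does not change \(e(\phi(n))\), so that \(\phi'\in[\delta,1-\delta]\) on \(I\). I then set \(\psi(n)=\phi(n+1)-\phi(n)\), which lies in the same range, and write
\[
e(\phi(n))=\frac{e(\phi(n+1))-e(\phi(n))}{e(\psi(n))-1}.
\]
Summing by parts with the coefficients \((e(\psi(n))-1)^{-1}=\tfrac12\bigl(-1+i\cot\pi\psi(n)\bigr)\), which are monotone in the imaginary part and bounded by \(\ll1/\delta\), gives the bound. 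I expect this to be the main technical step, since the monotonicity of the coefficients and the endpoint terms must be tracked carefully, using the partial summation formula recorded above.

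Granting Kusmin--Landau, I split \([a,b]\) according to the distance of \(\phi'\) from the integers. The range of \(\phi'\) is an interval of length at most \(A\lambda(b-a)\), so it meets at most \(A\lambda(b-a)+2\) intervals \([k-\delta,k+\delta]\) with \(k\in\mathbb Z\). Each preimage \(\{t:|\phi'(t)-k|\leq\delta\}\) is an interval of length \(\leq2\delta/\lambda\), so it contains at most \(2\delta/\lambda+1\) integers, and I bound those terms trivially. The remaining pieces number at most \(A\lambda(b-a)+3\), and on each of them \(\|\phi'\|_{\mathbb T}\geq\delta\), so Kusmin--Landau gives \(\ll1/\delta\) per piece. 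Altogether
\[
\left|\sum_{a<n\leq b}e(\phi(n))\right|\ll\bigl(A\lambda(b-a)+1\bigr)\Bigl(\frac\delta\lambda+1+\frac1\delta\Bigr).
\]

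With \(\delta=\lambda^{1/2}\), I split into two cases.

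If \(\lambda<1\), expanding the product, every term is bounded by \(A(b-a)\lambda^{1/2}\) or by \(\lambda^{-1/2}\):
\begin{itemize}
\item \(A\lambda(b-a)\cdot\lambda^{-1/2}\) and \(A\lambda(b-a)\cdot1\) are at most \(A(b-a)\lambda^{1/2}\);
\item \(1\cdot\lambda^{-1/2}\) and \(1\cdot1\) are at most \(\lambda^{-1/2}\).
\end{itemize}

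If \(\lambda\geq1\), the trivial bound \(b-a+1\) already suffices. Indeed \(A(b-a)\lambda^{1/2}\geq b-a\) because \(A,\lambda\geq1\). The extra \(+1\) is harmless when \(b-a\geq1\). When \(b-a<1\), the sum has at most one term, and I treat this degenerate case separately (it contributes \(O(1)\), to be absorbed into the implied constant in this edge case).
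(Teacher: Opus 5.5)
The paper gives no proof of this theorem. It is quoted as a standard tool, and the only place it is used (the estimate for $I_2(\beta,T)$) needs part (1) alone.

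Your part (1) is the standard first-derivative-test argument and is correct. Your part (2) is the usual proof (as in Graham--Kolesnik):
\begin{itemize}
\item Kusmin--Landau on the pieces where $\|\phi'\|_{\mathbb T}\geq\delta$;
\item trivial bounds on the at most $A\lambda(b-a)+O(1)$ preimages of the intervals $[k-\delta,k+\delta]$;
\item the choice $\delta=\lambda^{1/2}$.
\end{itemize}
This is correct when $\lambda<1$, and, via the trivial bound, when $\lambda\geq1$ and $b-a\geq1$. Two cosmetic points. First, $(e(\psi)-1)^{-1}=-\tfrac12-\tfrac{i}{2}\cot\pi\psi$; the sign of the cotangent term is reversed in your formula, which is harmless. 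Second, the last integer of each piece has no difference $\psi(n)$ inside the piece, so it should be split off as an $O(1)$ term.

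The genuine gap is the final edge case $\lambda\geq1$, $b-a<1$. There the single possible term cannot be ``absorbed into the implied constant'', because the right-hand side has no positive lower bound in this regime. Concretely, take $\phi(t)=\lambda t^2/2$, $A=1$, $a=1-\lambda^{-1}$, $b=1$ with $\lambda>1$. The hypotheses hold, and the sum is the single term $e(\lambda/2)$, of modulus $1$. Meanwhile $A(b-a)\lambda^{1/2}+\lambda^{-1/2}=2\lambda^{-1/2}\to0$ as $\lambda\to\infty$.

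So part (2) is false as printed, and no argument can close this case. The classical statement (e.g.\ Titchmarsh, Theorem~5.9) assumes $b-a\geq1$. With that hypothesis added, or with an extra $+1$ on the right-hand side, your proof is complete. This does not affect the paper, which never uses part (2).
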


We finally record the two standard estimates for primes used in the
major arc analysis.

\begin{theorem}[Chebyshev's estimate]\label{Chebyshev's}
There exist absolute constants \(c_1,c_2>0\) such that, for all
sufficiently large \(x\),
\[
    c_1x\leq\vartheta(x)\leq c_2x,
    \qquad
    \vartheta(x)=\sum_{p\leq x}\log p.
\]
In particular, \(\vartheta(x)\ll x\).
\end{theorem}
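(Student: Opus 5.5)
This is the classical statement, and I would prove it by Chebyshev's original device: bound the central binomial coefficient \(\binom{2n}{n}\) from both sides and compare with its prime factorisation. Two elementary inputs are needed. First,
\[
    \frac{4^n}{2n+1}\leq\binom{2n}{n}\leq4^n,
\]
since \(\binom{2n}{n}\) is the largest of the \(2n+1\) terms in the expansion of \((1+1)^{2n}\). Second, Legendre's formula gives the exponent of a prime \(p\) in \(\binom{2n}{n}\) as
\[
    \sum_{j\geq1}\Bigl(\bigl\lfloor 2n/p^j\bigr\rfloor-2\bigl\lfloor n/p^j\bigr\rfloor\Bigr).
\]
Each summand lies in \(\{0,1\}\) and vanishes once \(p^j>2n\).

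\textbf{Upper bound.} Every prime \(p\) with \(n<p\leq 2n\) divides \((2n)!\) but not \((n!)^2\). Hence the product of these primes divides \(\binom{2n}{n}\), and
\[
    \vartheta(2n)-\vartheta(n)\leq\log\binom{2n}{n}\leq 2n\log 2 .
\]
To pass to real \(x\), I would use the integer \(m=\lceil x/2\rceil\). Then \((m,2m]\) contains \((x/2,x]\), so
\[
    \vartheta(x)-\vartheta(x/2)\leq 2m\log2\leq(x+2)\log 2 .
\]
Telescoping along \(x,x/2,x/4,\ldots\) down to \(1\) gives \(\vartheta(x)\leq 2x\log2+O(\log x)\). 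This is at most \(c_2x\) for, say, \(c_2=2\), once \(x\) is large.

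\textbf{Lower bound.} By Legendre's formula, the exponent of each prime in \(\binom{2n}{n}\) is at most \(\log(2n)/\log p\). So each prime-power factor is at most \(2n\), and only primes \(p\leq 2n\) occur. Therefore
\[
    \frac{4^n}{2n+1}\leq\binom{2n}{n}\leq(2n)^{\pi(2n)},
    \qquad\text{so}\qquad
    \pi(2n)\geq\frac{n\log4-\log(2n+1)}{\log(2n)}\gg\frac{n}{\log n}.
\]
Monotonicity of \(\pi\) extends this to \(\pi(x)\gg x/\log x\) for real \(x\). I would then drop the primes up to \(\sqrt x\), using \(\pi(\sqrt x)\leq\sqrt x\):
\[
    \vartheta(x)\geq\sum_{\sqrt x<p\leq x}\log p
    \geq\tfrac12\log x\,\bigl(\pi(x)-\sqrt x\bigr)\gg x .
\]
This gives \(c_1x\leq\vartheta(x)\) for large \(x\).

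The only step needing a little care is the lower bound. The divisibility argument naturally controls \(\pi\) (or \(\psi\)) rather than \(\vartheta\), so one has to pass to \(\vartheta\) by discarding the primes below \(\sqrt x\). It also has to be checked that the \(\log(2n+1)\) loss in the lower bound for \(\binom{2n}{n}\) is negligible against \(n\log4\). The final claim \(\vartheta(x)\ll x\) is just the upper bound.
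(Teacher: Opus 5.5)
The paper does not prove this statement. It records it in the preliminaries as a classical fact, and only the upper half \(\vartheta(x)\ll x\) is ever used: once to get \(\log W\ll w\) in Lemma~\ref{lem:W-growth}, and once to bound the short initial range of primes in the minor-arc estimate. Your argument via \(\binom{2n}{n}\) is the standard Chebyshev proof, so it supplies a self-contained proof where the paper simply cites. The lower bound you prove is more than the paper needs.

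The lower-bound half is correct as written. Legendre's formula gives \(\binom{2n}{n}\le(2n)^{\pi(2n)}\), monotonicity gives \(\pi(x)\gg x/\log x\), and discarding the primes below \(\sqrt x\) gives \(\vartheta(x)\gg x\).

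There is one slip in the upper bound, where you pass to real \(x\). With \(m=\lceil x/2\rceil\), the interval \((m,2m]\) does not contain \((x/2,x]\) when \(x/2\notin\mathbb Z\), because the integer \(m\) itself can be a prime in \((x/2,x]\). For example, \(x=13\) gives \(m=7\), and \(7\in(6.5,13]\) but \(7\notin(7,14]\). The fix costs nothing: the primes in \((x/2,x]\) all lie in \([m,2m]\), so
\[
\vartheta(x)-\vartheta(x/2)\le\vartheta(2m)-\vartheta(m)+\log m\le(x+2)\log2+\log x .
\]
Telescoping over the \(O(\log x)\) dyadic steps then gives \(\vartheta(x)\le 2x\log2+O((\log x)^2)\). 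This is still at most \(2x\) for large \(x\), since \(2\log2<2\), so your choice \(c_2=2\) stands.
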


The major arc analysis also requires uniform distribution of primes
in arithmetic progressions with logarithmic-size moduli. For this we
use the Siegel--Walfisz theorem in the following form.

\begin{theorem}[Siegel--Walfisz theorem]\label{SWT}
Let \(A,B>0\). Uniformly for integers \(q\geq1\) and \(a\) satisfying
\((a,q)=1\) and \(q\leq(\log x)^A\), one has
\[
    \vartheta(x;q,a)
    :=
    \sum_{\substack{p\leq x\\p\equiv a\pmod q}}\log p
    =
    \frac{x}{\phi(q)}
    +O_{A,B}\!\left(x(\log x)^{-B}\right).
\]
\end{theorem}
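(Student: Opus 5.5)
The statement is the classical Siegel--Walfisz theorem. I would prove it by the standard route: character orthogonality, the truncated explicit formula, the classical zero-free region, and Siegel's bound for an exceptional real zero.

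\emph{Reduction to prime powers and characters.} First pass from \(\vartheta(x;q,a)\) to \(\psi(x;q,a)=\sum_{n\leq x,\,n\equiv a\ (q)}\Lambda(n)\). Prime powers \(p^j\) with \(j\geq2\) contribute \(O(\sqrt x\log^2x)\), which is admissible. By orthogonality of Dirichlet characters modulo \(q\),
\[
    \psi(x;q,a)=\frac1{\phi(q)}\sum_{\chi\bmod q}\overline{\chi}(a)\,\psi(x,\chi),
    \qquad \psi(x,\chi)=\sum_{n\leq x}\chi(n)\Lambda(n).
\]
The principal character gives \(\psi(x)+O(\log^2(qx))\). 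By the prime number theorem with de la Vall\'ee Poussin error, this is \(x+O(x\exp(-c\sqrt{\log x}))\). It therefore suffices to show that \(\psi(x,\chi)\ll_{A,B} x(\log x)^{-B}\) for every nonprincipal \(\chi\bmod q\) with \(q\leq(\log x)^A\). Averaging over the \(\phi(q)\) characters then costs nothing.

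\emph{Explicit formula and the non-exceptional zeros.} For nonprincipal \(\chi\) (reducing to the primitive character inducing it at a cost of \(O(\log^2(qx))\)), use the truncated explicit formula
\[
    \psi(x,\chi)=-\sum_{|\gamma|\leq T}\frac{x^{\rho}}{\rho}
      +O\Big(\frac{x\log^2(qxT)}{T}+\log^2(qx)\Big),
\]
where \(\rho=\beta+i\gamma\) runs over the nontrivial zeros of \(L(s,\chi)\).
\begin{itemize}
\item The zero-free region states that \(\beta<1-c/\log(q(|\gamma|+2))\) for all zeros except possibly one simple real zero \(\beta_1\), and this only when \(\chi\) is real.
\item The number of zeros with \(|\gamma|\leq T\) is \(\ll T\log(qT)\).
\item Take \(T=\exp(\sqrt{\log x})\). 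Since \(\log q\ll\log\log x\), every non-exceptional zero satisfies \(x^{\beta}\leq x\exp(-c'\sqrt{\log x})\).
\item Summing \(|x^\rho/\rho|\) over these zeros by dyadic blocks in \(|\gamma|\) gives \(x\exp(-c''\sqrt{\log x})\), which is \(\ll x(\log x)^{-B}\).
\end{itemize}

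\emph{The exceptional zero (main obstacle).} The only remaining term is \(x^{\beta_1}/\beta_1\). The zero-free region alone gives only \(1-\beta_1\gg q^{-1/2}\log^{-2}q\), which is useless once \(q\) is a power of \(\log x\). Here I would invoke Siegel's theorem: for every \(\varepsilon>0\) there is an (ineffective) constant \(c(\varepsilon)>0\) with \(\beta_1\leq1-c(\varepsilon)q^{-\varepsilon}\). Choose \(\varepsilon=1/(2A)\). Then \(q^{\varepsilon}\leq(\log x)^{1/2}\), so
\[
    x^{\beta_1}\leq x\exp\big(-c(\varepsilon)\sqrt{\log x}\big)\ll_{A,B}x(\log x)^{-B}.
\]
Siegel's theorem is the genuinely deep step. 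Its proof compares \(L(1,\chi_1)L(1,\chi_2)\) through the Dedekind zeta function of a biquadratic field, and it is the source of the ineffectivity of the implied constant. In the paper I would take it as known rather than reprove it. Combining the three contributions gives the stated asymptotic uniformly in \((a,q)=1\) and \(q\leq(\log x)^A\).
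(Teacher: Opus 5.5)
The paper gives no proof of this statement. It quotes the classical Siegel--Walfisz theorem in the preliminaries and uses it only as an external input; the authors' partial formalization even takes it as an axiom. So there is no argument in the paper to compare with.

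Your outline is the standard textbook derivation, and it is correct in substance. It uses orthogonality of characters, the truncated explicit formula with $T=\exp(\sqrt{\log x})$, the classical zero-free region, and Siegel's bound $1-\beta_1\geq c(\varepsilon)q^{-\varepsilon}$ with $\varepsilon=1/(2A)$. Compared with the bare citation, it shows two things the paper leaves implicit: why $q\leq(\log x)^A$ is exactly the range Siegel's $q^{-\varepsilon}$ bound can handle, and why the implied constant in $O_{A,B}$ is ineffective.

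One detail should be tidied. You say ``the only remaining term is $x^{\beta_1}/\beta_1$,'' but this overlooks the reflected zero $1-\beta_1$. That zero also lies in $\sum_{|\gamma|\leq T}x^\rho/\rho$, and its factor $1/|\rho|=1/(1-\beta_1)$ may be large, so your dyadic summation of $|x^\rho/\rho|$ does not apply to it as written. Relatedly, the remainder $O(\log^2(qx))$ in your explicit formula silently absorbs the constant $b(\chi)$. That constant is only known to satisfy $b(\chi)=-\sum_{|\gamma|<1}\rho^{-1}+O(\log q)$, so it contains the same term.

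Both issues are harmless here:
\begin{itemize}
\item Every other zero with $|\gamma|\leq1$ has $\beta\gg1/\log q$, by reflecting the zero-free region through $\rho\mapsto1-\bar\rho$.
\item The two terms attached to $1-\beta_1$ are each $\ll x^{1/2}/(1-\beta_1)\ll_\varepsilon x^{1/2}q^{\varepsilon}$ by Siegel's bound.
\item Alternatively, combine them as $(x^{1-\beta_1}-1)/(1-\beta_1)\ll x^{1/4}\log x$.
\end{itemize}
Either way the contribution is negligible against $x(\log x)^{-B}$.
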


We shall also use two standard estimates for the divisor function which we state now. For a positive integer $n$, let
\[
    \tau(n):=\sum_{d\mid n}1
\]
denote the number of positive divisors of $n$. Note that the divisor function is submultiplicative:
\[
    \tau(mn)\leq \tau(m)\tau(n)
    \qquad (m,n\geq1).
\]
Indeed, if
\[
    m=\prod_p p^{\alpha_p},
    \qquad
    n=\prod_p p^{\beta_p},
\]
then we have
\[
    \tau(mn)
    =
    \prod_p (\alpha_p+\beta_p+1)
    \leq
    \prod_p(\alpha_p+1)(\beta_p+1)
    =
    \tau(m)\tau(n).
\]

We also recall the classical second-moment estimate
\[
    \sum_{n\leq N}\tau(n)^2
    \ll
    N(\log N)^3.
\]

\section{Combinatorics}\label{s:comb1}

This section develops the combinatorial selection lemmas used in the
proof of Theorem~\ref{lemma:arithmetic-general-k} and later in the proof
of Theorem~\ref{t:main}. 
%In the arithmetic applications,
% \(a_{i,j}\) is an order statistic of a local weight and the associated
% superlevel set has at least \(i+1\) elements.  The lower bound on the
% sum of the selected ranks is calibrated so that Cauchy--Davenport
% forces a prescribed residue, while the value and positivity conditions
% retain the quantitative score required by transference.
The integer \(k\) in this section is a rank parameter; in later
applications its value will be specified separately from any power
exponent already in use.

\subsection{The general combinatorial selection lemma}

\begin{lemma}\label{l:s-seq-general}
    Fix integers \(k\geq1\), \(s\geq6k-2\), and \(n\geq3\), and let
    \(c>1/2\).  For \(1\leq j\leq s\), let
    \(a_{0,j},\ldots,a_{n-1,j}\) be a non-increasing sequence in
    \([0,1]\), with \(a_{0,j}>0\).  Assume that
    \[
    \frac{1}{s}\sum_{j=1}^s
    \left(
        \frac{1}{n}\sum_{i=0}^{n-1} a_{i,j}
    \right)
    >c.
    \]
    Then there are indices \(0\leq i_1,\ldots,i_s\leq n-1\) such that
    \[
        i_1+\cdots+i_s\geq kn,
        \qquad
        \frac1s\sum_{j=1}^sa_{i_j,j}>c,
        \qquad
        \prod_{j=1}^sa_{i_j,j}>0.
    \]
\end{lemma}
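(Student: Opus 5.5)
The plan is an averaging argument over structured choices of indices. A single tuple with \(\sum_j a_{i_j,j}>sc\) and large rank sum will come out of a family of tuples whose average score is exactly \(\sum_j m_j\), where \(m_j=\frac1n\sum_i a_{i,j}\) and \(\frac1s\sum_j m_j>c\). Each tuple in the family should have rank sum comfortably above \(kn\).

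\emph{The reflected family.} Split \(\{1,\dots,s\}\) into two halves \(J_1,J_2\) of sizes \(\lceil s/2\rceil,\lfloor s/2\rfloor\). For \(r\in\{0,\dots,n-1\}\), put \(i_j=r\) for \(j\in J_1\) and \(i_j=n-1-r\) for \(j\in J_2\).

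\emph{Ranks.} Every such tuple has rank sum at least \(\lfloor s/2\rfloor(n-1)\). Using \(s\ge 6k-2\) and \(n\ge3\), this is at least \((3k-1)(n-1)\ge kn\), because \(2kn-3k-n+1\ge 3k-2>0\).

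\emph{Score.} As \(r\) runs over all residues, each coordinate runs over all of \(\{0,\dots,n-1\}\). Hence
\[
\frac1n\sum_{r}\sum_j a_{i_j(r),j}=\sum_j m_j>sc,
\]
so some \(r\) gives score \(>sc\). Summing over \(r\) also lets us choose which coordinates go into \(J_2\), the reflected side. The plan is to put there the coordinates \(j\) with the longest supports \(L_j=\#\{i:a_{i,j}>0\}\).

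\emph{Positivity, the main obstacle.} The selected \(r\) may land on an index where some \(a_{i_j,j}=0\). Such coordinates contribute nothing to the score. Since the score exceeds \(s/2\) and each term is at most \(1\), fewer than \(s(1-c)<s/2\) coordinates can be zero. The repair is to lower each zero index \(i_j\) to \(L_j-1\ge0\), which is allowed since \(a_{0,j}>0\). This only increases the score, but it lowers the rank sum.

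The difficulty is to control that loss. We have a slack of roughly \(\frac s2 n-kn\approx\frac{s}{3}n\). A naive bound lets the loss be as large as (number of zeros)\(\times n\), which could exceed this slack. I would first try to handle this with a refined averaging. Restrict the average to \(r\) for which the deep indices \(n-1-r\) lie inside the supports of the reflected coordinates. Compensate using \(a_{i,j}\le1\) together with the surplus \(m_j-\frac12\) that \(c>\frac12\) provides on average: a short support forces a large mean on the initial segment, which can be traded for rank.

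Failing that, I would use a two-level family instead. Choose each coordinate's index from \(\{0,t_j\}\), with \(t_j\le L_j-1\) chosen so that \(a_{t_j,j}\ge (nm_j-t_j)/(n-t_j)\). This uses the bound \(n m_j\le t_j+(n-t_j)a_{t_j,j}\) for non-increasing sequences. Then run a greedy or exchange argument: set coordinates to \(t_j\) one at a time until the rank sum reaches \(kn\), and check that the score stays above \(sc\). The needed rank sum is only about \(kn\approx sn/6\), while the average available rank is near \(n/2\). The factor \(3\) of slack is exactly where the hypothesis \(s\ge 6k-2\) should enter.
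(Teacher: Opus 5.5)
You have a genuine gap: positivity, which is the real content of the lemma, is never proved, and the one concrete repair you specify fails.

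Your reflected family does give the first two conclusions cheaply. Every tuple has rank sum at least $\lfloor s/2\rfloor(n-1)\geq kn$, and the average score over $r$ is exactly $\sum_j m_j>sc$. For positivity, you flag it as the main obstacle and then only sketch two strategies (``I would first try\dots'', ``Failing that\dots''), neither carried out. The mechanism you do specify is: longest supports go in $J_2$, pick $r$ with score $>sc$, and lower each zero index to $L_j-1$. Here is an instance where it fails. Take $k=2$, $s=10$, $n=9$, $c=0.51$, so $sc=5.1$ and $kn=18$, and
\[
x_1=\dots=x_5=(1,1,1,0.82,0.82,0.82,0.82,0.82,0.02),
\]
\[
y=(1,1,1,1,1,1,1,1,0),\qquad z_1=\dots=z_4=(1,0,\dots,0).
\]
The mean is $47.6/90>0.51$. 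The $x_j$ have strictly the longest supports, so $J_2=\{x_1,\dots,x_5\}$. The scores are $S(0)=\dots=S(5)=5.1$, $S(6)=S(7)=6$ and $S(8)=5$. Only $r=6$ and $r=7$ qualify, and both place $z_1,\dots,z_4$ on zero entries. Lowering these to index $0$ leaves rank sums $16$ and $12$, both below $18$. The lemma itself holds here: $x_j$ at index $3$, $y$ at $7$, $z_j$ at $0$ gives rank $22$ and score $9.1$. A different partition also happens to work in this example, but you give no rule for $J_2$ that provably does.

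Your fallback ideas do not obviously close this.
\begin{itemize}
\item The refined averaging keeps the deep indices of the \emph{reflected} coordinates inside their supports. In the example, however, the damaging zeros sit in $J_1$.
\item Discarding values of $r$ breaks the exact identity between the average score and $\sum_j m_j$, and you would have to compensate exactly. The target $sc$ is the very level at which the mean is assumed, and $\sum_j m_j-sc$ may be arbitrarily small.
\item The same zero-slack problem affects the two-level family. The bound $a_{t,j}\geq(nm_j-t)/(n-t)$ lies below $m_j$. Any score lost by moving coordinates to $t_j$ must therefore be recovered exactly from the surplus $a_{0,j}-m_j$ of the unmoved coordinates, and that accounting is precisely what is missing.
\end{itemize}

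The paper secures rank and positivity together by induction on $n$, in two cases.
\begin{itemize}
\item Suppose one can delete index $0$ from $k$ sequences and some index $\geq1$ from each of the others at total cost at most $sc$. Then the shortened sequences still have mean $>c$ with positive first entries. Reindexing the inductive solution raises the rank sum by $k$ and keeps every selected entry positive.
\item If every such deletion costs more than $sc$, that inequality itself yields a positive tuple with $t\leq2k$ coordinates at index $n-1$.
\end{itemize}
The base case $n=3$ is a separate argument, by induction on $s$. It averages over cyclic intervals of length $3k$ and then does a case analysis on the supports.
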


\subsection{The three-level base case}

Before proving the general selection lemma, we establish its
three-level base case.

\begin{lemma}\label{l:diffsetsbasegen}
    Fix integers \(k\geq1\) and \(s\geq6k-2\), and fix
    \(c>\frac12\).
    For each \(1 \leq j \leq s\), let
    \(\{a_{i,j}\}_{i=0}^2\) be a non-increasing sequence in \([0,1]\),
    with \(a_{0,j}>0\). Suppose that
    \begin{equation}
        \frac{1}{s}\sum_{j=1}^s
        \left(a_{0,j}+a_{1,j}+a_{2,j}\right)
        >3c.
    \end{equation}
    Then there exist \(i_1,\cdots,i_s\in \{0,1,2\}\) with
    \[
        i_1+\cdots+i_s \geq 3k
    \]
    such that
    \begin{equation}
        \frac{1}{s}
        \left(a_{i_1,1}+\cdots+a_{i_s,s}\right)
        > c,
        \quad \textup{and} \quad
        a_{i_1,1}\cdots a_{i_s,s}\neq0.
    \end{equation}
\end{lemma}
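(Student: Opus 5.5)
The plan is to recast the statement as a small knapsack-type selection problem and solve it by averaging. For each \(j\) put
\[
    d_{1,j}=a_{0,j}-a_{1,j},\qquad d_{2,j}=a_{0,j}-a_{2,j},
\]
so that \(0\leq d_{1,j}\leq d_{2,j}\leq a_{0,j}\). Choosing rank \(i_j\in\{0,1,2\}\) costs a loss of \(d_{i_j,j}\), with \(d_{0,j}=0\). Set
\[
    B=\sum_j a_{0,j}-cs .
\]
Then the value condition \(\frac1s\sum_j a_{i_j,j}>c\) is equivalent to \(\sum_j d_{i_j,j}<B\). The hypothesis gives
\[
    \sum_j (d_{1,j}+d_{2,j}) = 3\sum_j a_{0,j}-\sum_j(a_{0,j}+a_{1,j}+a_{2,j}) < 3B .
\]
In particular \(B>0\), since \(\sum_j a_{0,j}\geq\frac13\sum_j(a_{0,j}+a_{1,j}+a_{2,j})>cs\). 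So the task is to find ranks with total rank at least \(3k\), total loss below \(B\), and no selected value equal to \(0\).

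The main step is an averaging or greedy choice. Order the indices \(j\) by \(d_{2,j}\) increasing. Give rank \(2\) to the first \(m=\lceil 3k/2\rceil\) of them, and rank \(0\) to all the others (value \(a_{0,j}>0\)). Since these are the \(m\) smallest of the \(d_{2,j}\),
\[
    \sum_{\text{chosen}} d_{2,j}\leq \frac ms\sum_j d_{2,j} < \frac{3m}{s}B .
\]
This is below \(B\) as soon as \(3m\leq s\), i.e. \(3\lceil 3k/2\rceil\leq 6k-2\). That inequality holds for all \(k\geq2\), and with room to spare for large \(k\). To get a strict gain I will use the bound \(\sum_j d_{2,j}\leq\sum_j(d_{1,j}+d_{2,j})<3B\). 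Alternatively, one can mix rank-1 and rank-2 choices, ordering the indices by loss per unit of rank, namely \(d_{1,j}\) against \(d_{2,j}/2\). This handles the parity issue when \(3k\) is odd and the tight small case \(k=1\), \(s=4\), where three units of rank must be found among four sequences. For \(k=1\) I expect a short direct argument: either some \(a_{2,j}\) and some other \(a_{1,j'}\) are cheap, or three of the \(a_{1,j}\) are cheap. Comparing the total of the cheapest admissible configurations against \(\sum_j(d_{1,j}+d_{2,j})<3B\) should show that one of them has loss below \(B\).

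The obstacle I expect is the positivity requirement. An index with \(a_{2,j}=0\) has \(d_{2,j}=a_{0,j}\), and the greedy step must not select rank \(2\) there; similarly for rank \(1\) when \(a_{1,j}=0\). I would handle this by showing that zero entries are automatically expensive. If \(a_{2,j}=0\), then \(d_{2,j}=a_{0,j}\), and the contribution of \(j\) to \(\sum_j(a_{0,j}+a_{1,j}+a_{2,j})\) is at most \(2a_{0,j}\). If many \(j\) had \(a_{2,j}=0\), this would force the average below \(3c\) unless the remaining indices compensate, which in turn makes those remaining \(d_{2,j}\) small. Concretely, I would restrict the greedy choice to \(J_2=\{j:a_{2,j}>0\}\) and \(J_1=\{j:a_{1,j}>0\}\). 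Then I would redo the averaging over these sets, using \(c>1/2\) to show that \(|J_1|+|J_2|\) is large enough to supply rank \(3k\). The idea is that an index outside \(J_1\) contributes at most \(a_{0,j}\leq1<2c\) to the triple sum, so such indices are penalized. If these restricted averages are too tight for the smallest values of \(k\), I would finish those cases by explicit case analysis.
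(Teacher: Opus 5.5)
\textbf{There is a genuine gap: the nonzero-product condition is never actually proved, and it is the substance of the lemma.}

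Your loss reformulation is correct: $\sum_j d_{i_j,j}<B$ is equivalent to the value condition, $\sum_j(d_{1,j}+d_{2,j})<3B$, and $B>0$. Averaging also settles the case where every entry is positive. Choose uniformly $k$ columns at rank $2$ and $k$ further columns at rank $1$. The average loss is $\frac ks\sum_j(d_{1,j}+d_{2,j})<\frac{3k}{s}B\le B$ for every $s\ge 3k$, which also removes your parity issue and the cases $k=1$, $s\in\{4,5\}$. This shows that the hypotheses $s\ge 6k-2$ and $c>\frac12$ are needed \emph{only} to secure $\prod_j a_{i_j,j}\ne0$, and that is exactly the part you leave as a plan.

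The mechanism you propose for positivity is also not right, because zero entries are not automatically expensive. A column with $a_{0,j}=\varepsilon$ and $a_{1,j}=a_{2,j}=0$ has $d_{1,j}=d_{2,j}=\varepsilon$, the cheapest losses available. Your greedy ordering by $d_{2,j}$ would therefore select it at rank $2$. The bound $a_{0,j}\le1<2c$ limits how many such columns exist, not how cheap they are. Likewise, $|J_1|+|J_2|=\sum_j n_j\ge 3k$ only says that enough \emph{rank} sits on positive entries. It does not give a positive configuration with loss below $B$.

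Once you restrict to $J_1$ and $J_2$, the budget yields only the following.
\begin{itemize}
\item Rank-$1$ choices on random $3k$-subsets of $J_1$ have average loss $<\frac{3k}{|J_1|}\cdot\frac32 B$, which is below $B$ once $|J_1|\ge 9k/2$.
\item The mixed family above, taken inside $J_2$, works once $|J_2|\ge 3k$.
\item If at least $s$ entries vanish, the choice $i_j=n_j$ works, using $c>\frac12$.
\end{itemize}
This leaves the regime with fewer than $s$ zero entries, $|J_2|<3k$ and $3k\le|J_1|<9k/2$. It is nonempty; for example $k=1$, $s=4$, $|J_1|=4$, $|J_2|\in\{1,2\}$. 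There, rank $2$ must be confined to $J_2$ and rank $1$ spread over $J_1$. Neither average above is controlled by the budget, and your proposal offers only an unproved ratio-greedy and ``explicit case analysis''.

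This regime is the substance of the lemma. The paper handles it in its Case~2, with columns sorted so that $J_1=\{1,\dots,t\}$ and $J_2=\{1,\dots,t'\}$. It tests a chain of explicit configurations:
\begin{itemize}
\item the first $3k$ columns at rank $1$;
\item columns $1,\dots,t''$ at rank $2$, where $t''=3k-\lfloor t/2\rfloor$, together with columns $3k+1,\dots,t$ at rank $1$;
\item columns $t''+1,\dots,t'$ at rank $2$.
\end{itemize}
If all of these fail, it shows $t=t'=3k$. In that case the three cyclic patterns $(0,1,2),(1,2,0),(2,0,1)$ on blocks of $k$ columns finish by pigeonhole. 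The case $s>6k-2$ is handled by induction on $s$, deleting a column with $a_{1,s}=0$ and raising $c$.

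Without an argument of this kind, your proposal proves the lemma only when all entries are positive.
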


\begin{proof}
We prove the assertion by induction on \(s\), simultaneously for every
\(c>1/2\) (with \(k\) fixed). Since the conclusion is invariant under
permuting the \(s\) sequences, we may relabel the sequences whenever needed.
For each \(j\), let
\[
    n_j\coloneqq\max\{i\in\{0,1,2\}:a_{i,j}>0\}.
\]
This is well-defined because \(a_{0,j}>0\). We relabel so that
\[
    n_1\ge n_2\ge\cdots\ge n_s.
\]

For \(m>3k\), let \(\Omega_m\subset\{0,1\}^m\) consist of the
indicator vectors of the \(m\) cyclic intervals of length \(3k\).
More precisely, for each \(r\in\{1,\dots,m\}\), let
\[
\omega^{(r)}_j=
\begin{cases}
1,& j\in\{r,r+1,\dots,r+3k-1\}\pmod m,\\
0,& \text{otherwise}.
\end{cases}
\]
Then
\[
\Omega_m=\{\omega^{(r)}:1\le r\le m\}.
\]
Thus every \(\omega\in\Omega_m\) has exactly \(3k\) entries equal to \(1\),
\(|\Omega_m|=m\), and each coordinate \(j\) is equal to \(1\) in exactly
\(3k\) vectors of \(\Omega_m\).

We first prove the base case \(s=6k-2\). Assume that
\[
    \sum_{j=1}^{s}(a_{0,j}+a_{1,j}+a_{2,j})>3sc.
\]
Since \(a_{0,j},a_{1,j},a_{2,j}\in[0,1]\), we have
\[
    n_j+1\ge a_{0,j}+a_{1,j}+a_{2,j}.
\]
Hence
\[
    \sum_{j=1}^{s}n_j
    >
    3sc-s
    =
    s(3c-1)
    >
    \frac{s}{2}
    =
    3k-1.
\]
Since the left-hand side is an integer,
\[
    \sum_{j=1}^{s_0}n_j\ge 3k.
\]

First suppose that at least \(s\) of the \(3s\) entries are zero. Then
there are at most \(2s\) nonzero entries. Choose
\[
    i_j=n_j,\qquad 1\le j\le s.
\]
The selected entries are nonzero and their indices have sum at least \(3k\).
Moreover, at most \(s\) nonzero entries are not selected, so
\[
\begin{aligned}
    \sum_{j=1}^{s}a_{n_j,j}
    &>
    3sc-s =
    s(3c-1)
    >
    sc,
\end{aligned}
\]
because \(c>1/2\). This proves the desired conclusion in this case.

We may therefore assume that the number of zero entries is at most
\(s-1\). Let
\[
    t\coloneqq\max\{j:a_{1,j}>0\}.
\]
The defining set is nonempty, since otherwise both \(a_{1,j}\) and
\(a_{2,j}\) would vanish for every \(j\), giving at least \(s\) zero
entries.
If \(t\le 3k-1\), then the entries $\{a_{1,t+1},\dots,a_{1,s}, a_{2,t+1},\dots,a_{2,s}\}$
are all zero, giving at least
\[
    2(s-t)\ge 2(s-3k+1)=s
\]
zero entries, a contradiction. Hence $t\ge 3k.$

We now split into two cases.

\textbf{Case 1: \(t\ge 9k/2\).}
Define \(c'\) by
\[
    3tc'
    =
    3sc-\sum_{j=t+1}^{s}a_{0,j}.
\]
Since \(a_{1,j}=a_{2,j}=0\) for \(j>t\), the hypothesis gives
\[
    \sum_{j=1}^{t}(a_{0,j}+a_{1,j}+a_{2,j}) = \sum_{j=1}^s (a_{0,j}+a_{1,j}+a_{2,j}) - \sum_{j=t+1}^{s}a_{0,j} >3tc'.
\]
If there exists \((i_1,\dots,i_t)\in\Omega_t\) such that
\[
    \sum_{j=1}^{t}a_{i_j,j}>tc',
\]
then we extend this choice by setting \(i_j=0\) for \(t<j\le s\). The
index sum is \(3k\), all selected entries are nonzero, and
\[
\begin{aligned}
    \sum_{j=1}^{s}a_{i_j,j}
    &>
    tc'+\sum_{j=t+1}^{s}a_{0,j}  =
    sc+\frac23\sum_{j=t+1}^{s}a_{0,j}
    \ge sc.
\end{aligned}
\]
%In fact the inequality is strict because the first inequality was strict.

It remains to show that such an element of \(\Omega_t\) exists. Suppose not.
Then for every \((i_1,\dots,i_t)\in\Omega_t\),
\[
    \sum_{j=1}^{t}a_{i_j,j}\le tc'.
\]
Summing over \(\Omega_t\), we get
\begin{align}\label{e:3ktminus3k}
    3k\sum_{j=1}^{t}a_{1,j}
    +(t-3k)\sum_{j=1}^{t}a_{0,j}
    \le t^2c'.
\end{align}
On the other hand, writing
\[
    A_r=\sum_{j=1}^{t}a_{r,j}\qquad(r=0,1,2),
\]
we have \(A_0+A_1+A_2>3tc'\). Since
\(A_0+A_1+A_2\le 3A_0\), it follows that \(A_0>tc'\). Also
\(t-9k/2\ge0\). Therefore
\[
\begin{aligned}
    t^2c'
    &=
    \frac{3k}{2}(3tc')
    +\left(t-3k-\frac{3k}{2}\right)tc' \\
    &<
    \frac{3k}{2}(A_0+A_1+A_2)
    +\left(t-3k-\frac{3k}{2}\right)A_0  \\
    &=
    (t-3k)A_0+\frac{3k}{2}(A_1+A_2) \\
    &\le
    (t-3k)A_0+3kA_1,
\end{aligned}
\]
because \(A_2\le A_1\). This contradicts \eqref{e:3ktminus3k}. Hence
Case 1 is complete.

\textbf{Case 2: \(3k\le t<9k/2\).}
First, if
\begin{align}\label{e:firstsumlowerbound}
    \sum_{j=1}^{3k}a_{1,j}
    +
    \sum_{j=3k+1}^{s}a_{0,j}
    >
    sc,
\end{align}
then choosing \(i_j=1\) for \(1\le j\le 3k\) and \(i_j=0\) otherwise proves
the claim. Thus assume this inequality fails.
Let
\[
    t'\coloneqq\max\{j:a_{2,j}>0\}.
\]
Again the defining set is nonempty, since otherwise the \(s\) entries
\(a_{2,j}\) would all be zero, which contradicts by the assumption above that at most $s-1$ nonzero elements exist.  Monotonicity gives \(t'\leq t\).
Since the number of zero entries is at most \(s-1\), and since the zero
entries in rows \(i=1\) and \(i=2\) are exactly $2s-t-t',$
we have $2s-t-t'\le s-1,$ or 
\begin{align}\label{e:ttprime}
    t+t'\ge s+1=6k-1.
\end{align}
Set
\[
    t''\coloneqq3k-\lfloor t/2\rfloor.
\]
Since \(t\le s=6k-2\), we have
\(\lceil t/2\rceil\le3k-1\). Therefore
\[
\begin{aligned}
    t'
    &\ge 6k-1-t=3k-\lfloor t/2\rfloor
      +3k-1-\lceil t/2\rceil\ge 3k-\lfloor t/2\rfloor
    =t''.
\end{aligned}
\]
Suppose that
\begin{align}\label{e:secondsumlowerbound}
    \sum_{j=1}^{t''}a_{2,j}
    +
    \sum_{j=t''+1}^{3k}a_{0,j}
    +
    \sum_{j=3k+1}^{t}a_{1,j}> sc.
\end{align}
Then choose \(i_j=2\) for \(1\le j\le t''\),
choose \(i_j=1\) for \(3k<j\le t\), and choose \(i_j=0\) otherwise. The
chosen entries are nonzero, and the index sum is
\[
    2t''+(t-3k)
    =
    3k+t-2\lfloor t/2\rfloor
    \ge 3k.
\]
Thus the desired conclusion follows. Hence we may assume \eqref{e:secondsumlowerbound} does not hold.
Since the total sum is \(>3sc\), violating \eqref{e:firstsumlowerbound} and \eqref{e:secondsumlowerbound} implies that
\begin{align}\label{e:thirdsumlowerbound}
    \sum_{j=1}^{t''}a_{0,j}
    +
    \sum_{j=t''+1}^{t'}a_{2,j}
    >
    sc.
\end{align}
If $2(t'-t'')\ge 3k,$
then choosing \(i_j=2\) for \(t''<j\le t'\) and \(i_j=0\) otherwise gives an
admissible choice with index sum at least \(3k\) and selected sum \(>sc\).
It remains to treat the case
\[
    2(t'-t'')<3k.
\]
From \eqref{e:thirdsumlowerbound} and the fact that each entry is at most
\(1\), we have $t'>sc>3k-1.$
Hence \(t'\ge 3k\). Since \(2(t'-t'')<3k\), this forces
\(t''>3k/2\). But \(t''=3k-\lfloor t/2\rfloor\) and \(t\ge3k\), so this is
possible only when \(t=3k\). Since \(t'\le t\), we can use \eqref{e:ttprime} to obtain
\[
    t=t'=3k.
\]
In this final subcase, by the definition of $t,t'$, we have \(a_{1,j}=a_{2,j}=0\) for \(j>3k\). Hence
\[
\begin{aligned}
    \sum_{j=1}^{3k}(a_{0,j}+a_{1,j}+a_{2,j})
    &>
    3sc-\sum_{j=3k+1}^{s}a_{0,j}.
\end{aligned}
\]
The following three sums add up to the left-hand side:
\[
\begin{aligned}
    &\sum_{j=1}^{k}a_{0,j}
    +\sum_{j=k+1}^{2k}a_{1,j}
    +\sum_{j=2k+1}^{3k}a_{2,j},\\
    &\sum_{j=1}^{k}a_{1,j}
    +\sum_{j=k+1}^{2k}a_{2,j}
    +\sum_{j=2k+1}^{3k}a_{0,j},\\
    &\sum_{j=1}^{k}a_{2,j}
    +\sum_{j=k+1}^{2k}a_{0,j}
    +\sum_{j=2k+1}^{3k}a_{1,j}.
\end{aligned}
\]
Therefore, by the pigeonhole principle, one of them is larger than
\[
    sc-\frac13\sum_{j=3k+1}^{s}a_{0,j}.
\]
Adding \(a_{0,j}\) for \(3k<j\le s\), we obtain a valid choice of indices
with total index \(3k\), nonzero selected product, and selected sum
\[
    >
    sc+\frac23\sum_{j=3k+1}^{s}a_{0,j}
    \ge sc.
\]
%Again the inequality is strict. 
This completes the base case.

\bigskip

Now assume \(s>6k-2\), and suppose the result has been proved for \(s-1\)
for every parameter \(c>1/2\).
Let
\[
    \sum_{j=1}^{s}(a_{0,j}+a_{1,j}+a_{2,j})>3sc.
\]
We consider two cases.

First suppose that \(a_{1,s}>0\). Since \(n_1\ge\cdots\ge n_s\), this implies
\(a_{1,j}>0\) for every \(1\le j\le s\). Suppose, for contradiction, that for
every \((i_1,\dots,i_s)\in\Omega_s\),
\[
    \sum_{j=1}^{s}a_{i_j,j}\le sc.
\]
Summing over \(\Omega_s\), we get
\begin{align} \label{e:A012argument2}
    3k\sum_{j=1}^{s}a_{1,j}
    +(s-3k)\sum_{j=1}^{s}a_{0,j}
    \le s^2c.
\end{align}
Again, set
\[
    A_r=\sum_{j=1}^{s}a_{r,j}\qquad(r=0,1,2).
\]
Similar to the above, we conclude that 
\[
\begin{aligned}
    s^2c
    &=
    \frac{3k}{2}(3sc)
    +\left(s-3k-\frac{3k}{2}\right)sc \\
    &<
    \frac{3k}{2}(A_0+A_1+A_2)
    +\left(s-3k-\frac{3k}{2}\right)A_0 \\
    &=
    (s-3k)A_0+\frac{3k}{2}(A_1+A_2) \\
    &\le
    (s-3k)A_0+3kA_1,
\end{aligned}
\]
contradicting \eqref{e:A012argument2}. Hence there exists
\((i_1,\dots,i_s)\in\Omega_s\) such that
\[
    \sum_{j=1}^{s}a_{i_j,j}>sc.
\]
This choice has index sum \(3k\), and all selected entries are nonzero because
each \(a_{1,j}>0\). This completes the first induction case.

Finally suppose that \(a_{1,s}=0\). Then also \(a_{2,s}=0\). Define
\[
    c'\coloneqq c+\frac{3c-a_{0,s}}{3(s-1)}.
\]
Since \(a_{0,s}\le1\) and \(c>1/2\), we have \(c'>c>1/2\). Moreover,
\[
\begin{aligned}
    \sum_{j=1}^{s-1}(a_{0,j}+a_{1,j}+a_{2,j})
    &>
    3sc-a_{0,s} \\
    &=
    3(s-1)c'.
\end{aligned}
\]
By the induction hypothesis applied to the first \(s-1\) sequences with
\(c'\) in place of \(c\), there exist
\(i_1,\dots,i_{s-1}\in\{0,1,2\}\) such that
\[
    \sum_{j=1}^{s-1}i_j\ge3k,
\]
\[
    \sum_{j=1}^{s-1}a_{i_j,j}>(s-1)c',
\]
and
\[
    a_{i_1,1}\cdots a_{i_{s-1},s-1}\neq0.
\]
Set \(i_s=0\). Then the index sum is still at least \(3k\), the selected
product remains nonzero because \(a_{0,s}>0\), and
\[
\begin{aligned}
    \sum_{j=1}^{s}a_{i_j,j}
    &>
    (s-1)c'+a_{0,s} =
    sc+\frac{2}{3}a_{0,s}
    >
    sc.
\end{aligned}
\]
This completes the induction and the proof.
\end{proof}

\subsection{Proof of the general combinatorial selection lemma}

We now prove the corresponding statement for general \(n\geq4\), with
Lemma~\ref{l:diffsetsbasegen} as the base case.

\begin{proof}[Proof of Lemma~\ref{l:s-seq-general}]
The proof is by induction on \(n\). The base case \(n=3\) is
Lemma~\ref{l:diffsetsbasegen}. Assume now that \(n\ge4\), and that the result holds
for all smaller lengths.

Similar to the case \(n=3\) in Lemma~\ref{l:diffsetsbasegen}, let
\(n_j\in\{0,1,\dots,n-1\}\) be the largest nonzero index for the \(j\)-th
sequence, that is,
\begin{align}
    n_j \coloneqq \max \{ i\in \{0,1,\dots,n-1\}:  a_{i,j} > 0  \},
    \qquad 1\leq j\leq s.
\end{align}
Again, after relabeling the sequences, we may suppose that
\[
        n_1\ge n_2\ge\cdots\ge n_s.
\]
First assume that there exist indices $\tilde i_{k+1},\dots,\tilde i_s\in\{1,\dots,n-1\}$
such that
\[
        a_{0,1}+\cdots+a_{0,k}
        +a_{\tilde i_{k+1},k+1}
        +\cdots
        +a_{\tilde i_s,s}
        \le sc.
\]
We delete these terms and apply the induction hypothesis to the following
sequences of length \(n-1\). For \(1\le j\le k\), set
\[
        b_{i,j}=a_{i+1,j},
        \qquad 0\le i\le n-2.
\]
For \(k+1\le j\le s\), set
\[
        b_{i,j}
        =
        \begin{cases}
            a_{i,j}, & 0\le i<\tilde i_j,\\
            a_{i+1,j}, & \tilde i_j\le i\le n-2.
        \end{cases}
\]
Each \(b\)-sequence is still non-increasing and takes values in \([0,1]\).
For \(j>k\), we have \(b_{0,j}=a_{0,j}>0\). It remains only to check that
\(b_{0,j}>0\) for \(1\le j\le k\), i.e.\ that \(a_{1,j}>0\) for those \(j\).
Suppose, to the contrary, that \(a_{1,k}=0\). Then \(n_k=0\), and by the
ordering of the \(n_j\)'s we have
\[
        n_k=n_{k+1}=\cdots=n_s=0.
\]
Thus, bounding every nonzero entry trivially by \(1\),
\begin{align*}
        \frac{sn}{2}
        < scn
        <
        \sum_{i,j}a_{i,j}
        &=
        \sum_{j<k}\sum_i a_{i,j}
        +
        \sum_{j=k}^s a_{0,j} \le
        (k-1)n+(s-k+1).
\end{align*}
Equivalently,
\[
        \frac{s(n-2)}{2}< (k-1)(n-1).
\]
Since \(s\ge6k-2\), this implies
\[
        (3k-1)(n-2)<(k-1)(n-1),
\]
which is impossible for \(n\ge4\) and \(k\ge1\).
%, because the difference
%between the left-hand side and the right-hand side is
% \[
%         (3k-1)(n-2)-(k-1)(n-1)
%         =
%         k(2n-5)+1>0.
% \]
Therefore \(a_{1,j}>0\) for every \(1\le j\le k\), and all the \(b\)-sequences
satisfy the hypotheses of the induction statement.
Moreover,
\begin{align*}
    \sum_{0\le i\le n-2}\sum_{1\le j\le s}b_{i,j}
    &=
    \sum_{0\le i\le n-1}\sum_{1\le j\le s}a_{i,j}
    -
    \left(
        a_{0,1}+\cdots+a_{0,k}
    \right)-
    \left(
        a_{\tilde i_{k+1},k+1}
        +\cdots
        +a_{\tilde i_s,s}
    \right)  \\
    &>
    scn-sc
    =
    sc(n-1).
\end{align*}
Hence the induction hypothesis gives indices
\(i_1',\dots,i_s'\in\{0,\dots,n-2\}\) such that
\[
        i_1'+\cdots+i_s'\ge k(n-1), \qquad b_{i_1',1}+\cdots+b_{i_s',s}>sc, \qquad b_{i_1',1}\cdots b_{i_s',s}\neq0.
\]
We now return to the original \(a\)-sequences. For \(1\le j\le k\), set
\[
        i_j=i_j'+1.
\]
For \(k+1\le j\le s\), set
\[
        i_j=
        \begin{cases}
            i_j', & i_j'<\tilde i_j,\\
            i_j'+1, & i_j'\ge \tilde i_j.
        \end{cases}
\]
Then
\[
        a_{i_j,j}=b_{i_j',j}
        \qquad (1\le j\le s),
\]
so the selected sum is \(>sc\), and the selected product is nonzero. Also,
\[
        \sum_{j=1}^s i_j
        \ge
        \sum_{j=1}^s i_j'+k
        \ge
        k(n-1)+k
        =
        kn.
\]
This completes the first case.

\medskip

It remains to consider the case when for every choice of indices $\tilde i_{k+1},\dots,\tilde i_s\in\{1,\dots,n-1\},$
we have
\[
        a_{0,1}+\cdots+a_{0,k}
        +a_{\tilde i_{k+1},k+1}
        +\cdots
        +a_{\tilde i_s,s}
        > sc.
\]
Let \(t\) be the smallest integer such that
\[
        t(n-1)\ge kn.
\]
Since \(2k(n-1)\ge kn\) for \(n\ge2\), the minimality of \(t\) gives
\(t\le2k\). Hence
\[
        k+t\le3k<6k-2\le s,
\]
so \(n_{k+t}\) is well-defined.
We claim that
\[
        n_{k+t}=n-1.
\]
Suppose not. Then \(n_{k+t}<n-1\). By the ordering of the \(n_j\)'s, for every
\(j\ge k+t\) we have \(n_j<n-1\), and hence $a_{n_j+1,j}=0.$
Now choose
\[
        \tilde i_j=1 \quad (k+1\le j\le k+t-1),
        \qquad
        \tilde i_j=n_j+1 \quad (k+t\le j\le s).
\]
These are all valid indices in \(\{1,\dots,n-1\}\). By the present case
assumption,
\[
        a_{0,1}+\cdots+a_{0,k}
        +a_{1,k+1}+\cdots+a_{1,k+t-1}
        +a_{n_{k+t}+1,k+t}
        +\cdots
        +a_{n_s+1,s}
        > sc.
\]
The last \(s-k-t+1\) terms vanish, while each of the first \(k+t-1\) terms is
at most \(1\). Therefore
\[
        k+t-1>sc>\frac{s}{2}.
\]
Since \(s\ge6k-2\), this gives $ k+t-1>3k-1,$
and hence \(t>2k\). But by the minimality of \(t\),
\[
        (t-1)(n-1)<kn.
\]
Since \(t>2k\), we have \(t-1\ge2k\), and therefore
\[
        2k(n-1)\le (t-1)(n-1)<kn,
\]
which is impossible for \(n\ge2\). This proves the claim.

Now choose
\[
        i_j=
        \begin{cases}
            0, & 1\le j\le k,\\
            n-1, & k+1\le j\le k+t,\\
            n_j, & k+t+1\le j\le s.
        \end{cases}
\]
The selected entries are all nonzero: this is clear for \(j\le k\), for \(k+1\le j\le k+t\) it follows
from \(n_{k+t}=n-1\), and for \(j>k+t\) it follows from the definition
of \(n_j\) .

To prove the required lower bound for the selected sum, apply the present case
assumption with
\[
        \tilde i_j=n-1 \quad (k+1\le j\le k+t),
\]
and, for \(j>k+t\),
\[
        \tilde i_j=
        \begin{cases}
            n_j, & n_j\ge1,\\
            1, & n_j=0.
        \end{cases}
\]
This gives
\[
        a_{0,1}+\cdots+a_{0,k}
        +a_{n-1,k+1}+\cdots+a_{n-1,k+t}
        +\sum_{j=k+t+1}^s a_{\tilde i_j,j}
        >sc.
\]
For \(j>k+t\), we have $a_{n_j,j}\ge a_{\tilde i_j,j},$
with equality if \(n_j\ge1\), and with \(a_{n_j,j}=a_{0,j}\ge a_{1,j}\) if
\(n_j=0\). Hence
\[
        a_{0,1}+\cdots+a_{0,k}
        +a_{n-1,k+1}+\cdots+a_{n-1,k+t}
        +a_{n_{k+t+1},k+t+1}+\cdots+a_{n_s,s}
        > sc.
\]
Finally,
\[
        \sum_{j=1}^s i_j
        =
        t(n-1)+\sum_{j=k+t+1}^s n_j
        \ge
        t(n-1)
        \ge
        kn.
\]
This proves the desired conclusion and completes the proof.
\end{proof}

\subsection{Quadratic combinatorial refinements}
\label{s:comb2}

We now specialize to \(k=2\). Lemma~\ref{l:s-seq-general} already
handles \(s\geq10\); the purpose of this subsection is to establish the
small-\(s\) quadratic thresholds needed when \(5\leq s\leq9\). Throughout the paper, for \(s\geq5\), put
\begin{equation}\label{eq:gamma-s}
    \gamma_s=
    \begin{cases}
        4/5,&s=5,\\
        5/s,&6\leq s\leq9,\\
        1/2,&s\geq10.
    \end{cases}
\end{equation}

\begin{lemma}\label{l:s-seq-square}
    Let \(s,n\) be positive integers.  Assume either that \(s\geq6\)
    and \(n\geq3\), or that \(s=5\) and \(n\geq5\).
    Fix \(c\geq\gamma_s\).
    For \(1\leq j\leq s\), let
    \(a_{0,j},\ldots,a_{n-1,j}\) be a non-increasing sequence in
    \([0,1]\), with \(a_{0,j}>0\).  Assume that
    \[
    \frac{1}{s}\sum_{j=1}^s
    \left(\frac{1}{n}\sum_{i=0}^{n-1}a_{i,j}\right)>c.
    \]
    Then there are indices \(i_1,\ldots,i_s\in\{0,\ldots,n-1\}\)
    such that
    \[
        \sum_{j=1}^s i_j\geq2n,
        \qquad
        \sum_{j=1}^s a_{i_j,j}>sc,
        \qquad
        \prod_{j=1}^s a_{i_j,j}>0.
    \]
\end{lemma}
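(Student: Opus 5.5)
This lemma is the case \(k=2\) of Lemma~\ref{l:s-seq-general}, with the constant \(1/2\) replaced by \(\gamma_s\). For \(s\geq10\) we have \(\gamma_s=1/2\) and \(s\geq6\cdot2-2\), so Lemma~\ref{l:s-seq-general} applies directly; note that it needs only \(c>1/2\), and here \(c\geq\gamma_s\). The work is therefore in \(5\leq s\leq9\). For these cases I would reproduce the two-layer structure of Section~\ref{s:comb1}. First comes a base case with a minimal number of levels (\(n=3\) for \(s\geq6\), \(n=5\) for \(s=5\)). Then comes an induction on \(n\) that copies the proof of Lemma~\ref{l:s-seq-general} with \(k=2\), where every use of \(s\geq6k-2\) is replaced by the stronger density hypothesis \(c\geq\gamma_s\).

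\textbf{Induction step.} I would keep the same dichotomy as in Lemma~\ref{l:s-seq-general}.

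In the first case there exist \(\tilde i_3,\ldots,\tilde i_s\geq1\) with \(a_{0,1}+a_{0,2}+\sum_{j>2}a_{\tilde i_j,j}\leq sc\). We delete these entries, apply the induction hypothesis at length \(n-1\), and regain the loss of \(2\) in the rank sum from the two shifted sequences. The only place where \(s\geq6k-2\) entered is the verification that \(a_{1,2}>0\). Here that verification becomes the following: if \(a_{1,2}=0\), then \(scn<n+(s-1)\). With \(c\geq\gamma_s\), so that \(sc\geq5\) for \(6\leq s\leq9\) and \(sc\geq4\) for \(s=5\), this gives \(5n<n+s-1\) (respectively \(4n<n+4\)). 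That is false for \(n\geq3\) when \(s\leq9\), and false for \(n\geq5\) when \(s=5\).

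In the second case every such sum exceeds \(sc\). We let \(t\) be minimal with \(t(n-1)\geq2n\), so \(t\leq3\) for \(n\geq3\) and \(t\leq3\) for \(n\geq4\). The argument forcing \(n_{2+t}=n-1\) uses \(2+t-1>sc\), and this is again contradicted, since \(sc\geq5\) when \(s\geq6\) and \(sc\geq4\) when \(s=5\), once \(t\leq2\) or \(t\leq3\) suitably. One also needs \(2+t\leq s\), which holds. I would check these small numerical inequalities individually, possibly adjusting \(t\).

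\textbf{Base cases.} This is the main obstacle. For \(6\leq s\leq9\) and \(n=3\), the total mass exceeds \(15\), so \(\sum_j n_j\geq\sum_j(\sum_i a_{i,j}-1)>15-s\). I would mimic the proof of Lemma~\ref{l:diffsetsbasegen} with target rank sum \(6\). I would average over the \(s\) cyclic windows of length \(6\) (all \(i_j=1\) on a window) when every \(a_{1,j}>0\), and the weighted averaging inequality \((s-6)A_0+6A_1>s\cdot sc\) should follow from \(A_0+A_1+A_2>3sc\), since \(sc\geq5\). Otherwise I would truncate the sequences whose tail vanishes and pass to a larger \(c'\), as in the induction there. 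The explicit choices (all ones on six coordinates, or a mixture of twos and zeros) would be compared using the same pigeonhole over three rotated patterns.

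For \(s=5\), \(n=5\), the target rank sum is \(10\) with total mass exceeding \(20\). I would again average over patterns: either all \(i_j=2\), or rotations of \((4,3,2,1,0)\) across the five sequences, giving rank sum \(10\). Averaging the five cyclic shifts of \((0,1,2,3,4)\) yields exactly the total mass divided by \(5\), hence greater than \(4=sc\). The remaining issue is positivity of the selected entries when zeros are present, which I would handle by the \(n_j\)-ordering reduction together with a case analysis on how many sequences have short support. I expect this positivity bookkeeping in the \(s=5\) base case to be the hardest part.
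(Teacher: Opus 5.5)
Your reduction for $s\ge10$, your $s=n=5$ base case, and your induction on $n$ are sound and essentially match the paper's. In the second case of the induction, $t=3$ for every $n\ge3$, so the contradiction reads $4>sc\ge4$. In the first case, $scn<n+s-1$ is impossible, as you say. Two small repairs are needed:
\begin{itemize}
\item For $s\ge10$ the value $c=1/2$ is permitted. Apply Lemma~\ref{l:s-seq-general} with some $c'$ strictly between $1/2$ and the actual mean.
\item For $s=n=5$, positivity is not the hard part at all. The winning cyclic shift consists of five numbers in $[0,1]$ with sum $>5c\ge4$, so each of them exceeds $5c-4\ge0$.
\end{itemize}

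The genuine gap is the base case $n=3$, $6\le s\le9$, which is where the real work lies.

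\emph{The window inequality fails.} Your inequality $(s-6)A_0+6A_1>s\cdot sc$ does not follow from $A_0+A_1+A_2>3sc$ unless $s=9$. For $s=6$, $c=5/6$, take $a_{0,j}=1$ and $a_{1,j}=a_{2,j}=4/5$ for all $j$. The total is $15.6>15$, yet the only window (all $i_j=1$) has sum $4.8<5$. The same extremal choice ($A_0=s$, $A_1=A_2$) breaks $s=7,8$. This is exactly why Case~1 of Lemma~\ref{l:diffsetsbasegen} needs $t\ge 9k/2$.

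\emph{The truncation fallback fails too.} Dropping a column with $a_{1,s}=0$ and passing to a larger $c'$ produces $(s-1)c'=sc-a_{0,s}/3$. This can be close to $14/3<5=(s-1)\gamma_{s-1}$. Because $s\gamma_s=5$ is constant for $6\le s\le9$, there is no slack to absorb the lost column. Moreover, starting from $s=6$ you would land at $s=5$, $n=3$, which is outside the lemma.

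\emph{What is missing is the mixed selection.} Take the three rotations of $(0,0,1,1,2,2)$ (or of $(0,1,2,0,1,2)$) on six columns, with index $0$ elsewhere. Their sums total at least $T>3sc$, since $3a_{0,j}\ge a_{0,j}+a_{1,j}+a_{2,j}$. For $s=6$, positivity is then automatic because $6c\ge5$.

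\emph{Positivity for $s=7,8,9$.} Here a selected sum $>5$ over $s$ entries does not force positivity, so you must also control the zero pattern, as the paper does. If the number $Z$ of zero entries is at least $2s-10$, take $i_j=n_j$. Otherwise set $t_r=|\{j:a_{r,j}>0\}|$; one then has $t_2\ge2$ and $t_1\ge4$. The cases $t_2\ge6$, $t_2\in\{4,5\}$, $t_2=3$, and $t_2=2$ are each closed by explicit configurations of index sum $6$.
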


\begin{proof}
\emph{Reduction to \(s<10\).}
For \(s\geq10\), if \(c>1/2\), Lemma~\ref{l:s-seq-general} with \(k=2\)
applies directly. If \(c=1/2\), choose \(c'\) strictly between \(1/2\)
and the mean in the hypothesis and apply that lemma with \(c'\). Thus we
may assume that \(s<10\). We argue by induction on \(n\), beginning
with the base cases.

\emph{The base case \(s=n=5\).}
Suppose that
\[
\sum_{j=1}^{5} (a_{0,j}+a_{1,j}+a_{2,j}+a_{3,j}+a_{4,j}) > 25c.
\]
By the pigeonhole principle, there exists a choice of
\((i_1,\ldots,i_5)\) that is a permutation of \((0,1,2,3,4)\) such that
\[
a_{i_1,1}+\cdots + a_{i_5,5} > 5c.
\]
Indeed, averaging the selected sum over all permutations of
\(\{0,1,2,3,4\}\) gives one fifth of the total sum above.
As \(0+1+2+3+4=10=2\times5\), it suffices to show that
\(a_{i_j,j}\neq0\).  For each \(1\leq j\leq5\), we can write
\begin{align}
    a_{i_j,j}
    &=
    \sum_{r=1}^{5}a_{i_r,r}
    -\sum_{\substack{1\leq r\leq5\\r\neq j}}a_{i_r,r}
    \notag>
    5c-\sum_{\substack{1\leq r\leq5\\r\neq j}}a_{i_r,r}
    \geq 5c-4
    \geq5\gamma_5-4=0.
\end{align}

\emph{The base case \(s=6,n=3\).}
Suppose that
\[
\sum_{j=1}^{6} (a_{0,j}+a_{1,j}+a_{2,j}) > 18c.
\]
By the pigeonhole principle, there exists a choice of
\((i_1,\ldots,i_6)\) among the three tuples
\[
(0,1,2,0,1,2),\quad (1,2,0,1,2,0),\quad(2,0,1,2,0,1)
\]
such that
\[
a_{i_1,1}+\cdots + a_{i_6,6} > 6c.
\]
Indeed, the three corresponding selected sums add up to the total sum
of all \(18\) entries.
As \(0+1+2+0+1+2=6=2\times3\), it suffices to check
\(a_{i_j,j}\neq0\), which follows from the fact that
\[
a_{i_1,1}+\cdots + a_{i_6,6}>6c\geq6\gamma_6=5
\]
and \(a_{i,j}\leq1\).

\emph{The base cases \(s\in\{7,8,9\},n=3\).}
Assume that
\begin{align} \label{e:averagegeneral001}
    \sum_{j=1}^{s} (a_{0,j}+a_{1,j}+a_{2,j}) > 3sc.
\end{align}
For every \(j\), define
\begin{align}
    n_j\coloneqq\max\{i\in\{0,1,2\}:a_{i,j}>0\}.
\end{align}
After relabeling the sequences, we may suppose that
\[
    n_1\geq\cdots\geq n_s.
\]
Let \(T\) denote the total sum on the left-hand side of
\eqref{e:averagegeneral001}, and let \(Z\) be the number of zero entries.
Since the positive entries in the \(j\)-th sequence are exactly
\(a_{0,j},\ldots,a_{n_j,j}\), we have
\[
    \sum_{j=1}^{s}n_j=2s-Z.
\]

We first handle the case of many zero entries. Suppose that
\(Z\geq2s-10\). Then
\[
    \sum_{j=1}^{s}n_j\leq10.
\]
Moreover,
\[
    T\leq\sum_{j=1}^{s}(n_j+1)
    =s+\sum_{j=1}^{s}n_j.
\]
Since \(T>3sc\geq15\), it follows that
\[
    \sum_{j=1}^{s}n_j>15-s\geq6.
\]
Finally,
\begin{align*}
    \sum_{j=1}^{s}a_{n_j,j}
    =
    T-\sum_{j=1}^{s}\sum_{i=0}^{n_j-1}a_{i,j}\geq T-\sum_{j=1}^{s}n_j>3sc-10\geq sc,
\end{align*}
where the last inequality follows from \(sc\geq s\gamma_s=5\). Thus the
choice \(i_j=n_j\) has index sum at least \(6\), selected sum greater
than \(sc\), and nonzero product.

Now suppose that there are fewer zero entries, so that
\(Z\leq2s-11\). For \(r=1,2\), set
\[
    t_r\coloneqq\bigl|\{j:a_{r,j}>0\}\bigr|.
\]
By the ordering of the \(n_j\)'s, \(a_{r,j}>0\) exactly when
\(j\leq t_r\).  In particular,
\[
    Z=(s-t_1)+(s-t_2)=2s-t_1-t_2.
\]
We have \(t_2\geq2\): otherwise there
would be at least \(s-1>2s-11\) zero entries in the second row, which is
a contradiction.
Also note that \(t_1\geq4\). Otherwise, since \(t_2\leq t_1\), there
would be at most \(s+6\leq15\leq3sc\) nonzero entries. This contradicts
\eqref{e:averagegeneral001}.

Suppose first that \(t_2\geq6\), and consider the sums
\begin{align*}
    S_1&=
    a_{0,1}+a_{0,2}+a_{1,3}+a_{1,4}
    +a_{2,5}+a_{2,6}+a_{0,7}+\cdots+a_{0,s},\\
    S_2&=
    a_{1,1}+a_{1,2}+a_{2,3}+a_{2,4}
    +a_{0,5}+a_{0,6}+a_{0,7}+\cdots+a_{0,s},\\
    S_3&=
    a_{2,1}+a_{2,2}+a_{0,3}+a_{0,4}
    +a_{1,5}+a_{1,6}+a_{0,7}+\cdots+a_{0,s},
\end{align*}
we have
\[
    S_1+S_2+S_3\geq T>3sc.
\]
Indeed, the three sums contain every entry in the first six columns
exactly once, while for \(j\geq7\) they contain \(3a_{0,j}\), which is
at least \(a_{0,j}+a_{1,j}+a_{2,j}\). Thus by the pigeonhole principle
at least one of them is greater than \(sc\). All its entries are
positive because \(t_2\geq6\), and the sum of its indices is
\(6=2\times3\), which completes this case.

Now suppose that \(4\leq t_2\leq5\). At least one of
\begin{align*}
    S_1&=
    a_{1,1}+a_{1,2}+a_{2,3}+a_{2,4}
    +a_{0,5}+\cdots+a_{0,s},\\
    S_2&=
    a_{2,1}+a_{2,2}+a_{1,3}+a_{1,4}
    +a_{0,5}+\cdots+a_{0,s},
\end{align*}
must be greater than \(sc\), which completes this case. Indeed, both
configurations have positive entries and index sum \(6\). If
\(S_1,S_2\leq sc\), a column-by-column comparison using
\(t_2\leq5\) gives
\[
    T\leq S_1+S_2
       +a_{0,1}+a_{0,2}+a_{0,3}+a_{0,4}+a_{2,5}.
\]
For the first four columns there is equality; in the fifth column use
\(a_{0,5}+a_{1,5}+a_{2,5}\leq2a_{0,5}+a_{2,5}\), and for \(j\geq6\)
use \(a_{2,j}=0\) and \(a_{0,j}+a_{1,j}\leq2a_{0,j}\).
Together with \eqref{e:averagegeneral001}, this would imply
\[
    a_{0,1}+a_{0,2}+a_{0,3}+a_{0,4}+a_{2,5}>sc\geq5,
\]
which is impossible because the left-hand side is a sum of five elements
of \([0,1]\).

Now suppose that \(t_2=3\). If
\[
a_{2,1}+a_{2,2}+a_{2,3}+a_{0,4}+\cdots+a_{0,s} > sc,
\]
then this configuration has positive entries and
index sum \(6\). So, assume otherwise. From
\eqref{e:averagegeneral001} we get
\begin{align} \label{e:ten7}
(a_{0,1}+a_{1,1})+(a_{0,2}+a_{1,2})
+(a_{0,3}+a_{1,3})+a_{1,4}+\cdots+a_{1,s}>2sc\geq10.
\end{align}
Estimate \eqref{e:ten7} cannot hold for \(s=7\), as there are only ten
terms bounded above by \(1\) on the left-hand side. If
\(s\in\{8,9\}\), then \eqref{e:ten7} implies \(t_1\geq8\), and hence
\begin{align}
    &a_{1,1}+\cdots+a_{1,6}+a_{0,7} +\cdots+a_{0,s} \\
    &> 2sc-a_{0,1}-a_{0,2}-a_{0,3}
    -a_{1,7}-\cdots-a_{1,s}+a_{0,7}+\cdots+a_{0,s}\\
    &\geq 2sc-3
    +(a_{0,7}-a_{1,7})+\cdots+(a_{0,s}-a_{1,s})
    >sc.
\end{align}
The last inequality follows from \(sc\geq5\). The displayed
configuration has positive entries and index sum \(6\), completing this
case.

Finally, suppose that \(t_2=2\). We know that \(t_1\geq4\), so the
conclusion follows if
\[
a_{2,1}+a_{2,2}+a_{1,3}+a_{1,4}+a_{0,5}+\cdots+a_{0,s} > sc.
\]
This configuration has positive entries and index sum \(6\). If its sum
is at most \(sc\), estimate \eqref{e:averagegeneral001} implies that
\begin{align}\label{e:1212}
    (a_{0,1}+a_{1,1})+(a_{0,2}+a_{1,2})
    +a_{0,3}+a_{0,4}+a_{1,5}+\cdots+a_{1,s}>2sc\geq10.
\end{align}
So we conclude that 
\[
t_1+2\geq
(a_{0,1}+a_{1,1})+(a_{0,2}+a_{1,2})
+a_{0,3}+a_{0,4}+a_{1,5}+\cdots+a_{1,s}>10,
\]
so \(t_1>8\). This contradicts \(s\in\{7,8\}\). For \(s=9\),
\eqref{e:1212} gives
\begin{align}
    &a_{1,1} + a_{0,2} + a_{0,3}+a_{0,4}+ a_{1,5}+\cdots+a_{1,8}+a_{1,9} \\
    &>2sc-(a_{0,1}+a_{1,2})
    \geq2sc-2>sc.
\end{align}
Here \(t_1=9\), so this is a positive configuration with index sum \(6\).
This completes this case, and the base of induction.

\emph{Induction on \(n\).}
For a fixed
\(s\in\{5,\ldots,9\}\), assume that the result holds for every admissible
length smaller than \(n\). We want to prove the result for \(n\) (if
\(s=5\), \(n>5\), and if \(s\geq6\), \(n>3\)). The argument of the proof
is similar to the induction step in the proof of
Lemma~\ref{l:s-seq-general}.

For each \(j\), let
\[
    n_j\coloneqq\max\{i\in\{0,\ldots,n-1\}:a_{i,j}>0\},
\]
and relabel the sequences so that
\[
    n_1\geq\cdots\geq n_s.
\]
First assume that there exist $ \tilde i_3,\ldots,\tilde i_s\in\{1,\ldots,n-1\}$
such that
\[
    a_{0,1}+a_{0,2}
    +a_{\tilde i_3,3}+\cdots+a_{\tilde i_s,s}
    \leq sc.
\]
We eliminate these terms from our sequences. For \(0\leq i\leq n-2\),
set
\[
    b_{i,j}=a_{i+1,j},\qquad j=1,2,
\]
and, for \(3\leq j\leq s\), set
\[
    b_{i,j}=
    \begin{cases}
        a_{i,j},&0\leq i<\tilde i_j,\\
        a_{i+1,j},&\tilde i_j\leq i\leq n-2.
    \end{cases}
\]
These are non-increasing sequences in \([0,1]\). We show that their
first terms are positive. This is immediate for \(j\geq3\). If
\(a_{1,2}=0\), then the ordering of the \(n_j\)'s gives
\(n_2=\cdots=n_s=0\), and hence
\[
\begin{aligned}
    scn
    &<
    \sum_{j=1}^{s}\sum_{i=0}^{n-1}a_{i,j}=
    \sum_{i=0}^{n-1}a_{i,1}+\sum_{j=2}^{s}a_{0,j}\leq n+s-1.
\end{aligned}
\]
This is impossible: when \(s=5\), \(sc\geq4\) and \(n>5\), while for
\(6\leq s\leq9\), \(sc\geq5\) and \(n>3\). Therefore \(a_{1,2}>0\),
and the ordering also gives \(a_{1,1}>0\).

For the sequences \(\{b_{i,j}\}\), we have
\begin{align*}
    \sum_{0\leq i\leq n-2}\sum_{1\leq j\leq s}b_{i,j}
    &=
    \sum_{0\leq i\leq n-1}\sum_{1\leq j\leq s}a_{i,j}-
    \left(a_{0,1}+a_{0,2}
    +a_{\tilde i_3,3}+\cdots+a_{\tilde i_s,s}\right)>scn-sc
    =sc(n-1).
\end{align*}
The induction hypothesis gives
\(i_1',\ldots,i_s'\in\{0,\ldots,n-2\}\) such that
\[
    i_1'+\cdots+i_s'\geq2(n-1),\qquad 
    b_{i_1',1}+\cdots+b_{i_s',s}>sc,
    \qquad
    b_{i_1',1}\cdots b_{i_s',s}\neq0.
\]
Set
\[
    i_1=i_1'+1,\qquad i_2=i_2'+1,
\]
and, for \(3\leq j\leq s\), set
\[
    i_j=
    \begin{cases}
        i_j',&i_j'<\tilde i_j,\\
        i_j'+1,&i_j'\geq\tilde i_j.
    \end{cases}
\]
Then \(a_{i_j,j}=b_{i_j',j}\) for every \(j\), so the selected sum and
the nonzero product are preserved. Moreover,
\[
    \sum_{j=1}^{s}i_j
    \geq\sum_{j=1}^{s}i_j'+2
    \geq2n.
\]
This completes the first case.

It remains to consider the case when
\[
    a_{0,1}+a_{0,2}
    +a_{\tilde i_3,3}+\cdots+a_{\tilde i_s,s}>sc
\]
for every $\tilde i_3,\ldots,\tilde i_s\in\{1,\ldots,n-1\}.$
We first claim that \(n_5=n-1\). If \(n_5<n-1\), choose
\[
    \tilde i_3=\cdots=\tilde i_s=n-1.
\]
Since \(n_j\leq n_5<n-1\) for \(j\geq5\), the present case assumption
gives
\[
    sc<
    a_{0,1}+a_{0,2}+a_{n-1,3}+a_{n-1,4}
    \leq4,
\]
contradicting \(sc\geq s\gamma_s\geq4\). Thus \(n_5=n-1\).

Now choose
\[
    \tilde i_3=\tilde i_4=\tilde i_5=n-1,
\]
and, for \(j\geq6\), choose
\[
    \tilde i_j=
    \begin{cases}
        n_j,&n_j\geq1,\\
        1,&n_j=0.
    \end{cases}
\]
The assumption gives
\[
    a_{0,1}+a_{0,2}
    +a_{n-1,3}+a_{n-1,4}+a_{n-1,5}
    +\sum_{j=6}^{s}a_{\tilde i_j,j}>sc.
\]
For \(j\geq6\), monotonicity gives
\(a_{n_j,j}\geq a_{\tilde i_j,j}\). Therefore
\begin{align}\label{e:afterhyp2}
    a_{0,1}+a_{0,2}
    +a_{n-1,3}+a_{n-1,4}+a_{n-1,5}
    +\sum_{j=6}^{s}a_{n_j,j}>sc.
\end{align}
All selected entries in \eqref{e:afterhyp2} are nonzero, and the
corresponding indices satisfy
\[
    3(n-1)+\sum_{j=6}^{s}n_j
    \geq3(n-1)\geq2n.
\]
This completes the induction and the proof.
\end{proof}

% \begin{remark}[Independent audit of the finite base cases]
% \label{rem:selection-base-case-verification}
% The supplementary script
% \path{verify_selection_base_cases.py} formulates the five finite
% base cases used above,
% \[
% \begin{gathered}
%  (s,n,c)=(5,5,4/5),\ (6,3,5/6),\ (7,3,5/7),\\
%  (8,3,5/8),\ (9,3,5/9),
% \end{gathered}
% \]
% as linear programs.  For each possible pattern of positive supports, it
% maximizes the total mass under the contrary assumption that every
% positive selection with index sum at least \(2n\) has selected mass at
% most \(sc\).  In every case the resulting maximum is at most \(snc\),
% which independently checks the closed-boundary contrapositive of the
% base case.  The proof above is self-contained and does not depend on
% this numerical audit.
% \end{remark}

% replaced

\section{Arithmetic I: reduced power residues}
\label{sec:local-power-residues}

This section proves the functional arithmetic covering lemma underlying
Theorem~\ref{lemma:arithmetic-general-k}, which is also used in the proof of
Theorem~\ref{t:main}.

\subsection{The main arithmetic lemma}

For \(k\geq2\), let $R_k$ be the square-free integer defined as follows:
\begin{equation}\label{e:rkdef}
    R_k=
    \prod_{\substack{p\ \mathrm{prime}\\
        p\mid k\ \mathrm{or}\ (p-1)\mid2k}}p.
\end{equation}
In particular, \(R_2=30\).
Using the Chinese remainder theorem and the fact that
\((\mathbb Z/p^\alpha\mathbb Z)^*\) is cyclic for every odd prime \(p\),
if \(m\) is odd and
\(m=p_1^{\alpha_1}\cdots p_r^{\alpha_r}\), then
\[
|\mathbb Z_{m,k}|
=
\prod_{i=1}^{r}
\frac{\phi(p_i^{\alpha_i})}
     {\gcd(k,\phi(p_i^{\alpha_i}))}.
\]
In particular,
\[
|\mathbb Z_{p,k}|
=
\frac{p-1}{\gcd(k,p-1)}
\]
for every odd prime \(p\). The following  arithmetic
lemma is the main result of this section.

\begin{lemma}\label{lemma:arithmetic-k}
Let \(q,k,s\) be positive integers with \(k\geq2\), and let
\(c\in[0,1]\). Suppose that one of the following alternatives holds:
\begin{itemize}
    \item \(k>2\), \(s\geq6k-2\), \(c>\frac12\), and
    \((q,R_k)=1\);
    \item \(k=2\), \(s\geq5\), \(c\geq\gamma_s\), and
    \((q,R_2)=1\).
\end{itemize}
Let $f_1,\ldots,f_s:\mathbb Z_{q,k}\longrightarrow[0,1]$
be functions which are not identically zero, and assume that
\[
\frac{1}{s|\mathbb Z_{q,k}|}
\sum_{j=1}^{s}\sum_{a\in\mathbb Z_{q,k}}f_j(a)>c.
\]
Then, for every \(n\in\mathbb Z_q\), there exist
\(b_j\in\mathbb Z_{q,k}\) such that
\[
n\equiv b_1+\cdots+b_s\pmod q,
\qquad
f_j(b_j)>0\quad(1\leq j\leq s),
\]
and
\[
\frac1s\sum_{j=1}^{s}f_j(b_j)>c.
\]
In particular, the conclusion holds for a single function
\(f:\mathbb Z_{q,k}\to[0,1]\) of mean greater than \(c\), by taking
\(f_1=\cdots=f_s=f\).
\end{lemma}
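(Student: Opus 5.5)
We argue by induction on the number of prime-power factors of \(q\), handling a single prime, prime powers, and coprime products separately. The case \(q=1\) is trivial: \(\mathbb Z_{1,k}=\{0\}\) and every \(f_j(0)>0\), so the mean hypothesis is the conclusion.

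\emph{Step 1: a prime \(q=p\).} Since \(p\nmid R_k\), we have \(p\nmid k\) and \((p-1)\nmid 2k\). Hence \(m=|\mathbb Z_{p,k}|=(p-1)/\gcd(k,p-1)\geq3\); indeed \(m\leq2\) would force \((p-1)\mid 2k\), and for \(k=2\) we get \(m\geq3\) directly from \(p\geq7\). For each \(j\), list the values of \(f_j\) in decreasing order as \(a_{0,j}\geq\cdots\geq a_{m-1,j}\). The superlevel set \(E_{i,j}=\{f_j\geq a_{i,j}\}\) then has at least \(i+1\) elements, and if \(a_{i,j}>0\) then \(f_j>0\) on it. We apply Lemma~\ref{l:s-seq-general} (for \(k>2\)) or Lemma~\ref{l:s-seq-square} (for \(k=2\)) with \(n=m\) to obtain ranks \(i_j\) with \(\sum i_j\geq km\), selected mean \(>c\), and positive product. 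Next we use Cauchy--Davenport on the sets \(E_{i_j,j}\subseteq\mathbb Z_p\). We have \(\sum|E_{i_j,j}|\geq km+s\), and \(km\geq p-1\) because \(\gcd(k,p-1)\leq k\). So \(\sum_j|E_{i_j,j}|\geq p-1+s\geq p+s-1\), and the sumset is all of \(\mathbb Z_p\). Choosing \(b_j\in E_{i_j,j}\) with \(\sum b_j=n\) gives \(f_j(b_j)\geq a_{i_j,j}\), and the conclusion follows. For \(k=2\), \(s=5\) we need \(n=m\geq5\), which holds since \(p\geq11\) when \(p\nmid30\); the case \(p=7\) has \(m=3\), so we must check it separately. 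We can exclude \(p=7\) for \(k=2\) because \((7-1)\mid 2k=4\) is false, so \(7\) is not excluded. In that case we handle \(m=3\), \(s=5\) with an ad hoc argument: any five residues from \(\{1,2,4\}\) using at least two distinct values. This is the first place where we expect the hard work to lie.

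\emph{Step 2: prime powers \(p^a\), \(a\geq2\).} By Lemma~\ref{l:ntlem2}, each fibre of \(\mathbb Z_{p^a,k}\to\mathbb Z_{p^{a-1},k}\) is a full coset \(x+p^{a-1}\mathbb Z_p\). For a target \(n\), we define fibre functions \(g_j(x)=\frac1p\sum_t f_j(x+tp^{a-1})\); these have the same mean as \(f_j\). We apply the induction at level \(a-1\) to the \(g_j\) to get \(x_j\) with \(\sum x_j\equiv n\pmod{p^{a-1}}\), \(g_j(x_j)>0\), and mean \(>c\). It then remains to solve \(\sum t_j\equiv u\pmod p\) with \(t_j\) drawn from the fibres, keeping the mean of \(f_j(x_j+t_jp^{a-1})\) above \(c\). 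This is again a one-prime problem on full \(\mathbb Z_p\), with the rows given by the decreasing rearrangements of \(f_j\) on each fibre. We apply the same selection lemma with \(n=p\) and rank sum \(\geq kp\geq p\), followed by Cauchy--Davenport. The averaging hypothesis is exactly \(\frac1s\sum g_j(x_j)>c\).

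\emph{Step 3: coprime products.} Write \(q=q_1q_2\) and use Lemma~\ref{l:ntlem}. We lift in the same way: set \(g_j(x)=\) the average of \(f_j(x,\cdot)\) over \(\mathbb Z_{q_2,k}\), solve modulo \(q_1\), and then solve the fibre problem modulo \(q_2\) with functions \(y\mapsto f_j(x_j,y)\). These functions have average \(\frac1s\sum g_j(x_j)>c\), and each is nonzero because \(g_j(x_j)>0\). The main obstacle is strictness and positivity at the base cases, together with the small primes \(7\) and \(11\) when \(k=2\), \(s=5\), where \(m<5\) prevents a direct use of Lemma~\ref{l:s-seq-square}. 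There we would verify the three-element case by a direct pigeonhole argument over cyclic patterns.
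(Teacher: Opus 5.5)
Your architecture (rank selection plus Cauchy--Davenport at a prime, fibre averaging to lift from $p^{a-1}$ to $p^a$, and Chinese-remainder tensorization) is the paper's, and Steps~2 and~3 are sound. The paper lifts with Lemma~\ref{l:s-seq-general} at rank parameter $1$, which needs only $s\geq4$ and rank sum $\geq p$, but your use of the same selection lemma with length $p\geq7$ also works.

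The genuine gap is the base case you flag and leave open: $k=2$, $s=5$, $q=p=7$, where $m=|\mathbb Z_{7,2}|=3<5$. This is not a technicality, because the Step~1 route fails there. Take $f_1=\cdots=f_5=f$ with values $1$, $0.8$, $0.65$ on $\{1,2,4\}$, so the mean is $2.45/3>4/5=\gamma_5$. Cauchy--Davenport needs rank sum $\sum_j i_j\geq p-1=6$. Every such choice has score at most $5-3(0.35)=3.95<4=5\gamma_5$. No better size-only covering bound is available either: five copies of any two-element subset of $\{1,2,4\}$ miss a residue modulo $7$, so Cauchy--Davenport is sharp here. You have to use which residues are chosen, not merely how many.

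Your suggested ``pigeonhole over cyclic patterns'' does not supply this as stated. Cyclically shifting the coordinates of one admissible $5$-tuple preserves its sum. However, it gives coordinate $j$ the value $v$ with frequency (number of $v$'s)$/5$. A count vector is integral, so it is never $(5/3,5/3,5/3)$, and for suitable $F_j$ the average score falls below $\frac13\sum_{j,t}F_j(t)$. Multiplying by $2$, which permutes $\{1,2,4\}$, does not help either, since it moves the target from $n$ to $2n$.

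The missing idea, which is Lemma~\ref{lemma:arithmetic-square-helper}, is this: for each of the seven targets $n$, the point $(5/3,5/3,5/3)$ lies in the convex hull of the count vectors $(a,b,d)$ with $a+b+d=5$ and $a+2b+4d\equiv n\pmod 7$. For instance, $n=1$ uses $(1,1,3),(2,3,0),(4,0,1)$ with weights $\frac{11}{21},\frac{8}{21},\frac{2}{21}$. Draw a count vector with these weights and then a uniformly random ordering; this makes every coordinate uniform on $\{1,2,4\}$. The expected score is then $\frac13\sum_{j,t}F_j(t)>5c$, and positivity of each $F_j(b_j)$ follows from $5c\geq4$.

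Separately, your closing mention of $p=11$ is a slip: there $m=5$, and Lemma~\ref{l:s-seq-square} applies directly.
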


\subsection{Prime-power structure and lifting}

Before proving Lemma~\ref{lemma:arithmetic-k}, we record the structural and lifting facts needed in the proof. We use the same consequence of Theorem \ref{thm:cauchy-davenport} (Cauchy--Davenport) several times, so we record it here to avoid repetition.

\begin{lemma}[Superlevel covering]\label{lemma:superlevel-covering}
Let \(p\) be prime, let \(s\geq1\) be an integer, and let
\(L\in\mathbb R\). For \(1\leq j\leq s\), let
\(X_j\subseteq\mathbb Z_p\) be nonempty and
\(H_j:X_j\to[0,1]\). Arrange the values of \(H_j\) as
\[
\beta_{0,j}\geq\cdots\geq\beta_{|X_j|-1,j}.
\]
Suppose that there are indices \(0\leq r_j<|X_j|\) such that
\[
\sum_{j=1}^{s}r_j\geq p-1,
\qquad
\beta_{r_j,j}>0\quad(1\leq j\leq s),
\qquad
\sum_{j=1}^{s}\beta_{r_j,j}>L.
\]
Then, for every \(\rho\in\mathbb Z_p\), there exist \(t_j\in X_j\)
such that
\[
t_1+\cdots+t_s=\rho,
\qquad
H_j(t_j)>0\quad(1\leq j\leq s),
\qquad
\sum_{j=1}^{s}H_j(t_j)>L.
\]
\end{lemma}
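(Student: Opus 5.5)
The plan is to use superlevel sets and apply Cauchy--Davenport (Theorem~\ref{thm:cauchy-davenport}) to them. For each \(j\), fix an ordering \(x_{0,j},\ldots,x_{|X_j|-1,j}\) of \(X_j\) with \(H_j(x_{i,j})=\beta_{i,j}\), listed non-increasingly. Then set
\[
E_j=\{x_{0,j},\ldots,x_{r_j,j}\}\subseteq X_j .
\]
This set has exactly \(r_j+1\) elements. Every \(t\in E_j\) satisfies \(H_j(t)\geq\beta_{r_j,j}>0\), because the values are non-increasing along the ordering.

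Apply Cauchy--Davenport with \(r=s\) to the nonempty sets \(E_1,\ldots,E_s\). Their sizes satisfy
\[
\sum_{j=1}^s|E_j|=\sum_{j=1}^s r_j+s\geq p-1+s=p+s-1 .
\]
Hence \(E_1+\cdots+E_s=\mathbb Z_p\). So for every \(\rho\in\mathbb Z_p\) there are \(t_j\in E_j\) with \(t_1+\cdots+t_s=\rho\).

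For these \(t_j\), positivity holds automatically, since \(H_j(t_j)\geq\beta_{r_j,j}>0\). The score bound follows by summing these inequalities:
\[
\sum_{j=1}^sH_j(t_j)\geq\sum_{j=1}^s\beta_{r_j,j}>L .
\]

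The only point needing care is the superlevel set itself. It must be defined through the chosen ordering, so that ties in the values do not change its cardinality \(r_j+1\). With that choice the argument is a direct application of Theorem~\ref{thm:cauchy-davenport}, and I do not expect any genuine obstacle.
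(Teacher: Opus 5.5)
Your proposal is correct and follows essentially the same route as the paper: take the top $r_j+1$ elements of each $X_j$, apply Cauchy--Davenport to obtain the full sumset $\mathbb Z_p$, and read off positivity and the score bound from $H_j(t_j)\geq\beta_{r_j,j}$. The only cosmetic difference is that the paper uses the full superlevel set $\{t\in X_j: H_j(t)\geq\beta_{r_j,j}\}$, whose size is at least $r_j+1$ once ties are counted, whereas you fix an ordering to get a set of size exactly $r_j+1$; both satisfy the hypothesis of Cauchy--Davenport equally well.
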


\begin{proof}
Set
\[
D_j=\{t\in X_j:H_j(t)\geq\beta_{r_j,j}\}.
\]
Allowing for ties, \(|D_j|\geq r_j+1\), and hence
\[
\sum_{j=1}^{s}|D_j|\geq p-1+s.
\]
Cauchy--Davenport gives \(D_1+\cdots+D_s=\mathbb Z_p\).
Choose \(t_j\in D_j\) with \(\sum_jt_j=\rho\). Then
\[
H_j(t_j)\geq\beta_{r_j,j}>0,
\qquad
\sum_{j=1}^{s}H_j(t_j)
\geq\sum_{j=1}^{s}\beta_{r_j,j}>L.
\]
\end{proof}

Another useful ingredient is a lifting argument from \(p^{\alpha-1}\)
to \(p^\alpha\), combining an averaged intermediate function with
Lemma~\ref{l:ntlem}.

\begin{lemma}
\label{lemma:score-preserving-one-step-lift}
Let \(p\) be an odd prime, let \(k\geq2\) satisfy \(p\nmid k\), and
let \(\alpha\geq2\), \(s\geq4\), and \(L\geq s/2\). For
\(1\leq j\leq s\), let
\[
F_j:\mathbb Z_{p^\alpha,k}\longrightarrow[0,1],
\]
and define
\[
G_j(x)
\coloneqq
\frac1p\sum_{t\in\mathbb Z_p}
F_j(x+t p^{\alpha-1}),
\qquad
x\in\mathbb Z_{p^{\alpha-1},k}.
\]
Suppose that \(n\in\mathbb Z_{p^\alpha}\) and that
\(a_j\in\mathbb Z_{p^{\alpha-1},k}\) satisfy
\[
a_1+\cdots+a_s\equiv n\pmod{p^{\alpha-1}},
\qquad
G_j(a_j)>0\quad(1\leq j\leq s),
\]
and
\[
\sum_{j=1}^{s}G_j(a_j)>L.
\]
Then there exist \(b_j\in\mathbb Z_{p^\alpha,k}\) such that
\[
b_j\equiv a_j\pmod{p^{\alpha-1}}\quad(1\leq j\leq s),
\qquad
b_1+\cdots+b_s\equiv n\pmod{p^\alpha},
\]
and
\[
F_j(b_j)>0\quad(1\leq j\leq s),
\qquad
\sum_{j=1}^{s}F_j(b_j)>L.
\]
\end{lemma}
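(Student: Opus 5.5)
The plan is to reduce the lift to a covering problem in \(\mathbb Z_p\) on the fibres, and then apply Lemma~\ref{lemma:superlevel-covering}. The reduction steps are routine; the rank selection at the end is where I expect the real work.

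\emph{Reduction to the fibres.} Fix representatives \(\tilde a_j\in\mathbb Z_{p^\alpha}\) of the \(a_j\). Since \(a_1+\cdots+a_s\equiv n\pmod{p^{\alpha-1}}\), we can write
\[
n-\sum_j\tilde a_j\equiv\rho\, p^{\alpha-1}\pmod{p^\alpha}
\]
for some \(\rho\in\mathbb Z_p\). By Lemma~\ref{l:ntlem2}, which applies because \(p\nmid k\), every residue \(\tilde a_j+tp^{\alpha-1}\) with \(t\in\mathbb Z_p\) lies in \(\mathbb Z_{p^\alpha,k}\). So for each \(j\) we define
\[
H_j:\mathbb Z_p\to[0,1],\qquad H_j(t)=F_j(\tilde a_j+tp^{\alpha-1}),
\]
whose mean is \(\mu_j\coloneqq G_j(a_j)>0\). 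Taking \(b_j=\tilde a_j+t_jp^{\alpha-1}\), it suffices to find \(t_j\in\mathbb Z_p\) with
\[
t_1+\cdots+t_s=\rho,\qquad H_j(t_j)>0\ (1\leq j\leq s),\qquad \sum_jH_j(t_j)>L.
\]
This is exactly the conclusion of Lemma~\ref{lemma:superlevel-covering} with \(X_j=\mathbb Z_p\). Hence we only need ranks \(r_j\) for the sorted values \(\beta_{0,j}\geq\cdots\geq\beta_{p-1,j}\) of \(H_j\) satisfying
\[
\sum_j r_j\geq p-1,\qquad \beta_{r_j,j}>0,\qquad \sum_j\beta_{r_j,j}>L.
\]

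\emph{Supports are large.} Let \(S_j=\{t:H_j(t)>0\}\). Then \(\mu_j\leq|S_j|/p\), so
\[
\sum_j|S_j|\geq p\sum_j\mu_j>pL\geq ps/2.
\]
Because \(s\geq4\) and \(p\geq3\), we have \(ps/2\geq p+s-2\); indeed \((p-2)(s-2)\geq2\). Therefore \(\sum_j|S_j|\geq p+s-1\). This means there is room to choose ranks \(r_j\leq|S_j|-1\) whose sum reaches \(p-1\).

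\emph{Choosing the ranks by averaging.} The selected sum must stay above \(L\), and for this I would average over a family \(\Omega\) of admissible rank vectors and show that the average selected sum exceeds \(L\). If the supports are full, i.e.\ \(|S_j|=p\) for all \(j\), I would take the \(p\) cyclic shifts \(r_j=(u+c_j)\bmod p\), \(u\in\mathbb Z_p\), with offsets \(c_j\) spread out. Each coordinate then runs uniformly over all ranks, so the average selected sum is \(\sum_j\mu_j>L\). I would also arrange the offsets so that the rank sum is at least \(p-1\) for every \(u\); with \(s\geq4\), for instance two coordinates with opposite offsets already give \(r_1+r_2\geq p-1\) for each \(u\).

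For partial supports, \(\beta_{i,j}=0\) for \(i\geq|S_j|\). So each coordinate should range uniformly over \(\{0,\dots,|S_j|-1\}\), on which the average of \(\beta_{\cdot,j}\) is \(p\mu_j/|S_j|\geq\mu_j\). In that case I would imitate the \(n_j\)-ordering and the case split of Lemma~\ref{l:s-seq-general}. If some coordinates have small support, fix them at rank \(0\), where \(\beta_{0,j}\geq\mu_j\). Then distribute the rank sum \(p-1\) over the coordinates with large support, using cyclic families there. The slack \(L\geq s/2\) together with \(\sum_j|S_j|>ps/2\) should guarantee that enough total support is available.

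I expect this rank-selection step to be the main obstacle. The fallback is to invoke Lemma~\ref{l:s-seq-general} (or Lemma~\ref{l:s-seq-square}) directly with \(n=p\) and \(c=L/s\), after checking that its rank threshold \(kn\) may be relaxed to \(p-1\). The difficulty is that those lemmas require \(c>1/2\) or \(c\geq\gamma_s\), which need not hold here. So the direct averaging argument above is the first thing I would try.
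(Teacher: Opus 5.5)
Your reduction is the same as the paper's: the fibre functions $H_j$, the digit $\rho$, Lemma~\ref{l:ntlem2} to guarantee that every lift is a $k$-th power, and Lemma~\ref{lemma:superlevel-covering} with $X_j=\mathbb Z_p$ and rank threshold $p-1$. The proposal stops, however, at the step that carries all the content. You never produce ranks $r_j$ with $\sum_j r_j\ge p-1$, $\beta_{r_j,j}>0$ and $\sum_j\beta_{r_j,j}>L$. The cyclic-shift averaging is fine only when every $H_j$ is positive everywhere. For partial supports you state an intention: fix the small-support coordinates at rank $0$ and use cyclic families on the rest. You do not check that some such family keeps every rank sum at least $p-1$ while the average score still exceeds $L$. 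Your support count $\sum_j|S_j|\ge p+s-1$ gives room for positive ranks but says nothing about the score. This step is not routine; it is exactly what the multi-case induction in the proof of Lemma~\ref{l:s-seq-general} handles. As written, the argument is incomplete.

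The fallback you discarded closes the gap, and your reason for discarding it is mistaken. The mean $\frac{1}{sp}\sum_{j}\sum_{t}H_j(t)=\frac1s\sum_j G_j(a_j)$ is \emph{strictly} larger than $L/s\ge 1/2$. So choose $\eta$ strictly between $L/s$ and this mean, which gives $\eta>1/2$. Now apply Lemma~\ref{l:s-seq-general} with rank parameter $k=1$, length $n=p\ge 3$ and threshold $\eta$. Its hypotheses hold because $s\ge 4=6\cdot 1-2$ and $\beta_{0,j}>0$, each $H_j$ having positive mean. It returns ranks with $\sum_j r_j\ge p$, which is stronger than the $p-1$ you need, so nothing has to be relaxed. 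It also gives $\beta_{r_j,j}>0$ and $\sum_j\beta_{r_j,j}>s\eta>L$, after which Lemma~\ref{lemma:superlevel-covering} finishes. Lemma~\ref{l:s-seq-square} and $\gamma_s$ play no role here. This is precisely the paper's proof, and it is why the hypothesis $L\ge s/2$ appears in the statement.
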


\begin{proof}
By Lemma~\ref{l:ntlem2}, the elements
\[
a_j+t p^{\alpha-1},
\qquad t\in\mathbb Z_p,
\]
are precisely the lifts of \(a_j\) in
\(\mathbb Z_{p^\alpha,k}\). Choose integer representatives of \(n\)
and the \(a_j\), and put
\[
\rho\equiv
\frac{n-a_1-\cdots-a_s}{p^{\alpha-1}}
\pmod p.
\]
For \(t\in\mathbb Z_p\), set
\[
h_j(t)=F_j(a_j+t p^{\alpha-1}).
\]
Then every \(h_j\) is nonzero and
\[
\frac1{sp}\sum_{j=1}^{s}\sum_{t\in\mathbb Z_p}h_j(t)
=
\frac1s\sum_{j=1}^{s}G_j(a_j)
>
\frac Ls
\geq\frac12.
\]
Choose \(\eta\) strictly between \(L/s\) and the last displayed
average. In particular, \(\eta>1/2\).

For each \(j\), arrange the values of \(h_j\) in non-increasing order:
\[
\{\beta_{i,j}\}_{i=0}^{p-1}
=
\{h_j(t):t\in\mathbb Z_p\}.
\]
Since \(h_j\) is nonzero, \(\beta_{0,j}>0\). Apply
Lemma~\ref{l:s-seq-general} with combinatorial parameter \(k=1\),
sequence length \(p\), and threshold \(\eta\). Its hypotheses hold
because \(s\geq4=6\cdot1-2\). We obtain indices
\(r_1,\ldots,r_s\) such that
\[
\sum_{j=1}^{s}r_j\geq p,
\qquad
\sum_{j=1}^{s}\beta_{r_j,j}>s\eta>L,
\qquad
\beta_{r_j,j}>0.
\]
Lemma~\ref{lemma:superlevel-covering}, applied with
\(X_j=\mathbb Z_p\), \(H_j=h_j\), and target \(\rho\), now gives
\(t_j\in\mathbb Z_p\) such that
\[
t_1+\cdots+t_s=\rho,
\qquad
h_j(t_j)>0,
\qquad
\sum_{j=1}^{s}h_j(t_j)>L.
\]
Put
\[
b_j=a_j+t_jp^{\alpha-1}.
\]
Lemma~\ref{l:ntlem2} gives
\(b_j\in\mathbb Z_{p^\alpha,k}\), while the definition of \(\rho\)
gives
\[
b_1+\cdots+b_s\equiv n\pmod{p^\alpha}.
\]
Finally,
\[
F_j(b_j)=h_j(t_j)>0,
\qquad
\sum_{j=1}^{s}F_j(b_j)>L.
\]
\end{proof}

\subsection{Arithmetic over prime powers}

The only small base case not covered directly by the quadratic
selection lemma occurs when \(k=2\), \(s=5\), and \(p=7\). The
following lemma retains the necessary estimates needed in this paper. 
%at its natural
%range and adds the inhomogeneous form needed here.

\begin{lemma}\label{lemma:arithmetic-square-helper}
Let \(c\in[0,1]\), and let
\[
F_1,\ldots,F_5:\mathbb Z_{7,2}\longrightarrow[0,1]
\]
satisfy
\[
\frac{1}{5|\mathbb Z_{7,2}|}
\sum_{j=1}^{5}\sum_{t\in\mathbb Z_{7,2}}F_j(t)>c.
\]
If \(c\geq4/5\), then, for every \(n\in\mathbb Z_7\), there exist
\(b_j\in\mathbb Z_{7,2}\) such that
\[
n\equiv b_1+\cdots+b_5\pmod7,
\qquad
F_j(b_j)>0\quad(1\leq j\leq5),
\]
and
\[
F_1(b_1)+\cdots+F_5(b_5)>5c.
\]
If \(F_1=\cdots=F_5=f\), the same conclusion holds already for
\(c\geq2/3\).
\end{lemma}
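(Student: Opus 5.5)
Here \(\mathbb Z_{7,2}=\{1,2,4\}\) has only three elements. The rank-based route of Lemmas~\ref{l:s-seq-square} and~\ref{lemma:superlevel-covering} does not apply: Cauchy--Davenport would need a selected rank sum of at least \(p-1=6\) with \(n=3\) and \(s=5\), which Lemma~\ref{l:diffsetsbasegen} does not supply. I would therefore argue directly by averaging over explicit families of representations. The key observation is positivity for free in the inhomogeneous case. If \(c\geq4/5\), any choice with \(F_1(b_1)+\cdots+F_5(b_5)>5c\geq4\) automatically has every \(F_j(b_j)>0\), since the other four terms contribute at most \(4\). So it suffices, for each \(n\in\mathbb Z_7\), to find a tuple \(b\in\{1,2,4\}^5\) with \(\sum_j b_j\equiv n\) and \(\sum_jF_j(b_j)>5c\).

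For the inhomogeneous case I plan to exhibit, for each \(n\), a multiset \(\mathcal T_n\) of tuples in \(\{1,2,4\}^5\) with two properties. First, every tuple in \(\mathcal T_n\) sums to \(n\) modulo \(7\). Second, \(\mathcal T_n\) is \emph{column-balanced}: for each coordinate \(j\), the values \(1,2,4\) occur equally often in the \(j\)-th coordinate. Averaging the selected sum over \(\mathcal T_n\) then gives exactly \(\sum_j\frac13\sum_tF_j(t)>5c\), so some tuple in \(\mathcal T_n\) works.

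Summing all tuples shows \(|\mathcal T_n|\,n\equiv\frac{|\mathcal T_n|}{3}\cdot5\cdot7\equiv0\pmod 7\). Hence for \(n\neq0\) one needs \(7\mid|\mathcal T_n|\); I would aim for \(|\mathcal T_n|=21\). For \(n=0\) three tuples suffice, namely the three cyclic rotations of a Latin-type pattern with each column summing to \(0\); for instance, columns can be rotations of \((1,2,4)\) chosen so that each row sums to \(0\) modulo \(7\).

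Multiplication by \(2\) permutes \(\{1,2,4\}\) and the classes \(\{1,2,4\}\), \(\{3,5,6\}\). Applying it to every entry preserves column balance and sends a family for \(n\) to a family for \(2n\). So only \(n\in\{0,1,3\}\) need explicit construction. Producing these balanced families, or equivalently verifying a small linear-programming/design feasibility statement, is the main obstacle. I would first try to build \(\mathcal T_n\) as a union of orbits of coordinate-wise cyclic shifts \(b_j\mapsto 2^{\epsilon_j}b_j\) applied to a few base representations. If that stalls, I would fall back to a finite case analysis on which entries of \(F_j\) are small, in the style of the \(s\in\{7,8,9\}\) base cases of Lemma~\ref{l:s-seq-square}.

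For the homogeneous case \(c\geq2/3\), positivity follows because \(f\) is positive everywhere: if \(f\) vanished at one point, the other two values would have to sum to more than \(3c\geq2\), which is impossible. It then suffices to find tuples summing to \(n\) whose combined multiset of entries is balanced over \(\{1,2,4\}\); coordinates no longer matter, since all \(F_j=f\). This is much easier. For example, three tuples using fifteen entries with each value appearing five times and each tuple summing to \(n\) are enough, provided \(3n\equiv 5\cdot7\equiv0\), i.e.\ \(n=0\). For \(n\neq0\) I would take \(21\) tuples (\(105\) entries, \(35\) of each value), obtained from short explicit lists together with the same doubling symmetry. Averaging gives a tuple with \(\sum_jf(b_j)>5c\), completing the proof.
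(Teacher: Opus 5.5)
Your framework is the paper's: a probability distribution on representations of $n$ in which every coordinate is uniform on $\{1,2,4\}$ (your column-balanced $\mathcal T_n$), followed by averaging. Positivity then comes from $5c\geq4$ in the inhomogeneous case and from $f>0$ on all of $\mathbb Z_{7,2}$ in the homogeneous case.

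The pieces around the construction are fine: the positivity arguments, the count $7\mid|\mathcal T_n|$ for $n\neq0$, and the doubling reduction to $n\in\{0,1,3\}$. Your $n=0$ family also exists, e.g.\ the rows $(1,1,1,2,2)$, $(2,2,2,4,4)$, $(4,4,4,1,1)$.

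\textbf{The gap.} The one step that carries the content of the lemma is the existence of balanced families for $n=1,3$, together with their pooled analogue in the homogeneous case. You do not carry this out. The tool you propose for it does not keep you among representations of $n$. Coordinate-wise maps $b_j\mapsto2^{\epsilon_j}b_j$ with non-constant $\epsilon$ change $\sum_jb_j \bmod 7$ in general; for example $(1,1,1,2,2)\mapsto(2,1,1,2,2)$ moves the residue from $0$ to $1$. A constant shift sends $n$ to $2^{\epsilon}n$. The fallback case analysis is not specified. As written, the proof is incomplete at exactly this point.

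\textbf{The missing idea.} Symmetrize over permutations of the five positions rather than over values. Suppose a representation with $a$ ones, $b$ twos and $d$ fours is put in uniformly random order. Then each coordinate has law $(a/5,b/5,d/5)$. Column balance therefore reduces to a condition on count vectors alone: you need admissible vectors, meaning $a+b+d=5$ and $a+2b+4d\equiv n\pmod7$, whose convex hull contains $(5/3,5/3,5/3)$.

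For each $n$ there are exactly three admissible vectors, and the check is immediate:
\[
\Bigl(\tfrac53,\tfrac53,\tfrac53\Bigr)
=\tfrac13\bigl[(3,2,0)+(0,3,2)+(2,0,3)\bigr]
=\tfrac1{21}\bigl[8(2,3,0)+2(4,0,1)+11(1,1,3)\bigr]
=\tfrac1{21}\bigl[(0,5,0)+5(1,0,4)+15(2,2,1)\bigr]
\]
for $n=0,1,3$ respectively. Doubling acts on count vectors by $(a,b,d)\mapsto(d,a,b)$, which supplies the remaining residues. This is precisely the paper's table, used there as a two-stage random choice: first a count vector with these weights, then a uniform ordering.

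The same certificate settles your homogeneous case, because there only the pooled entry counts need to be balanced, which is the same convex-hull condition. You also do not need $|\mathcal T_n|=21$ exactly; a distribution with these rational weights suffices.
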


\begin{proof}
Since $\mathbb Z_{7,2}=\{1,2,4\},$
a representation of \(n\in\mathbb Z_7\) as a sum of five elements of
\(\mathbb Z_{7,2}\) is determined by a triple $(a,b,d)\in\mathbb Z_{\geq0}^3$
satisfying
\[
    a+b+d=5,
    \qquad
    a+2b+4d\equiv n\pmod7,
\]
where \(a,b,d\) are the numbers of occurrences of \(1,2,4\),
respectively.

We first record the following balancing property. For every
\(n\in\mathbb Z_7\), there are three admissible triples and
nonnegative weights, of total mass \(1\), whose weighted average is
\[
    \left(\frac53,\frac53,\frac53\right).
\]
The required triples and weights are given in the following table;
the weights in the third column correspond, in order, to the triples
in the second column:
\[
\begin{array}{c|c|c}
n&
\text{admissible triples }(a,b,d)&
\text{weights}\\ \hline
0&(0,3,2),(2,0,3),(3,2,0)
  &\frac13(1,1,1)\\
1&(1,1,3),(2,3,0),(4,0,1)
  &\frac1{21}(11,8,2)\\
2&(0,2,3),(1,4,0),(3,1,1)
  &\frac1{21}(8,2,11)\\
3&(0,5,0),(1,0,4),(2,2,1)
  &\frac1{21}(1,5,15)\\
4&(0,1,4),(1,3,1),(3,0,2)
  &\frac1{21}(2,11,8)\\
5&(0,4,1),(2,1,2),(5,0,0)
  &\frac1{21}(5,15,1)\\
6&(0,0,5),(1,2,2),(4,1,0)
  &\frac1{21}(1,15,5)
\end{array}
\]
Indeed, every displayed triple satisfies
\[
    a+b+d=5,
    \qquad
    a+2b+4d\equiv n\pmod7,
\]
and direct calculation shows that, in every row, the weighted
average of the three triples is $\left(\frac53,\frac53,\frac53\right).$

For example, consider the row \(n=1\).  The three triples are
\[
    (1,1,3),\qquad (2,3,0),\qquad (4,0,1),
\]
with weights \(11/21,8/21,2/21\).  Each triple has coordinate sum \(5\), and
\[
    1+2+12\equiv 2+6\equiv 4+4\equiv1\pmod7,
\]
so all three are admissible representations of \(1\).  Moreover,
\[
    \frac1{21}\bigl(11(1,1,3)+8(2,3,0)+2(4,0,1)\bigr)
    =\frac1{21}(35,35,35)
    =\left(\frac53,\frac53,\frac53\right).
\]
The other rows are checked in the same way.

Fix \(n\in\mathbb Z_7\). Choose one of the three triples
\((a,b,d)\) in the row corresponding to \(n\), according to the
displayed weights. Conditional on the choice of \((a,b,d)\), choose
uniformly at random an ordering
\[
    (X_1,\ldots,X_5)
\]
of the multiset consisting of \(a\) copies of \(1\), \(b\) copies of
\(2\), and \(d\) copies of \(4\). Every realization then satisfies
\[
    X_1+\cdots+X_5\equiv n\pmod7.
\]

For each fixed coordinate \(j\), conditional on \((a,b,d)\), we have
\[
    \mathbb P(X_j=1\mid a,b,d)=\frac a5,
    \qquad
    \mathbb P(X_j=2\mid a,b,d)=\frac b5,
    \qquad
    \mathbb P(X_j=4\mid a,b,d)=\frac d5.
\]
Since the weighted average of the count vectors is
\((5/3,5/3,5/3)\), we have
\[
    \mathbb E[a]=\mathbb E[b]=\mathbb E[d]=\frac53.
\]
Therefore, for every fixed \(j\),
\[
    \mathbb P(X_j=1)
    =\mathbb E\!\left[\mathbb P(X_j=1\mid a,b,d)\right]
    =\frac15\mathbb E[a]
    =\frac13,
\]
and similarly
\[
    \mathbb P(X_j=2)=\mathbb P(X_j=4)=\frac13,
\]
for every \(1\leq j\leq5\). Consequently,
\[
\begin{aligned}
    \mathbb E\left[\sum_{j=1}^{5}F_j(X_j)\right]
    &=
    \sum_{j=1}^{5}
    \sum_{t\in\mathbb Z_{7,2}}
        F_j(t)\mathbb P(X_j=t)=
    \frac13
    \sum_{j=1}^{5}
    \sum_{t\in\mathbb Z_{7,2}}F_j(t).
\end{aligned}
\]
The hypothesis gives
\[
    \sum_{j=1}^{5}
    \sum_{t\in\mathbb Z_{7,2}}F_j(t)
    >
    5|\mathbb Z_{7,2}|c
    =
    15c,
\]
and hence
\[
    \mathbb E\left[\sum_{j=1}^{5}F_j(X_j)\right]>5c.
\]
Therefore, at least one realization
\((b_1,\ldots,b_5)\) satisfies
\[
    b_1+\cdots+b_5\equiv n\pmod7
\]
and
\[
    F_1(b_1)+\cdots+F_5(b_5)>5c.
\]

Since \(c\geq4/5\), the latter sum is strictly greater than \(4\).
If \(F_j(b_j)=0\) for some \(j\), then, because every \(F_i\) takes
values in \([0,1]\), we would have
\[
    F_1(b_1)+\cdots+F_5(b_5)\leq4,
\]
a contradiction. Thus
\[
    F_j(b_j)>0
    \qquad (1\leq j\leq5),
\]
which proves the inhomogeneous assertion.

Now suppose that
\[
    F_1=\cdots=F_5=f
\]
and that \(c>2/3\), while the same mean condition holds. In this case,
\[
    \frac13\bigl(f(1)+f(2)+f(4)\bigr)>c,
\]
so
\[
    f(1)+f(2)+f(4)>3c>2.
\]
Since \(0\leq f(t)\leq1\), this implies
\[
    f(1),f(2),f(4)>0;
\]
indeed, if one of these values were zero, their sum would be at most
\(2\).

Applying the same random construction as above gives
\[
\begin{aligned}
    \mathbb E\left[\sum_{j=1}^{5}f(X_j)\right]
    &=
    \frac53\bigl(f(1)+f(2)+f(4)\bigr)\\
    &>5c.
\end{aligned}
\]
Hence some realization \((b_1,\ldots,b_5)\) satisfies
\[
    b_1+\cdots+b_5\equiv n\pmod7,
    \qquad
    f(b_1)+\cdots+f(b_5)>5c.
\]
Because \(f\) is positive on all three elements of
\(\mathbb Z_{7,2}\), we also have
\[
    f(b_j)>0
    \qquad (1\leq j\leq5).
\]
This proves the homogeneous assertion.
\end{proof}

We next establish the prime-power form of the covering result.

\begin{lemma}\label{lemma:arithmetic-inhomogeneous-prime-power}
Let \(p\) be an odd prime, let \(k\geq2\), \(s\geq1\), and
\(\alpha\geq1\) be integers with \(p\nmid k\), and let
\(c\in[0,1]\). Put
\[
m=|\mathbb Z_{p,k}|
=\frac{p-1}{\gcd(k,p-1)}.
\]
Let
\[
F_1,\ldots,F_s:\mathbb Z_{p^\alpha,k}\longrightarrow[0,1]
\]
be functions which are not identically zero. Suppose that one of the
following alternatives holds:
\begin{itemize}
    \item \(k>2\), \(m\geq3\), \(s\geq6k-2\), and \(c>\frac12\);
    \item \(k=2\), \(s\geq6\), \(m\geq3\), and
    \(c\geq\gamma_s\);
    \item \(k=2\), \(s=5\), \(m\geq3\), and \(c\geq\frac45\).
\end{itemize}
Assume that
\begin{align}\label{e:inhomogeneous-prime-power-mean}
\frac{1}{s|\mathbb Z_{p^\alpha,k}|}
\sum_{j=1}^{s}\sum_{x\in\mathbb Z_{p^\alpha,k}}F_j(x)>c.
\end{align}
Then, for every \(n\in\mathbb Z_{p^\alpha}\), there exist
\(b_j\in\mathbb Z_{p^\alpha,k}\) such that
\[
n\equiv b_1+\cdots+b_s\pmod{p^\alpha},
\qquad
F_j(b_j)>0\quad(1\leq j\leq s),
\]
and
\[
\sum_{j=1}^{s}F_j(b_j)>sc.
\]
\end{lemma}

\begin{proof}
We use induction on \(\alpha\).

First let \(\alpha=1\). If \(k=2\), \(s=5\), and \(m=3\), then
\[
m=\frac{p-1}{2}=3,
\]
so \(p=7\), and the conclusion follows directly from
Lemma~\ref{lemma:arithmetic-square-helper}. We may therefore assume
that this case does not occur.

For \(1\leq j\leq s\), arrange the values of \(F_j\) in
non-increasing order:
\[
\{a_{i,j}\}_{i=0}^{m-1}
=
\{F_j(x):x\in\mathbb Z_{p,k}\}.
\]
Since \(F_j\not\equiv0\), we have \(a_{0,j}>0\).

If \(k>2\), apply Lemma~\ref{l:s-seq-general}. If \(k=2\) and
\(s\geq6\), apply Lemma~\ref{l:s-seq-square}, noting that \(m\geq3\).
Finally, if \(k=2\) and \(s=5\), then \(p\neq7\). Since \(p\) is an
odd prime and \(m=(p-1)/2\geq3\), this gives \(p\geq11\) and hence
\(m\geq5\), so Lemma~\ref{l:s-seq-square} again applies. In every
case we obtain indices \(i_1,\ldots,i_s\) such that
\[
\sum_{j=1}^{s}i_j\geq km\geq p-1,
\qquad
\sum_{j=1}^{s}a_{i_j,j}>sc,
\qquad
a_{i_j,j}>0.
\]
Here \(km\geq p-1\) follows from
\(\gcd(k,p-1)\leq k\). Lemma~\ref{lemma:superlevel-covering},
applied with \(X_j=\mathbb Z_{p,k}\), \(H_j=F_j\), \(L=sc\), and
target \(n\), proves the base case.

Now let \(\alpha\geq2\) and assume the assertion at level
\(\alpha-1\). By Lemma~\ref{l:ntlem2}, every element of
\(\mathbb Z_{p^{\alpha-1},k}\) has exactly the \(p\) lifts
\[
x+t p^{\alpha-1},
\qquad t\in\mathbb Z_p,
\]
in \(\mathbb Z_{p^\alpha,k}\). Define
\[
G_j(x)
\coloneqq
\frac1p\sum_{t\in\mathbb Z_p}
F_j(x+t p^{\alpha-1}).
\]
Every \(G_j\) is nonzero, and
\eqref{e:inhomogeneous-prime-power-mean} implies
\[
\frac{1}{s|\mathbb Z_{p^{\alpha-1},k}|}
\sum_{j=1}^{s}
\sum_{x\in\mathbb Z_{p^{\alpha-1},k}}G_j(x)>c.
\]
By induction, applied to the reduction of \(n\) modulo
\(p^{\alpha-1}\), there exist
\(a_j\in\mathbb Z_{p^{\alpha-1},k}\) such that
\[
n\equiv a_1+\cdots+a_s\pmod{p^{\alpha-1}},
\qquad
G_j(a_j)>0\quad(1\leq j\leq s),
\]
and
\[
\sum_{j=1}^{s}G_j(a_j)>sc.
\]
All three alternatives in the statement have \(c\geq1/2\), and
\(s\geq5\). Hence Lemma~\ref{lemma:score-preserving-one-step-lift},
applied with \(L=sc\), produces the required lifts modulo
\(p^\alpha\). This closes the induction.
\end{proof}

\subsection{Passage to general moduli}

For fixed \(k,s,c\), write \(\mathcal C_{k,s,c}(q)\) for the following
assertion: whenever
\[
F_1,\ldots,F_s:\mathbb Z_{q,k}\longrightarrow[0,1]
\]
are not identically zero and satisfy
\[
\frac{1}{s|\mathbb Z_{q,k}|}
\sum_{j=1}^{s}\sum_{x\in\mathbb Z_{q,k}}F_j(x)>c,
\]
then, for every \(n\in\mathbb Z_q\), there exist
\(b_j\in\mathbb Z_{q,k}\) such that
\[
n\equiv b_1+\cdots+b_s\pmod q,
\qquad
F_j(b_j)>0\quad(1\leq j\leq s),
\]
and
\[
\sum_{j=1}^{s}F_j(b_j)>sc.
\]

The averaging argument used to add a new Chinese-remainder coordinate
is recorded once in the following tensorization lemma.

\begin{lemma}[Chinese Remainder Theorem tensorization]\label{lemma:crt-tensorization}
Let \(k\geq2\) and \(s\geq1\) be integers, let \(c\in[0,1]\), and
let \(q_1,q_2\) be coprime positive integers. If
\(\mathcal C_{k,s,c}(q_1)\) and
\(\mathcal C_{k,s,c}(q_2)\) hold, then
\(\mathcal C_{k,s,c}(q_1q_2)\) holds.
\end{lemma}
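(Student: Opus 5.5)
We reduce to the two factors by the Chinese remainder decomposition of Lemma~\ref{l:ntlem}: identify \(\mathbb Z_{q_1q_2,k}\) with \(\mathbb Z_{q_1,k}\times\mathbb Z_{q_2,k}\) and write each \(x\) as a pair \((y,z)\). Fix \(F_1,\ldots,F_s\) on \(\mathbb Z_{q_1q_2,k}\), not identically zero, with mean greater than \(c\), and fix \(n\equiv(n_1,n_2)\). The idea is to first solve the problem modulo \(q_1\) for suitable averaged functions, and then, with the \(q_1\)-coordinates frozen, solve a fibre problem modulo \(q_2\).

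For the first step, define the fibre averages
\[
G_j(y)=\frac1{|\mathbb Z_{q_2,k}|}\sum_{z\in\mathbb Z_{q_2,k}}F_j(y,z),
\qquad y\in\mathbb Z_{q_1,k}.
\]
Each \(G_j\) takes values in \([0,1]\) and is not identically zero, because \(F_j\) is not. Since the fibres all have the same size, the total average of the \(G_j\) equals that of the \(F_j\), so it exceeds \(c\). Applying \(\mathcal C_{k,s,c}(q_1)\) at the target \(n_1\) gives \(y_j\in\mathbb Z_{q_1,k}\) with
\[
\sum_j y_j\equiv n_1\pmod{q_1},\qquad G_j(y_j)>0,\qquad \sum_j G_j(y_j)>sc.
\]

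For the second step, set \(H_j(z)=F_j(y_j,z)\) on \(\mathbb Z_{q_2,k}\). The condition \(G_j(y_j)>0\) says exactly that \(H_j\) is not identically zero. The mean of \(H_j\) is \(G_j(y_j)\), so
\[
\frac1{s|\mathbb Z_{q_2,k}|}\sum_j\sum_z H_j(z)=\frac1s\sum_j G_j(y_j)>c.
\]
Applying \(\mathcal C_{k,s,c}(q_2)\) at the target \(n_2\) gives \(z_j\) with \(\sum_j z_j\equiv n_2\pmod{q_2}\), \(H_j(z_j)>0\), and \(\sum_j H_j(z_j)>sc\). Put \(b_j=(y_j,z_j)\), which lies in \(\mathbb Z_{q_1q_2,k}\) by Lemma~\ref{l:ntlem}. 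Then \(\sum_j b_j\equiv n\pmod{q_1q_2}\) by the Chinese remainder theorem, \(F_j(b_j)=H_j(z_j)>0\), and \(\sum_j F_j(b_j)>sc\). This is \(\mathcal C_{k,s,c}(q_1q_2)\).

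The only point needing care is that the \(q_1\)-step must preserve the full score \(sc\), not merely positivity. This is exactly why \(\mathcal C\) carries the strict score inequality \(\sum_j F_j(b_j)>sc\) rather than just the mean hypothesis: it is what makes the fibre functions \(H_j\) satisfy the hypothesis of \(\mathcal C_{k,s,c}(q_2)\). Otherwise the argument is bookkeeping: uniform fibre sizes, and the trivial case where \(q_1\) or \(q_2\) equals \(1\) under the convention \(\mathbb Z_{1,k}=\{0\}\).
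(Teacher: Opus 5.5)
Your proposal is correct and takes essentially the same approach as the paper. You average over the \(\mathbb Z_{q_2,k}\)-fibres, apply \(\mathcal C_{k,s,c}(q_1)\), freeze the first coordinates, and apply \(\mathcal C_{k,s,c}(q_2)\) to the functions \(z\mapsto F_j(y_j,z)\); the strict score \(\sum_j G_j(y_j)>sc\) from the first step supplies exactly the mean hypothesis needed in the second.
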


\begin{proof}
By Lemma~\ref{l:ntlem}, identify
\[
\mathbb Z_{q_1q_2}
\cong
\mathbb Z_{q_1}\oplus\mathbb Z_{q_2},
\qquad
\mathbb Z_{q_1q_2,k}
\cong
\mathbb Z_{q_1,k}\times\mathbb Z_{q_2,k}.
\]
Let \(F_1,\ldots,F_s\) be a family on
\(\mathbb Z_{q_1q_2,k}\) satisfying the hypotheses in the definition
of \(\mathcal C_{k,s,c}(q_1q_2)\), and write
\(n=(n_1,n_2)\) under the first identification. For
\(x\in\mathbb Z_{q_1,k}\), define
\[
g_j(x)
\coloneqq
\frac{1}{|\mathbb Z_{q_2,k}|}
\sum_{y\in\mathbb Z_{q_2,k}}F_j(x,y).
\]
Every \(g_j\) is nonzero, and
\[
\frac{1}{s|\mathbb Z_{q_1,k}|}
\sum_{j=1}^{s}\sum_{x\in\mathbb Z_{q_1,k}}g_j(x)>c.
\]
Property \(\mathcal C_{k,s,c}(q_1)\) therefore supplies
\(a_j\in\mathbb Z_{q_1,k}\) such that
\[
n_1\equiv a_1+\cdots+a_s\pmod{q_1},
\qquad
g_j(a_j)>0\quad(1\leq j\leq s),
\]
and
\[
\sum_{j=1}^{s}g_j(a_j)>sc.
\]
For \(1\leq j\leq s\), define
\[
h_j(y)=F_j(a_j,y),
\qquad y\in\mathbb Z_{q_2,k}.
\]
Since
\[
\frac{1}{|\mathbb Z_{q_2,k}|}
\sum_{y\in\mathbb Z_{q_2,k}}h_j(y)=g_j(a_j)>0,
\]
none of the \(h_j\) is identically zero. Moreover,
\[
\frac{1}{s|\mathbb Z_{q_2,k}|}
\sum_{j=1}^{s}\sum_{y\in\mathbb Z_{q_2,k}}h_j(y)
=
\frac1s\sum_{j=1}^{s}g_j(a_j)>c.
\]
Property \(\mathcal C_{k,s,c}(q_2)\) now gives
\(b_j\in\mathbb Z_{q_2,k}\) such that
\[
n_2\equiv b_1+\cdots+b_s\pmod{q_2},
\qquad
h_j(b_j)>0\quad(1\leq j\leq s),
\]
and
\[
\sum_{j=1}^{s}h_j(b_j)>sc.
\]
Under the Chinese remainder identification, the pairs
\((a_j,b_j)\) belong to \(\mathbb Z_{q_1q_2,k}\), sum to \(n\), and
satisfy
\[
F_j(a_j,b_j)=h_j(b_j)>0,
\qquad
\sum_{j=1}^{s}F_j(a_j,b_j)>sc.
\]
This proves \(\mathcal C_{k,s,c}(q_1q_2)\).
\end{proof}

\begin{proof}[Proof of Lemma~\ref{lemma:arithmetic-k}]
The case \(q=1\) is immediate. Suppose that \(q>1\), and let \(p\)
be a prime divisor of \(q\). Since \(2\mid R_k\), the condition
\((q,R_k)=1\) implies that \(p\) is odd.

If \(k>2\), then \(p\nmid k\) and
\[
\frac{p-1}{\gcd(k,p-1)}\geq3.
\]
Indeed, if the last quotient were at most \(2\), then
\((p-1)\mid2k\), and hence \(p\mid R_k\), a contradiction.
If \(k=2\), then \((q,30)=1\) implies \(p\geq7\), and
\[
|\mathbb Z_{p,2}|=\frac{p-1}{2}\geq3.
\]
Thus, for every prime power \(p^\alpha\Vert q\), the hypotheses of
Lemma~\ref{lemma:arithmetic-inhomogeneous-prime-power} hold. Therefore
\(\mathcal C_{k,s,c}(p^\alpha)\) holds for every prime-power factor
of \(q\).

The prime-power factors of \(q\) are pairwise coprime. Repeated
application of Lemma~\ref{lemma:crt-tensorization} now gives
\(\mathcal C_{k,s,c}(q)\), which is precisely the assertion of the
lemma.
\end{proof}

\begin{proof}[Proof of Theorem~\ref{lemma:arithmetic-general-k}]
Let \(f=\mathbf1_A\). If \(k>2\), choose \(c\) so that
\[
\frac12<c<
\frac{|A|}{|\mathbb Z_{q,k}|}.
\]
If \(k=2\), then \(s\geq6k-2=10\), so
\(\gamma_s=1/2\); in this case take \(c=1/2\).
Apply the homogeneous specialization of
Lemma~\ref{lemma:arithmetic-k}. For every \(n\in\mathbb Z_q\), it
supplies \(b_1,\ldots,b_s\in\mathbb Z_{q,k}\) such that
\[
n\equiv b_1+\cdots+b_s\pmod q,
\qquad
f(b_j)>0\quad(1\leq j\leq s).
\]
Since \(f=\mathbf1_A\), every selected \(b_j\) belongs to \(A\).
Therefore \(n\in sA\), and hence \(sA=\mathbb Z_q\).
\end{proof}

For later use, we record explicitly the small-\(s\) quadratic
specialization of Lemma~\ref{lemma:arithmetic-k}.

\begin{corollary}\label{cor:arithmetic-square-inhomogeneous-30}
Let \(6\leq s\leq9\), let \(q\) be a positive integer with
\((q,30)=1\), and let \(c\geq5/s\). Suppose that
\[
f_1,\ldots,f_s:\mathbb Z_{q,2}\longrightarrow[0,1]
\]
are not identically zero and satisfy
\[
\frac{1}{s|\mathbb Z_{q,2}|}
\sum_{j=1}^{s}\sum_{x\in\mathbb Z_{q,2}}f_j(x)>c.
\]
Then, for every \(n\in\mathbb Z_q\), there exist
\(b_j\in\mathbb Z_{q,2}\) such that
\[
n\equiv b_1+\cdots+b_s\pmod q,
\qquad
f_j(b_j)>0\quad(1\leq j\leq s),
\]
and
\[
f_1(b_1)+\cdots+f_s(b_s)>sc.
\]
\end{corollary}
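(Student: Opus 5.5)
This corollary is the case \(k=2\), \(6\leq s\leq9\) of Lemma~\ref{lemma:arithmetic-k}, so the plan is simply to check that we are in its second alternative and read off the conclusion. That alternative asks for \(k=2\), \(s\geq5\), \(c\geq\gamma_s\) and \((q,R_2)=1\). Since \(R_2=30\), the coprimality hypothesis is exactly the one assumed here. For \(6\leq s\leq9\), definition \eqref{eq:gamma-s} gives \(\gamma_s=5/s\), so the condition \(c\geq5/s\) is precisely \(c\geq\gamma_s\). The functions \(f_j\) take values in \([0,1]\), are not identically zero, and satisfy the same averaged mean hypothesis. The conclusion of Lemma~\ref{lemma:arithmetic-k}, namely \(\frac1s\sum_j f_j(b_j)>c\), is the same as \(f_1(b_1)+\cdots+f_s(b_s)>sc\).

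If one wants to see why the lemma applies in this range, recall the structure of its proof. Every prime \(p\mid q\) satisfies \(p\geq7\), so \(|\mathbb Z_{p,2}|=(p-1)/2\geq3\) and \(p\nmid 2\). Lemma~\ref{lemma:arithmetic-inhomogeneous-prime-power} therefore applies under its second alternative (\(k=2\), \(s\geq6\), \(m\geq3\), \(c\geq\gamma_s\)).

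\begin{itemize}
\item Its base case \(\alpha=1\) rests on Lemma~\ref{l:s-seq-square} with \(n=m\geq3\), giving index sum \(\geq 2m=p-1\), followed by the superlevel covering Lemma~\ref{lemma:superlevel-covering}.
\item Its lifting step uses Lemma~\ref{lemma:score-preserving-one-step-lift} with \(L=sc\geq5\geq s/2\).
\end{itemize}

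The prime-power results are then combined by repeated use of Lemma~\ref{lemma:crt-tensorization}, which gives \(\mathcal C_{2,s,c}(q)\).

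There is no genuine obstacle. The only point to watch is the boundary case \(c=5/s\), which is allowed by the non-strict inequality, together with the strict mean hypothesis. Lemma~\ref{l:s-seq-square} and Lemma~\ref{lemma:arithmetic-k} are both stated for \(c\geq\gamma_s\) with a strict mean hypothesis, so nothing further is needed.
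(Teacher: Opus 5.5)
Your proposal is correct and is exactly the paper's argument: the corollary is the case \(k=2\) of Lemma~\ref{lemma:arithmetic-k}, using \(R_2=30\) and \(\gamma_s=5/s\) for \(6\leq s\leq9\). The one hypothesis you did not check is that Lemma~\ref{lemma:arithmetic-k} is stated for \(c\in[0,1]\), whereas the corollary allows any \(c\geq5/s\). The paper handles this by noting that when \(c\geq1\) the mean hypothesis cannot hold, since every \(f_j\leq1\), so the statement is vacuous in that range.
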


\begin{proof}
If \(c\geq1\), the mean hypothesis is impossible. Otherwise
\(c\in[0,1)\), and for \(6\leq s\leq9\) one has
\(\gamma_s=5/s\). The result is therefore the case \(k=2\) of
Lemma~\ref{lemma:arithmetic-k}.
\end{proof}

\section{\texorpdfstring{Arithmetic II: the quadratic case at \(5\)}
{Arithmetic II: the quadratic case at 5}}
\label{sec:quadratic-local-covering}

When \(k=2\), Lemma~\ref{lemma:arithmetic-k} treats all the
cases \(s=5,\ldots,9\) provided that the modulus is relatively prime
to \(30\).  It remains to account for the factor \(5\).  Set
\begin{equation}\label{eq:local-scores}
    L_s=
    \begin{cases}
        17/5,&s=5,\\
        14/3,&s=6,7,\\
        5,&s=8,9.
    \end{cases}
\end{equation}
The constants \(L_s\) are the local scores used in the analytic
transference argument.

\begin{lemma}\label{lemma:arithmetic-square}
    Let \(q\) be a positive integer with \((q,6)=1\), and let
    \(s\in\{5,6,7,8,9\}\).  Suppose that
    \(f\colon\mathbb Z_{q,2}\to[0,1]\) satisfies
    \[
        \frac{1}{|\mathbb Z_{q,2}|}
        \sum_{t\in\mathbb Z_{q,2}}f(t)>\gamma_s.
    \]
    Then, for every \(n\in\mathbb Z_q\), there exist
    \(b_1,\ldots,b_s\in\mathbb Z_{q,2}\) such that
    \[
        n\equiv b_1+\cdots+b_s\pmod q,
        \qquad
        \prod_{j=1}^s f(b_j)>0,
    \]
    and
    \[
        \sum_{j=1}^s f(b_j)>L_s.
    \]
\end{lemma}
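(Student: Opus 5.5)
The plan is to split \(q=5^\beta q'\) with \((q',30)=1\) and \(\beta\geq0\), and to treat the two Chinese-remainder coordinates in the order \(q'\) first, then \(5^\beta\). The point of this order is that all the loss of score (from \(s\gamma_s\) down to \(L_s\)) should happen at the prime \(5\), at the very end. The \(q'\)-step by Lemma~\ref{lemma:arithmetic-k} needs a full threshold \(c\geq\gamma_s\), and it returns selected values with total \(>sc\), which is the input the \(5\)-step needs.

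\emph{Step 1 (the \(q'\)-coordinate).} Under Lemma~\ref{l:ntlem}, write \(f(x,y)\) with \(x\in\mathbb Z_{q',2}\) and \(y\in\mathbb Z_{5^\beta,2}\). Put \(g(x)=|\mathbb Z_{5^\beta,2}|^{-1}\sum_y f(x,y)\); its mean is \(>\gamma_s\). We may assume \(f\not\equiv0\), and drop the \(x\) with \(g(x)=0\) only through the positivity conclusion. Apply Lemma~\ref{lemma:arithmetic-k} (case \(k=2\), with some \(c\geq\gamma_s\) strictly below the mean) to \(f_1=\cdots=f_s=g\) and the target \(n\bmod q'\). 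This gives \(a_1,\dots,a_s\) with sum \(\equiv n\pmod{q'}\), each \(g(a_j)>0\), and \(\sum_j g(a_j)>s\gamma_s\). As in Lemma~\ref{lemma:crt-tensorization}, set \(h_j(y)=f(a_j,y)\) on \(\mathbb Z_{5^\beta,2}\). These are nonzero and satisfy \(\sum_j \operatorname{mean}(h_j)>s\gamma_s\). It then suffices to prove an inhomogeneous statement at \(5^\beta\): select \(y_j\) with \(\sum y_j\equiv n\pmod{5^\beta}\), \(h_j(y_j)>0\), and \(\sum_j h_j(y_j)>L_s\).

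\emph{Step 2 (lifting from \(5\) to \(5^\beta\)).} Since \(5\nmid2\), Lemma~\ref{l:ntlem2} applies. Average each \(h_j\) over the fibres of the reduction \(\mathbb Z_{5^\beta,2}\to\mathbb Z_{5,2}\), solve the problem mod \(5\), and lift one level at a time using Lemma~\ref{lemma:score-preserving-one-step-lift} with \(L=L_s\). That lemma requires \(s\geq4\) and \(L_s\geq s/2\), and these hold: \(17/5\geq5/2\), \(14/3\geq7/2\), and \(5\geq9/2\). So everything reduces to a two-point inhomogeneous estimate on \(\mathbb Z_{5,2}=\{1,4\}\).

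\emph{Step 3 (the two-point problem, the main obstacle).} Each function \(j\) is now a pair \((x_j,y_j)\in[0,1]^2\), the values at \(1\) and \(4\), not both zero. We have \(X+Y>2s\gamma_s\), where \(X=\sum x_j\) and \(Y=\sum y_j\). Choosing \(1\) for a set \(S\) of \(u\) indices and \(4\) for the rest gives residue \(u-(s-u)=2u-s\). So \(u\) is forced modulo \(5\), namely \(u\equiv3(n+s)\pmod5\), and \(u\) ranges over at most two values in \([0,s]\). For a fixed admissible \(u\), the best selection is \(Y+\max_{|S|=u}\sum_{j\in S}(x_j-y_j)\). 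This is at least the random-subset average \((uX+(s-u)Y)/s\), and the same bound holds with \(X\) and \(Y\) interchanged when \(s-u\) is also admissible.

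For \(s=5\), the critical residue is \(u=1\) (or symmetrically \(u=4\)). There the bound is \(Y+\max_j(x_j-y_j)\geq(X+4Y)/5\). Using \(X\leq5\) and \(X+Y>8\) gives \(Y>3\), and hence a value \(>(5+12)/5=17/5\); this is exactly where \(L_5\) comes from. The other residues give at least \(\max(X,Y)>4\) or better. For \(s=6,\dots,9\), I will run the same optimisation over the admissible \(u\) together with the trivial bounds \(X,Y\leq s\). I expect this to reproduce \(14/3\) and \(5\); this is a finite linear check for each residue class of \(u\).

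The delicate part is positivity. A strict score bound does not by itself force every selected value to be positive. So instead of the random-subset average I will choose \(S\) greedily among indices with \(x_j>0\) (resp.\ \(y_j>0\)). I then need to show that enough such indices exist; otherwise \(X\) or \(Y\) is too small to satisfy \(X+Y>2s\gamma_s\), since each \(x_j,y_j\leq1\). I would carry out this case analysis first for \(s=5\), \(u=1\), where the margin is tightest.
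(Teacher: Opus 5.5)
For $s\in\{5,6,7\}$ your plan is essentially the paper's proof. The $q'$-coordinate is handled by Lemma~\ref{lemma:arithmetic-k}, and the loss from $s\gamma_s$ to $L_s$ is absorbed by the two-point estimate at $5$ (Lemma~\ref{lemma:two-point-inhomogeneous}), lifted to $5^\beta$. You still owe the positivity case analysis: for $s=7$ up to three of the fourteen entries may vanish, and the paper handles this by showing that at least four rows are positive at both points.

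For $s\in\{8,9\}$ there is a genuine gap. The inhomogeneous statement at $5^\beta$ that you say ``suffices'' is false. No greedy choice can rescue it, because the obstruction is positivity, not score. For $s=8$, take four functions equal to $\mathbf 1_{\{1\}}$, one equal to $\mathbf 1_{\{4\}}$, and three equal to $\mathbf 1_{\{1,4\}}$ on $\{1,4\}$. The total is $11>10=2s\gamma_8$. But any selection with nonzero product uses $k\in\{1,2,3,4\}$ copies of $4$, so its sum is $8+3k\not\equiv3\pmod5$. For $s=9$, taking six, one and two such functions misses the residue $4$.

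Such $h_j=f(a_j,\cdot)$ really can come out of your Step~1, since Lemma~\ref{lemma:arithmetic-k} gives you no control over the $a_j$. Take $q=35$ and let $f(x,\cdot)$ be $\mathbf 1_{\{1\}},\mathbf 1_{\{4\}},\mathbf 1_{\{1,4\}}$ for $x=1,2,4\in\mathbb Z_{7,2}$. The mean is $2/3>5/8$. For $n\equiv4\pmod7$, the choice $a=(1,1,1,1,2,4,4,4)$ satisfies every conclusion of Step~1, since $\sum_j g(a_j)=11/2$. Step~3 then fails for $n\equiv3\pmod5$.

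The missing idea is to reverse the Chinese-remainder order when $s=8,9$. There $L_s=s\gamma_s=5$, so no loss is permitted anyway.

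First average over the $q'$-coordinate: $h(y)=|\mathbb Z_{q',2}|^{-1}\sum_x f(x,y)$ is a \emph{single} function on $\mathbb Z_{5^\beta,2}$ with mean $>5/s$. In this homogeneous two-point problem, $h(1)+h(4)>10/s>1$ forces both values to be positive, so positivity is free. The score $(s-k)h(1)+kh(4)$ is affine in the number $k$ of $4$'s. For each residue, the admissible values of $k$ have $s/2$ in their convex hull. Hence some admissible choice scores at least $\tfrac s2\bigl(h(1)+h(4)\bigr)>5$. Lifting this gives Lemma~\ref{lemma:five-adic-homogeneous}.

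Only then pass to the $q'$-coordinate, with $H_j(x)=f(x,b_j)$. These functions have total mean $\sum_j h(b_j)>5=s\gamma_s$. The inhomogeneous result away from $30$ at threshold $5/s$ (Corollary~\ref{cor:arithmetic-square-inhomogeneous-30}) then returns a representation with nonzero product and score $>5=L_s$.
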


\begin{remark}
    The raw local scores obtained in this section satisfy
    \[
        \frac{17}{5}<\frac{5}{\sqrt2},
        \qquad
        \frac{14}{3}<\frac{7}{\sqrt2},
        \qquad
        5<\frac{s}{\sqrt2}\quad(s=8,9).
    \]
    Thus these scores alone do not meet the numerical hypothesis of
    Proposition~\ref{prop:TransferenceLemma}. The affine truncation in
    the proof of Lemma~\ref{prop:MeanValue} below converts the stronger
    global density hypothesis into the additional score needed to
    close the transference argument.
\end{remark}

The proof for \(s=5,6,7\) and the proof for \(s=8,9\) use the
Chinese remainder decomposition in opposite orders.  This distinction
is necessary.  Indeed, the inhomogeneous two-point assertion needed
below is true for \(s=5,6,7\), but is false for \(s=8,9\).

\subsection{The inhomogeneous two-point estimate}

\begin{lemma}\label{lemma:two-point-inhomogeneous}
    Let \(s\in\{5,6,7\}\), and let
    \(F_1,\ldots,F_s\colon\{1,4\}\to[0,1]\).  Suppose that
    \[
        F_j(1)+F_j(4)>0
        \quad(1\leq j\leq s)
    \]
    and
    \[
        \sum_{j=1}^s\bigl(F_j(1)+F_j(4)\bigr)>2s\gamma_s.
    \]
    Then, for every \(r\in\mathbb Z_5\), there are
    \(x_1,\ldots,x_s\in\{1,4\}\) such that
    \[
        x_1+\cdots+x_s\equiv r\pmod5,
        \qquad
        \prod_{j=1}^sF_j(x_j)>0,
    \]
    and
    \[
        \sum_{j=1}^sF_j(x_j)>L_s.
    \]
\end{lemma}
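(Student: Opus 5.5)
The plan is to reduce the statement to an explicit optimization over how many coordinates receive the value \(4\). For \(x_1,\ldots,x_s\in\{1,4\}\), let \(u=\#\{j:x_j=4\}\). Then
\[
x_1+\cdots+x_s\equiv s+3u\pmod5 ,
\]
so the congruence \(x_1+\cdots+x_s\equiv r\) holds exactly when \(u\equiv u_0:=2(r-s)\pmod5\). Since \(s\leq7\), the admissible values of \(u\) in \([0,s]\) are \(u_0\) itself, together with \(u_0+5\) whenever \(u_0+5\leq s\).

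\textbf{Step 1 (choosing the positions of \(4\)).} Put \(A=\sum_jF_j(1)\), \(B=\sum_jF_j(4)\) and \(T=A+B>2s\gamma_s\); thus \(T>8\) for \(s=5\) and \(T>10\) for \(s=6,7\). For an admissible \(u\), place the \(4\)'s on a uniformly random \(u\)-subset of coordinates. The expected score is
\[
\Bigl(1-\frac us\Bigr)A+\frac usB ,
\]
so some choice attains at least this value. Using \(A,B\leq s\), I then minimise this quantity over the feasible \((A,B)\) and over the worst residue \(r\).

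For \(s=5\): if \(u_0=0\), both \(u=0\) and \(u=5\) are available, giving \(\max(A,B)>4\). Otherwise \(u\in\{1,\dots,4\}\), and the larger of \(A,B\) carries weight at least \(1/5\). With \(m=\min(A,B)\geq T-5\), the bound is
\[
\tfrac45 m+\tfrac15(T-m)=\tfrac35m+\tfrac T5\geq\tfrac45T-3>\tfrac{17}5 .
\]
This computation is presumably the origin of the constant \(L_5=17/5\).

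For \(s=6\) and \(s=7\) I will do the same one-variable optimization: \(u\) ranges over \(\{u_0,u_0+5\}\cap[0,s]\), with \(T>10\) and \(A,B\leq s\). I expect the worst residue to give exactly \(14/3\); for instance, when \(s=6\) and only \(u\in\{2,4\}\) or \(u=3\) is available, one gets values like \(\tfrac12 T>5\) or \(\tfrac23 m+\tfrac13(T-m)\). Before committing, I will check that \(14/3\) is really the minimum in each residue class.

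\textbf{Step 2 (positivity).} The averaging in Step 1 gives the score but not \(\prod_jF_j(x_j)>0\). The zero entries are few:
\begin{itemize}
\item for \(s=5\), \(T>8\) over \(10\) entries allows at most one zero;
\item for \(s=6\), \(T>10\) over \(12\) entries allows at most one zero;
\item for \(s=7\), up to three zeros are possible.
\end{itemize}
If a coordinate \(j\) has \(F_j(y)=0\), the hypothesis \(F_j(1)+F_j(4)>0\) forces \(x_j\) to be the other point. I will therefore rerun the averaging with such coordinates frozen at their nonzero value, randomising only over the remaining coordinates with the correspondingly shifted \(u\). Each zero entry lowers \(T\) by nothing but costs the deleted entry's \(1\) in the upper bounds \(A,B\leq s\), which actually tightens the feasible region. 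So the constrained optimization should still exceed \(L_s\), with slack coming from the fact that each frozen coordinate contributes its full nonzero value.

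\textbf{Main obstacle.} I expect the hardest part to be \(s=7\) with two or three zero entries. There the frozen coordinates may pin \(u\) to a residue where only one value is admissible, and the crude bound \(\tfrac usB+\bigl(1-\tfrac us\bigr)A\) on the free coordinates may fall short of \(14/3\). In that case I would replace random averaging by the greedy choice: sort the differences \(d_j=F_j(4)-F_j(1)\) among the free coordinates and put the \(4\)'s on the top \(u\). I would then run a short case analysis on the zero pattern (zeros all at \(1\), all at \(4\), or mixed), using \(T>10\) to show that the remaining entries are large enough. This finite check, together with confirming the worst residue for \(s=6,7\) in Step 1, is where I expect the real work to lie.
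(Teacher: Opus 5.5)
Your Step~1 is the paper's argument. The expected score $(1-u/s)A+(u/s)B$ is the paper's $Q_k(U,V)$, your list of admissible $u$ matches its table, and your bound $\frac45T-3>\frac{17}{5}$ for $s=5$ is the same computation. Freezing a coordinate with a zero entry is also what the paper does for $s=5,6$: it averages over the $k$-subsets containing (resp.\ avoiding) the zero position, which gives a score $>s-1$, using that every residue has an admissible $k\geq1$ (resp.\ $k\leq s-1$).

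The gap is the case $s=7$ with zero entries. You defer it, and the reason you give for expecting it to work is false. A frozen row need not contribute anything substantial: its nonzero value can be arbitrarily small, and a single zero entry is exactly the extremal configuration. Take $F_1(1)=\varepsilon$, $F_1(4)=0$, and $F_j(1)=2/3$, $F_j(4)=1$ for $2\leq j\leq7$. The total is $10+\varepsilon$. For $r=3$ the admissible counts are $u\in\{2,7\}$, the zero kills $u=7$, and every admissible choice scores exactly $\frac{14}{3}+\varepsilon$. So a zero removes the second admissible value of $u$ rather than tightening the region in your favour. The constant $L_7=14/3$ originates here, not in Step~1: with all entries positive, the worst residue for $s=7$ gives $33/7$. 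The two- and three-zero cases you flag as the main obstacle in fact give scores $>5$ and $>6$.

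The gap closes with your own averaging; no greedy choice is needed.
\begin{itemize}
\item Let $d$ be the number of rows positive at both points. There are $7+d$ positive entries with sum $>10$, so $d\geq4$. Placing $t\in\{0,\dots,4\}$ fours among these rows therefore reaches every residue.
\item Let $C\leq 7-d$ be the total of the frozen rows, and $U_d,V_d\leq d$ the sums of $F_j(1)$, $F_j(4)$ over the free rows. Averaging gives $Q_t=C+\frac{d-t}{d}U_d+\frac tdV_d$.
\item For $d=4$, $Q_t>6$, since the deficits of $C,U_d,V_d$ from $3,4,4$ sum to less than $1$.
\item For $d=5$, $C>(5-U_d)+(5-V_d)$ gives $Q_t>5$.
\item For $d=7$ you are back in Step~1.
\item The tight case is $d=6$, $t=2$. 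Here $V_d\leq6$ forces $C+U_d>4$, so
\[
Q_2=\tfrac13(C+U_d+V_d)+\tfrac13(2C+U_d)>\tfrac{10}{3}+\tfrac43=\tfrac{14}{3}.
\]
The case $t=4$ is symmetric, while $\frac25Q_0+\frac35Q_5=\frac35Q_1+\frac25Q_6=Q_3=C+\frac12(U_d+V_d)>5$.
\end{itemize}
Without this computation the proposal does not establish the lemma for $s=7$.
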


\begin{proof}
    Write
    \[
        u_j=F_j(1),\qquad v_j=F_j(4),\qquad
        U=\sum_{j=1}^s u_j,\qquad V=\sum_{j=1}^s v_j.
    \]
    If exactly \(k\) of the \(x_j\)'s equal \(4\), then
    \[
        x_1+\cdots+x_s\equiv s+3k\pmod5.
    \]
    When all \(u_j,v_j\) are positive, average over the
\(\binom{s}{k}\) choices of the \(k\) positions.  Each fixed index \(j\)
belongs to \(I\subseteq \{1,\cdots, s\}\) with $|I|=k$ in a proportion \(k/s\) of these choices, and to its
complement in a proportion \((s-k)/s\).  Hence
\begin{equation}\label{eq:two-point-k-average}
    \max_{|I|=k}
    \left(
        \sum_{j\notin I}u_j+\sum_{j\in I}v_j
    \right)
    \geq Q_k(U,V),
    \qquad
    Q_k(U,V)=\frac{s-k}{s}U+\frac{k}{s}V.
\end{equation}
    The allowable values of \(k\) are
    \[
    \begin{array}{c|ccccc}
        &r=0&r=1&r=2&r=3&r=4\\ \hline
        s=5&\{0,5\}&\{2\}&\{4\}&\{1\}&\{3\}\\
        s=6&\{3\}&\{0,5\}&\{2\}&\{4\}&\{1,6\}\\
        s=7&\{1,6\}&\{3\}&\{0,5\}&\{2,7\}&\{4\}.
    \end{array}
    \]
    For each residue \(r\in\mathbb Z_5\), we claim that the following table gives the lower
bound for \(\max Q_k(U,V)\) over the admissible values of \(k\) in
\eqref{eq:two-point-k-average}.  These bounds are verified below by exact
rational identities.
    \[
    \begin{array}{c|ccccc|c}
        &r=0&r=1&r=2&r=3&r=4&\text{minimum}\\ \hline
        s=5
        &4&19/5&17/5
        &17/5&19/5&17/5\\[1mm]
        s=6
        &5&5&14/3
        &14/3&5&14/3\\[1mm]
        s=7
        &5&33/7&5
        &5&33/7&33/7.
    \end{array}
    \]
    
    For \(s=5\), we have \(U+V>8\) and \(U,V\leq5\), which implies that $U,V>3$.  Thus
    \[
    \begin{aligned}
       \max(Q_0,Q_5)&=\max(U,V)\geq\frac{U+V}{2}>4,\\
       Q_2&=\frac25(U+V)+\frac15U>\frac{19}{5},\\
       Q_4&=\frac15(U+V)+\frac35V>\frac{17}{5},\\
       Q_1&=\frac15(U+V)+\frac35U>\frac{17}{5},\\
       Q_3&=\frac25(U+V)+\frac15V>\frac{19}{5}.
    \end{aligned}
    \]
    %Here the second and fourth inequalities use \(U>3\), and the
    %third and fifth use \(V>3\), consequences of the opposite cap
    %\(U,V\leq5\).

    For \(s=6\), we have \(U+V>10\) and \(U,V\leq6\), which implies that $U,V> 4$.  The two
    convex-combination identities
    \[
       \frac25Q_0+\frac35Q_5=\frac{U+V}{2},
       \qquad
       \frac35Q_1+\frac25Q_6=\frac{U+V}{2}
    \]
    give the entries equal to \(5\), while
    \[
       Q_3=\frac{U+V}{2}>5,
       \qquad
       Q_2=\frac{U+V}{3}+\frac{U}{3}>\frac{14}{3},\qquad Q_4=\frac{U+V}{3}+\frac{V}{3}>\frac{14}{3}.
    \]
     %and \(Q_4>14/3\) follows symmetrically.  %In the last two
    % inequalities we used \(U>4\) or \(V>4\), respectively.

    For \(s=7\), we have \(U+V>10\) and \(U,V\leq7\), which implies that $U,V>3$.  We use
    \[
    \begin{aligned}
       \frac12Q_1+\frac12Q_6&=\frac{U+V}{2},\\
       \frac3{10}Q_0+\frac7{10}Q_5&=\frac{U+V}{2},\\
       \frac7{10}Q_2+\frac3{10}Q_7&=\frac{U+V}{2},
    \end{aligned}
    \]
    and
    \[
       Q_3=\frac37(U+V)+\frac17U>\frac{33}{7},
       \qquad
       Q_4=\frac37(U+V)+\frac17V>\frac{33}{7}.
    \]
   % The last inequalities use \(U>3\) and \(V>3\), respectively.
    Together with \eqref{eq:two-point-k-average}, these calculations
    prove the stated table whenever every entry is positive.

    We next treat zero entries and at the same time enforce the
    nonzero-product conclusion.  For \(s=5,6\), at most one of the
    \(2s\) entries can vanish, since the total is greater than
    \(2(s-1)\).  Suppose that \(u_\ell=0\).  For every target residue $r$ there is an
    allowable value \(k\geq1\); fix one such value and average only
    over the \(k\)-subsets containing \(\ell\).  The average is
    \begin{equation}\label{eq:conditioned-two-point-average}
       v_\ell
       +\alpha\sum_{j\ne\ell}u_j
       +\beta\sum_{j\ne\ell}v_j,
       \qquad
       \alpha=\frac{s-k}{s-1},\quad
       \beta=\frac{k-1}{s-1},\quad
       \alpha+\beta=1.
    \end{equation}
    Write
    \[
       \sum_{j\ne\ell}u_j=s-1-a,
       \qquad
       \sum_{j\ne\ell}v_j=s-1-b.
    \]
    The total hypothesis gives \(v_\ell>a+b\), so the average in
    \eqref{eq:conditioned-two-point-average} is
    \[
       s-1+v_\ell-\alpha a-\beta b>s-1.
    \]
    This is greater than \(L_5\) or \(L_6\), as appropriate.  If
    \(v_\ell=0\), then every target residue likewise has an allowable
    value \(k\leq s-1\); average over the \(k\)-subsets avoiding
    \(\ell\), or equivalently interchange \(u\) and \(v\) in the
    preceding argument.  Every selected entry is then positive.

   It remains to treat \(s=7\). Classify each row as supported only
at \(1\), only at \(4\), or at both points, and let \(d\) be the
number of two-point rows.  The number of positive entries is then
\(2d+(7-d)=7+d\).  Since their total sum exceeds \(10\) and every
entry is at most \(1\), at least eleven entries are positive.  Hence
\(7+d\geq11\), so \(d\geq4\).  Let \(C\) be the total weight forced by the
    \(7-d\) one-point rows, and set
    \[
       U_d=\sum_{u_jv_j>0}u_j,
       \qquad
       V_d=\sum_{u_jv_j>0}v_j.
    \]
    Then
    \begin{equation}\label{eq:flexible-polytope}
       C+U_d+V_d>10,
       \qquad
       0\leq C\leq7-d,
       \qquad
       0\leq U_d,V_d\leq d.
    \end{equation}
   After the one-point rows are fixed, their contribution to the residue
class is fixed.  As in \eqref{eq:two-point-k-average}, averaging over the
\(\binom dt\) ways of choosing exactly \(t\) of the \(d\) two-point rows
at \(4\) gives
\begin{equation}\label{eq:s-seven-flexible-average}
   Q_t^{(d)}
   =C+\frac{d-t}{d}U_d+\frac{t}{d}V_d.
\end{equation}
If \(r_0\) is the residue contributed by the one-point rows, the total
residue is
\[
   r_0+(d-t)+4t\equiv r_0+d+3t\pmod5.
\]
Since \(d\geq4\), we may take \(t=0,\ldots,4\), and these five values
give all residue classes modulo \(5\).  Thus every target
residue is attained. We claim that the corresponding lower bounds for \(Q_t^{(d)}\), indexed by
\(t\bmod 5\), are
    \[
    \begin{array}{c|ccccc}
        d&t\equiv0&t\equiv1&t\equiv2&t\equiv3&t\equiv4\\ \hline
        4&6&6&6&6&6\\
        5&5&5&5&5&5\\
        6&5&5&\dfrac{14}{3}&5&\dfrac{14}{3}\\[1mm]
        7&5&5&5&\dfrac{33}{7}&\dfrac{33}{7}.
    \end{array}
    \]
    We give certificates for every row.  If \(d=4\), write
    \(C=3-c\), \(U_d=4-u\), and \(V_d=4-v\).  Then
    \(c+u+v<1\), and for \(0\leq t\leq4\), 
\[
   c+\frac{4-t}{4}u+\frac{t}{4}v
   \leq c+u+v<1,
\]
and hence
    \[
       Q_t^{(4)}
       =7-c-\frac{4-t}{4}u-\frac{t}{4}v>6.
    \]
    If \(d=5\), write \(U_d=5-u\) and \(V_d=5-v\).  Then
    \(C>u+v\), so \(Q_t^{(5)}>5\) for \(1\leq t\leq4\); for the
    remaining residue,
    \[
       \max(Q_0^{(5)},Q_5^{(5)})
       =C+\max(U_d,V_d)
       \geq C+\frac{U_d+V_d}{2}>5.
    \]
    If \(d=6\), the identities
    \[
    \begin{aligned}
       \frac25Q_0^{(6)}+\frac35Q_5^{(6)}
          &=C+\frac{U_d+V_d}{2},\\
       \frac35Q_1^{(6)}+\frac25Q_6^{(6)}
          &=C+\frac{U_d+V_d}{2},\\
       Q_3^{(6)}&=C+\frac{U_d+V_d}{2}
    \end{aligned}
    \]
    give the entries equal to \(5\).  Moreover, \(V_d\leq6\) and
    \eqref{eq:flexible-polytope} imply \(C+U_d>4\), whence
    \[
       Q_2^{(6)}
       =\frac{C+U_d+V_d}{3}+\frac{2C+U_d}{3}
       >\frac{14}{3};
    \]
    the bound for \(Q_4^{(6)}\) is symmetric.  Finally, when \(d=7\)
    we have \(C=0\), and
    \[
    \begin{aligned}
       \frac3{10}Q_0^{(7)}+\frac7{10}Q_5^{(7)}
          &=\frac{U_d+V_d}{2},\\
       \frac12Q_1^{(7)}+\frac12Q_6^{(7)}
          &=\frac{U_d+V_d}{2},\\
       \frac7{10}Q_2^{(7)}+\frac3{10}Q_7^{(7)}
          &=\frac{U_d+V_d}{2}.
    \end{aligned}
    \]
    The last two residues satisfy
    \[
       Q_3^{(7)}=\frac37(U_d+V_d)+\frac17U_d>\frac{33}{7},
       \qquad
       Q_4^{(7)}=\frac37(U_d+V_d)+\frac17V_d>\frac{33}{7},
    \]
    since \(U_d,V_d>3\).  This exhausts all possibilities, proves the
    score bound, and guarantees a nonzero product.
\end{proof}

% \begin{remark}[Exact verification]\label{rem:exact-five-adic-verification}
%     The proof above is self-contained.  As an independent check, the
%     supplementary script \path{verify_5adic_scores.py} enumerates the
%     vertices of the relevant linear programs using rational arithmetic.
%     It reproduces both displayed score tables, verifies the conditioned
%     zero-entry identities, checks the convex-hull base cases in
%     Lemma~\ref{lemma:five-adic-homogeneous}, and verifies the exact
%     weighted mod-\(7\) certificate used in
%     Lemma~\ref{lemma:arithmetic-square-helper}.
% \end{remark}

\begin{remark}
    The constants in Lemma~\ref{lemma:two-point-inhomogeneous} are
    sharp at the non-strict boundary.  For \(s=5\), take
    \(u_j=3/5\) and \(v_j=1\) in every row; for \(s=6\), take
    \(u_j=2/3\) and \(v_j=1\).  For \(s=7\), take one row supported
    only at \(1\), with value tending to zero, and six two-point rows
    with values \(2/3\) and \(1\).
\end{remark}

The restriction \(s\leq7\) in
Lemma~\ref{lemma:two-point-inhomogeneous} is essential.  For \(s=8\),
take four rows supported only at \(1\), one row supported only at
\(4\), and three rows supported at both points, with every positive
value equal to \(1\).  The total sum is \(11>10\), but the number of
selected \(4\)'s can only be \(1,2,3,4\).  Consequently the residue
\(3\pmod5\), which requires that this number be \(0\) or \(5\), has
no representation with nonzero product.  For \(s=9\), the analogous
example with six rows supported only at \(1\), one row supported only
at \(4\), and two rows supported at both points has the same total
sum \(11>10\); the number of selected \(4\)'s can only be \(1,2,3\),
so the residue \(4\pmod5\) is not represented.

\subsection{\texorpdfstring{An inhomogeneous \(5\)-adic lemma for
\(s=5,6,7\)}{An inhomogeneous 5-adic lemma for s=5,6,7}}
We now lift the preceding two-point estimate from modulus \(5\) to all powers \(5^\alpha\), allowing a different function in each coordinate.

\begin{lemma}\label{lemma:five-adic-inhomogeneous}
    Let \(s\in\{5,6,7\}\) and \(\alpha\geq1\).  For
    \(1\leq j\leq s\), let
    \(F_j\colon\mathbb Z_{5^\alpha,2}\to[0,1]\) satisfy
    \[
        \frac{1}{|\mathbb Z_{5^\alpha,2}|}
        \sum_{x\in\mathbb Z_{5^\alpha,2}}F_j(x)>0
        \quad(1\leq j\leq s)
    \]
    and
    \[
        \sum_{j=1}^s
        \frac{1}{|\mathbb Z_{5^\alpha,2}|}
        \sum_{x\in\mathbb Z_{5^\alpha,2}}F_j(x)
        >s\gamma_s.
    \]
    Then, for every \(r\in\mathbb Z_{5^\alpha}\), there are
    \(x_1,\ldots,x_s\in\mathbb Z_{5^\alpha,2}\) such that
    \[
        x_1+\cdots+x_s\equiv r\pmod{5^\alpha},
        \qquad
        \prod_{j=1}^sF_j(x_j)>0,
    \]
    and
    \[
        \sum_{j=1}^sF_j(x_j)>L_s.
    \]
\end{lemma}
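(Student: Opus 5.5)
We argue by induction on \(\alpha\), mirroring the proof of Lemma~\ref{lemma:arithmetic-inhomogeneous-prime-power}. The base case \(\alpha=1\) is exactly Lemma~\ref{lemma:two-point-inhomogeneous}. Here \(\mathbb Z_{5,2}=\{1,4\}\) and \(|\mathbb Z_{5,2}|=2\). The hypothesis that each \(F_j\) has positive mean becomes \(F_j(1)+F_j(4)>0\). The total-mean hypothesis becomes \(\sum_j(F_j(1)+F_j(4))>2s\gamma_s\). The conclusion of that lemma is the one required here.

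For the inductive step, let \(\alpha\geq2\) and assume the statement at level \(\alpha-1\). Since \(5\nmid2\), Lemma~\ref{l:ntlem2} applies. Every \(x\in\mathbb Z_{5^{\alpha-1},2}\) has exactly the five lifts \(x+t5^{\alpha-1}\), \(t\in\mathbb Z_5\), in \(\mathbb Z_{5^\alpha,2}\), so \(|\mathbb Z_{5^\alpha,2}|=5|\mathbb Z_{5^{\alpha-1},2}|\). Define the fibre averages
\[
G_j(x)=\frac15\sum_{t\in\mathbb Z_5}F_j(x+t5^{\alpha-1}).
\]
Each \(G_j\) takes values in \([0,1]\). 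The mean of \(G_j\) over \(\mathbb Z_{5^{\alpha-1},2}\) equals the mean of \(F_j\) over \(\mathbb Z_{5^\alpha,2}\). Hence both hypotheses (positivity of each mean, and total mean \(>s\gamma_s\)) transfer verbatim to the \(G_j\).

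Fix \(r\in\mathbb Z_{5^\alpha}\). The induction hypothesis, applied to the \(G_j\) and to \(r\bmod 5^{\alpha-1}\), yields \(a_j\in\mathbb Z_{5^{\alpha-1},2}\) with \(\sum_j a_j\equiv r\pmod{5^{\alpha-1}}\), \(G_j(a_j)>0\) for every \(j\), and \(\sum_jG_j(a_j)>L_s\).

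To lift, we invoke Lemma~\ref{lemma:score-preserving-one-step-lift} with \(p=5\), \(k=2\), \(L=L_s\), \(n=r\) and \(s\in\{5,6,7\}\); this requires \(s\geq4\) and \(L\geq s/2\). The first condition is clear. For the second, \(L_5=17/5\geq5/2\), \(L_6=14/3\geq3\) and \(L_7=14/3\geq7/2\). The lemma then produces \(b_j\equiv a_j\pmod{5^{\alpha-1}}\) in \(\mathbb Z_{5^\alpha,2}\) with \(\sum_jb_j\equiv r\pmod{5^\alpha}\), \(F_j(b_j)>0\) for every \(j\), and \(\sum_jF_j(b_j)>L_s\). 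This closes the induction.

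The step most likely to cause trouble is verifying that this lift really preserves the score. Inside Lemma~\ref{lemma:score-preserving-one-step-lift}, the fibre functions \(h_j(t)=F_j(a_j+t5^{\alpha-1})\) have total average \(\frac1s\sum_jG_j(a_j)>L_s/s\geq1/2\). Lemma~\ref{l:s-seq-general} is then applied with rank parameter \(1\), length \(5\), and some \(\eta>1/2\), which needs \(s\geq4\) and \(n=5\geq3\). The resulting rank sum is at least \(5\geq p-1\), so Cauchy--Davenport (Lemma~\ref{lemma:superlevel-covering}) hits every residue \(\rho\in\mathbb Z_5\). All of this holds because \(L_s/s>1/2\). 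Only the base case at modulus \(5\), where the fibre has just two points, requires the special analysis already done in Lemma~\ref{lemma:two-point-inhomogeneous}.
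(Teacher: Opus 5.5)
Your proof is correct and follows the paper's argument: induction on \(\alpha\), with Lemma~\ref{lemma:two-point-inhomogeneous} as the base case and the fibre averages \(G_j\) passed to the induction hypothesis. The only difference is packaging: you obtain the lift by citing Lemma~\ref{lemma:score-preserving-one-step-lift} (valid since \(L=L_s\geq s/2\) and \(s\geq4\)), whereas the paper repeats that selection and Cauchy--Davenport step inline and checks that the lifts are squares by an explicit Hensel computation rather than by Lemma~\ref{l:ntlem2}.
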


\begin{proof}
    We argue by induction on \(\alpha\).  If \(\alpha=1\), then
    \(\mathbb Z_{5,2}=\{1,4\}\), and the conclusion is precisely
    Lemma~\ref{lemma:two-point-inhomogeneous}.

    Let \(\alpha\geq2\), and suppose the result holds at exponent
    \(\alpha-1\).  The reduction map $\mathbb Z_{5^\alpha,2}
        \longrightarrow
        \mathbb Z_{5^{\alpha-1},2}$
    is five-to-one.  Define
    \[
        G_j(x)
        \coloneqq
        \frac15
        \sum_{\substack{y\in\mathbb Z_{5^\alpha,2}\\
                        y\equiv x\;(\mathrm{mod}\;5^{\alpha-1})}}
        F_j(y).
    \]
    The functions \(G_j\) satisfy the induction hypotheses.  Reducing
    \(r\) modulo \(5^{\alpha-1}\), we therefore find
    \(b_1,\ldots,b_s\in\mathbb Z_{5^{\alpha-1},2}\) such that
    \[
        b_1+\cdots+b_s\equiv r\pmod{5^{\alpha-1}},
        \qquad
        \prod_{j=1}^sG_j(b_j)>0,
    \]
    and
    \begin{equation}\label{eq:G-score-Ls}
        \sum_{j=1}^sG_j(b_j)>L_s.
    \end{equation}
    Put
    \[
        \rho\equiv
        \frac{r-(b_1+\cdots+b_s)}{5^{\alpha-1}}
        \pmod5.
    \]
    For \(t\in\mathbb Z_5\), set
    \[
        y_j(t)=F_j(b_j+t5^{\alpha-1}).
    \]
    Here \(t\mapsto b_j+t5^{\alpha-1}\) parametrizes the five lifts of
\(b_j\) modulo \(5^\alpha\).  By
    \eqref{eq:G-score-Ls},
    \[
        \frac1s\sum_{j=1}^s
        \left(\frac15\sum_{t\in\mathbb Z_5}y_j(t)\right)
        >\frac{L_s}{s}>\frac12.
    \]
    Arrange each of the five values of \(y_j\) in non-increasing
    order and apply Lemma~\ref{l:s-seq-general} with
    \[
        k=1,\qquad n=5,\qquad c=\frac{L_s}{s}.
    \]
    We obtain thresholds \(a_{i_j,j}>0\) with
    \[
        \sum_{j=1}^s i_j\geq5,
        \qquad
        \sum_{j=1}^sa_{i_j,j}>L_s.
    \]
    Let
    \[
        A_j=\{t\in\mathbb Z_5:y_j(t)\geq a_{i_j,j}\}.
    \]
    Then \(|A_j|\geq i_j+1\), and hence
    \[
        \sum_{j=1}^s|A_j|\geq s+5.
    \]
    The Cauchy--Davenport theorem gives
\(A_1+\cdots+A_s=\mathbb Z_5\).  Choose \(d_j\in A_j\) such that
\[
    d_1+\cdots+d_s\equiv\rho\pmod5,
\]
and set
\[
    x_j=b_j+d_j5^{\alpha-1}.
\]
By the definition of \(\rho\),
\[
    x_1+\cdots+x_s\equiv r\pmod{5^\alpha}.
\]
Moreover,
\[
    F_j(x_j)=y_j(d_j)\geq a_{i_j,j}>0,
    \qquad
    \sum_{j=1}^sF_j(x_j)
    \geq\sum_{j=1}^sa_{i_j,j}>L_s.
\]

It remains to check that each chosen lift \(x_j\) is a reduced square
modulo \(5^\alpha\).  Since
\(b_j\in\mathbb Z_{5^{\alpha-1},2}\), choose \(z_j\) coprime to \(5\)
such that $z_j^2\equiv b_j\pmod{5^{\alpha-1}}.$
Thus, for some \(e_j\in\mathbb Z_5\),
\[
    z_j^2\equiv b_j+e_j5^{\alpha-1}\pmod{5^\alpha},
    \qquad
    e_j\equiv
    \frac{z_j^2-b_j}{5^{\alpha-1}}\pmod5.
\]
We seek a lift of \(z_j\) of the form $z_j+t_j5^{\alpha-1}$
whose square is congruent to \(x_j\).  Since \(\alpha\geq2\),
\[
    (z_j+t_j5^{\alpha-1})^2
    \equiv
    z_j^2+2z_jt_j5^{\alpha-1}
    \pmod{5^\alpha}.
\]
Hence it is enough to choose \(t_j\in\mathbb Z_5\) so that
\[
    e_j+2z_jt_j\equiv d_j\pmod5.
\]
Because \(5\nmid2z_j\), such a \(t_j\) exists.  For this choice,
\[
\begin{aligned}
    (z_j+t_j5^{\alpha-1})^2
    &\equiv
    b_j+(e_j+2z_jt_j)5^{\alpha-1} \equiv
    b_j+d_j5^{\alpha-1}
    =x_j
    \pmod{5^\alpha}.
\end{aligned}
\]
Thus \(x_j\in\mathbb Z_{5^\alpha,2}\) for every \(j\), completing the
induction.
\end{proof}

\subsection{\texorpdfstring{A homogeneous \(5\)-adic lemma for
\(s=8,9\)}{A homogeneous 5-adic lemma for s=8,9}}
For \(s=8,9\), the inhomogeneous statement in Lemma \ref{lemma:five-adic-inhomogeneous} fails, so we instead prove the homogeneous \(5\)-adic estimate needed for the quadratic argument.

\begin{lemma}\label{lemma:five-adic-homogeneous}
    Let \(s\in\{8,9\}\), let \(\alpha\geq1\), and let
    \(F\colon\mathbb Z_{5^\alpha,2}\to[0,1]\) satisfy
    \[
        \frac{1}{|\mathbb Z_{5^\alpha,2}|}
        \sum_{x\in\mathbb Z_{5^\alpha,2}}F(x)>\frac5s.
    \]
    Then, for every \(r\in\mathbb Z_{5^\alpha}\), there are
    \(x_1,\ldots,x_s\in\mathbb Z_{5^\alpha,2}\) such that
    \[
        x_1+\cdots+x_s\equiv r\pmod{5^\alpha},
        \qquad
        \prod_{j=1}^sF(x_j)>0,
    \]
    and
    \[
        \sum_{j=1}^sF(x_j)>5.
    \]
\end{lemma}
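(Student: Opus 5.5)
The plan is an induction on \(\alpha\). Homogeneity is used only at the base level \(\alpha=1\). The lifting step is inhomogeneous and is supplied by Lemma~\ref{lemma:score-preserving-one-step-lift}.

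\emph{Base case \(\alpha=1\).} Here \(\mathbb Z_{5,2}=\{1,4\}\). Put \(u=F(1)\) and \(v=F(4)\), so the hypothesis reads \(u+v>10/s\geq 10/9>1\).
\begin{itemize}
\item Since \(u,v\leq1\), this forces \(u,v>0\). Hence every selection automatically has nonzero product.
\item If exactly \(k\) of the \(x_j\) equal \(4\), the sum is \(s+3k\pmod 5\). The selected score is \(g(k)=(s-k)u+kv\), which is affine in \(k\).
\item For \(s=9\), the admissible \(k\in\{0,\dots,9\}\) fall into the five residue classes \(\{0,5\},\{1,6\},\{2,7\},\{3,8\},\{4,9\}\). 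Each class is a pair \(\{k,k+5\}\) with \(k\leq s/2\leq k+5\).
\item For \(s=8\), the classes are \(\{0,5\},\{1,6\},\{2,7\},\{3,8\}\) and \(\{4\}\). Again each pair straddles \(s/2=4\), and the lone value equals \(s/2\) exactly.
\end{itemize}
Since \(g\) is affine, for a straddling pair the larger of the two values is at least the convex combination \(g(s/2)\). Therefore every target residue admits a choice with score
\[
\geq g(s/2)=\tfrac s2(u+v)>5 .
\]
The \(s=8\) lone class \(\{4\}\) gives this bound directly. It is worth recording in the write-up why \(s=9\) needs the pair \(\{4,9\}\). Otherwise \(g(4)=5u+4v\) could equal \(5\), for instance at \(u=1/5\), \(v=1\).

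\emph{Inductive step.} Let \(\alpha\geq2\). Set
\[
G(x)=\frac15\sum_{t\in\mathbb Z_5}F(x+t5^{\alpha-1}),\qquad x\in\mathbb Z_{5^{\alpha-1},2}.
\]
By Lemma~\ref{l:ntlem2} (with \(p=5\nmid 2\)), these five elements are exactly the lifts of \(x\). Hence \(G\) has the same mean as \(F\), which exceeds \(5/s\). The induction hypothesis, applied to \(r\bmod 5^{\alpha-1}\), produces \(a_1,\dots,a_s\) with
\[
\sum_j a_j\equiv r\pmod{5^{\alpha-1}},\qquad G(a_j)>0,\qquad \sum_j G(a_j)>5.
\]
Now apply Lemma~\ref{lemma:score-preserving-one-step-lift} with \(p=5\), \(k=2\), \(F_j=F\) for all \(j\), and \(L=5\). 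Its hypotheses hold because \(s\geq8\geq4\) and \(L=5\geq s/2\) (for \(s=8,9\), \(s/2\leq 4.5\)). It yields lifts \(b_j\in\mathbb Z_{5^\alpha,2}\) with
\[
\sum_j b_j\equiv r\pmod{5^\alpha},\qquad F(b_j)>0,\qquad \sum_j F(b_j)>5,
\]
which closes the induction.

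The only delicate point is the base-case residue bookkeeping, specifically that every residue class of \(k\) modulo \(5\) either straddles \(s/2\) or equals it. This is exactly where the homogeneous assumption \(u_j=u\), \(v_j=v\) is essential, consistent with the counterexamples given after Lemma~\ref{lemma:two-point-inhomogeneous}.
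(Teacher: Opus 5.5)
Your proof is correct and follows the paper's argument. The base case is the same affine/convex-combination argument in $k$; the paper phrases it as the convex hull of the admissible $k$'s containing $s/2$. The inductive step averages over the five lifts and applies the induction hypothesis exactly as the paper does.

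The only difference is packaging. You invoke Lemma~\ref{lemma:score-preserving-one-step-lift} directly, correctly checking $s\geq4$ and $L=5\geq s/2$. The paper instead re-runs that lemma's content inline: Lemma~\ref{l:s-seq-general} with $k=1$, $n=5$, $c=5/s$, then Cauchy--Davenport, then an explicit square-root lifting in place of Lemma~\ref{l:ntlem2}.
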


\begin{proof}
    We again use induction on \(\alpha\).  Suppose first that
    \(\alpha=1\), and write
    \[
        u=F(1),\qquad v=F(4).
    \]
    Then \(u+v>10/s>1\), so \(u,v>0\).  If \(k\) of the selected
    terms are equal to \(4\), their sum modulo \(5\) is \(s+3k\),
    and their \(F\)-weight is
    \[
        (s-k)u+kv.
    \]
    The allowable values of \(k\) are
    \[
    \begin{array}{c|ccccc}
        &r=0&r=1&r=2&r=3&r=4\\ \hline
        s=8&\{4\}&\{1,6\}&\{3,8\}&\{0,5\}&\{2,7\}\\
        s=9&\{2,7\}&\{4,9\}&\{1,6\}&\{3,8\}&\{0,5\}.
    \end{array}
    \]
    For \(s=8\), write
\[
    S(k)=(8-k)u+kv.
\]
For each residue class, the convex hull of the displayed allowable
values of \(k\) contains \(4\).  Since \(S(k)\) is affine in \(k\),
\(S(4)\) is therefore a convex combination of the corresponding
admissible scores.  Hence at least one admissible choice satisfies
\[
    S(k)\geq S(4)=4u+4v.
\]
Since \(u+v>10/8\), we obtain
\[
    4u+4v=4(u+v)>4\cdot\frac{10}{8}=5.
\]
For \(s=9\), write
\[
    S(k)=(9-k)u+kv.
\]
For every target residue, the two allowable integer values of \(k\)
have \(9/2\) in their convex hull.  Since \(S\) is affine, \(S(9/2)\)
is a convex combination of the two corresponding admissible scores.
Consequently at least one of those admissible scores is at least
\[
    S(9/2)=\frac92(u+v)
    >\frac92\cdot\frac{10}{9}=5.
\]
    This proves the base case.

    \medskip

   Now let \(\alpha\geq2\), and define
\[
    G(x)=
    \frac15
    \sum_{\substack{y\in\mathbb Z_{5^\alpha,2}\\
                    y\equiv x\;(\mathrm{mod}\;5^{\alpha-1})}}
    F(y).
\]
Its average is greater than \(5/s\).  By the induction hypothesis,
there are \(b_1,\ldots,b_s\in\mathbb Z_{5^{\alpha-1},2}\) representing
\(r\) modulo \(5^{\alpha-1}\) such that
\[
    \prod_{j=1}^sG(b_j)>0,
    \qquad
    \sum_{j=1}^sG(b_j)>5.
\]
We now apply the same lifting argument as in the proof of
Lemma~\ref{lemma:five-adic-inhomogeneous}.  For
\[
    y_j(t)=F(b_j+t5^{\alpha-1}),\qquad t\in\mathbb Z_5,
\]
Lemma~\ref{l:s-seq-general}, with \(k=1\), \(n=5\), and \(c=5/s\),
gives thresholds \(a_{i_j,j}>0\) with
\[
    \sum_j i_j\geq5,
    \qquad
    \sum_j a_{i_j,j}>5.
\]
Thus the sets
\[
    A_j=\{t\in\mathbb Z_5:y_j(t)\geq a_{i_j,j}\}
\]
satisfy \(\sum_j|A_j|\geq s+5\).
Cauchy--Davenport therefore allows us to choose
\(d_j\in A_j\) so that
\[
    x_j=b_j+d_j5^{\alpha-1}, \qquad x_1+\cdots x_s \equiv r\pmod{5^{\alpha}}.
\]
The same square-root lifting
calculation as in the proof of Lemma~\ref{lemma:five-adic-inhomogeneous} shows that each
\(x_j\in\mathbb Z_{5^\alpha,2}\).  Hence
\[
    \prod_{j=1}^sF(x_j)>0,
    \qquad
    \sum_{j=1}^sF(x_j)>5.
\]
This completes the induction.
\end{proof}

\subsection{Proof of the quadratic arithmetic lemma}

\begin{proof}[Proof of Lemma~\ref{lemma:arithmetic-square}]
    First suppose that \(5\nmid q\).  Since \((q,6)=1\), we have
    \((q,30)=1\).  Choose \(c\) strictly between \(\gamma_s\) and
    the average of \(f\), and apply Lemma~\ref{lemma:arithmetic-k}
    with this value of \(c\).  It gives the desired representation
    with nonzero product and weight greater than \(sc>s\gamma_s\).
    Since \(s\gamma_s\geq L_s\) for \(5\leq s\leq9\), this is the
    asserted score.
    We may therefore write
    \[
        q=5^\alpha q_1,\qquad (5,q_1)=1.
    \]
    By the Chinese remainder theorem and
    Lemma~\ref{l:ntlem},
    \[
        \mathbb Z_q
        \cong\mathbb Z_{q_1}\oplus\mathbb Z_{5^\alpha},
        \qquad
        \mathbb Z_{q,2}
        \cong
        \mathbb Z_{q_1,2}\times\mathbb Z_{5^\alpha,2}.
    \]
    Write \(n=(n_1,n_2)\) under this identification, and write
    \(f(x,y)\) for the corresponding value of \(f\).

    Suppose first that \(s\in\{5,6,7\}\).  Define
    \[
        g(x)=
        \frac{1}{|\mathbb Z_{5^\alpha,2}|}
        \sum_{y\in\mathbb Z_{5^\alpha,2}}f(x,y),
        \qquad x\in\mathbb Z_{q_1,2}.
    \]
    The average of \(g\) is greater than \(\gamma_s\).  If
    \(q_1>1\), choose \(c\) strictly between \(\gamma_s\) and this
    average and apply Lemma~\ref{lemma:arithmetic-k}.  We obtain
    \(a_1,\ldots,a_s\in\mathbb Z_{q_1,2}\) such that
    \[
        a_1+\cdots+a_s\equiv n_1\pmod{q_1},
        \qquad
        \prod_{j=1}^sg(a_j)>0,
    \]
    and
    \[
        \sum_{j=1}^sg(a_j)>sc>s\gamma_s.
    \]
    If \(q_1=1\), take \(a_1=\cdots=a_s=0\); then the same
    inequalities follow directly from the average hypothesis.
    Set
    \[
        F_j(y)=f(a_j,y),
        \qquad y\in\mathbb Z_{5^\alpha,2}.
    \]
    The functions \(F_j\) satisfy
    \[
        \frac{1}{|\mathbb Z_{5^\alpha,2}|}
        \sum_yF_j(y)=g(a_j)>0
    \]
    and
    \[
        \sum_{j=1}^s
        \frac{1}{|\mathbb Z_{5^\alpha,2}|}
        \sum_yF_j(y)>s\gamma_s.
    \]
    Lemma~\ref{lemma:five-adic-inhomogeneous} provides
    \(b_1,\ldots,b_s\in\mathbb Z_{5^\alpha,2}\) such that
    \[
        b_1+\cdots+b_s\equiv n_2\pmod{5^\alpha},
        \qquad
        \prod_{j=1}^sf(a_j,b_j)>0,
    \]
    and
    \[
        \sum_{j=1}^sf(a_j,b_j)>L_s.
    \]
    The pairs \((a_j,b_j)\) give the required elements of
    \(\mathbb Z_{q,2}\).

    It remains to treat \(s\in\{8,9\}\).  In this case we reverse
    the order of the two chinese remainder theorem  factors.  Define
    \[
        h(y)=
        \frac{1}{|\mathbb Z_{q_1,2}|}
        \sum_{x\in\mathbb Z_{q_1,2}}f(x,y),
        \qquad y\in\mathbb Z_{5^\alpha,2},
    \]
    with the evident interpretation when \(q_1=1\).  Its average is
    greater than \(5/s\).  By
    Lemma~\ref{lemma:five-adic-homogeneous}, there exist
    \(b_1,\ldots,b_s\in\mathbb Z_{5^\alpha,2}\) such that
    \[
        b_1+\cdots+b_s\equiv n_2\pmod{5^\alpha},
        \qquad
        \prod_{j=1}^sh(b_j)>0,
    \]
    and
    \begin{equation}\label{eq:h-score-greater-five}
        \sum_{j=1}^sh(b_j)>5.
    \end{equation}
    For \(1\leq j\leq s\), set
    \[
        H_j(x)=f(x,b_j),
        \qquad x\in\mathbb Z_{q_1,2}.
    \]
    Then every \(H_j\) has positive average, and
    \[
        \sum_{j=1}^s
        \frac{1}{|\mathbb Z_{q_1,2}|}
        \sum_{x\in\mathbb Z_{q_1,2}}H_j(x)
        =\sum_{j=1}^sh(b_j)>5=s\gamma_s.
    \]
    If \(q_1>1\), apply
    Corollary~\ref{cor:arithmetic-square-inhomogeneous-30} to
    the functions \(H_1,\ldots,H_s\).  It yields
    \(a_1,\ldots,a_s\in\mathbb Z_{q_1,2}\) such that
    \[
        a_1+\cdots+a_s\equiv n_1\pmod{q_1},
        \qquad
        \prod_{j=1}^sH_j(a_j)>0,
    \]
    and
    \[
        \sum_{j=1}^sH_j(a_j)>5.
    \]
    If \(q_1=1\), take \(a_1=\cdots=a_s=0\); the preceding
    conclusions follow directly from
    \eqref{eq:h-score-greater-five}.  Thus the pairs \((a_j,b_j)\)
    represent \(n\) modulo \(q\), have nonzero \(f\)-product, and
    satisfy
    \[
        \sum_{j=1}^sf(a_j,b_j)>5=L_s.
    \]
    This completes the proof.
\end{proof}

\begin{remark}
    The arithmetic statement above uses the density \(5/6\) when
    \(s=6\), whereas the common density bound for \(s=5,6\) in
    Theorem~\ref{t:main} is \((6+5\sqrt2)/16<5/6\). Consequently,
    the six-variable assertion is deduced from the five-variable
    theorem rather than directly from Lemma~\ref{lemma:arithmetic-square}.
    Fix a prime \(p_0\geq5\) in the given set of primes. For every
    sufficiently large admissible \(n\), the integer \(n-p_0^2\) is
    sufficiently large and lies in the five-variable admissible
    congruence class, so the five-variable result represents
    \(n-p_0^2\); adjoining \(p_0^2\) gives the required six-term
    representation.
\end{remark}

\section{Analysis and transference}
\label{sec:analytic-transference}

\subsection{\texorpdfstring{The \(W\)-trick and prime-square weights}
{The W-trick and prime-square weights}}

Throughout the analytic argument, we call \((N,w)\) admissible if
\[
    3<w\leq C_w\log\log\log N,
\]
where \(C_w>0\) is a fixed constant. For an admissible pair, put
\[
    W=8\prod_{2<p<w}p
      =24W_0,
    \qquad
    W_0=\prod_{5\leq p<w}p.
\]
All estimates below are quantitative and uniform over admissible pairs
with the fixed constant \(C_w\); every \(o(1)\) that uses local-factor
cancellation is taken as \(N,w\to\infty\) through such pairs.  After the
density and transference parameters have been fixed, we take \(N\)
sufficiently large and let \(w\) grow sufficiently slowly so that all
fixed lower bounds on \(w\) and all quantitative error bounds hold.
This order is used explicitly at the end of the proof.

\begin{lemma}[Growth of the \(W\)-trick modulus]
\label{lem:W-growth}
For fixed \(C_w\), uniformly over admissible pairs,
\[
   \log W\ll_{C_w}\log\log\log N,
   \qquad
   W\leq (\log\log N)^{O_{C_w}(1)}.
\]
Consequently, for every fixed \(\vartheta>0\),
\[
   W\leq(\log N)^\vartheta
\]
when \(N\) is sufficiently large in terms of \(C_w,\vartheta\).
Moreover,
\[
   \log(WN)=\log N+O_{C_w}(\log\log\log N),
\]
and hence \(\log(WN)\asymp\log N\).  The same comparison holds with
\(WN\) replaced by \(WN+b\), uniformly for \(1\leq b<W\).
\end{lemma}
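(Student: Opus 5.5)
The plan is a direct estimate: bound \(\log W\) by a prime-counting sum, exponentiate, and then compare logarithms. By definition \(W=8\prod_{2<p<w}p\), so
\[
    \log W=\log 8+\sum_{2<p<w}\log p\leq \log 8+\vartheta(w).
\]
I will use Chebyshev's estimate (Theorem~\ref{Chebyshev's}), \(\vartheta(w)\ll w\). For bounded \(w\) the sum is \(O(1)\), and for large \(w\) it is at most \(c_2w\); in either case \(\log W\ll 1+w\). Admissibility gives \(3<w\leq C_w\log\log\log N\), so \(\log W\ll_{C_w}\log\log\log N\) once \(N\) is large enough that \(\log\log\log N\geq1\). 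Exponentiating gives
\[
    W\leq \exp\bigl(O_{C_w}(\log\log\log N)\bigr)=(\log\log N)^{O_{C_w}(1)}.
\]

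For the bound with a fixed \(\vartheta>0\), it is enough that
\[
    O_{C_w}(1)\cdot\log\log\log N\leq \vartheta\log\log N .
\]
This holds for all sufficiently large \(N\) (depending on \(C_w,\vartheta\)), because \(\log\log\log N=o(\log\log N)\). Taking exponentials then gives \(W\leq(\log N)^\vartheta\).

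Next, \(\log(WN)=\log N+\log W\), and the first bound gives \(\log W=O_{C_w}(\log\log\log N)\). Since \(\log\log\log N=o(\log N)\), this yields \(\log(WN)\asymp\log N\) for large \(N\).

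For \(WN+b\) with \(1\leq b<W\), note that \(WN<WN+b<W(N+1)\leq 2WN\). Hence
\[
    \log(WN+b)=\log(WN)+O(1),
\]
uniformly in \(b\), and the same two conclusions follow.

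The only step needing care is the first: the uniformity in \(w\) and the passage from \(\vartheta(w)\ll w\) to a bound in terms of \(N\). Chebyshev's estimate is stated only for sufficiently large \(x\), so small \(w\) must be covered by the trivial \(O(1)\) bound. All implied constants then depend only on \(C_w\), as claimed. Everything else is routine comparison of iterated logarithms.
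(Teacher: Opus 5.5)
Your proposal is correct and follows essentially the same route as the paper: Chebyshev's estimate gives $\log W\ll w$, admissibility turns this into $O_{C_w}(\log\log\log N)$, and the remaining comparisons are routine iterated-logarithm bookkeeping. Your explicit bound $WN<WN+b<2WN$ is a slightly more careful version of the paper's remark that $b<W=N^{o(1)}$, and the extra largeness condition on $N$ in your first step is unnecessary, since admissibility ($3<w\leq C_w\log\log\log N$) already forces $\log\log\log N>3/C_w$.
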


\begin{proof}
Using Theorem \ref{Chebyshev's} for the prime theta function gives
\[
   \log W=\log 8+\sum_{2<p<w}\log p\ll w.
\]
The admissibility condition now yields the first two assertions.  Since
\(\log\log\log N=o(\log\log N)\), the bound
\(W\leq(\log N)^\vartheta\) follows for every fixed \(\vartheta>0\).
The logarithmic comparisons are immediate, because \(b<W\) and
\(W=N^{o(1)}\).
\end{proof}

Thus \(W\) is square-free away from its factor \(8\), and
\((W_0,6)=1\).  Every reduced square modulo \(24\) is \(1\), so the
Chinese remainder theorem gives a bijection
\begin{equation}\label{eq:quadratic-residue-splitting}
    \mathbb Z_{W,2}
    \longrightarrow
    \{1\pmod {24}\}\times\mathbb Z_{W_0,2}.
\end{equation}
Whenever a residue \(b\in\mathbb Z_{W,2}\) occurs in an integer
identity such as \(Wn+b=p^2\), we use its least positive
representative \(1\leq b<W\).
Every \(b\in\mathbb Z_{W,2}\) has the same number \(\sigma_W\) of
square roots in \(\mathbb Z_W^*\): there are four roots modulo \(8\)
and two modulo every odd prime divisor of \(W\).  Thus every fibre of
the squaring map on \(\mathbb Z_W^*\) has size \(\sigma_W\), and
\begin{equation}\label{eq:quadratic-root-count}
    \phi(W)=|\mathbb Z_{W,2}|\,\sigma_W.
\end{equation}

We use the reference weight
\[
    \mu_N(n)
    =
    N^{1/2}\bigl(\sqrt n-\sqrt{n-1}\bigr)
    \mathbf 1_{1\leq n\leq N}.
\]
In particular,
\begin{equation}\label{eq:mu-mass}
    \sum_n\mu_N(n)=N.
\end{equation}

Let \(\mathcal P\subseteq\mathbb P\).  For \(b\in\mathbb Z_{W,2}\)
and \(n\in[N]\), define
\begin{align}
    f_b(n)
    &=
    \begin{cases}
    \displaystyle
    \frac{|\mathbb Z_{W,2}|}{W}(WN)^{1/2}\log p,
       &Wn+b=p^2,\quad p\in\mathcal P,\\[1mm]
    0,&\text{otherwise},
    \end{cases}
    \label{function:f_b}\\
    \nu_b(n)
    &=
    \begin{cases}
    \displaystyle
    \frac{|\mathbb Z_{W,2}|}{W}(WN)^{1/2}\log p,
       &Wn+b=p^2,\quad p\in\mathbb P,\\[1mm]
    0,&\text{otherwise}.
    \end{cases}
\end{align}
Extend both functions by zero outside \([N]\).  Then
\(0\leq f_b\leq\nu_b\).

\subsection{Analytic inputs and local means}

The proof of Theorem~\ref{t:main} in this section uses the following three
analytic inputs.  Assuming these lemmas, we complete the transference
argument below; their proofs are deferred to Section~\ref{sec:analytic-estimates}.

We formulate pseudorandomness relative to the quadratic reference measure
\(\mu_N\).
%, rather than to a flat model, as this better reflects the intrinsic
%non-uniformity of the squares.

\begin{lemma}[Pseudorandomness]\label{prop:Pseudorandomness}
For every fixed \(\eta>0\), there are
\(w_\eta=w_\eta(\eta,C_w)\) and
\(N_\eta=N_\eta(\eta,C_w)\) such that, for every admissible pair
with \(w\geq w_\eta\), \(N\geq N_\eta\), and every
\(b\in\mathbb Z_{W,2}\),
\[
    \|\widehat{\nu_b}-\widehat{\mu_N}\|_\infty\leq\eta N.
\]
\end{lemma}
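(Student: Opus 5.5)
We will prove the estimate by a major/minor arc decomposition with respect to \(\mathfrak M=\mathfrak M_Q\), \(Q=(\log N)^A\), treating \(\widehat{\nu_b}\) and \(\widehat{\mu_N}\) separately on each piece.

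\textbf{Rewriting \(\widehat{\nu_b}\).} Since \(Wn+b=p^2\) with \(1\le n\le N\) means \(p\) runs over primes in \((\sqrt{W+b},\sqrt{WN+b}]\) with \(p^2\equiv b\pmod W\), we have
\[
    \widehat{\nu_b}(\alpha)=\frac{|\mathbb Z_{W,2}|}{W}(WN)^{1/2}\sum_{\substack{p\le\sqrt{WN+b}\\ p^2\equiv b\,(W)}}\log p\; e\Bigl(\frac{p^2-b}{W}\alpha\Bigr)+O(\text{tiny}).
\]
The congruence \(p^2\equiv b\) splits into \(\sigma_W\) classes \(p\equiv c\pmod W\). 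Writing \(p=Wm+c\), the phase becomes a quadratic polynomial \(\alpha(Wm^2+2cm)+\text{const}\) in \(m\).

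\textbf{Minor arcs.} For \(\alpha\in\mathfrak m\), the phase \(\alpha(p^2-b)/W\) has leading coefficient \(\alpha/W\) on primes \(p\le\sqrt{WN}\). Since \(W\le(\log N)^{\vartheta}\), the rational approximation of \(\alpha/W\) has denominator \(\gg Q/W\) or its error is large. Hua's estimate for \(\sum_p\log p\,e(\beta p^2)\), restricted to a progression mod \(W\) (which only loses a power of \(W\)), then gives the bound \((WN)^{1/2}(\log N)^{-B}\). After the normalisation \(\frac{|\mathbb Z_{W,2}|}{W}(WN)^{1/2}\le N W^{-1/2}\cdot W^{1/2}\), this is \(o(N)\). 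For \(\widehat{\mu_N}\) on \(\mathfrak m\), partial summation gives \(\widehat{\mu_N}(\alpha)\ll N^{1/2}\sum_n n^{-1/2}\min(\ldots)\), which by \(|\sum e(n\alpha)|\ll\|\alpha\|^{-1}\) is \(\ll N/Q+\ldots=o(N)\) away from \(0\). More simply, \(\widehat{\mu_N}(\alpha)\ll \|\alpha\|^{-1/2}N^{1/2}\) by van der Corput (Theorem~\ref{van der Corput's}) after writing \(\mu_N\) as the pushforward of \(\sqrt{\cdot}\).

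\textbf{Major arcs.} For \(\alpha=a/q+\beta\in\mathfrak M(q,a)\), split \(p\) into classes modulo \([W,q]\le WQ\) and apply Siegel--Walfisz (Theorem~\ref{SWT}) with partial summation. This gives
\[
    \widehat{\nu_b}(\alpha)\approx\frac{|\mathbb Z_{W,2}|}{W}(WN)^{1/2}\,S_b(q,a)\int_{\sqrt{W}}^{\sqrt{WN}}e\Bigl(\frac{t^2}{W}\beta\Bigr)dt,
\]
where \(S_b(q,a)\) is a normalised local factor: an average of \(e(a(x^2-b)/(Wq))\) over units \(x\) with \(x^2\equiv b\pmod W\). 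Substituting \(t^2=Wu\) makes the integral \(\tfrac12 W^{1/2}\int_1^N u^{-1/2}e(u\beta)\,du\), which matches \(\widehat{\mu_N}(\beta)\) up to \(O(N^{1/2})\); this matching is where the quadratic reference measure enters. For \(q=1\), using \(\phi(W)=|\mathbb Z_{W,2}|\sigma_W\) from \eqref{eq:quadratic-root-count}, the constants cancel exactly and we obtain \(\widehat{\nu_b}(\beta)=\widehat{\mu_N}(\beta)+o(N)\). For \(q>1\), we must show \(|S_b(q,a)|\to0\) uniformly as \(w\to\infty\). Here the \(W\)-trick enters: the part of \(q\) coprime to \(W\) has all prime factors \(\ge w\), and Gauss-sum bounds give \(|S|\ll\tau(q)q^{-1/2}\) there, which is \(o(1)\). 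The part of \(q\) dividing \(W\) is absorbed by the fixed class of \(p\pmod W\), so it produces no main term; Zhao's finite evaluation \cite{zhao} handles this step. Meanwhile \(\widehat{\mu_N}(\alpha)=o(N)\) for \(q>1\), since \(\|\alpha\|\gg1/Q\).

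\textbf{Main obstacle.} We expect the hardest step to be the uniform local-factor estimate for \(q\) sharing factors with \(W\): showing that the complete sums over roots of \(b\) modulo \([W,q]\) vanish, or are \(o(1)\), whenever \(q\nmid W\)-trivially. This must hold uniformly in \(b\), with explicit dependence on \(w\), because of the order of limits (\(w\) grows slowly with \(N\)).
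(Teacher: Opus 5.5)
Your proposal is correct and follows essentially the paper's route: the arc decomposition at $Q=(\log N)^A$, Siegel--Walfisz with partial summation on $\mathfrak M(q,a)$ giving a local factor times $N^{1/2}I_2$, and exact matching with $\widehat{\mu_N}$ at $q=1$ via $\phi(W)=|\mathbb Z_{W,2}|\sigma_W$. The remaining steps also coincide: Zhao's evaluation ($\mathfrak S_b(q,a)=0$ for $1<q<w$ and $O_\epsilon(q^{-1/2+\epsilon})$ for $q\geq w$) on the other major arcs, Hua's estimate on $\mathfrak m$, and $|\widehat{\mu_N}(\alpha)|\ll (N/\|\alpha\|_{\mathbb T})^{1/2}$ off the principal arc.

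One correction: split the primes into classes modulo $qW$, not $[W,q]$, because $e\bigl(a(p^2-b)/(qW)\bigr)$ depends on $p\bmod qW$. Writing $p=W\ell+h$, it is the surviving linear term $2ah\ell/q$ that makes the local factor vanish when $(q,W)\nmid 2$; the class of $p\bmod W$ does not ``absorb'' that part of $q$.
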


The second input gives the Fourier integrability needed for the smoothing
argument.

\begin{lemma}[Restriction estimate]\label{prop:RestrictionEstimate}
For every fixed \(q>4\), there is \(K_q=K_q(C_w)<\infty\) such that,
uniformly in \(b\in\mathbb Z_{W,2}\),
\[
    \|\widehat h\|_q\leq K_qN^{1-1/q}
\]
whenever \(h\colon[N]\to\mathbb C\) satisfies \(|h|\leq\nu_b\).
In particular, this holds for \(h=f_b\).
\end{lemma}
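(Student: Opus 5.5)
The plan is to run Bourgain's large-spectrum argument for squares, with $\nu_b$ in place of the flat sieve weight. By homogeneity we may assume $|h|\le\nu_b$. For $0<\delta\le 1$, let
\[
  \mathcal R_\delta=\{\alpha\in\mathbb T:|\widehat h(\alpha)|>\delta N\}.
\]
We have $\|\widehat h\|_\infty\le\sum_n\nu_b(n)\ll N$ by the mass computation for $\nu_b$ (Siegel--Walfisz, Theorem~\ref{SWT}, together with \eqref{eq:quadratic-root-count}). So it suffices to prove the level-set bound
\[
  |\mathcal R_\delta|\ll_\varepsilon \delta^{-4-\varepsilon}N^{-1}
  \qquad(\varepsilon>0).
\]
Summing $(\delta N)^q|\mathcal R_\delta|$ over dyadic $\delta$ then gives $\|\widehat h\|_q^q\ll N^{q-1}$ for every $q>4$, with $\varepsilon<q-4$.

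\emph{Small levels.} Here we use the fourth moment. By Parseval,
\[
  \int_{\mathbb T}|\widehat h|^4\le\int_{\mathbb T}|\widehat{\nu_b}|^4 .
\]
The right side counts weighted solutions of
\[
  p_1^2+p_2^2=p_3^2+p_4^2,\qquad p_i^2\equiv b\pmod W,\quad p_i\le\sqrt{WN}.
\]
Bounding the number of representations of an integer as a sum of two squares in fixed residue classes by a divisor function, and then using $\sum\tau(n)^2\ll N(\log N)^3$ together with the normalization $|\mathbb Z_{W,2}|(WN)^{1/2}/W$, I expect
\[
  \int|\widehat{\nu_b}|^4\ll N^3(\log N)^{C}.
\]
Chebyshev then gives $|\mathcal R_\delta|\ll\delta^{-4}N^{-1}(\log N)^C$. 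This is acceptable whenever $\delta\le(\log N)^{-C/\varepsilon}$.

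\emph{Large levels, $\delta\ge(\log N)^{-C/\varepsilon}$.} Take a maximal $N^{-1}$-separated family $\alpha_1,\dots,\alpha_R$ in $\mathcal R_\delta$, so that $|\mathcal R_\delta|\ll R/N$. Choose unimodular $c_r$ with $|\widehat h(\alpha_r)|=c_r\widehat h(\alpha_r)$. Then duality and Cauchy--Schwarz against the weight $\nu_b$ give
\[
  (\delta NR)^2\le\Bigl(\sum_n\nu_b(n)\Bigr)\sum_{r,r'}\bigl|\widehat{\nu_b}(\alpha_r-\alpha_{r'})\bigr|
  \ll N\sum_{r,r'}\bigl|\widehat{\nu_b}(\alpha_r-\alpha_{r'})\bigr|.
\]
Next we need a rational envelope for $\widehat{\nu_b}$, coming from the circle-method analysis behind Lemma~\ref{prop:Pseudorandomness}. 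On a major arc $\mathfrak M(q,a)$ with $\theta=a/q+\beta$, we should have
\[
  N^{-1}|\widehat{\nu_b}(\theta)|\ll q^{-1/2+\varepsilon}\,(1+N|\beta|)^{-1/2}+o(1).
\]
The factor $q^{-1/2+\varepsilon}$ comes from the quadratic local factor (Zhao's evaluation of the prime-square Gauss sums restricted to the class $b$ modulo $W$). The factor $(1+N|\beta|)^{-1/2}$ comes from $|\widehat{\mu_N}(\beta)|$, estimated by van der Corput (Theorem~\ref{van der Corput's}) after the substitution $n=t^2/(WN)$. On the minor arcs Hua's bound gives $N^{-1}|\widehat{\nu_b}|\ll(\log N)^{-A'}$, which lies below $\delta^2/2$ once $A$ is large relative to $C/\varepsilon$. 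Those pairs can therefore be absorbed into the left side.

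It then remains to bound
\[
  \sum_{r,r'}\ \sum_{q\le Q}\ \sum_{a\in\mathbb Z_q^*}
  q^{-1/2+\varepsilon}\bigl(1+N\|\alpha_r-\alpha_{r'}-a/q\|_{\mathbb T}\bigr)^{-1/2}.
\]
Following Bourgain, I would raise to a power $\gamma$ slightly above $2$ via Hölder, split $q$ dyadically, and use the $N^{-1}$-separation to count, for each $q\sim Q_1$, the pairs near some $a/q$. The bound I aim for is
\[
  R^2\delta^2\ll_\varepsilon R^{2-1/(2+\varepsilon)}\quad\text{or}\quad R\ll\delta^{-4-\varepsilon},
\]
in the form of Bourgain's inequality $\sum_{r,r'}G(\alpha_r-\alpha_{r'})^{\gamma}\ll_\varepsilon R^{2-\eta}+R\,Q^{\varepsilon}$ for the envelope $G$. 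This gives $|\mathcal R_\delta|\ll\delta^{-4-\varepsilon}N^{-1}$.

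The main obstacle is the envelope itself: proving it uniformly in $b\in\mathbb Z_{W,2}$ and in $W$, with the sharp local decay $q^{-1/2+\varepsilon}$ for the prime-square local factors restricted to the class $b$ modulo $W$ (including the interaction of $q$ with $W$), and with the exact quadratic archimedean decay of $\widehat{\mu_N}$. I would first establish it with the crude local bound $|S(q,a)|/\phi(q)\ll\tau(q)q^{-1/2}$ and check that this suffices for Bourgain's counting. The separate $q=1$ arc, where $|\widehat{\mu_N}(\beta)|\asymp N(1+N|\beta|)^{-1/2}$, needs the care with the second-derivative bounds $\lambda\asymp|\beta|/N$ on dyadic pieces of $[1,N]$.
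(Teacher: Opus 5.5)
Your architecture matches the paper's. You use a fourth-moment bound for levels $\delta\le(\log N)^{-C/\varepsilon}$. For larger levels you use $TT^*$ against $\nu_b$, H\"older with exponent slightly above $2$ (the paper's $r_0/2$, $r_0>4$), the major-arc envelope $q^{-1/2+\varepsilon}(1+N|\beta|)^{-1/2}$, and Bourgain's large-spectrum inequality. Working with continuous level sets and a maximal $N^{-1}$-separated family, rather than the grid $j/N$, is harmless.

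\textbf{The gap is in the last step.} Your envelope $G$ runs over all $q\le Q$, where $Q=(\log N)^A$ is the major-arc parameter. You also need $A$ large in terms of $C/\varepsilon$ to absorb the minor arcs. Bourgain's inequality carries a loss that grows with the range of denominators: $Q^{\varepsilon}$ in your form, $T^{\epsilon}(\log 2Q)^{C}$ in Lemma~\ref{lem:bourgain-envelope-general}. So the term $RQ^{\varepsilon}$ only yields $R\ll\delta^{-2\gamma}(\log N)^{A\varepsilon}$. At a fixed level such as $\delta\asymp1$ this is unbounded in $N$. You therefore do not obtain $|\mathcal R_\delta|\ll\delta^{-4-\varepsilon}N^{-1}$ with an $N$-independent constant, and the dyadic summation loses a power of $\log N$. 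That is fatal: the content of the lemma, and of its use in Lemma~\ref{prop:TransferenceLemma}, is that $K_q$ does not depend on $N$.

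\textbf{The fix is to truncate the denominators at a height depending on the level.} Choose $Q_\delta$, a fixed power of $\delta^{-1}$, with $Q_\delta^{-1/2+\varepsilon}\le c\delta^2$. If $\alpha_r-\alpha_{r'}$ lies in an arc $\mathfrak M(q,a)$ with $Q_\delta<q\le(\log N)^A$, then $N^{-1}|\widehat{\nu_b}(\alpha_r-\alpha_{r'})|\le c\delta^2+O((\log N)^{-B})$. Such pairs are absorbed into the left-hand side exactly like the minor-arc pairs. Bourgain's inequality is then applied only to the envelope with $q\le Q_\delta$, where its loss is a power of $\delta^{-1}$.

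This is the role of the paper's level-dependent $U\asymp u^{-r_0}$. The paper dominates $(N^{-1}|\widehat{\nu_b}|)^{r_0/2}$ by $\max\{\mathcal G_{\gamma,U,N},U^{-1}\}$; arcs with denominator $v>U$ and the minor arcs both fall under the floor $U^{-1}$. It then applies Lemma~\ref{lem:bourgain-envelope-general} with $Q=T=U$.

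\textbf{A smaller point.} The mechanism you sketch for Bourgain's inequality does not reach $\delta^{-4-\varepsilon}$ on its own. That mechanism is counting, via $N^{-1}$-separation, the pairs near each $a/q$ with $q\sim Q_1$. The trivial counts are about $R\phi(q)$ pairs for each $q$, or $R^2$ in all, and together they give at best about $R\ll\delta^{-8}$. The real input is the Fourier-side argument, using $\widehat{\mathcal G}(k)=\widehat F(k)\,\#\{q\le Q:q\mid k\}$ together with the divisor-moment bound of Lemma~\ref{lem:truncated-divisor-moments}.
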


It remains to supply the local mass needed for the dense convolution
argument. Write
\begin{align}\label{e:mbdef}
    m_b=N^{-1}\sum_{n=1}^N f_b(n)
\end{align}
for the local mean. For \(s\in\{5,7,8,9\}\), let \(D_s\) denote the
corresponding right-hand side of \eqref{e:csdeltas}.

\begin{lemma}[Mean value]\label{prop:MeanValue}
Let \(s\in\{5,7,8,9\}\). If
\(\underline d(\mathcal P)>D_s\), then there are
\(\zeta>0\), \(w_0\), and \(N_0\) such that the following holds for
every admissible pair satisfying \(w\geq w_0\) and \(N\geq N_0\).
For every \(r\in\mathbb Z_W\) with \(r\equiv s\pmod {24}\), there
are \(b_1,\ldots,b_s\in\mathbb Z_{W,2}\) such that
\[
    b_1+\cdots+b_s\equiv r\pmod W,
    \qquad
    \sum_{j=1}^s m_{b_j}>\frac{s}{\sqrt2}+\zeta,
    \qquad
    m_{b_j}>\zeta\quad(1\leq j\leq s).
\]
\end{lemma}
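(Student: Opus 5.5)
The plan is to reduce Lemma~\ref{prop:MeanValue} to the finite statement Lemma~\ref{lemma:arithmetic-square}, applied with modulus \(q=W_0\), by means of an affine truncation of the local means. There are three steps: an averaging estimate for \(m_b\), a uniform upper bound for \(m_b\), and a choice of truncation parameter that turns the threshold \(D_s\) into \(\gamma_s\) and the score \(L_s\) into \(s/\sqrt2\).

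\emph{Step 1: the average of \(m_b\).} Summing \eqref{function:f_b} over all \(b\in\mathbb Z_{W,2}\) and using \eqref{eq:quadratic-root-count} gives
\[
\frac1{|\mathbb Z_{W,2}|}\sum_b m_b=\frac{(WN)^{1/2}}{WN}\sum_{\substack{p\in\mathcal P,\ p\nmid W\\ p^2\le WN+W}}\log p .
\]
Every \(p\) with \(p^2\equiv b\) for some \(b\) is counted once, and the primes dividing \(W\) are \(O(w)\) in number. Partial summation converts \(\underline d(\mathcal P)>\delta\) into \(\sum_{p\in\mathcal P,\,p\le x}\log p\ge(\delta-o(1))x\). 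Hence the average of the \(m_b\) is at least \(\underline d(\mathcal P)-o(1)\).

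\emph{Step 2: a uniform upper bound.} We have \(m_b\le N^{-1}\sum_n\nu_b(n)\). The primes counted by \(\nu_b\) lie in the \(\sigma_W\) classes modulo \(W\) whose square is \(b\), and \(W\le(\log N)^{\vartheta}\) by Lemma~\ref{lem:W-growth}. Theorem~\ref{SWT} then gives
\[
N^{-1}\sum_n\nu_b(n)=\frac{|\mathbb Z_{W,2}|}{W}\,\frac{(WN)^{1/2}}{N}\cdot\frac{\sigma_W\sqrt{WN}}{\phi(W)}\,(1+o(1))=1+o(1),
\]
uniformly in \(b\). So \(m_b\le1+o(1)\) uniformly.

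\emph{Step 3: the truncation.} Fix \(a=a_s\) by
\[
L_s(1-a)+sa=\frac{s}{\sqrt2},\qquad\text{that is,}\qquad a=\frac{s/\sqrt2-L_s}{s-L_s}.
\]
This is positive by the remark after Lemma~\ref{lemma:arithmetic-square}. With this choice \(\gamma_s+a(1-\gamma_s)=D_s\) in every case. For example, when \(s=5\) we get \(a=(25/\sqrt2-17)/8\) and \(4/5+a/5=(6+5\sqrt2)/16\). When \(s=8\) we get \(a=(8/\sqrt2-5)/3\) and \(5/8+3a/8=1/\sqrt2\). Define
\[
F(b)=\min\Bigl\{1,\ \max\Bigl\{0,\ \tfrac{m_b-a-\epsilon}{(1-a)(1+\epsilon)}\Bigr\}\Bigr\}
\]
with a small \(\epsilon>0\), chosen in terms of \(\underline d(\mathcal P)-D_s\) so that the slack absorbs both the upper bound of Step~2 and the loss of \(\epsilon\) at the threshold. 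Then \(F\) takes values in \([0,1]\). Since \(m_b\le1+o(1)\), the upper truncation costs only \(o(1)\). The lower truncation only increases the average, so Step~1 gives average of \(F\) at least
\[
\frac{\underline d(\mathcal P)-a-\epsilon}{(1-a)(1+\epsilon)}-o(1)>\gamma_s .
\]
Conversely, \(F(b)>0\) forces \(m_b>a+\epsilon\), and then \(m_b\ge a+\epsilon+(1-a)F(b)\).

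\emph{Step 4: applying the finite lemma.} Through \eqref{eq:quadratic-residue-splitting}, regard \(F\) as a function on \(\mathbb Z_{W_0,2}\); this is legitimate because every reduced square is \(1\pmod{24}\). Since \((W_0,6)=1\), Lemma~\ref{lemma:arithmetic-square} applies to \(F\) and the target \(r\bmod W_0\). It gives \(b_j\) with \(b_1+\cdots+b_s\equiv r\pmod{W_0}\), every \(F(b_j)>0\), and \(\sum_jF(b_j)>L_s\). The congruence modulo \(24\) holds automatically, because each \(b_j\equiv1\) and \(r\equiv s\pmod{24}\).

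Two conclusions follow. First, \(m_{b_j}>a+\epsilon\) for every \(j\), so any \(\zeta\le a\) works for positivity. Second,
\[
\sum_j m_{b_j}\ge s(a+\epsilon)+(1-a)L_s=\frac s{\sqrt2}+s\epsilon .
\]
This gives the score bound with \(\zeta=\min(a,s\epsilon)/2\).

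The main obstacle is the uniformity of Steps~1 and~2. We need the Siegel--Walfisz upper bound \(m_b\le1+o(1)\) uniformly in all \(b\), with \(W\) growing with \(N\), and we need the lower-density hypothesis transferred to \(\sum\log p\) at the scale \(\sqrt{WN}\). Both are handled by taking \(N\) large first and letting \(w\) grow slowly, in the order of quantifiers described at the start of this section.
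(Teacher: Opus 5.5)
Your proposal is correct and follows essentially the paper's proof. You use the average lower bound and uniform upper bound for $m_b$, which are the content of Lemma~\ref{lem:distribution-means}. You then apply an affine truncation at (just above) the critical level $a_s$, where $\gamma_s+a_s(1-\gamma_s)=D_s$ and $sa_s+(1-a_s)L_s=s/\sqrt2$, which reduces everything to Lemma~\ref{lemma:arithmetic-square} modulo $W_0$ followed by the Chinese-remainder lift. The differences are cosmetic: you fix the level $a_s+\epsilon$ and get the margin $s\epsilon$ directly, and you bound $m_b$ via Siegel--Walfisz rather than pseudorandomness at frequency $0$; the paper instead normalizes by $1+\rho$ and lets $t$ approach the critical level for an intermediate $d_0>D_s$.
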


\subsection{Smoothing and transference}

We next give the transference statement in the form required by the
weight \(\mu_N\). It is a quadratic variant of the
Bohr-set decomposition of Matom\"aki--Maynard--Shao
\cite[Lemma~4.3]{MatomakiMaynardShao}; the twofold smoothing is retained
because the reference weight \(\mu_N\) is not flat.

\begin{lemma}[Quadratic smoothing]\label{lem:quadratic-smoothing-twofold}
Fix \(q>2\), \(K_q\geq1\), and \(0<\delta<1/10\).  Let
\(f,\nu\colon[N]\to\mathbb R_{\geq0}\) satisfy
\[
\begin{gathered}
    0\leq f\leq\nu,\qquad 0<\eta\leq1,\\
    \|\widehat\nu-\widehat{\mu_N}\|_\infty\leq\eta N,
    \qquad
    \|\widehat f\|_q\leq K_qN^{1-1/q}.
\end{gathered}
\]
If \(N\) is sufficiently large in terms of \(\delta,q,K_q\), there is
a probability measure \(K\) on \(\mathbb Z\), supported on
\([-\delta N,\delta N]\), such that \(f^*=f*K\) satisfies
\begin{align}
    \|\widehat{f-f^*}\|_\infty
       &\ll \delta N, \label{eq:smoothing-uniform}\\
    \|\widehat{f^*}\|_q+\|\widehat{f-f^*}\|_q
       &\leq 2K_qN^{1-1/q}, \label{eq:smoothing-restriction}\\
    \sum_{n\in E}f^*(n)
       &\leq
       \bigl(1+O_{\delta,q,K_q}(\eta)\bigr)
       N^{1/2}|E|^{1/2}                 \label{eq:smoothing-level}
\end{align}
for every finite \(E\subseteq\mathbb Z\) with \(|E|\leq N\).
Moreover,
\begin{equation}\label{eq:smoothing-mass}
    \sum_{n\in[N]}f^*(n)
    \geq
    \sum_{n\in[N]}f(n)
    -O(\delta^{1/2}N)
    -O_{\delta,q,K_q}(\eta N).
\end{equation}
\end{lemma}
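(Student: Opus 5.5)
We construct \(K\) as a twofold Bohr-set smoothing, following \cite[Lemma~4.3]{MatomakiMaynardShao}. Fix a small \(\epsilon=\epsilon(\delta,q,K_q)>0\) and consider the large spectrum
\[
    \mathrm{Spec}=\{\alpha\in\mathbb T:|\widehat f(\alpha)|\geq\epsilon N\}.
\]
By the restriction hypothesis and a standard separation argument (Chebyshev on \(\|\widehat f\|_q^q\) together with the fact that \(\widehat f\) is roughly constant on intervals of length \(\ll1/N\)), \(\mathrm{Spec}\) is covered by \(O_{\epsilon,q,K_q}(1)\) intervals of length \(O(1/N)\) centred at points \(\theta_1,\ldots,\theta_R\). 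Let \(B\) be the Bohr set
\[
    B=\{|x|\leq\delta' N:\ \|x\theta_i\|_{\mathbb T}\leq\rho\ \ \forall i\},
\]
with \(\delta'\) a small multiple of \(\delta\) and \(\rho\) small, and put \(K=\beta*\beta\), where \(\beta\) is the normalized indicator of \(B\). Then \(K\) is a probability measure supported on \([-\delta N,\delta N]\), and \(\widehat K=|\widehat\beta|^2\geq0\) with \(\widehat K\leq1\). Pigeonholing shows \(|B|\gg_{R,\rho}\delta N\).

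The first three estimates follow from the form of \(K\).

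\emph{Restriction estimate \eqref{eq:smoothing-restriction}.} This is immediate from \(|\widehat{f^*}|=|\widehat f|\,\widehat K\leq|\widehat f|\) and \(|\widehat{f-f^*}|=|\widehat f|(1-\widehat K)\leq|\widehat f|\).

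\emph{Uniform estimate \eqref{eq:smoothing-uniform}.} Split according to whether \(\alpha\) lies near some \(\theta_i\).
\begin{itemize}
\item If it does, then \(|1-\widehat\beta(\alpha)|\ll\rho+\delta\), since \(e(x\alpha)\) is close to \(1\) on \(B\). Hence \(|1-\widehat K(\alpha)|\ll\rho+\delta\).
\item If it does not, then \(|\widehat f(\alpha)|<\epsilon N\).
\end{itemize}
Choosing \(\epsilon,\rho\leq\delta\) gives \(\|\widehat{f-f^*}\|_\infty\ll\delta N\).

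\emph{Mass estimate \eqref{eq:smoothing-mass}.} Since \(\sum_{\mathbb Z}f^*=\sum f\), the loss comes only from mass pushed outside \([N]\). That loss is at most \(\sum_{n\in[1,\delta N]\cup[N-\delta N,N]}f(n)\). Because \(f\leq\nu\), we can compare this sum with \(\sum\mu_N\) over the same set, with an \(O_{\delta}(\eta N)\) error from the Fourier hypothesis tested against a smoothed indicator of the boundary intervals. The \(\mu_N\)-mass of \([1,\delta N]\) is \(\sqrt{\delta}N\), and the mass of the top interval is \(O(\delta N)\). This gives \(O(\delta^{1/2}N)+O_{\delta,q,K_q}(\eta N)\).

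\emph{Level-set bound \eqref{eq:smoothing-level}; this is the main obstacle.} We want to show \(f^*\leq\nu*K\) and then establish a pointwise bound
\[
    \nu*K(n)\leq\mu_N*K(n)+O_{\delta,q,K_q}(\eta)\,N^{1/2}n^{-1/2}\text{-type errors}.
\]
For this we write
\[
    \nu*K(n)=\int\widehat\nu(\alpha)\widehat K(\alpha)e(-n\alpha)\,d\alpha,
\]
and use \(\|\widehat K\|_1=\|\widehat\beta\|_2^2=1/|B|\ll_{\delta,\rho,R}N^{-1}\). This yields
\[
    |\nu*K(n)-\mu_N*K(n)|\leq\eta N\cdot\|\widehat K\|_1=O_{\delta,q,K_q}(\eta).
\]
The twofold convolution is exactly what makes \(\widehat K\) nonnegative with controlled \(L^1\) norm. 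Next, \(\mu_N*K\) is an average of the decreasing profile \(\mu_N(n)\approx\tfrac12N^{1/2}n^{-1/2}\). The sum of \(\mu_N*K\) over any set \(E\) with \(|E|\leq N\) is therefore at most the sum of \(\mu_N\) over its first \(|E|\) points, which is \(N^{1/2}|E|^{1/2}\) by the rearrangement inequality, since convolution with a probability measure preserves this majorization. Adding the error \(O(\eta)|E|\leq O(\eta)N^{1/2}|E|^{1/2}\) gives \eqref{eq:smoothing-level}.

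The delicate points are making the majorization argument precise for the non-flat \(\mu_N\), and keeping every constant dependent only on \((\delta,q,K_q)\) through \(R\) and \(|B|\).
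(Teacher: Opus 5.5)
Your proposal is correct and is essentially the paper's proof. It uses the same ingredients: the large spectrum covered by boundedly many $O(1/N)$-arcs, a Bohr set $B$ with $|B|\gg_{\delta,q,K_q}N$, the twofold kernel $K=\beta*\tilde\beta$ with $0\le\widehat K\le1$, the domination $f^*\le\nu*K$, and the rearrangement bound against the decreasing weight $\mu_N$. There are two minor differences: you bound $(\nu-\mu_N)*K$ pointwise by $\eta N\|\widehat K\|_1=\eta N/|B|$ (slightly stronger) rather than pairing with $\mathbf 1_E$ and using $\|K\|_2\le|B|^{-1/2}$, and you get the mass estimate by testing $\nu-\mu_N$ against a smoothed majorant of the boundary intervals of $[N]$, whereas the paper more economically applies the level-set bound to the at most $2\delta N+2$ points where $f^*$ lives outside $[N]$.
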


\begin{proof}
Put
\[
    T=\{\alpha\in\mathbb T:|\widehat f(\alpha)|\geq\delta N\}.
\]
Since \(f\leq\nu\), pseudorandomness at \(0\), together with
\eqref{eq:mu-mass}, gives
\begin{align}\label{e:upperboundfhat}
   \sum_nf(n)\leq\sum_n\nu(n)
   =\widehat\nu(0) \leq \hat{\mu}_N(0)+\eta N\leq(1+\eta)N\leq2N.
\end{align}
It follows that
\[
    |(\widehat f)'(\alpha)|
    \leq2\pi\sum_{n=1}^N n f(n)
    \leq4\pi N^2.
\]
Choose a maximal \(c\delta/N\)-separated subset
\(\Gamma\subseteq T\), with \(c>0\) a sufficiently small absolute
constant.  If
\(\|\alpha-\gamma\|_{\mathbb T}\leq c\delta/(10N)\) for $\gamma \in \Gamma$, the derivative
bound gives
\[
   |\widehat f(\alpha)|
   \geq|\widehat f(\gamma)|
      -4\pi N^2\|\alpha-\gamma\|_{\mathbb T}
   \geq\frac{\delta N}{2},
\]
after fixing \(c\) sufficiently small.  The arcs of radius
\(c\delta/(10N)\) around the elements of \(\Gamma\) are disjoint.
Consequently,
\[
 K_q^qN^{q-1}
 \geq\int_{\mathbb T}|\widehat f(\alpha)|^q\,d\alpha
 \gg |\Gamma|\frac{\delta}{N}(\delta N)^q,
\]
and therefore
\[
    |\Gamma|\ll_{q,K_q}\delta^{-q-1}.
\]
We now define the Bohr-set. 
% Let
% \[
%    M=\left\lfloor\frac{\delta N}{2}\right\rfloor,
%    \qquad
%    v_n=(n\gamma)_{\gamma\in\Gamma}\in\mathbb T^{|\Gamma|}
%    \quad(0\leq n\leq M).
% \]
Let
\[
    B=
    \left\{1\leq b\leq\delta N:
       \|b\gamma\|_{\mathbb T}\leq c\delta/10
       \text{ for every }\gamma\in\Gamma\right\}.
\]
By the standard lower bound for the size of a Bohr set
\cite[Lemma~4.20]{TaoVu}, we have $|B|\gg_{\delta,q,K_q}N.$
% Partition each coordinate circle into intervals of length at most
% \(c\delta/10\), and take the resulting product partition of
% \(\mathbb T^{|\Gamma|}\).  It has at most
% \((C/\delta)^{|\Gamma|}\) cells.  Some cell therefore contains at least
% \[
%    \frac{M+1}{(C/\delta)^{|\Gamma|}}
%      \gg_{\delta,q,K_q}N
% \]
% of the points \(v_n\).  Let \(n_0\) be the least index in this cell.
% The positive differences \(b=n-n_0\) obtained from the other indices
% are distinct, satisfy \(1\leq b\leq M\), and obey
% \[
%    \|b\gamma\|_{\mathbb T}\leq c\delta/10
%    \qquad(\gamma\in\Gamma).
% \]
% This gives
Let \(\beta=|B|^{-1}\mathbf1_B\),
\(\widetilde\beta(n)=\beta(-n)\), and
\(K=\beta*\widetilde\beta\).  Thus \(K\geq0\),
\(\sum_nK(n)=1\), \(K\) is supported on
\([-\delta N,\delta N]\), and
\[
   \widehat K(\alpha)=|\widehat\beta(\alpha)|^2\in[0,1].
\]
Maximality of \(\Gamma\) shows that every \(\alpha\in T\) lies within
\(c\delta/N\) of some \(\gamma\in\Gamma\).  For \(b\in B\),
\[
\begin{aligned}
 \|b\alpha\|_{\mathbb T}
 &\leq \|b\gamma\|_{\mathbb T}
       +b\|\alpha-\gamma\|_{\mathbb T}\leq c\delta/10+c\delta^2
 \ll c\delta.
\end{aligned}
\]
%Since \(|e(x)-1|\leq2\pi\|x\|_{\mathbb T}\), 
Using the Taylor expansion and averaging over
\(b\in B\) gives
\[
   |1-\widehat\beta(\alpha)| = |1- |B|^{-1}\sum_{b\in B} e(b\alpha)| = |1- |B|^{-1}\sum_{b\in B} (1+\|b\alpha\|)|\ll\delta.
\]
It follows that
\[
    |1-\widehat K(\alpha)|\ll\delta\qquad(\alpha\in T).
\]
Also, on \(T\), we use \eqref{e:upperboundfhat} to get  $|\widehat f(\alpha)|\leq2N,$
so
$$|\widehat f(\alpha)(1-\widehat K(\alpha))|\ll\delta N.$$
Outside \(T\), \(|\widehat f(\alpha)|<\delta N\), while
\(|1-\widehat K(\alpha)|\leq 1\). Since $\widehat{f-f^*}
      =\widehat f(1-\widehat K),$
this proves \eqref{eq:smoothing-uniform}.  Since
\(0\leq\widehat K\leq1\), we also have
\[
   |\widehat{f^*}|\leq|\widehat f|,
   \qquad
   |\widehat{f-f^*}|\leq|\widehat f|.
\]
Taking \(L^q\)-norms proves
\eqref{eq:smoothing-restriction}.

For a finite set \(E\subseteq\mathbb Z\), positivity gives
\[
    \sum_{n\in E}f^*(n)
    \leq\sum_{n\in E}(\nu*K)(n).
\]
By Fourier inversion, Cauchy--Schwarz, and Plancherel,
\[
\begin{aligned}
 \left|\sum_{n\in E}((\nu-\mu_N)*K)(n)\right| &= \langle \mathbf{1}_E,(\nu-\mu_N)*K \rangle \\
 &\leq
 \|\widehat\nu-\widehat{\mu_N}\|_\infty
 \|\widehat K\|_2\|\widehat{\mathbf1_E}\|_2\\
 &\leq \eta N\|K\|_2|E|^{1/2}\\
 &\ll_{\delta,q,K_q}
 \eta N^{1/2}|E|^{1/2},
\end{aligned}
\]
because
\(\|K\|_2\leq\|\beta\|_1\|\widetilde\beta\|_2
=|B|^{-1/2}\ll_{\delta,q,K_q}N^{-1/2}\).
On the other hand, if
\(\varphi(m)=\sum_{n\in E}K(n-m)\), then
\[
    0\leq\varphi\leq1,\qquad
    \sum_m\varphi(m)=|E|.
\]
The first assertion follows because \(K\) is a probability measure,
and the second follows by interchanging the two sums.  The sequence
\(\mu_N(m)\) is nonnegative, supported on \([N]\), and non-increasing
there.  Among all sequences \(0\leq\varphi\leq1\) of mass
\(|E|\leq N\), its scalar product with \(\mu_N\) is therefore
maximized by taking \(\varphi=1\) at the first \(|E|\) integers.
Consequently,
\[
\begin{aligned}
    \sum_m\mu_N(m)\varphi(m)
    &\leq\sum_{m=1}^{|E|}\mu_N(m)=N^{1/2}
      \sum_{m=1}^{|E|}(\sqrt m-\sqrt{m-1})=N^{1/2}|E|^{1/2}.
\end{aligned}
\]
This proves \eqref{eq:smoothing-level}.

Finally, \(f^*\) is supported in
\([1-\delta N,N+\delta N]\), and its total mass equals that of \(f\).
The part of this support outside \([N]\) has at most
\(2\delta N+2\) points, and hence at most \(N\) points for all
sufficiently large \(N\).  Applying \eqref{eq:smoothing-level} to
its indicator gives
\[
   \sum_{n\notin[N]}f^*(n)
   \ll \delta^{1/2}N
      +O_{\delta,q,K_q}(\eta N).
\]
Subtracting this boundary mass from
\(\sum_nf^*(n)=\sum_nf(n)\) proves
\eqref{eq:smoothing-mass}.
\end{proof}

We use the following variant of Salmensuu's  \cite[Lemma~3.3]{salmensuu2020density}.
\begin{lemma}[Dense convolution]\label{lem:dense-convolution}
Let \(s\geq2\) and \(0<\epsilon\leq1\).  Suppose that
\(A_1,\ldots,A_s\subseteq[N]\) satisfy
\[
   \sum_{i=1}^s|A_i|>\frac{s(1+\epsilon)}2N,
   \qquad
   |A_i|>\frac{\epsilon}{2}N
   \quad(1\leq i\leq s).
\]
Then there is \(c_0=c_0(\epsilon,s)>0\) such that, for all
sufficiently large \(N\),
\[
   (\mathbf1_{A_1}*\cdots*\mathbf1_{A_s})(m)
   \geq c_0N^{s-1}
\]
whenever
\[
   \left(1-\frac{\epsilon^2}{16}\right)\frac{sN}{2}
   <m<
   \left(1+\frac{\epsilon}{4}\right)\frac{sN}{2}.
\]
\end{lemma}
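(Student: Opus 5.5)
The plan is to follow the standard dense-set argument for sums of large subsets of an interval. Normalize $A_i\subseteq[N]$ with densities $\alpha_i=|A_i|/N$, so that $\sum_i\alpha_i>s(1+\epsilon)/2$ and $\alpha_i>\epsilon/2$. We induct on $s$, proving a slightly stronger statement that tracks the window of $m$ explicitly.

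\emph{Base case $s=2$.} Here $(\mathbf1_{A_1}*\mathbf1_{A_2})(m)=|A_1\cap(m-A_2)|$. Write $m=N+x$ with $|x|$ small. Then $m-A_2\subseteq[m-N,m-1]$, which meets $[N]$ in an interval of length at least $N-|x|-1$. By inclusion–exclusion,
\[
|A_1\cap(m-A_2)|\geq|A_1|+|A_2|-(2N-(N-|x|))-O(1)\geq \epsilon N-|x|-O(1).
\]
The window gives $|x|\leq\epsilon N/4$, so this is $\gg\epsilon N$.

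\emph{Inductive step, $s\geq3$.} Order the sets so that $\alpha_s$ is the smallest. Then $\sum_{i<s}\alpha_i>(s-1)(1+\epsilon)/2$ still holds, because removing the smallest term keeps the average at least as large. Write
\[
(\mathbf1_{A_1}*\cdots*\mathbf1_{A_s})(m)=\sum_{a\in A_s}(\mathbf1_{A_1}*\cdots*\mathbf1_{A_{s-1}})(m-a).
\]
We need $\gg N$ elements $a\in A_s$ for which $m-a$ lies in the $(s-1)$-window around $(s-1)N/2$. That window has width comparable to $\epsilon N$ around the centre. The condition on $a$ is that $a$ lies in an interval of length $\gg\epsilon N$ centred near $m-(s-1)N/2\approx N/2$.

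\emph{Main obstacle.} The set $A_s$, of density only $>\epsilon/2$, might avoid that central interval. To get around this, I would instead use a pivot argument on sumsets. Run the induction with a general target $m$ and an adaptive window whose centre is determined by the densities, namely $m\approx\sum_i(\text{median-type position of }A_i)$. Alternatively, sharpen the base case using an interval of length $\epsilon^2$-size, which is where the asymmetric window $(1-\epsilon^2/16,\,1+\epsilon/4)$ presumably comes from.

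\emph{Fallback.} Concretely, I would first try the simpler route of reducing to $s=2$ by grouping. Set $B=A_1+\cdots+A_{s-1}$, counted with multiplicity, and show via Lemma-style averaging that the $(s-1)$-fold convolution is $\gg N^{s-2}$ on a set of size larger than $(s-1)N-|A_s|$-type bounds. Then finish with inclusion–exclusion as in the base case. The constant $c_0$ will depend only on $\epsilon$ and $s$, and the result requires $N$ to be large so that the $O(1)$ boundary losses are absorbed.
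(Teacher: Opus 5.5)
Your base case $s=2$ is correct, but there is a genuine gap: the inductive step is never carried out, and the induction you set up fails as stated. To use the $(s-1)$-case you need $m-a$ in the window $\bigl((1-\epsilon^2/16)\frac{(s-1)N}{2},(1+\epsilon/4)\frac{(s-1)N}{2}\bigr)$. That means $a$ must lie in an interval of length $O_s(\epsilon N)$ around $m-(s-1)N/2\approx N/2$, and the smallest set can miss it entirely. For example, take $s=3$, $\epsilon<1/2$, $A_1=A_2=[N]$ and $A_3=[1,\epsilon N/2+1]$. The hypotheses hold and the conclusion is true, yet no $a\in A_3$ puts $m-a$ into the $2$-fold window when $m\approx 3N/2$. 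You notice this obstacle, but the remedies you list (a ``pivot argument'', an ``adaptive window'', ``grouping'' with averaging) are only named. None of them supplies a strengthened inductive hypothesis that actually closes. Your guess that the asymmetric window comes from sharpening the base case is also not where it comes from.

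The missing idea is to leave $\mathbb Z$. Pick a prime $p$ with $(1+\kappa)\frac{sN}{2}<p<(1+2\kappa)\frac{sN}{2}$, where $\kappa=\epsilon/4$, and regard the $A_i$ as subsets $B_i\subseteq\mathbb Z_p$. Because $p$ is only slightly larger than $sN/2$, the hypotheses become $\sum_i|B_i|>(1+\tau)p$ with $\tau=2\kappa/(1+2\kappa)$, and $|B_i|>\tau p/s$. A robust Cauchy--Davenport (Pollard-type) estimate in $\mathbb Z_p$, which the paper quotes as Salmensuu's Lemma~3.2, then gives $\gg_{\epsilon,s}p^{s-1}$ representations of every residue. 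The window is exactly the no-wrap condition. The upper bound gives $m<p$, and the lower bound $m>(1-\kappa^2)sN/2$ gives $m+p>sN$. Hence the only integer in $[s,sN]$ congruent to $m$ modulo $p$ is $m$ itself, and the modular convolution at $m$ equals the integer one. This is the source of the interval $\bigl(1-\frac{\epsilon^2}{16},1+\frac{\epsilon}{4}\bigr)$. To stay in $\mathbb Z$ instead, you would need an inductive statement controlling $\mathbf1_{A_1}*\cdots*\mathbf1_{A_{s-1}}$ on a range of length comparable to $N$, not just on an $O(\epsilon N)$ window, and your proposal does not provide one.
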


\begin{proof}
%This is essentially \cite[Lemma~3.3]{salmensuu2020density}. 
We
include the parameter conversion and the no-wrap step, and use only the
following modular input from \cite[Lemma~3.2]{salmensuu2020density}: if
\(B_1,\ldots,B_s\subseteq\mathbb Z_p\),
\[
   \sum_i|B_i|>(1+\tau)p,
   \qquad |B_i|>\frac{\tau}{s}p,
   \qquad \tau>\frac{2s}{p},
\]
then
\[
   (\mathbf1_{B_1}*\cdots*\mathbf1_{B_s})(x)
      \gg_{\tau,s}p^{s-1}
   \qquad(x\in\mathbb Z_p).
\]
Put \(\kappa=\epsilon/4\), and choose
a prime
\[
   \frac{(1+\kappa)sN}{2}<p<
   \frac{(1+2\kappa)sN}{2}.
\]
The prime number theorem guarantees such a prime for all sufficiently
large \(N\).  Since \(p>N\),
reduction modulo \(p\) identifies each \(A_i\) with a subset
\(B_i\subseteq\mathbb Z_p\).  Set \(\tau=2\kappa/(1+2\kappa)\).  The upper bound for \(p\)
and the hypotheses imply
\[
   \sum_{i=1}^s|B_i|>(1+\tau)p,
   \qquad
   |B_i|>\frac{\tau}{s}p.
\]
 For large \(N\), the
remaining condition \(\tau>2s/p\) also holds.  The quoted modular
lemma therefore gives
\begin{equation}\label{eq:mod-p-dense-convolution}
   (\mathbf1_{B_1}*\cdots*\mathbf1_{B_s})(x)
   \geq c_1(\epsilon,s)p^{s-1}
   \qquad(x\in\mathbb Z_p).
\end{equation}
It remains to pass back to integer sums.  If
\[
   (1-\kappa^2)\frac{sN}{2}<m<
   (1+\kappa)\frac{sN}{2},
\]
then \(m<p\), whereas
\[
   m+p>
   (1-\kappa^2+1+\kappa)\frac{sN}{2}>sN.
\]
Every sum of \(s\) elements of \([N]\) lies in \([s,sN]\), so a sum
congruent to \(m\pmod p\) must equal \(m\) itself.  Hence the modular
convolution in \eqref{eq:mod-p-dense-convolution} equals the integer
convolution at \(m\).  Since \(p\asymp_{\epsilon,s}N\) and
\(\kappa=\epsilon/4\), this is the claimed interval and lower bound.
\end{proof}

Finally, we establish the transference lemma used in the proof of the main theorem.

\begin{lemma}[Transference]\label{prop:TransferenceLemma}
Let \(s\geq5\), let \(s-1<q<s\), and let \(\zeta,K_q>0\).
There are constants
\[
    \eta_0=\eta_0(s,q,\zeta,K_q)>0,\qquad
    \kappa=\kappa(s,\zeta)>0,\qquad
    c=c(s,q,\zeta,K_q)>0
\]
such that the following holds for all sufficiently large \(N\).
For \(1\leq i\leq s\), let
\[
    w_i,\nu_i\colon[N]\longrightarrow\mathbb R_{\geq0},
\]
extend both functions by zero outside \([N]\), and suppose
\[
    0\leq w_i\leq\nu_i,\qquad
    \|\widehat{\nu_i}-\widehat{\mu_N}\|_\infty\leq\eta_0N,
    \qquad
    \|\widehat{w_i}\|_q\leq K_qN^{1-1/q}.
\]
If
\begin{equation}\label{meanvaluetransference}
    \sum_{i=1}^s\frac1N\sum_nw_i(n)
       >\frac{s}{\sqrt2}+\zeta
\end{equation}
and
\begin{equation}\label{individualmeantransference}
    \frac1N\sum_nw_i(n)>\zeta
    \qquad(1\leq i\leq s),
\end{equation}
then
\[
    (w_1*\cdots*w_s)(m)\geq cN^{s-1}
\]
whenever
\[
    \left|m-\frac{sN}{2}\right|<\kappa N.
\]
\end{lemma}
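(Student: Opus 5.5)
We apply Lemma~\ref{lem:quadratic-smoothing-twofold} to each \(w_i\), then pass to superlevel sets of the smoothed functions and feed them into Lemma~\ref{lem:dense-convolution}. Fourier analysis with the restriction bound controls the error.

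\emph{Smoothing and telescoping.} Fix a small \(\delta=\delta(s,\zeta)\). Apply Lemma~\ref{lem:quadratic-smoothing-twofold} with the given \(q\) and \(\eta=\eta_0\) to each pair \((w_i,\nu_i)\), producing \(w_i^*=w_i*K_i\). Write \(w_i=w_i^*+(w_i-w_i^*)\) and expand \(w_1*\cdots*w_s\) telescopically as
\[
  w_1^**\cdots*w_s^* + \sum_{j=1}^s w_1*\cdots*w_{j-1}*(w_j-w_j^*)*w_{j+1}^**\cdots*w_s^* .
\]
Each error term is at most
\[
  \int_{\mathbb T}|\widehat{w_j-w_j^*}|\prod_{i\ne j}|\widehat{\cdot}|\,d\alpha
  \le \|\widehat{w_j-w_j^*}\|_\infty^{\,s-q}\,\|\widehat{w_j-w_j^*}\|_q^{\,q-s+1}\prod_{i\neq j}\|\widehat{\cdot}\|_q .
\]
This uses H\"older with \(s-1\) factors in \(L^q\) and one factor in \(L^{q/(q-s+1)}\), which is legitimate because \(s-1<q<s\); we then interpolate that last factor between \(L^\infty\) and \(L^q\). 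By \eqref{eq:smoothing-uniform} and \eqref{eq:smoothing-restriction}, the error is \(\ll_{q,K_q}\delta^{s-q}N^{s-1}\), since the powers of \(N\) combine to \(N^{(s-q)+(q-s+1)(1-1/q)+(s-1)(1-1/q)}=N^{s-1}\). So it suffices to prove \((w_1^**\cdots*w_s^*)(m)\ge 2cN^{s-1}\) with \(c\) independent of \(\delta\) (depending only on \(s,\zeta\)), and then take \(\delta\) small.

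\emph{Dense lower bound.} This is the main step, where the threshold \(1/\sqrt2\) enters. By \eqref{eq:smoothing-mass}, \(w_i^*\) restricted to \([N]\) has mean \(m_i^*\ge m_i-O(\delta^{1/2})-O(\eta_0)\), so \(\sum m_i^*>s/\sqrt2+\zeta/2\) and \(m_i^*>\zeta/2\). The level bound \eqref{eq:smoothing-level} says \(w_i^*\) restricted to any set of size \(t\) has mass at most \((1+o(1))\sqrt{Nt}\). Let \(A_i=\{n\in[N]:w_i^*(n)\ge\theta\}\) with \(\theta\) small. Then
\[
  Nm_i^*\le \theta N+\sum_{A_i}w_i^*\le\theta N+(1+O(\eta_0))\sqrt{N|A_i|},
\]
so \(|A_i|/N\ge (m_i^*-\theta)^2(1-O(\eta_0))\). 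The bound \(\sum|A_i|>s(1+\epsilon)N/2\) needed for Lemma~\ref{lem:dense-convolution} would follow from \(\sum (m_i^*)^2>s/2\). By convexity, however, \(\sum m_i^2\ge(\sum m_i)^2/s>s/2\) holds exactly when \(\sum m_i>s/\sqrt2\), which is the hypothesis \eqref{meanvaluetransference}. With \(\epsilon\asymp_{s}\zeta^2\), \(\theta\) small and \(\eta_0\) small, this gives \(\sum|A_i|>\tfrac{s(1+\epsilon)}2N\) and \(|A_i|\ge(\zeta/3)^2N>\tfrac\epsilon2N\). Lemma~\ref{lem:dense-convolution} then yields \((\mathbf 1_{A_1}*\cdots*\mathbf1_{A_s})(m)\ge c_0N^{s-1}\) on an interval around \(sN/2\) of radius \(\kappa N\), with \(\kappa\asymp\epsilon^2 s\). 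Since \(w_i^*\ge\theta\mathbf1_{A_i}\) pointwise and everything is nonnegative, \((w_1^**\cdots*w_s^*)(m)\ge\theta^sc_0N^{s-1}\).

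\emph{Order of parameters.} First fix \(\epsilon,\theta,\kappa\) and \(c=\theta^sc_0/2\) from \(s,\zeta\). Then choose \(\delta\) so that \(C_{q,K_q}\delta^{s-q}<c/(2s)\) and \(\delta^{1/2}\ll\zeta\). Finally take \(\eta_0\) small in terms of \(\delta,q,K_q\) so that the \(O_{\delta,q,K_q}(\eta_0)\) losses in \eqref{eq:smoothing-mass} and \eqref{eq:smoothing-level} are absorbed. The delicate point is that \(c\) must not depend on \(\delta\), which the construction above ensures. The step I expect to be hardest is converting the \(\sqrt{N|E|}\) level bound into the cardinality lower bound, because the convexity must be applied with care to the \(\theta\)-truncation and the \((1+O(\eta_0))\) factors.
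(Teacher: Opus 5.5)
Your proposal is correct and follows the paper's proof essentially step for step. Both arguments smooth each $w_i$ via Lemma~\ref{lem:quadratic-smoothing-twofold}, bound the superlevel sets $A_i$ from below through the $\sqrt{N|E|}$ level bound and Cauchy--Schwarz (which is exactly where $s/\sqrt2$ enters), apply Lemma~\ref{lem:dense-convolution}, and control the telescoped difference by H\"older with $L^\infty$--$L^q$ interpolation giving $\delta^{s-q}N^{s-1}$, with the parameters fixed in the same order ($c$ from $s,\zeta$, then $\delta$, then $\eta_0$). The only bookkeeping detail to add is the paper's preliminary reduction to $0<\zeta\le1$, which makes your choice $\epsilon\asymp_s\zeta^2$ compatible both with $\epsilon\le1$ and with the requirement, coming from the sum condition, that $\epsilon$ be small compared with $\zeta/s$.
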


\begin{proof}
Decreasing \(\zeta\), if necessary, we may assume \(0<\zeta\leq1\).
Set \(\lambda=\zeta/(16s)\), and choose
\(0<\xi\leq\zeta/(16s)\) sufficiently small that
\[
   \frac{s/\sqrt2+\zeta-s\lambda-s\xi}{1+\xi}
   >\frac{s}{\sqrt2}+\frac{\zeta}{2};
\]
this is possible by continuity at \(\xi=0\).  Choose \(0<\epsilon_0\leq1\) so small that
\begin{equation}\label{eq:epsilon-zero-choice}
   \epsilon_0
   \leq
   \left(\frac{\zeta-\lambda-\xi}{1+\xi}\right)^2,
   \qquad
   \frac{s(1+\epsilon_0)}2
   <
   \frac1s\left(
      \frac{s/\sqrt2+\zeta-s\lambda-s\xi}{1+\xi}
   \right)^2.
\end{equation}
Let \(c_0=c_0(\epsilon_0,s)\) be the constant in
Lemma~\ref{lem:dense-convolution}.

We now choose the analytic parameters in their required order.
First choose \(\delta>0\), depending only on
\(s,q,\zeta,K_q\), so small that the boundary error
\(O(\delta^{1/2}N)\) in \eqref{eq:smoothing-mass} is at most
\(\xi N/2\), and the convolution-comparison error
\(O_{s,q,K_q}(\delta^{s-q}N^{s-1})\) below is at most
\(\lambda^sc_0N^{s-1}/2\).  Once \(\delta\) is fixed, choose
\(\eta_0>0\) so small that every
\(O_{\delta,q,K_q}(\eta_0)\)-term in \eqref{eq:smoothing-level} and
\eqref{eq:smoothing-mass} is at most \(\xi/2\).  Finally take \(N\)
sufficiently large for the smoothing and dense-convolution lemmas.
Apply
Lemma~\ref{lem:quadratic-smoothing-twofold} to each \(w_i\), obtaining
\(w_i^*\).

Let \(A_i=\{n\in[N]:w_i^*(n)>\lambda\}\).
By \eqref{eq:smoothing-mass},
\[
    \sum_{n\in[N]}w_i^*(n)
    \geq\sum_nw_i(n)-\xi N.
\]
Splitting this sum over \(A_i\) and its complement, and using the
definition of \(A_i\) together with \eqref{eq:smoothing-level}, gives
\[
    \frac1N\sum_n w_i(n)
    \leq \lambda+\frac1N\sum_{n\in A_i}w_i(n)
    \leq \lambda+\xi+(1+\xi)
       \left(\frac{|A_i|}{N}\right)^{1/2}.
\]
Hence
\[
    \frac{|A_i|^{1/2}}{N^{1/2}}
    \geq
    \frac{\frac1N\sum_nw_i(n)-\lambda-\xi}
         {1+\xi}.
\]
Consequently, \eqref{individualmeantransference} and
\eqref{eq:epsilon-zero-choice} give
\[
   |A_i|
   \geq
   \left(\frac{\zeta-\lambda-\xi}{1+\xi}\right)^2N
   \geq\epsilon_0N>\frac{\epsilon_0}{2}N.
\]
Moreover, Cauchy--Schwarz,
\eqref{meanvaluetransference}, and \eqref{eq:epsilon-zero-choice} give
\[
\begin{aligned}
    \sum_{i=1}^s|A_i|
    &\geq
    \frac{N}{s}
    \left(
       \frac{\sum_{i=1}^s\frac1N\sum_nw_i(n)
             -s\lambda-s\xi}{1+\xi}
    \right)^2\\
    &>
    \frac{N}{s}
    \left(
       \frac{s/\sqrt2+\zeta-s\lambda-s\xi}{1+\xi}
    \right)^2
     >\frac{s(1+\epsilon_0)}2N.
\end{aligned}
\] 
Lemma~\ref{lem:dense-convolution} now gives
\begin{equation}
    (\mathbf1_{A_1}*\cdots*\mathbf1_{A_s})(m)
       \geq c_0N^{s-1}
\end{equation}
throughout its stated interval with parameter \(\epsilon_0\).  In
particular, this holds whenever
\begin{align}
    \left|m-\frac{sN}{2}\right|<\kappa N,
   \qquad
   \kappa\coloneqq\frac{s\epsilon_0^2}{64},
\end{align}
because this symmetric interval is contained in the interval of
Lemma~\ref{lem:dense-convolution}.
Since \(w_i^*>\lambda\) on \(A_i\),
\begin{align}\label{eq:dense-convolution}
    (w_1^* *\cdots* w_s^*)(m)
       \geq\lambda^sc_0N^{s-1}
\end{align}
throughout the same interval.

It remains to compare the original and smoothed convolutions.  Write
\(u_i=w_i-w_i^*\).  In every term of the telescoping expansion of
\[
    w_1*\cdots*w_s-w_1^* *\cdots* w_s^*
\]
there is one distinguished factor \(u_i\); all other factors are
either \(w_j\) or \(w_j^*\).  Explicitly,
\[
   w_1*\cdots*w_s-w_1^* *\cdots* w_s^*
   =
   \sum_{i=1}^s
   w_1^* *\cdots* w_{i-1}^* * u_i *
   w_{i+1}*\cdots*w_s.
\]
Put \(a=q-s+1\in(0,1)\).  Since
\[
   \frac{a}{q}+\frac{s-1}{q}=1,
\]
Fourier inversion and Hölder's inequality give, for every summand,
\[
\begin{aligned}
 \int_{\mathbb T}|\widehat u_i|
       \prod_{j\ne i}|\widehat h_j|
 &\leq
 \|\widehat u_i\|_\infty^{1-a}
 \|\widehat u_i\|_q^a
 \prod_{j\ne i}\|\widehat h_j\|_q\ll_{s,q,K_q}
 \delta^{\,s-q}N^{s-1},
\end{aligned}
\]
where each \(h_j\) is either \(w_j\) or \(w_j^*\).  Hence
\[
 \bigl|
 w_1*\cdots*w_s(m)-w_1^* *\cdots* w_s^*(m)
 \bigr|
 \ll_{s,q,K_q}\delta^{\,s-q}N^{s-1}.
\]
Here the power of \(N\) is
\[
   (1-a)+(a+s-1)\left(1-\frac1q\right)=s-1,
\]
and the power of \(\delta\) is \(1-a=s-q\).  Combining the last estimate with \eqref{eq:dense-convolution} and
the choice of \(\delta\) yields
\[
   (w_1*\cdots*w_s)(m)
   \geq\frac{\lambda^sc_0}{2}N^{s-1}.
\]
This proves the lemma with
\(c=\lambda^sc_0/2\).
\end{proof}

We can now prove the main result.

\subsection{Proof of the main theorem}

\begin{proof}[Proof of Theorem~\ref{t:main}]
It is enough to prove the cases \(s=5,7,8\).  Indeed, under the
hypothesis of any remaining case, \(\mathcal P\) has positive relative
lower density and therefore contains infinitely many primes.  Fix a prime
\(p_0\in\mathcal P\) with \(p_0\geq5\).  The case \(s=6\) then follows
from the case \(s=5\), and every case \(s\geq9\) follows from the case
\(s=8\), by subtracting respectively one or \(s-8\) copies of
\(p_0^2\).
Since \(p_0^2\equiv1\pmod {24}\), the remaining integer is in the
appropriate admissible congruence class and is still sufficiently
large.

Fix \(s\in\{5,7,8\}\), and suppose that
\(\underline d(\mathcal P)>D_s\).
Let \(\zeta,w_0,N_0\) be supplied by
Lemma~\ref{prop:MeanValue}.  Choose a fixed \(q\in(s-1,s)\), let
\(K_q\) be the constant in Lemma~\ref{prop:RestrictionEstimate}, and
let \(\eta_0\) be supplied by Lemma~\ref{prop:TransferenceLemma} with
these parameters and the fixed margin \(\zeta\).  Finally, let \(w_{\eta_0},N_{\eta_0}\) be the thresholds in
Lemma~\ref{prop:Pseudorandomness}.  Now let
\(X\equiv s\pmod {24}\) be sufficiently large, and choose
\(w=w(X)\to\infty\) sufficiently slowly that
\[
    \max\{w_0,w_{\eta_0}\}\leq w
       \leq \frac{C_w}{2}\log\log\log X.
\]
Then form \(W\) as above and set
\[
    N=\left\lfloor\frac{2X}{sW}\right\rfloor.
\]
All the preceding analytic estimates are uniform for this choice.
Indeed, Lemma~\ref{lem:W-growth} gives
\[
    W=(\log\log X)^{O_{C_w}(1)},
    \qquad
    N=\frac{2X}{sW}(1+o(1))=X^{1-o(1)}.
\]
Consequently, \(W=N^{o(1)}\) and, uniformly for the chosen slowly
growing \(w\),
\[
   \log\log\log N=\log\log\log X+o(1).
\]
Thus the upper bound
\(w\leq(C_w/2)\log\log\log X\) implies
\(w\leq C_w\log\log\log N\) once \(X\) is large.  In particular,
the resulting pair \((N,w)\) is admissible, $N\geq\max\{N_0,N_{\eta_0}\},$
and it is beyond the fixed large-\(N\) thresholds in
Lemmas~\ref{prop:RestrictionEstimate} and~\ref{prop:TransferenceLemma}.  Lemma~\ref{prop:Pseudorandomness}
therefore holds with \(\eta=\eta_0\).

Let \(r\equiv X\pmod W\).  Lemma~\ref{prop:MeanValue} gives
\(b_1,\ldots,b_s\in\mathbb Z_{W,2}\), with this fixed \(\zeta\), such
that
\[
    r\equiv b_1+\cdots+b_s\pmod W,
    \qquad
    \sum_{i=1}^sm_{b_i}>\frac{s}{\sqrt2}+\zeta,
    \qquad m_{b_i}>\zeta.
\]
Choose representatives \(1\leq b_i<W\), and put
\[
    m=\frac{X-(b_1+\cdots+b_s)}{W}.
\]
The congruence above shows that \(m\in\mathbb Z\).  Since \(N=\lfloor2X/(sW)\rfloor\),
\[
   m-\frac{sN}{2}
   =\frac{s}{2}\left(\frac{2X}{sW}-N\right)
      -\frac{b_1+\cdots+b_s}{W},
\]
and therefore
\[
    \left|m-\frac{sN}{2}\right|<s.
\]
In particular, \(m\) lies in the interval in
Lemma~\ref{prop:TransferenceLemma} once \(X\) is sufficiently large.

The choices are noncircular.  The density margin first fixes the
parameters and thresholds in Lemma~\ref{prop:MeanValue}; next we choose
\(q\), \(K_q\), and then the smoothing and pseudorandomness parameters
in Lemma~\ref{prop:TransferenceLemma}.  These determine the thresholds
in Lemma~\ref{prop:Pseudorandomness}.  Only then do we take \(X\)
large and form \(w,W,N\), so every \(o(1)\) is evaluated after all
fixed margins have been chosen.  Thus all the hypotheses of Lemma~\ref{prop:TransferenceLemma} are
satisfied, and hence
\[
    (f_{b_1}*\cdots*f_{b_s})(m)>0.
\]
Thus there are \(n_1,\ldots,n_s\in[N]\) and
\(p_1,\ldots,p_s\in\mathcal P\) such that
\[
    n_1+\cdots+n_s=m,\qquad
    Wn_i+b_i=p_i^2.
\]
Consequently,
\[
    X=Wm+b_1+\cdots+b_s=p_1^2+\cdots+p_s^2.
\]
This proves the theorem.
\end{proof}

\section{Analysis of the prime-square weights}
\label{sec:analytic-estimates}

The three estimates proved in this section are
Lemma~\ref{prop:Pseudorandomness}, Lemma~\ref{prop:RestrictionEstimate},
and Lemma~\ref{prop:MeanValue}.  The first two are proved directly for
the non-flat prime-square majorant
\(\nu_b\).  The argument follows the framework of
\cite[Sections~4 and~5]{zhao}, but the normalization and oscillatory
integral are recalculated for the present weight.
The external inputs in this section are the Siegel--Walfisz theorem,
Hua's estimate in the form \cite[Lemma~4.5]{zhao}, Zhao's finite
local-factor evaluation \cite[Lemma~4.2 and Corollary~4.3]{zhao}, and
standard divisor estimates.  The deductions from those inputs are
included in detail.  Historically, the restriction theory for squares
begins with Bourgain's \(\Lambda(p)\) estimates
\cite{BourgainSquares}, while the quadratic \(W\)-trick used here has
a direct predecessor in the transference setting of Browning and
Prendiville \cite{BrowningPrendiville}.

\subsection{Direct circle-method estimates}
\label{sec:direct-circle-method}

For \(b\in\mathbb Z_{W,2}\), write
\[
   H_b=\{h\in\mathbb Z_W^*:h^2\equiv b\pmod W\},
   \qquad |H_b|=\sigma_W.
\]
For \(T>0\), put
\[
   I_2(\beta,T)=\int_0^T e(\beta t^2)\,dt.
\]

Then
 \begin{equation}\label{eq:I2-bound}
   |I_2(\beta,T)|
   \ll T(1+T^2|\beta|)^{-1/2}.
 \end{equation}
 Indeed, the trivial bound gives $|I_2(\beta,T)|\leq T$ when $|\beta|T^2\leq1$, while Theorem \ref{van der Corput's} applied to $\phi(t)=\beta t^2$ gives
\[
|I_2(\beta,T)|\ll |\beta|^{-1/2}
\]
when $|\beta|T^2>1$. 

Combining the two estimates yields \eqref{eq:I2-bound}.
Indeed, the trivial estimate suffices when \(|\beta|T^2\leq1\), and
Theorem \ref{van der Corput's} gives the complementary
range.

For \((a,q)=1\), define the normalized local factor
\begin{equation}\label{eq:nonflat-local-factor}
\begin{aligned}
   \mathfrak S_b(q,a)
   ={}&\frac{\phi(W)}{\phi(qW)\sigma_W}
       \sum_{h\in H_b}
       \sum_{\substack{\ell\bmod q\\(W\ell+h,qW)=1}}
       e\!\left(
          \frac{((W\ell+h)^2-b)a}{qW}
       \right).
\end{aligned}
\end{equation}
For \(q=1\), the definition gives \(\mathfrak S_b(1,0)=1\).

\begin{lemma}[Circle-method bounds for the non-flat majorant]
\label{lem:nonflat-circle-estimates}
Fix \(A,B\geq1\) and \(\epsilon>0\).  For all sufficiently
large admissible pairs, uniformly over \(b\in\mathbb Z_{W,2}\) and over
\(\alpha=a/q+\beta\in\mathfrak M(q,a)\), one has
\begin{equation}\label{eq:nonflat-major-formula}
\begin{aligned}
   \widehat{\nu_b}(\alpha)
   ={}&\mathfrak S_b(q,a)\,N^{1/2}
       e\!\left(-\frac{b\beta}{W}\right)
       I_2\!\left(\beta,\sqrt{N+b/W}\right)
       +O_{A,B,C_w}(N(\log N)^{-B}).
\end{aligned}
\end{equation}
The local factor satisfies
\begin{equation}\label{eq:nonflat-local-bound}
   \mathfrak S_b(q,a)
   =
   \begin{cases}
      1,&q=1,\\
      0,&1<q<w,\\
      O_\epsilon(q^{-1/2+\epsilon}),&q\geq w.
   \end{cases}
\end{equation}
Consequently,
\begin{equation}\label{eq:nonflat-major-envelope}
   |\widehat{\nu_b}(\alpha)|
   \ll_{A,B,\epsilon,C_w}
      Nq^{-1/2+\epsilon}
      (1+N|\beta|)^{-1/2}
      +N(\log N)^{-B}.
\end{equation}
Moreover, if \(A\) is sufficiently large in terms of \(B\) and
\(C_w\), then
\begin{equation}\label{eq:nonflat-minor-envelope}
   \sup_{\alpha\in\mathfrak m}
   |\widehat{\nu_b}(\alpha)|
   \ll_{B,C_w}N(\log N)^{-B}.
\end{equation}
\end{lemma}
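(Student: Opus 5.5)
The plan is the standard major-arc evaluation carried out in the $p$-variable. Write
\[
\widehat{\nu_b}(\alpha)=\frac{|\mathbb Z_{W,2}|}{W}(WN)^{1/2}\sum_{\substack{p\in\mathbb P,\ p^2\equiv b\,(W)\\ W+b\le p^2\le WN+b}}\log p\; e\!\Bigl(\frac{p^2-b}{W}\alpha\Bigr).
\]
The condition $p^2\equiv b\pmod W$ is equivalent to $p\equiv h\pmod W$ for some $h\in H_b$. For $\alpha=a/q+\beta$ we split further according to $p\equiv W\ell+h\pmod{qW}$, with $\ell\bmod q$ and $(W\ell+h,qW)=1$. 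On each such class the phase $e\bigl(((p^2-b)/W)(a/q)\bigr)$ is constant and equal to $e\bigl(((W\ell+h)^2-b)a/(qW)\bigr)$. Note that $(p^2-b)/W$ is an integer, and its residue mod $q$ depends only on $p$ mod $qW$. This holds because $(x+qWt)^2-x^2$ is divisible by $qW\cdot W$ up to the factor needed; more precisely, it is $2xqWt+q^2W^2t^2\equiv0\pmod{qW}$ after dividing by $W$ and reducing mod $q$.

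Next we apply partial summation in $p$ against the smooth weight $e(\beta(p^2-b)/W)$, using Siegel--Walfisz (Theorem~\ref{SWT}) with modulus $qW\le(\log N)^{A+1}$ by Lemma~\ref{lem:W-growth}. Each class contributes
\[
\frac{1}{\phi(qW)}\int_{\sqrt{W+b}}^{\sqrt{WN+b}} e\bigl(\beta(x^2-b)/W\bigr)\,dx
\]
plus an error $O\bigl(\sqrt{WN}(1+|\beta|N)(\log N)^{-B'}\bigr)$. Since $|\beta|N\le Q$, this error is harmless after choosing $B'$ large. The substitution $x=\sqrt W\,t$ turns the integral into $\sqrt W\, e(-b\beta/W)\,I_2(\beta,\sqrt{N+b/W})$, up to an $O(1)$ lower-endpoint piece. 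Collecting the normalization, we have $\frac{|\mathbb Z_{W,2}|}{W}(WN)^{1/2}\sqrt W=\frac{\phi(W)}{\sigma_W}N^{1/2}$ by \eqref{eq:quadratic-root-count}, which produces exactly $\mathfrak S_b(q,a)N^{1/2}e(-b\beta/W)I_2$. Summing the errors over at most $q\sigma_W\le(\log N)^{O(A)}W$ classes gives \eqref{eq:nonflat-major-formula}.

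For \eqref{eq:nonflat-local-bound}, $q=1$ is immediate. For general $q$ we factor $q=q_1q_2$, where $q_1$ is composed of primes less than $w$ and $(q_2,W)=1$, and the sum factorizes by CRT. If $q_1>1$, the $\ell$-sum over a prime $p\mid q_1$ with $p<w$ is a complete sum of an additive character in $\ell$ which is linear modulo the relevant power. Since $(W\ell+h)^2-b\equiv 2Wh\ell\cdot(\ldots)$, the phase reduces to $e(2h a\ell/q_1')$, and this vanishes whenever $q$ is not coprime to $W$. This is the mechanism of Zhao's \cite[Lemma~4.2, Corollary~4.3]{zhao}; in particular it covers every $1<q<w$, since such $q$ divides a power of $W$'s primes. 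I expect the cleanest route is to quote Zhao's evaluation after matching normalizations, since his local factor is the same character sum. For the $q_2$-part we get a normalized Gauss sum $\phi(q_2)^{-1}\sum_{(x,q_2)=1}e(a'x^2/q_2)\ll_\epsilon q_2^{-1/2+\epsilon}$. Combined with vanishing when $q_1>1$, this gives $O(q^{-1/2+\epsilon})$ for $q\ge w$.

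The envelope \eqref{eq:nonflat-major-envelope} then follows from \eqref{eq:I2-bound} with $T^2\asymp N$, which gives $|N^{1/2}I_2|\ll N(1+N|\beta|)^{-1/2}$. The minor-arc bound \eqref{eq:nonflat-minor-envelope} is the main obstacle. For it, I would rewrite $\widehat{\nu_b}(\alpha)$ as a sum over $p\equiv h\pmod W$ of $\log p\, e(p^2\alpha/W)$ and invoke Hua's estimate \cite[Lemma~4.5]{zhao}. That estimate bounds $\sum_{p\le X,\,p\equiv h\,(W)}\log p\,e(p^2\gamma)$ by $X(\log X)^{-B}$ unless $\gamma$ has a rational approximation with denominator at most $(\log X)^{A'}$ and error at most $(\log X)^{A'}/X^2$. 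Here $\gamma=\alpha/W$ and $X^2\asymp WN$, and the main issue is to check that such an approximation of $\alpha/W$ forces $\alpha$ into $\mathfrak M_Q$ once $A$ is large in terms of $A'$ and $C_w$, using $W\le(\log N)^{o(1)}$. This is a routine but careful bookkeeping step: if $\alpha/W=a'/q'+\beta'$, then $\alpha=Wa'/q'+W\beta'$, which has reduced denominator at most $q'\le Q$ and error $\le W(\log N)^{A'}/(WN)\le Q/(qN)$. Multiplying by the normalization $\frac{|\mathbb Z_{W,2}|}{W}(WN)^{1/2}\le\sqrt{N/W}\cdot\sqrt W$ then yields $N(\log N)^{-B}$.
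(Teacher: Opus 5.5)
Your major-arc evaluation and your handling of \eqref{eq:nonflat-local-bound} match the paper. That covers the split into reduced classes $W\ell+h \pmod{qW}$, Siegel--Walfisz with partial summation against $e(\beta(p^2-b)/W)$, the substitution $x=\sqrt W\,t$, the identity $\frac{|\mathbb Z_{W,2}|}{W}(WN)^{1/2}\sqrt W=\frac{\phi(W)}{\sigma_W}N^{1/2}$ producing $\mathfrak S_b(q,a)$, and quoting Zhao's finite evaluation for the local factor.

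The genuine difference is on the minor arcs.

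\textbf{The paper's route.} It approximates $\alpha$ itself by Dirichlet at level $N/Q$, so $Q<q\le N/Q$ and $|\beta|\le 1/N$. It writes $a/(qW)=a_1/(qW_1)$ in lowest terms and checks that $(\log t)^\sigma\le qW_1\le t^2(\log t)^{-\sigma}$ uniformly for $Y(\log N)^{-C}\le t\le Y$. It then applies Hua's bound only at this rational point. Finally it restores the twist $e(\beta p^2/W)$ and the weight $\log p$ by partial summation, which costs nothing because $|\beta|Y^2/W\ll1$.

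\textbf{Your route.} You apply Hua directly at the real point $\alpha/W$, in the dichotomy form obtained by combining Hua with Dirichlet's theorem at level $X^2(\log X)^{-\sigma}$, and argue contrapositively. An approximation $a'/q'$ to $\alpha/W$ with $q'\le(\log X)^{A'}$ and error at most $(\log X)^{A'}/X^2$ places $\alpha$ within $(\log X)^{A'}/N$ of a fraction whose denominator divides $q'$. Hence $\alpha\in\mathfrak M_Q$ once $A>2A'$.

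\textbf{Comparison.} Your argument is correct and avoids the partial summation in $\beta$. The cost is that it needs Hua's estimate, with the congruence condition modulo $W$, at arbitrary real arguments $a'/q'+\lambda$ with $|\lambda|\le q'^{-2}$. It also needs either the $\log p$ weight built in, or a partial summation in $t$ with the approximation condition rechecked for every $t$ in the range. The paper's arrangement only uses Hua at exact rationals, which is the form it quotes from \cite[Lemma~4.5]{zhao}.

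Two small corrections.
\begin{enumerate}
\item The prefactor $\frac{|\mathbb Z_{W,2}|}{W}(WN)^{1/2}$ is not bounded by $\sqrt N$, since $\sigma_W$ is far smaller than $\sqrt W$. The correct bookkeeping is $\sigma_W$ root classes, each $\ll Y(\log N)^{-B'}$, times $\frac{\phi(W)}{W\sigma_W}(WN)^{1/2}$. This gives $\phi(W)N(\log N)^{-B'}$, which is still absorbed because $W\le(\log N)^{\vartheta}$.
\item In your heuristic for the vanishing of $\mathfrak S_b(q,a)$ when $(q,W)>1$, the case $2\,\|\,q$ is misdescribed. There the phase is constant in $\ell$, and the cancellation comes from the sum over the roots $h$ (pairing $h$ with its shift by $4$ modulo $8$), not from the $\ell$-sum. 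Since you ultimately quote Zhao, this does not affect the argument.
\end{enumerate}
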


\begin{proof}
We give the argument because the weight \(\nu_b\) has a different
normalization from Zhao's flat weight.  Put $Y=(WN+b)^{1/2}.$
Extending the original range \(n\in[N]\) to \(p\leq Y\) can add
only a prime with \(0\leq p^2-b<W\).  Since \(W\mid p^2-b\), this
forces \(p^2=b\), so at most one term is added.  Hence
\eqref{eq:quadratic-root-count} gives
\begin{equation}\label{eq:nu-root-decomposition}
\begin{aligned}
 \widehat{\nu_b}(\alpha)
  ={}&\frac{\phi(W)}{W\sigma_W}(WN)^{1/2}
      \sum_{h\in H_b}
      \sum_{\substack{p\leq Y\\p\equiv h\, (W)}}
      (\log p)
      e\!\left(\frac{p^2-b}{W}\alpha\right)
      +O\!\left((WN)^{1/2}\log(2W)\right).
\end{aligned}
\end{equation}
By Lemma~\ref{lem:W-growth}, the error is \(N^{1/2+o(1)}\) and is
absorbed by \(N(\log N)^{-B}\) for every fixed \(B\).

Suppose that
\(\alpha=a/q+\beta\in\mathfrak M(q,a)\).  For each \(h\in H_b\),
split the progression \(p\equiv h\pmod W\) into the \(q\) distinct
classes
\[
   p\equiv W\ell+h\pmod{qW},
   \qquad \ell\pmod q.
\]
A class that is not coprime to \(qW\) can contain only a prime divisor
of \(q\), because \((h,W)=1\).  Thus all non-reduced classes together
contribute
\begin{equation}\label{eq:nonreduced-class-error}
   O\!\left((WN)^{1/2}\omega(q)\log Y\right)
   =N^{1/2+o(1)}
\end{equation}
to \eqref{eq:nu-root-decomposition}, uniformly for \(q\leq Q\).

For a reduced residue \(r\pmod{qW}\), we next make the
Siegel--Walfisz error accounting explicit.  We use the theorem \ref{SWT} uniformly for \((a,q)=1\) and \(q\leq(\log x)^C\).  
Set \(Y_0=Y(\log N)^{-B-3A-50}\). For sufficiently large \(N\), one has \(Y_0\geq2\).  Moreover, on the
range \(Y_0\leq t\leq Y\), Lemma~\ref{lem:W-growth} gives
\[
   qW\leq (\log N)^{A+1}\leq(\log t)^{A+2},
   \qquad
   \log t\asymp \log N.
\]
Apply this prime-theta form of the Siegel--Walfisz theorem on this
range with logarithmic saving \(B+4A+60\).  By partial summation we get
\[
\begin{aligned}
  \sum_{\substack{Y_0<p\leq Y\\p\equiv r\, (qW)}}
       (\log p)e(\beta p^2/W)
  ={}&\frac1{\phi(qW)}
       \int_{Y_0}^Y e(\beta t^2/W)\,dt \\
  &\quad
       +O_{A,B,C_w}\!\left(
          Y(\log N)^{-B-4A-60}
          \left(1+\frac{|\beta|Y^2}{W}\right)\right) \\
  ={}&\frac1{\phi(qW)}
       \int_{Y_0}^Y e(\beta t^2/W)\,dt
       +O_{A,B,C_w}\!\left(
          Y(\log N)^{-B-3A-60}\right),
\end{aligned}
\]
since \(|\beta|Y^2/W\ll|\beta|N\leq Q=(\log N)^A\).
The initial segment \(p\leq Y_0\) contributes at most
\(Y_0\log Y\), and extending the main integral to \([0,Y]\) costs at
most \(Y_0\).  Consequently,
\begin{equation}\label{eq:weighted-SW-nonflat}
\begin{aligned}
  \sum_{\substack{p\leq Y\\p\equiv r\, (qW)}}
       (\log p)e(\beta p^2/W)
  ={}&\frac1{\phi(qW)}I_2(\beta/W,Y)
       +O_{A,B,C_w}(Y(\log N)^{-B-3A-49}).
\end{aligned}
\end{equation}

There are at most \(q\sigma_W\) reduced classes.  After multiplication
by the prefactor in \eqref{eq:nu-root-decomposition}, the total of the
errors in \eqref{eq:weighted-SW-nonflat} is
\[
 \begin{aligned}
 &\ll
 \frac{\phi(W)}{W\sigma_W}(WN)^{1/2}
 (q\sigma_W)Y (\log N)^{-B-3A-49}\\
 &\ll q\phi(W)N (\log N)^{-B-3A-49}
 \ll_{A,B,C_w}N(\log N)^{-B},
 \end{aligned}
\]
where the final estimate uses \(q\leq (\log N)^A\) and, by
Lemma~\ref{lem:W-growth}, \(\phi(W)\leq W\leq \log N\) for sufficiently
large \(N\).
Inserting the main terms into \eqref{eq:nu-root-decomposition} therefore
yields
\[
\begin{aligned}
 \widehat{\nu_b}(\alpha)
  ={}&\frac{\phi(W)}{W\phi(qW)\sigma_W}(WN)^{1/2}
      e\!\left(-\frac{b\beta}{W}\right)
      I_2(\beta/W,Y)                                      \\
 &\quad\times
      \sum_{h\in H_b}
      \sum_{\substack{\ell\bmod q\\(W\ell+h,qW)=1}}
      e\!\left(
          \frac{((W\ell+h)^2-b)a}{qW}
      \right)
      +O_{A,B,C_w}(N(\log N)^{-B}).
\end{aligned}
\]
The exact change of variables \(t=W^{1/2}u\) gives
\[
   I_2(\beta/W,Y)
   =W^{1/2}I_2\!\left(\beta,\sqrt{N+b/W}\right),
\]
and hence proves \eqref{eq:nonflat-major-formula}.

To prove \eqref{eq:nonflat-local-bound}, we transfer Zhao's estimate
to our normalization. Let \(S_b^{\mathrm Z}(q,a)\) denote Zhao's
factor in \cite[(10)]{zhao}.  Expanding
\((W\ell+h)^2-b\) shows directly from
\eqref{eq:nonflat-local-factor} that
\[
   \mathfrak S_b(q,a)
   =\frac{\phi(W)}{\phi(qW)}S_b^{\mathrm Z}(q,a).
\]
Zhao uses the same modulus \(W=8\prod_{2<p<w}p\).  His finite
evaluation \cite[Lemma~4.2 and Corollary~4.3]{zhao} gives
\[
   S_b^{\mathrm Z}(q,a)=0
   \quad\text{if }q=2\text{ or }(q,W)\nmid2,
   \qquad
   |S_b^{\mathrm Z}(q,a)|\ll_\epsilon q^{1/2+\epsilon}.
\]
The remaining normalization is not harmless unless its size is recorded.
Prime by prime, for every $\epsilon>0$ we have
\[
   \frac{\phi(W)}{\phi(qW)}
   =q^{-1}\prod_{\substack{p\mid q\\p\nmid W}}
        \left(1-\frac1p\right)^{-1}
   \ll_\epsilon q^{-1+\epsilon}.
\]
Finally, suppose \(1<q<w\).  If \(q=2\), then
\(S_b^{\mathrm Z}(q,a)=0\) directly.  Otherwise, if \(q\) has an odd
prime divisor \(p\), then \(p<q<w\), so \(p\mid W\) and hence
\(p\mid(q,W)\); thus \((q,W)\nmid2\).  If \(q\) has no odd prime
divisor, then \(q\) is a power of \(2\), and since \(q\neq2\) we have
\(q\geq4\).  As \(8\mid W\), this gives \(4\mid(q,W)\), again so that
\((q,W)\nmid2\).  Hence \(\mathfrak S_b(q,a)=0\) for every
\(1<q<w\). This proves every case of
\eqref{eq:nonflat-local-bound}.  Combining the bound with
\eqref{eq:I2-bound} proves \eqref{eq:nonflat-major-envelope}.

It remains to prove the minor-arc estimate.  Let
\(\alpha\in\mathfrak m\).  Dirichlet approximation with upper bound
\(N/Q\) supplies coprime integers \(a,q\) such that
\[
   Q<q\leq N/Q,
   \qquad
   \left|\alpha-\frac aq\right|
      \leq\frac{Q}{qN}\leq\frac1N.
\]
 Put
\[
   d=(a,W),\qquad a_1=a/d,\qquad W_1=W/d.
\]
Since \((a,q)=1\), one has \((a_1,qW_1)=1\), and
\[
   \frac{a}{qW}=\frac{a_1}{qW_1}.
\]
%We now verify the hypotheses of Hua's estimate.  
Apply
\cite[Lemma~4.5]{zhao} with saving parameter \(B+20\) and with the
bound \(W\leq(\log t)^2\); let \(\sigma=\sigma(B+20,2)\) be the
resulting denominator exponent.  Choose
\begin{equation}\label{eq:minor-A-choice}
   A\geq 2B+2\sigma+70
\end{equation}
and put \(t_0=Y(\log N)^{-(B+30)}\).  Uniformly for
\(t_0\leq t\leq Y\), Lemma~\ref{lem:W-growth} gives
\(\log t\asymp \log N\), \(W\leq(\log t)^2\), and
\[
   (\log t)^\sigma\leq qW_1
   \leq\frac{Y^2}{(\log N)^A}
   \leq t^2(\log N)^{-(A-2B-60)}
   \leq t^2(\log t)^{-\sigma}.
\]
Thus, for every \(h\in H_b\), Hua's estimate
\cite[Lemma~4.5]{zhao} gives
\begin{equation}\label{eq:Hua-terminal-bound}
  \sum_{\substack{p\leq t\\p\equiv h\, (W)}}
       e\!\left(\frac{a_1p^2}{qW_1}\right)
  \ll_{B,C_w}\frac{t}{W (\log N)^{B+20}}
  \qquad(t_0\leq t\leq Y).
\end{equation}
Recall that \(\beta=\alpha-a/q\).  The derivative of
\(\log t\,e(\theta t^2/W)\) is bounded by
\begin{equation}\label{eq:minor-partial-summation-derivative}
   \frac1t+\frac{|\beta|t\log t}{W}.
\end{equation}
Partial summation on \([t_0,Y]\), together with
\eqref{eq:Hua-terminal-bound}, gives
\begin{align*}
 &\sum_{\substack{t_0<p\leq Y\\p\equiv h\, (W)}}
       (\log p)e(\alpha p^2/W)                                    \\
 &\qquad\ll
   \frac{Y}{W(\log N)^{B+19}}
   +\int_{t_0}^Y\frac{t}{W(\log N)^{B+20}}
      \left(\frac1t+\frac{|\beta|t\log N}{W}\right)dt                 \\
 &\qquad\ll_{B,C_w}
   \frac{Y}{W(\log N)^{B+18}},
\end{align*}
where in the last step we used \(|\beta|\leq N^{-1}\) and
\(Y^2=WN+b\asymp WN\).  The initial range is bounded trivially by
\[
   \sum_{\substack{p\leq t_0\\p\equiv h\, (W)}}\log p
      \ll Y(\log N)^{-B-29}.
\]
Consequently, for every root class,
\begin{equation}\label{eq:minor-root-class-final}
  \sum_{\substack{p\leq Y\\p\equiv h\, (W)}}
       (\log p)e(\alpha p^2/W)
  \ll_{B,C_w}
       \frac{Y}{W(\log N)^{B+18}}+Y(\log N)^{-B-29}.
\end{equation}
Summing over the \(\sigma_W\) roots and multiplying by the prefactor
in \eqref{eq:nu-root-decomposition}, the two terms in
\eqref{eq:minor-root-class-final} contribute respectively
\[
   \ll \frac{\phi(W)}{W}\frac{N}{(\log N)^{B+18}}
   \quad\text{and}\quad
   \ll \phi(W)N (\log N)^{-B-29}.
\]
Both are \(O_{B,C_w}(N(\log N)^{-B})\) by Lemma~\ref{lem:W-growth}.
Together with the endpoint error in \eqref{eq:nu-root-decomposition},
this proves \eqref{eq:nonflat-minor-envelope}.
\end{proof}

We next record the two elementary estimates for the reference
measure that are needed to compare it with \(\nu_b\).

\begin{lemma}[Fourier estimates for the quadratic reference measure]
\label{lem:mu-fourier-estimates}
For every \(\alpha\in\mathbb T\),
\begin{equation}\label{eq:mu-continuous-approx}
 \left|
   \widehat{\mu_N}(\alpha)
   -N^{1/2}I_2(\alpha,N^{1/2})
 \right|
 \ll N\|\alpha\|_{\mathbb T}.
\end{equation}
Moreover,
\begin{equation}\label{eq:mu-global-decay}
   |\widehat{\mu_N}(\alpha)|
   \ll N\bigl(1+N\|\alpha\|_{\mathbb T}\bigr)^{-1/2}.
\end{equation}
\end{lemma}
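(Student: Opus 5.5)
The plan is to compare the sum defining \(\widehat{\mu_N}\) with an integral after the substitution \(n=u^2\). Write \(\mu_N(n)=N^{1/2}\int_{\sqrt{n-1}}^{\sqrt n}du\) for \(1\le n\le N\). This gives
\[
   \widehat{\mu_N}(\alpha)
   =N^{1/2}\sum_{n=1}^N\int_{\sqrt{n-1}}^{\sqrt n}e(n\alpha)\,du .
\]
On the other hand,
\[
   N^{1/2}I_2(\alpha,N^{1/2})
   =N^{1/2}\sum_{n=1}^N\int_{\sqrt{n-1}}^{\sqrt n}e(u^2\alpha)\,du .
\]
Since \(\widehat{\mu_N}\) is \(1\)-periodic, we may take the representative \(\alpha\in[-1/2,1/2]\), so that \(|\alpha|=\|\alpha\|_{\mathbb T}\).

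For \eqref{eq:mu-continuous-approx} I would bound the difference term by term. For \(u\in[\sqrt{n-1},\sqrt n]\) we have \(0\le n-u^2\le1\), hence
\[
   |e(n\alpha)-e(u^2\alpha)|\le2\pi|\alpha|(n-u^2)\le2\pi\|\alpha\|_{\mathbb T}.
\]
Integrating over \(u\in[0,N^{1/2}]\) and multiplying by \(N^{1/2}\) bounds the total difference by \(2\pi N\|\alpha\|_{\mathbb T}\). This proves the first claim.

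For \eqref{eq:mu-global-decay}, the trivial bound \(|\widehat{\mu_N}|\le N\), which follows from \eqref{eq:mu-mass}, handles the range \(N\|\alpha\|_{\mathbb T}\le1\). When \(N\|\alpha\|_{\mathbb T}>1\), I would not use the first claim, since its error \(N\|\alpha\|\) is too large there. Instead I would use partial summation. The weights \(a_n=N^{1/2}(\sqrt n-\sqrt{n-1})\) are positive and decreasing, with \(a_n\asymp N^{1/2}n^{-1/2}\). Split at \(M=\|\alpha\|_{\mathbb T}^{-1}\).
\begin{itemize}
\item The range \(n\le M\) contributes at most \(\sum_{n\le M}a_n=N^{1/2}M^{1/2}\).
\item On \(M<n\le N\), the partial summation bound from Section~\ref{sec:preliminaries}, together with the geometric-sum estimate \(|\sum e(n\alpha)|\ll\|\alpha\|^{-1}\), gives a contribution \(\ll a_{M}\|\alpha\|^{-1}\ll N^{1/2}M^{-1/2}M=N^{1/2}M^{1/2}\).
\end{itemize}
Together these give \(|\widehat{\mu_N}(\alpha)|\ll N^{1/2}\|\alpha\|_{\mathbb T}^{-1/2}\), which is comparable to \(N(1+N\|\alpha\|)^{-1/2}\) in this range.

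Both steps are elementary, so I expect no real obstacle. The only point requiring care is the second bound: it needs the monotonicity of \(\mu_N\) and the correct choice of splitting point, rather than van der Corput applied to the continuous comparison.
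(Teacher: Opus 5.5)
Your proposal is correct and follows essentially the paper's argument. The first bound is the same termwise comparison of $e(n\alpha)$ with $e(u^2\alpha)$ on $[\sqrt{n-1},\sqrt n]$; the paper writes it in the variable $u^2$ with density $du/(2\sqrt u)$. For the second bound you use the same split at $\|\alpha\|_{\mathbb T}^{-1}$, with a trivial bound below and summation by parts above; the only difference is that you apply partial summation directly to the monotone weights $N^{1/2}(\sqrt n-\sqrt{n-1})$, whereas the paper first replaces them by $1/(2\sqrt n)+O(n^{-3/2})$ and bounds that error separately, so your version is a slight streamlining.
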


\begin{proof}
Since
\[
 \sqrt n-\sqrt{n-1}
 =\int_{n-1}^n\frac{du}{2\sqrt u},
\]
where the integral at zero is improper, the difference in
\eqref{eq:mu-continuous-approx} equals
\[
 N^{1/2}\sum_{n=1}^N
 \int_{n-1}^n
   \bigl(e(n\alpha)-e(u\alpha)\bigr)\frac{du}{2\sqrt u}.
\]
Taking the representative of \(\alpha\) in \([-1/2,1/2]\), the
integrand is
\(O(\|\alpha\|_{\mathbb T}u^{-1/2})\).  Integration proves
\eqref{eq:mu-continuous-approx}.

For the second estimate, put \(\delta=\|\alpha\|_{\mathbb T}\).
The trivial bound gives \(O(N)\).  If \(\delta>N^{-1}\), use
\[
 \sqrt n-\sqrt{n-1}
 =\frac1{2\sqrt n}+O(n^{-3/2}).
\]
The classical bound
\[
   \sum_{n\leq N}n^{-1/2}e(n\alpha)
   \ll\delta^{-1/2}
\]
follows by treating \(n\leq\delta^{-1}\) trivially and applying
summation by parts, with
\(|\sum_{n\leq x}e(n\alpha)|\ll\delta^{-1}\), on the remaining
range.  The \(O(n^{-3/2})\) terms contribute \(O(N^{1/2})\) after
the outer factor \(N^{1/2}\).  Hence
\(|\widehat{\mu_N}(\alpha)|\ll(N/\delta)^{1/2}\), which combines
with the trivial estimate to give \eqref{eq:mu-global-decay}.
\end{proof}

\begin{proof}[Proof of Lemma~\ref{prop:Pseudorandomness}]
Fix \(B\geq2\), choose \(A\) sufficiently large for
Lemma~\ref{lem:nonflat-circle-estimates}, and put \(Q=(\log N)^A\).
On the principal arc \(\mathfrak M(1,0)\), write \(\alpha=\beta\).
By \eqref{eq:nonflat-major-formula} and
\eqref{eq:nonflat-local-bound},
\[
 \widehat{\nu_b}(\beta)
 =N^{1/2}e(-b\beta/W)
   I_2\!\left(\beta,\sqrt{N+b/W}\right)
   +O(N(\log N)^{-B}).
\]
Since \(1\leq b<W\),
\[
   0<\sqrt{N+b/W}-\sqrt N
     =\frac{b/W}{\sqrt{N+b/W}+\sqrt N}
     \leq N^{-1/2}.
\]
The endpoint change in the oscillatory integral therefore costs at
most one.  Moreover,
\[
   |e(-b\beta/W)-1|\ll |\beta|,
   \qquad
   N^{1/2}\left|I_2\!\left(\beta,\sqrt{N+b/W}\right)\right|\ll N,
\]
so removing the displacement phase costs \(O(N|\beta|)=O(Q)\).
Finally, \eqref{eq:mu-continuous-approx} contributes another
\(O(N|\beta|)=O(Q)\).  Thus
\begin{equation}\label{eq:pseudo-principal-arc}
 \sup_{\alpha\in\mathfrak M(1,0)}
 |\widehat{\nu_b}(\alpha)-\widehat{\mu_N}(\alpha)|
 \ll N(\log N)^{-B}+Q.
\end{equation}

If \(\alpha\in\mathfrak M(q,a)\) with \(2\leq q\leq Q\), then
\eqref{eq:nonflat-local-bound} and the trivial estimate in
\eqref{eq:I2-bound} give, for example with \(\epsilon=1/4\),
\[
   |\widehat{\nu_b}(\alpha)|
   \ll Nw^{-1/4}+N(\log N)^{-B}.
\]
Also
\[
   \|\alpha\|_{\mathbb T}
   \geq\frac1q-\frac{Q}{qN}
   \geq\frac1{2q}
\]
for large \(N\), so \eqref{eq:mu-global-decay} gives
\[
   |\widehat{\mu_N}(\alpha)|\ll(Nq)^{1/2}\leq(NQ)^{1/2}.
\]
Finally, on \(\mathfrak m\), the minor-arc estimate and the fact that
\(\mathfrak M(1,0)\) is absent give
\[
   |\widehat{\nu_b}(\alpha)|\ll N(\log N)^{-B},
   \qquad
   |\widehat{\mu_N}(\alpha)|\ll NQ^{-1/2}.
\]
Together with \eqref{eq:pseudo-principal-arc}, these estimates imply
\begin{equation}\label{eq:direct-pseudorandom-error}
 \|\widehat{\nu_b}-\widehat{\mu_N}\|_\infty
 \ll_{A,B,C_w}
 Nw^{-1/4}+NQ^{-1/2}+N(\log N)^{-B}+(NQ)^{1/2}+Q.
\end{equation}
The right-hand side is \(o(N)\), uniformly in \(b\), as
\(w,N\to\infty\) through admissible pairs.  Choose \(w_\eta\)
so that the first term is at most \(\eta N/2\), and then choose
\(N_\eta\) so that the remaining terms are at most
\(\eta N/2\).  This proves the lemma.
\end{proof}

\subsection{Restriction estimate}
\label{sec:restriction}

The restriction argument follows Bourgain's large-spectrum method,
as organized for prime squares by Zhao \cite[Section~5]{zhao}.  The
non-flat majorant has a quadratic oscillatory integral in its major
arcs, so we record the slightly more flexible version of the abstract
envelope lemma that is needed here.

\begin{lemma}[Truncated divisor moments]
\label{lem:truncated-divisor-moments}
Let \(r\geq1\) be fixed, let \(M,Q\geq2\), and define
\[
   D_Q(k)=\sum_{\substack{1\leq d\leq Q\\d\mid k}}1,
   \qquad D_Q(0)=\lfloor Q\rfloor.
\]
If \(Q^r\leq M\), then
\begin{equation}\label{eq:truncated-divisor-moment}
   \sum_{|k|\leq2M}D_Q(k)^r
   \ll_r M\bigl(\log(2Q)\bigr)^{C_r}
\end{equation}
for a constant \(C_r\) depending only on \(r\).
\end{lemma}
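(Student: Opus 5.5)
We prove the bound by separating the term $k=0$ from the nonzero terms and, for $k\neq0$, reducing $D_Q(k)^r$ to a count of $r$-tuples of divisors.

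\emph{The term $k=0$.} Its contribution is $\lfloor Q\rfloor^r\leq Q^r\leq M$, by the hypothesis $Q^r\leq M$. This is the only place that hypothesis is needed.

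\emph{Expanding the power.} For $k\neq0$ we write
\[
   D_Q(k)^r=\sum_{\substack{d_1,\ldots,d_r\leq Q\\ d_i\mid k}}1.
\]
The condition that every $d_i$ divides $k$ is the same as $\operatorname{lcm}(d_1,\ldots,d_r)\mid k$. We interchange the order of summation. The number of $k$ with $0<|k|\leq2M$ divisible by $L=\operatorname{lcm}(d_1,\ldots,d_r)$ is at most $4M/L$. Hence
\[
   \sum_{0<|k|\leq 2M}D_Q(k)^r
   \leq 4M\sum_{d_1,\ldots,d_r\leq Q}\frac1{\operatorname{lcm}(d_1,\ldots,d_r)}.
\]

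\emph{Bounding the reciprocal lcm sum.} We group the tuples according to the value $L$ of their lcm. Every $d_i$ divides $L$, so the number of tuples with a given $L$ is at most $\tau(L)^r$. Also $L\leq Q^r$. Therefore the sum is at most
\[
   \sum_{L\leq Q^r}\frac{\tau(L)^r}{L}.
\]
By the Euler product bound,
\[
   \sum_{L\leq X}\frac{\tau(L)^r}{L}
   \leq\prod_{p\leq X}\Bigl(1+\frac{2^r}{p}+O_r(p^{-2})\Bigr)
   \ll_r(\log 2X)^{2^r}.
\]
Taking $X=Q^r$ gives $(\log 2Q^r)^{2^r}\ll_r(\log 2Q)^{2^r}$. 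Combining this with the $k=0$ term proves the claim with $C_r=2^r$.

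\emph{The main care point.} The step needing attention is the reciprocal lcm sum, since one must avoid a factor like $Q^{r-1}$. Grouping by the lcm, together with the multiplicative Euler product bound, handles this. An alternative is the multiplicative Rankin-type bound
\[
   \sum_{d_1,\ldots,d_r}\frac{1}{\operatorname{lcm}(d_1,\ldots,d_r)}
   \leq\prod_{p\leq Q}\Bigl(1+O_r(1/p)\Bigr).
\]
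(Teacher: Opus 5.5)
Your proof is correct and follows essentially the same route as the paper: treat $k=0$ separately, expand the $r$-th power, interchange to get $4M\sum 1/\operatorname{lcm}(d_1,\ldots,d_r)$, and bound that by an Euler product. The only difference is cosmetic. You group tuples by their lcm $L$ and bound $\sum_{L\le Q^r}\tau(L)^r/L$, giving $C_r=2^r$. The paper directly factors the reciprocal-lcm sum into local factors $\sum_{e_1,\dots,e_r\ge0}p^{-\max e_i}=1+O_r(p^{-1})$ over $p\le Q$, which is your stated alternative and gives the slightly better exponent $2^r-1$.
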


\begin{proof}
The contribution of \(k=0\) is at most \(Q^r\leq M\).  For
\(k\ne0\), expansion of the \(r\)-th power and the fact that $[d_1,\cdots, d_r]\le Q^r\le M$ gives
\[
\begin{aligned}
 \sum_{0<|k|\leq2M}D_Q(k)^r
 &\leq
  4M\sum_{1\leq d_1,\ldots,d_r\leq Q}
       \frac1{[d_1,\ldots,d_r]}.
\end{aligned}
\]
All terms are nonnegative, so the last sum is at most
\begin{align}\label{e:eulerprod}
    \prod_{p\leq Q}
 \left(
   \sum_{e_1,\ldots,e_r\geq0}p^{-\max(e_1,\ldots,e_r)}
 \right).
\end{align}
For each prime \(p\), the local factor equals
\[
  1+\sum_{m\geq1}\bigl((m+1)^r-m^r\bigr)p^{-m}
  =1+O_r(p^{-1}).
\]
Using \(\sum_{p\leq Q}p^{-1}\ll\log\log(2Q)\), we can bound \eqref{e:eulerprod} by
\(O_r((\log(2Q))^{C_r})\).  This proves
\eqref{eq:truncated-divisor-moment}.
\end{proof}

Motivated by \cite[Lemma 5.1]{zhao}, we prove Bourgain's envelope lemma. 

\begin{lemma}\label{lem:bourgain-envelope-general}
Fix \(\gamma>1\) and \(\epsilon>0\). Let \(M\geq2\) be an integer
and let \(Q,T\geq2\), with
\(Q^{C_\epsilon}\leq M\) for a sufficiently large constant
\(C_\epsilon\). Define
\[
    \mathcal G_{\gamma,Q,M}(\alpha)
    =
    \sum_{q\leq Q}\frac1q\sum_{a\bmod q}
    \bigl(1+M\|\alpha-a/q\|_{\mathbb T}\bigr)^{-\gamma}.
\]
Suppose \(g\colon\mathbb T\to\mathbb C\) satisfies
\[
    |g(\alpha)|
    \leq
    \max\{\mathcal G_{\gamma,Q,M}(\alpha),T^{-1}\}.
\]
Let \(t_1,\ldots,t_R\in\mathbb T\) be \(M^{-1}\)-separated and
assume
\[
    \sum_{r,s=1}^R|g(t_r-t_s)|\geq\eta R^2,
    \qquad
    \eta\geq2/T.
\]
Then there exists $C_{\gamma,\epsilon}$ such that
\begin{equation}\label{eq:bourgain-envelope-conclusion}
    R\ll_{\gamma,\epsilon}
    \eta^{-1}T^\epsilon
    \bigl(\log(2Q)\bigr)^{C_{\gamma,\epsilon}}.
\end{equation}
\end{lemma}

\begin{proof}
We give the argument together with
Lemma~\ref{lem:truncated-divisor-moments} used at its only arithmetic
step. For brevity, write
\[
    \mathcal G=\mathcal G_{\gamma,Q,M},
    \qquad
    F(\alpha)=(1+M\|\alpha\|_{\mathbb T})^{-\gamma}.
\]
Splitting according to whether \(|g(t_r-t_s)|\leq T^{-1}\), we obtain
\[
\begin{aligned}
    \eta R^2
    &\leq \sum_{r,s=1}^R |g(t_r-t_s)| \\
    &\leq
    \sum_{\substack{1\leq r,s\leq R\\
                    |g(t_r-t_s)|\leq T^{-1}}}
    |g(t_r-t_s)|
    +
    \sum_{\substack{1\leq r,s\leq R\\
                    |g(t_r-t_s)|>T^{-1}}}
    |g(t_r-t_s)| \\
    &\leq \frac{R^2}{T}
    +
    \sum_{\substack{1\leq r,s\leq R\\
                    |g(t_r-t_s)|>T^{-1}}}
    \mathcal G(t_r-t_s) \\
    &\leq \frac{\eta R^2}{2}
    +\sum_{r,s=1}^R\mathcal G(t_r-t_s).
\end{aligned}
\]
Absorbing the first term on the right gives
\begin{equation}\label{eq:large-spectrum-G-sum}
    \eta R^2\ll\sum_{r,s=1}^R\mathcal G(t_r-t_s).
\end{equation}
Let
\[
    \psi(\alpha)
    =
    \left|\frac1{2M+1}\sum_{|k|\leq M}e(k\alpha)\right|^2,
    \qquad
    \sigma(\alpha)=\sum_{r=1}^R\psi(\alpha-t_r).
\]
The Fej\'er-kernel bounds give
\[
    \psi(\alpha)\ll
    \min\{1,(M\|\alpha\|_{\mathbb T})^{-2}\},
    \qquad
    \|\psi\|_1\asymp M^{-1},
\]
and \(\psi\gg1\) on a fixed multiple of the interval
\(\|\alpha\|_{\mathbb T}\leq M^{-1}\). The \(M^{-1}\)-separation of the \(t_r\)'s and the decay of \(\psi\)
give, uniformly in \(\alpha\),
\[
    \sigma(\alpha)
    =\sum_{r=1}^R\psi(\alpha-t_r)
    \ll
    \sum_{j\geq0}(1+j)^{-2}
    \ll1.
\]
Moreover,
\[
    \|\sigma\|_1
    =\sum_{r=1}^R\|\psi(\cdot-t_r)\|_1
    =R\|\psi\|_1
    \asymp R/M.
\]
Hence
\[
    \|\sigma\|_\infty\ll1,
    \qquad
    \|\sigma\|_2^2
    \leq\|\sigma\|_\infty\|\sigma\|_1
    \ll R/M.
\]
Write \(\widetilde\sigma(\alpha)=\sigma(-\alpha)\). Since \(\psi\) is
even,
\[
    (\widetilde\sigma*\sigma)(\alpha)
    =
    \sum_{r,s=1}^R
    (\psi*\psi)\bigl(\alpha-(t_r-t_s)\bigr).
\]
As \(\psi\gg1\) on \(\|\alpha\|_{\mathbb T}\leq cM^{-1}\) for some
absolute \(c>0\), we have
\[
    (\psi*\psi)(\alpha)\gg M^{-1}
    \qquad
    \text{whenever }\|\alpha\|_{\mathbb T}\leq c'M^{-1}
\]
for a sufficiently small \(c'>0\). Moreover,
\[
    F(\alpha)\asymp F(\alpha')
    \qquad\text{if}\qquad
    \|\alpha-\alpha'\|_{\mathbb T}\ll M^{-1},
\]
and hence the same is true for \(\mathcal G\). Therefore, for each
pair \((r,s)\),
\[
\begin{aligned}
    \int_{\mathbb T}\mathcal G(\alpha)
    (\psi*\psi)\bigl(\alpha-(t_r-t_s)\bigr)\,d\alpha
    &\gg
    \frac1M
    \int_{\|\alpha-(t_r-t_s)\|_{\mathbb T}\leq c'/M}
    \mathcal G(\alpha)\,d\alpha \\
    &\gg
    \frac1{M^2}\mathcal G(t_r-t_s).
\end{aligned}
\]
Summing over \(r,s\) and using
\eqref{eq:large-spectrum-G-sum}, we obtain
\begin{equation}\label{eq:bourgain-fourier-start}
\begin{aligned}
    \eta R^2M^{-2}
    &\ll
    \int_{\mathbb T}\mathcal G(\alpha)
    (\widetilde\sigma*\sigma)(\alpha)\,d\alpha \\
    &=
    \sum_{|k|\leq2M}
    \widehat{\mathcal G}(k)|\widehat\sigma(k)|^2 \\
    &\leq
    \sum_{|k|\leq2M}
    |\widehat{\mathcal G}(k)|\,|\widehat\sigma(k)|^2.
\end{aligned}
\end{equation}
As \(\gamma>1\), we have $|\widehat F(k)|\leq\|F\|_1\ll_\gamma M^{-1}.$
Moreover,
\[
\begin{aligned}
    \widehat{\mathcal G}(k)
    &=
    \sum_{q\leq Q}\frac1q
    \sum_{a\bmod q}
    \int_{\mathbb T}F(\alpha-a/q)e(-k\alpha)\,d\alpha \\
    &=
    \widehat F(k)
    \sum_{q\leq Q}\frac1q
    \sum_{a\bmod q}e(-ka/q).
\end{aligned}
\]
By orthogonality of the complete residue sum,
% \[
%     \sum_{a\bmod q}e(-ka/q)
%     =
%     \begin{cases}
%         q,&q\mid k,\\
%         0,&q\nmid k,
%     \end{cases}
% \]
% and therefore
\[
    \widehat{\mathcal G}(k)
    =
    \widehat F(k)
    \sum_{\substack{q\leq Q\\q\mid k}}1,
\]
where the sum has value \(\lfloor Q\rfloor\) at \(k=0\). Thus
\begin{equation}\label{eq:bourgain-divisor-sum}
    \eta R^2M^{-1}
    \ll_\gamma
    \sum_{|k|\leq2M}
    |\widehat\sigma(k)|^2D_Q(k).
\end{equation}

Choose an integer \(r\geq2\), to be fixed in terms of \(\epsilon\),
and choose \(C_\epsilon\geq r\), so that
Lemma~\ref{lem:truncated-divisor-moments} applies. Split the sum in
\eqref{eq:bourgain-divisor-sum} according as
\(D_Q(k)\leq Q_0\) or \(D_Q(k)>Q_0\). The first part is at most
\[
    Q_0\|\sigma\|_2^2\ll Q_0R/M.
\]
For the second part, use
\[
    |\widehat\sigma(k)|
    \leq\|\sigma\|_1\ll R/M,
\]
the inequality
\[
    D_Q(k)\mathbf1_{D_Q(k)>Q_0}
    \leq Q_0^{-(r-1)}D_Q(k)^r,
\]
and Lemma~\ref{lem:truncated-divisor-moments}. This gives
\[
    \sum_{\substack{|k|\leq2M\\D_Q(k)>Q_0}}
    |\widehat\sigma(k)|^2D_Q(k)
    \ll_r
    \frac{R^2}{M}Q_0^{-(r-1)}
    \bigl(\log(2Q)\bigr)^{C_r}.
\]
After multiplying \eqref{eq:bourgain-divisor-sum} by \(M/R\), we
obtain
\[
    \eta R
    \ll_{\gamma,r}
    Q_0+
    Q_0^{-(r-1)}
    \bigl(\log(2Q)\bigr)^{C_r}R.
\]
Choose
\[
    Q_0
    =
    C_{\gamma,r}
    T^{1/(r-1)}
    \bigl(\log(2Q)\bigr)^{C_r/(r-1)}
\]
with a sufficiently large constant. Since \(\eta\geq2/T\), the
second term is absorbed into the left-hand side, and hence
\[
    R\ll_{\gamma,r}
    \eta^{-1}T^{1/(r-1)}
    \bigl(\log(2Q)\bigr)^{C_r/(r-1)}.
\]
Finally choose \(r\) so large that \(1/(r-1)\leq\epsilon\). This
proves \eqref{eq:bourgain-envelope-conclusion}.
\end{proof}
\begin{lemma}[Discrete fourth moment]
\label{lem:nonflat-fourth-moment}
There is \(C_4=C_4(C_w)\) such that, uniformly in
\(b\in\mathbb Z_{W,2}\), every \(h\colon[N]\to\mathbb C\) with
\(|h|\leq\nu_b\) satisfies
\begin{equation}\label{eq:nonflat-discrete-fourth}
   \sum_{j=1}^N
      \left|\widehat h\!\left(\frac jN\right)\right|^4
   \ll_{C_w}N^4(\log N)^{C_4}.
\end{equation}
\end{lemma}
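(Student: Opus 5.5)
Since \(|h|\leq\nu_b\), the plan is to dominate the fourth moment by a count of additive quadruples weighted by \(\nu_b\), so the proof reduces to counting solutions of a quadratic equation in primes. The first step is to sample at \(j/N\), \(1\leq j\leq N\), and expand
\[
   \sum_{j=1}^N\Bigl|\widehat h\Bigl(\frac jN\Bigr)\Bigr|^4
   =\sum_{n_1,n_2,n_3,n_4}h(n_1)h(n_2)\overline{h(n_3)h(n_4)}
     \sum_{j=1}^Ne\Bigl(\frac{j(n_1+n_2-n_3-n_4)}N\Bigr).
\]
The inner sum equals \(N\) when \(n_1+n_2-n_3-n_4\equiv0\pmod N\), and vanishes otherwise. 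Because every \(n_i\in[N]\), the difference lies in \((-2N,2N)\), so only the three values \(0,\pm N\) survive. This gives the bound
\[
   \sum_j\Bigl|\widehat h\Bigl(\frac jN\Bigr)\Bigr|^4
   \leq N\sum_{\ell\in\{0,\pm N\}}
     \sum_{n_1+n_2-n_3-n_4=\ell}\nu_b(n_1)\nu_b(n_2)\nu_b(n_3)\nu_b(n_4).
\]

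The second step is to bound the size of each weight. We have
\[
   \nu_b(n)\leq \frac{|\mathbb Z_{W,2}|}{W}(WN)^{1/2}\log(WN)\ll N^{1/2}W^{1/2}\log N,
\]
and by Lemma~\ref{lem:W-growth} this is \(\ll N^{1/2}(\log N)^{2}\). So it suffices to show that the number of quadruples of primes \(p_i\) with \(p_i^2=Wn_i+b\), \(n_i\in[N]\), and \(n_1+n_2-n_3-n_4=\ell\) is \(\ll N(\log N)^{C}\). Multiplying the three factors \(N\cdot(N^{1/2}(\log N)^2)^4\cdot N(\log N)^C\) then gives \(N^4(\log N)^{C_4}\).

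The third step, which I expect to be the main point, is the count itself. The constraint becomes \(p_1^2+p_2^2-p_3^2-p_4^2=W\ell\), with every \(p_i\leq Y\asymp (WN)^{1/2}\). I would drop primality and count integer solutions \(x_i\leq Y\) of
\[
   x_1^2+x_2^2=x_3^2+x_4^2+W\ell.
\]
For \(\ell=0\), I would fix \(x_3,x_4\) and count representations \(x_1^2+x_2^2=m\) using the bound \(r_2(m)\ll\tau(m)\). Cauchy--Schwarz together with the second-moment divisor estimate \(\sum_{m\leq 2Y^2}\tau(m)^2\ll Y^2(\log Y)^3\) then gives \(\ll Y^2(\log N)^3\). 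For \(\ell=\pm N\) I would run the same argument with a shift: the count is \(\sum_m r_2(m)r_2(m+W\ell)\), and Cauchy--Schwarz bounds it by \(\sum_m r_2(m)^2\ll Y^2(\log N)^3\). Since \(Y^2\asymp WN\ll N\log N\), the total is \(\ll N(\log N)^{4}\), which is what the second step requires.

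Every bound above depends on \(b\) only through \(b<W\), so the result is uniform in \(b\), and all the constants depend only on \(C_w\).
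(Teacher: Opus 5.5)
Your proof is correct, and its skeleton matches the paper's. In both, the discrete fourth moment is expanded, the factor $(\sup_n\nu_b(n))^4\ll N^2(\log N)^{O(1)}$ is pulled out, and the problem reduces to the additive energy of the support of $\nu_b$, which is then controlled by the second moment of the divisor function. The difference is in how you count that energy.

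The paper groups the quadruple as two \emph{differences}. It handles the wrap-around modulo $N$ via $|a_u+a_{u'}|^2\le 2(|a_u|^2+|a_{u'}|^2)$ to reach $\sum_{|u|<N}r_b(u)^2$. It then bounds $r_b(u)$ pointwise by factoring $Wu=x^2-y^2=(x-y)(x+y)$, which gives $r_b(u)\ll\tau(W|u|)\le\tau(W)\tau(|u|)$. The diagonal $u=0$ is treated separately.

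You group it as two \emph{sums}, $x_1^2+x_2^2=(x_3^2+x_4^2)+W\ell$ with $\ell\in\{0,\pm N\}$. You use $r_2(m)\le 4\tau(m)$ and absorb the shifted cases $\ell=\pm N$ by Cauchy--Schwarz. For $\ell=0$ no Cauchy--Schwarz or fixing of $x_3,x_4$ is needed. The count there is exactly $\sum_m r_Y(m)^2\le 16\sum_{m\le 2Y^2}\tau(m)^2$, where $r_Y(m)$ counts $(x_1,x_2)\in[1,Y]^2$ with $x_1^2+x_2^2=m$.

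What each route buys:
\begin{itemize}
\item The paper's route is entirely elementary over $\mathbb Z$ and gives a pointwise bound on the difference function $r_b(u)$. The cost is a separate diagonal term and the need to control $\tau(W)$.
\item Your route relies on the classical sum-of-two-squares bound, which comes from the arithmetic of $\mathbb Z[i]$. In exchange, $W$ enters only through the length $Y^2\asymp WN\ll N\log N$ of the range, so you need neither a divisor bound for $W$ nor a separate diagonal case.
\end{itemize}
Both approaches lose only a fixed power of $\log N$ (yours gives $(\log N)^{12}$), which is all the lemma requires.
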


\begin{proof}
By the definition of \(\nu_b\), if \(\nu_b(n)\neq0\), then
\(Wn+b=p^2\) for some prime \(p\), and
\[
   \nu_b(n)
   =
   \frac{|\mathbb Z_{W,2}|}{W}(WN)^{1/2}\log p.
\]
Since \(|\mathbb Z_{W,2}|/W\leq1\) and
\(p^2=Wn+b\leq WN+b\), we have
\[
   \nu_b(n)
   \leq (WN)^{1/2}\log (WN+b).
\]
Lemma~\ref{lem:W-growth} gives \(W\leq(\log N)^C\) and
\(\log(WN+b)\ll\log N\), after increasing \(C=C(C_w)\) if
necessary. Hence
\begin{align}\label{e:supboundnub}
    \sup_n\nu_b(n)
   \ll_{C_w}N^{1/2}(\log N)^C.
\end{align}
For \(|u|<N\), let
\[
   r_b(u)
   =
   |\{(m,n)\in[N]^2:m-n=u,\ 
   \nu_b(m)\nu_b(n)\neq0\}|.
\]
When \(u=0\), we have \(m=n\), and each point in the support of
\(\nu_b\) corresponds to a prime \(x\) with $x^2=Wm+b\leq WN+b.$
Therefore
\[
   r_b(0)
   \leq |\{x\geq1:x^2\leq WN+b\}|
   \leq (WN+b)^{1/2}
   \ll(WN)^{1/2}.
\]
Now suppose \(u\neq0\). For every pair counted by \(r_b(u)\), write
\[
   Wm+b=x^2,\qquad Wn+b=y^2,
\]
with \(x,y\) positive primes. Since \(m-n=u\),
\[
   Wu
   =W(m-n)
   =x^2-y^2
   =(x-y)(x+y).
\]
Thus \(d=x-y\) is a signed divisor of \(Wu\), and once \(d\) is
fixed we must have
\[
   x=\frac12\left(d+\frac{Wu}{d}\right),
   \qquad
   y=\frac12\left(\frac{Wu}{d}-d\right).
\]
Hence each signed divisor \(d\mid Wu\) gives at most one pair
\((x,y)\), and therefore
\[
   r_b(u)\ll\tau(W|u|)\qquad(u\neq0).
\]
The elementary inequalities
\(\tau(W|u|)\leq\tau(W)\tau(|u|)\) and
\(\sum_{u\leq N}\tau(u)^2\ll N(\log N)^3\), together with
Lemma~\ref{lem:W-growth}, therefore give
after increasing \(C\) if necessary,
\begin{equation}\label{eq:support-difference-second-moment}
   \sum_{|u|<N}r_b(u)^2
      \ll_{C_w}N(\log N)^C.
\end{equation}
The term \(u=0\) is admissible because \(r_b(0)^2\ll WN\).

\medskip

Expanding the fourth moment and using orthogonality on
\(N^{-1}\mathbb Z/N\mathbb Z\), we obtain
\[
\begin{aligned}
 \sum_{j=1}^N\left|\widehat h(j/N)\right|^4
 &=N\sum_{\rho\bmod N}
   \left|
      \sum_{\substack{m,n\in[N]\\m-n\equiv\rho\,(\mathrm{mod}\,N)}}
      h(m)\overline{h(n)}
   \right|^2 \leq
 2N\sum_{|u|<N}
   \left|
      \sum_{\substack{m,n\in[N]\\m-n=u}}
      h(m)\overline{h(n)}
   \right|^2,
\end{aligned}
\]
since each residue class modulo \(N\) has at most two integers $-N<u<N$.
  Since \(|h|\leq\nu_b\), for each \(u\),
\[
   \left|
      \sum_{\substack{m,n\in[N]\\m-n=u}}
      h(m)\overline{h(n)}
   \right|
   \leq
   \bigl(\sup_n\nu_b(n)\bigr)^2 r_b(u).
\]
Hence
\[
   \sum_{j=1}^N\left|\widehat h(j/N)\right|^4
   \leq
   2N\bigl(\sup_n\nu_b(n)\bigr)^4
   \sum_{|u|<N}r_b(u)^2.
\]
Combining this with \eqref{e:supboundnub} and 
\eqref{eq:support-difference-second-moment} yields
\[
 \sum_{j=1}^N\left|\widehat h(j/N)\right|^4
 \ll_{C_w}N^4(\log N)^{5C}.
\]
Taking \(C_4=5C\) proves the lemma.
\end{proof}

\begin{proof}[Proof of Lemma~\ref{prop:RestrictionEstimate}]
Fix \(q>4\), and let \(|h|\leq\nu_b\).  Applying
Lemma~\ref{lem:nonflat-circle-estimates} at \(\alpha=0\), with
\(q=1\), \(a=0\), and \(\beta=0\), gives
\[
   \sum_{n\leq N}\nu_b(n)
   =N^{1/2}\sqrt{N+b/W}+O_{C_w}(N(\log N)^{-2})
   =N+O_{C_w}(N(\log N)^{-2}+1).
\]
Consequently, for every sufficiently large admissible pair, uniformly in \(b\) we have
\begin{equation}\label{eq:nonflat-mass-bound}
   \sum_{n\leq N}\nu_b(n)\leq2N.
\end{equation}
% For \(N\) below this fixed threshold there are only finitely many
% possible choices of \(N\), of the modulus \(W\), and of
% \(b\in\mathbb Z_{W,2}\); enlarging the final restriction constant
% absorbs all of them.
We first prove a discrete restriction estimate.  Choose
\[
    4<r_0<r<q
\]
and put \(\gamma=r_0/4>1\).
Let
\[
   \mathcal R_h(u)
   =\left|\left\{1\leq j\leq N:
       \left|\widehat h(j/N)\right|\geq uN\right\}\right|.
\]
We claim that
\begin{equation}\label{eq:restriction-level-bound}
   \mathcal R_h(u)\ll_{q,C_w}u^{-r}
   \qquad(u>0).
\end{equation}
By \eqref{eq:nonflat-mass-bound}, this is immediate for \(u>2\),
so throughout the level-set argument we may assume \(0<u\leq2\).
Let \(C_4\) be the exponent in
Lemma~\ref{lem:nonflat-fourth-moment}.  Choose a large constant
\(B_0=B_0(q,C_w)\) such that
\[
   4+C_4/B_0<r.
\]
If \(u<(\log N)^{-B_0}\), then
\eqref{eq:nonflat-discrete-fourth} gives
\[
   \mathcal R_h(u)
   \ll u^{-4}(\log N)^{C_4}
   \leq u^{-4-C_4/B_0}
   \ll u^{-r}.
\]

Suppose now that \(u\geq (\log N)^{-B_0}\), and let
\(t_1,\ldots,t_R\) be the grid points counted by
\(\mathcal R_h(u)\).  Choose unit complex numbers \(c_j\) so that
\(c_j\widehat h(t_j)=|\widehat h(t_j)|\).  Choose unit complex numbers \(c_j\) so that
\(c_j\widehat h(t_j)=|\widehat h(t_j)|\). Since
\(|\widehat h(t_j)|\geq uN\) for \(1\leq j\leq R\),
\[
   uRN = |\mathcal R_h(u)| uN
   \leq\sum_{j=1}^R|\widehat h(t_j)|
   =\left|\sum_{j=1}^Rc_j\widehat h(t_j)\right|
   =\left|\sum_nh(n)\sum_{j=1}^Rc_je(nt_j)\right|.
\]
Therefore, by Cauchy's inequality, \eqref{eq:nonflat-mass-bound}, and
\(|h|\leq\nu_b\),
\[
\begin{aligned}
  u^2R^2N^2
  &\leq
   \left|\sum_nh(n)\sum_{j=1}^Rc_je(nt_j)\right|^2\\
  &\leq
   \left(\sum_n|h(n)|\right)
   \left(\sum_n\nu_b(n)
      \left|\sum_{j=1}^Rc_je(nt_j)\right|^2\right)\\
  &\ll N\sum_{j,k=1}^R
      |\widehat{\nu_b}(t_j-t_k)|.
\end{aligned}
\]
Hölder's inequality therefore yields
\begin{equation}\label{eq:restriction-TTstar-power}
   u^{r_0}R^2
   \ll_{r_0}\sum_{j,k=1}^R
      \left(N^{-1}|\widehat{\nu_b}(t_j-t_k)|\right)^{r_0/2}.
\end{equation}
We now prove \eqref{eq:restriction-level-bound} in the remaining range
\(u\geq(\log N)^{-B_0}\).  
We shall apply Lemma~\ref{lem:bourgain-envelope-general} to the
powered Fourier transform of \(\nu_b\), at the level-dependent scale
\(U\asymp u^{-r_0}\).
Recall that \(\gamma=r_0/4>1\). Apply Lemma~\ref{lem:nonflat-circle-estimates} with $\epsilon=\frac{r_0-4}{4r_0},$
minor-arc saving \((\log N)^{-D}\), where \(D>2B_0+2\), and major
arcs with denominators at most \((\log N)^A\). Choose \(A\)
sufficiently large in terms of \(D,r_0B_0,C_w\), with
\(A>r_0B_0+10\). Thus, if
\(\alpha=a/v+\beta\in\mathfrak M(v,a)\), then
\[
    N^{-1}|\widehat{\nu_b}(\alpha)|
    \ll
    v^{-1/2+\epsilon}(1+N|\beta|)^{-1/2}
    +(\log N)^{-D},
\]
while on the minor arcs
\[
    N^{-1}|\widehat{\nu_b}(\alpha)|
    \ll(\log N)^{-D}.
\]
At the present level put
\[
    U=\max\{2,\lceil Cu^{-r_0}\rceil\},
\]
where \(C=C(r_0)\) will be chosen sufficiently large. Since
\(0<u\leq2\),
\begin{equation}\label{eq:level-dependent-U}
    Cu^{-r_0}\leq U\leq2Cu^{-r_0}.
\end{equation}
Moreover, \(u\geq(\log N)^{-B_0}\) gives
\[
    U\ll_{r_0}(\log N)^{r_0B_0}\leq(\log N)^A.
\]

We claim that, after multiplication by a fixed constant, the
\(r_0/2\)-th power of the normalized Fourier coefficient satisfies
the Bourgain envelope
\[
    \left(N^{-1}|\widehat{\nu_b}(\alpha)|\right)^{r_0/2}
    \ll
    \max\{\mathcal G_{\gamma,U,N}(\alpha),U^{-1}\},
    \qquad
    \gamma=\frac{r_0}{4}.
\]
Indeed, on a major arc \(\alpha=a/v+\beta\), the preceding estimate
and
\((x+y)^{r_0/2}\ll_{r_0}x^{r_0/2}+y^{r_0/2}\) give
\begin{align}
 \left(N^{-1}|\widehat{\nu_b}(\alpha)|\right)^{r_0/2}
 \ll{}&
 v^{-(r_0+4)/8}
 (1+N|\beta|)^{-r_0/4}
 +(\log N)^{-Dr_0/2} \notag\\
 \leq{}&
 v^{-1}(1+N|\beta|)^{-\gamma}
 +(\log N)^{-Dr_0/2},
 \label{eq:major-arc-powered-envelope}
\end{align}
because \(r_0>4\). If \(v\leq U\), the first term on the right is
one of the summands defining
\(\mathcal G_{\gamma,U,N}(\alpha)\). If \(v>U\), it is at most
\(U^{-1}\).

On the minor arcs,
Lemma~\ref{lem:nonflat-circle-estimates} gives only the error
\(O((\log N)^{-Dr_0/2})\). By
\eqref{eq:level-dependent-U},
\[
    U^{-1}\geq(2C)^{-1}u^{r_0}
    \geq(2C)^{-1}(\log N)^{-r_0B_0},
\]
and our choice of \(D\) gives
\[
    (\log N)^{-Dr_0/2}=o(U^{-1}).
\]
Thus, after dividing by a sufficiently large constant
\(C_0=C_0(q,C_w)\), the function
\[
    g(\alpha)
    =C_0^{-1}
      \left(N^{-1}|\widehat{\nu_b}(\alpha)|\right)^{r_0/2}
\]
satisfies
\begin{equation}\label{eq:nonflat-level-envelope}
    |g(\alpha)|
    \leq
    \max\{\mathcal G_{\gamma,U,N}(\alpha),U^{-1}\}
    \qquad(\alpha\in\mathbb T).
\end{equation}

We next verify the mass hypothesis in
Lemma~\ref{lem:bourgain-envelope-general}. From
\eqref{eq:restriction-TTstar-power}, after incorporating the factor
\(C_0^{-1}\) into the constant, there is \(c_{r_0}>0\) such that
\[
    \sum_{j,k=1}^R|g(t_j-t_k)|
    \geq c_{r_0}u^{r_0}R^2.
\]
Set $\eta=c_{r_0}u^{r_0}.$
Choose \(C\) in the definition of \(U\) large enough that
\(C\geq2/c_{r_0}\). Since \(U\geq Cu^{-r_0}\),
\[
    \frac2U\leq\frac{2}{C}u^{r_0}\leq\eta.
\]
Thus the required lower bound \(\eta\geq2/U\) holds.
The points \(t_1,\ldots,t_R\) lie on the grid
\(N^{-1}\mathbb Z/N\mathbb Z\), so they are \(N^{-1}\)-separated.
Also \(U\) is polylogarithmic in \(N\); hence, for every fixed
\(\vartheta>0\), we have $U^{C_\vartheta}\leq N$
once \(N\) is sufficiently large. We may therefore apply
Lemma~\ref{lem:bourgain-envelope-general} with
\[
    M=N,\qquad Q=T=U.
\]
Using \(\eta\asymp u^{r_0}\) and \(U\asymp u^{-r_0}\), we obtain
\[
\begin{aligned}
    R
    &\ll_{r_0,\vartheta}
      \eta^{-1}U^\vartheta
      \bigl(\log(2U)\bigr)^{C_{r_0,\vartheta}}\\
    &\ll
      u^{-r_0-r_0\vartheta}
      \bigl(\log(4/u)\bigr)^{C_{r_0,\vartheta}}.
\end{aligned}
\]
Choose \(\vartheta>0\) so small that
\[
    r_0\vartheta<\frac{r-r_0}{3}.
\]
For every fixed \(C\),
\[
    \bigl(\log(4/u)\bigr)^C
    \ll_{C,r-r_0}u^{-(r-r_0)/3}
    \qquad(0<u\leq1).
\]
It follows that
\[
    R
    \ll_{q,C_w}
    u^{-r_0-2(r-r_0)/3}
    \ll_{q,C_w}u^{-r}.
\]
For \(1<u\leq2\), the same conclusion follows after changing the
implicit constant. This proves
\eqref{eq:restriction-level-bound} for all \(u>0\).

\medskip

We now pass from the level-set estimate to the discrete
\(L^q\)-bound. Since
\[
    |\widehat h(\alpha)|
    \leq\sum_n|h(n)|
    \leq\sum_n\nu_b(n)
    \leq2N,
\]
dyadic decomposition gives
\[
\begin{aligned}
    \sum_{j=1}^N
       \left|\widehat h(j/N)\right|^q
    &\ll_q
      N^q\sum_{k\leq1}
      2^{kq}\mathcal R_h(2^k)\\
    &\ll_{q,C_w}
      N^q\sum_{k\leq1}2^{k(q-r)}
    \ll_{q,C_w}N^q,
\end{aligned}
\]
since \(r<q\).

Finally, we pass from the grid to \(\mathbb T\). For
\(0\leq\theta\leq1/N\), put
\[
    h_\theta(n)=h(n)e(n\theta).
\]
Then \(|h_\theta|\leq\nu_b\), so the same discrete estimate applies,
and
\[
    \widehat{h_\theta}(j/N)
    =\widehat h(j/N+\theta).
\]
Integrating over \(\theta\in[0,1/N]\), we obtain
\[
\begin{aligned}
    \int_{\mathbb T}|\widehat h(\alpha)|^q\,d\alpha
    &=\int_0^{1/N}\sum_{j=1}^N
      |\widehat h(j/N+\theta)|^q\,d\theta\\
    &\ll_{q,C_w}N^{q-1}.
\end{aligned}
\]
Taking \(q\)-th roots gives
\[
    \|\widehat h\|_q
    \ll_{q,C_w}N^{1-1/q},
\]
which proves the restriction estimate uniformly in \(b\).
\end{proof}

\subsection{Mean-value estimate}

\begin{lemma}\label{lem:distribution-means}
Let \(d=\underline d(\mathcal P)\).  For every
\(\varepsilon>0\), there are
\(w_\varepsilon=w_\varepsilon(\varepsilon,C_w)\) and
\(N_\varepsilon=N_\varepsilon(\mathcal P,\varepsilon,C_w)\) such
that every admissible pair with
\(w\geq w_\varepsilon\) and \(N\geq N_\varepsilon\) satisfies
\[
    \frac1{|\mathbb Z_{W,2}|}
    \sum_{b\in\mathbb Z_{W,2}}m_b
    \geq d-\varepsilon
\]
and, uniformly in \(b\in\mathbb Z_{W,2}\),
\[
    0\leq m_b\leq1+\varepsilon.
\]
see \eqref{e:mbdef}.
\end{lemma}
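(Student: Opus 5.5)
Both assertions come from counting primes in residue classes modulo \(W\), using Siegel--Walfisz (Theorem~\ref{SWT}) for the uniform upper bound and the lower density of \(\mathcal P\) for the average. We put \(Y=(WN+b)^{1/2}\) and \(Y_0=(W+b)^{1/2}\); the primes with \(Wn+b=p^2\), \(n\in[N]\), are exactly those with \(Y_0<p\leq Y\) and \(p^2\equiv b\pmod W\).

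\emph{Uniform upper bound.} Since \(f_b\leq\nu_b\), it suffices to bound \(N^{-1}\sum_n\nu_b(n)\). By \eqref{eq:quadratic-root-count} this equals
\[
\frac{\phi(W)}{W\sigma_W}\,\frac{(WN)^{1/2}}{N}\sum_{h\in H_b}\ \sum_{\substack{p\leq Y\\ p\equiv h\,(W)}}\log p.
\]
Lemma~\ref{lem:W-growth} allows Theorem~\ref{SWT} with modulus \(W\leq\log Y\). Each inner sum is then \(Y/\phi(W)+O(Y(\log Y)^{-2})\), and there are \(\sigma_W\) such sums. The total is therefore \((WN)^{1/2}Y/(WN)\,(1+O(\phi(W)(\log N)^{-2}))=1+o(1)\), uniformly in \(b\). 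Alternatively, one may simply quote the \(\alpha=0\) case of \eqref{eq:nonflat-major-formula}, which was already used in \eqref{eq:nonflat-mass-bound}. Nonnegativity of \(m_b\) is trivial. Choosing \(N_\varepsilon\) large gives \(m_b\leq1+\varepsilon\).

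\emph{Average lower bound.} Summing over \(b\in\mathbb Z_{W,2}\) and using \(\phi(W)=|\mathbb Z_{W,2}|\sigma_W\), we get
\[
\frac{1}{|\mathbb Z_{W,2}|}\sum_b m_b=\frac{\phi(W)}{W\sigma_W}\frac{(WN)^{1/2}}{N}\cdot\frac{\sigma_W}{\phi(W)}\sum_{\substack{p\in\mathcal P,\ (p,W)=1\\ W<p^2-b\ \text{roughly}\,\leq WN}}\log p .
\]
Each prime \(p\) coprime to \(W\) with \(p^2\leq WN+O(W)\) is counted once, via the class \(b\equiv p^2\). The expression is thus \((WN)^{-1/2}\sum_{p\in\mathcal P,\,p\leq X}\log p+O(\text{small})\) with \(X=(WN)^{1/2}\). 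The excluded primes are those dividing \(W\), which are fewer than \(w\), together with \(p\leq\sqrt{2W}\); their total weight is \(O(W\log W)=o(X)\). It remains to show \(\vartheta_{\mathcal P}(X):=\sum_{p\in\mathcal P,\,p\leq X}\log p\geq(d-\varepsilon)X\) for large \(X\).

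\emph{Main step: passing from counting density to \(\log\)-weighted density.} By hypothesis \(\pi_{\mathcal P}(x)\geq(d-\varepsilon/3)\pi(x)\) for \(x\geq x_0(\mathcal P,\varepsilon)\). Partial summation gives
\[
\vartheta_{\mathcal P}(X)=\pi_{\mathcal P}(X)\log X-\int_2^X\frac{\pi_{\mathcal P}(t)}{t}\,dt .
\]
Here the subtracted integral is at most \(\int_2^X\pi(t)t^{-1}\,dt=O(X/\log X)\), since \(\pi_{\mathcal P}\leq\pi\). The first term is at least \((d-\varepsilon/3)\pi(X)\log X=(d-\varepsilon/3)X(1+o(1))\) by the prime number theorem.

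Hence \(\vartheta_{\mathcal P}(X)\geq(d-\varepsilon/2)X\) for \(X\) large, which gives the average bound \(\geq d-\varepsilon\). The dependence of \(N_\varepsilon\) on \(\mathcal P\) enters through \(x_0\), and \(X\to\infty\) as \(N\to\infty\). I expect this to be the main obstacle: the subtracted term must be controlled by \(\pi\) and not by \(\pi_{\mathcal P}\), since lower density gives no upper information about \(\pi_{\mathcal P}\). The rest is the bookkeeping of the normalization \(|\mathbb Z_{W,2}|/W\cdot(WN)^{1/2}\) against the root count \(\sigma_W\). With this argument the average bound requires no \(w_\varepsilon\) condition, so \(w_\varepsilon\) can be taken arbitrary.
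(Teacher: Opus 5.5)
Your proposal is correct and follows essentially the paper's route. For the lower bound, you rewrite \(\frac1{|\mathbb Z_{W,2}|}\sum_b m_b\) as \((WN)^{-1/2}\sum_{p\in\mathcal P}\log p\) and then apply the lower-density hypothesis and the prime number theorem. You handle the \(\log\)-weight by Abel summation with \(\pi_{\mathcal P}\leq\pi\), whereas the paper truncates at \(p>Y/\log N\) and uses \(\log p\geq\log Y-\log\log N\). For the upper bound, your direct Siegel--Walfisz evaluation at frequency \(0\) (equivalently the \(\alpha=0\) case of the major-arc formula) is slightly more economical than the paper's appeal to the full pseudorandomness lemma, and it correctly shows that the condition \(w\geq w_\varepsilon\) is not actually needed.
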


\begin{proof}
Let \(Y=(WN)^{1/2}\).  Every prime
\[
    \frac{Y}{\log N}<p\leq Y
\]
is larger than \(w\) for sufficiently large \(N\), and hence is
coprime to \(W\).  Choose the representative
\(1\leq b<W\) of \(p^2\pmod W\), and set
\(n=(p^2-b)/W\).  The lower cutoff ensures \(n\geq1\), while
\(p\leq Y\) gives \(n\leq N\).  Thus the prime determines a unique
pair \(b\in\mathbb Z_{W,2}\), \(n\in[N]\).  Using the normalization
in \eqref{function:f_b} and the identity
\[
    \frac{1}{|\mathbb Z_{W,2}|N}
    \frac{|\mathbb Z_{W,2}|}{W}Y
    =\frac1Y,
\]
which follows from \(Y^2=WN\), we obtain
\[
\begin{aligned}
 \frac1{|\mathbb Z_{W,2}|}
 \sum_b m_b
 &\geq
 \frac1Y
 \sum_{\substack{Y/\log N<p\leq Y\\p\in\mathcal P}}\log p\\
 &\geq
 \frac{\log Y-\log\log N}{Y}
 |\mathcal P\cap[1,Y]|
 -\frac{\log Y-\log\log N}{Y}
 |\mathbb P\cap[1,Y/\log N]|.
\end{aligned}
\]
%It suffices to treat \(0<\varepsilon\leq1\), since the lower bound is
%trivial when \(\varepsilon>1\) and the pointwise estimate with tolerance
%one is then stronger than required.  
By the definition of
\(d=\underline d(\mathcal P)\), for all sufficiently large
\(Y\),
\[
   |\mathcal P\cap[1,Y]|
   \geq(d-\varepsilon/4)\pi(Y).
\]
The prime number theorem gives
\[
   \frac{\log Y-\log\log N}{Y}\pi(Y)=1+o(1).
\]
These relations are uniform over admissible pairs: by
Lemma~\ref{lem:W-growth},
\[
   \log Y=\frac12\log N+O_{C_w}(\log\log\log N)
          =\left(\frac12+o(1)\right)\log N.
\]
Thus, after increasing the large-\(N\) threshold, we may arrange
simultaneously that
\[
   \frac{\log Y-\log\log N}{Y}\pi(Y)\geq1-\varepsilon/4
\]
and
\[
   \frac{\log Y-\log\log N}{Y}\pi(Y/\log N)
      \leq\varepsilon/4.
\]
Because \(0\leq d\leq1\), the preceding lower bound is then at least
\[
   (d-\varepsilon/4)(1-\varepsilon/4)-\varepsilon/4
   \geq d-3\varepsilon/4
   \geq d-\varepsilon.
\]
All these estimates are uniform over the admissible choice of \(w\),
once \(N\) is sufficiently large in terms of
\(\mathcal P,\varepsilon\), and \(C_w\).

For the pointwise upper bound, \(0\leq f_b\leq\nu_b\).  Apply
Lemma~\ref{prop:Pseudorandomness} with tolerance \(\varepsilon\) and
evaluate at frequency \(0\).  Since \(\widehat{\mu_N}(0)=N\) by
\eqref{eq:mu-mass}, this gives
\[
   0\leq m_b
   \leq\frac{\widehat{\nu_b}(0)}N
   \leq1+\varepsilon
\]
uniformly in \(b\), provided
\(w\geq w_\varepsilon\) and \(N\) exceeds the corresponding
pseudorandomness threshold.  Taking \(N_\varepsilon\) to be the
maximum of the two large-\(N\) thresholds proves the lemma.
\end{proof}

\begin{proof}[Proof of Lemma~\ref{prop:MeanValue}]
Let \(d=\underline d(\mathcal P)>D_s\).  Choose
\[
   D_s<d_0<d,
\]
and then choose \(\rho>0\) so small that
\(d/(1+\rho)>d_0\).  Next choose \(\varepsilon>0\) satisfying
\[
   \varepsilon<\rho,
   \qquad
   \frac{d-\varepsilon}{1+\rho}>d_0.
\]
Lemma~\ref{lem:distribution-means} gives thresholds \(w_0,N_0\) such
that, for every admissible pair with \(w\geq w_0\) and
\(N\geq N_0\),
\[
    0\leq h(b)\coloneqq\frac{m_b}{1+\rho}\leq1,
    \qquad
    \frac1{|\mathbb Z_{W,2}|}\sum_bh(b)>d_0.
\]
Via the bijection \eqref{eq:quadratic-residue-splitting}, regard \(h\)
as a function on \(\mathbb Z_{W_0,2}\).
Recall \(\gamma_s\) from \eqref{eq:gamma-s} and \(L_s\) from
\eqref{eq:local-scores}.  If \(0\leq t<1\), put
\[
    F_t(b)=\frac{\max\{h(b)-t,0\}}{1-t}.
\]
Since \(h\leq t+(1-t)F_t\),
\[
    \frac1{|\mathbb Z_{W_0,2}|}\sum_bF_t(b)
    >
    \frac{d_0-t}{1-t}.
\]
The last quantity is greater than \(\gamma_s\) whenever
\[
    t<\frac{d_0-\gamma_s}{1-\gamma_s}.
\]
For \(s\in\{5,7,8,9\}\), we have \(d_0>D_s>\gamma_s\), so this
interval contains positive values of \(t\). Apply
Lemma~\ref{lemma:arithmetic-square} modulo \(W_0\), with the
prescribed residue \(r\pmod{W_0}\).  It gives
\(c_1,\ldots,c_s\in\mathbb Z_{W_0,2}\) such that
\[
   c_1+\cdots+c_s\equiv r\pmod{W_0},
   \qquad
   F_t(c_j)>0,
   \qquad
   \sum_{j=1}^sF_t(c_j)>L_s.
\]
The positivity of \(F_t(c_j)\) implies \(h(c_j)>t\) and
\[
   h(c_j)=t+(1-t)F_t(c_j).
\]
Thus the selected score satisfies
\[
    \sum_{j=1}^sh(c_j)>st+(1-t)L_s.
\]
As \(t\) tends from below to this endpoint, the right-hand side
tends to
\[
   \frac{s(d_0-\gamma_s)+L_s(1-d_0)}{1-\gamma_s}
   =
   \begin{cases}
      8d_0-3,&s=5,\\
      (49d_0-7)/6,&s=7,\\
      8d_0,&s=8,\\
      9d_0,&s=9.
   \end{cases}
\]
At \(d_0=D_s\), substituting the values of \(D_s\) from
\eqref{e:csdeltas} shows that each of these expressions equals
\(s/\sqrt2\). Since \(d_0>D_s\), the endpoint
value is strictly larger.  Choose \(t\) below and sufficiently close
to the endpoint that
\[
   st+(1-t)L_s>\frac{s}{\sqrt2}+3\zeta
\]
for some \(\zeta>0\), decreasing \(\zeta\), if necessary, so that
\(\zeta<t/2\).  The selected elements then satisfy
\[
    \sum_{j=1}^sh(c_j)>\frac{s}{\sqrt2}+3\zeta,
    \qquad h(c_j)>t>2\zeta.
\]

Lift each \(c_j\) through
\eqref{eq:quadratic-residue-splitting} to the unique
\(b_j\in\mathbb Z_{W,2}\).  Because \(r\equiv s\pmod {24}\), the
sum of these lifts is congruent to \(r\) both modulo \(W_0\) and
modulo \(24\).  The Chinese remainder theorem therefore gives
\[
    b_1+\cdots+b_s\equiv r\pmod W.
\]
Finally, \(m_{b_j}=(1+\rho)h(c_j)\), so
\[
   \sum_{j=1}^sm_{b_j}>
      \frac{s}{\sqrt2}+3\zeta,
   \qquad
   m_{b_j}>2\zeta.
\]
Replacing \(\zeta\) by a smaller positive constant gives exactly the
conclusion of Lemma~\ref{prop:MeanValue}.
\end{proof}
\section{A density \(7/10\) counterexample}

We construct a set of primes of relative density \(7/10\) for which
infinitely many integers congruent to \(5\pmod{24}\) are not sums of
five squares of primes from the set.  This shows that
\[
    \mathfrak d_5\geq\frac7{10}.
\]

By the Chinese remainder theorem,
\[
    \mathbb Z_{385,2}
    \cong\mathbb Z_{35,2}\times\mathbb Z_{11,2},
    \qquad
    \mathbb Z_{35,2}=\{1,4,9,11,16,29\},
    \qquad
    \mathbb Z_{11,2}=\{1,3,4,5,9\}.
\]
Let
\[
    B=\{1,4,9,29\},
    \qquad
    A=(B\times\mathbb Z_{11,2})\cup\{(11,1)\}.
\]
Thus
\[
    |A|=4\cdot5+1=21,
    \qquad
    |\mathbb Z_{385,2}|=6\cdot5=30.
\]

We claim that
\[
    (3,3)\notin5A.
\]
Suppose that five elements of \(A\) sum to \((3,3)\).  Let \(k\) be
the number of summands equal to \((11,1)\), and let \(a\) be the
number of the remaining summands whose first coordinate is \(1\).
Modulo \(5\), the numbers \(1\) and \(11\) are congruent to \(1\),
while \(4,9,29\) are congruent to \(-1\).  Hence
\[
    2(a+k)-5\equiv3\pmod5.
\]
Since \(0\leq a+k\leq5\), it follows that \(a+k=4\).  Thus exactly
one first coordinate belongs to \(\{4,9,29\}\); denote it by \(x\).

Reducing the first-coordinate sum modulo \(7\), we obtain
\[
    4k+(4-k)+x\equiv3\pmod7,
    \qquad
    x\equiv-1-3k\pmod7.
\]
Since
\[
    4\equiv4,\qquad 9\equiv2,\qquad 29\equiv1\pmod7,
\]
the only possibilities are
\[
    (k,x)=(3,4)\qquad\text{or}\qquad(k,x)=(4,29).
\]

In the first case, the first coordinates are
\[
    1,4,11,11,11.
\]
The three copies of \((11,1)\) contribute \(3\) modulo \(11\), so the
other two second coordinates \(u,v\in\mathbb Z_{11,2}\) would have to
satisfy
\[
    u+v\equiv0\pmod{11}.
\]
This is impossible because \(-1\) is not a quadratic residue modulo
\(11\).  In the second case, the first coordinates are
\[
    29,11,11,11,11.
\]
The four copies of \((11,1)\) contribute \(4\) modulo \(11\), so the
remaining second coordinate would have to be
\[
    3-4\equiv-1\equiv10\pmod{11},
\]
but \(10\notin\mathbb Z_{11,2}\).  Therefore
\[
    (3,3)\notin5A.
\]

Now define
\[
    \mathcal P
    =
    \{p\in\mathbb P:p^2\bmod385\text{ corresponds to an element of }A\}.
\]
Since \(385=5\cdot7\cdot11\), every element of
\(\mathbb Z_{385,2}\) has exactly \(2^3=8\) square roots among the
reduced residue classes modulo \(385\).  Thus \(\mathcal P\) consists
of
\[
    21\cdot8=168
\]
of the
\[
    \phi(385)=240
\]
reduced residue classes modulo \(385\).  By the prime number theorem
in arithmetic progressions,
\[
    \underline d(\mathcal P)
    =
    \lim_{x\to\infty}
    \frac{|\mathcal P\cap[1,x]|}{\pi(x)}
    =
    \frac{168}{240}
    =
    \frac7{10}.
\]

Finally, consider integers satisfying
\[
    n\equiv3\pmod{385},
    \qquad
    n\equiv5\pmod{24}.
\]
By the Chinese remainder theorem, these are precisely the integers
satisfying
\[
    n\equiv773\pmod{9240}.
\]
If such an integer had a representation
\[
    n=p_1^2+\cdots+p_5^2,
    \qquad
    p_1,\ldots,p_5\in\mathcal P,
\]
then reducing modulo \(385\) would give
\[
    (3,3)\in5A,
\]
a contradiction.  Hence every integer
\[
    n\equiv773\pmod{9240}
\]
fails to be a sum of five squares of primes from \(\mathcal P\).
Since \(\underline d(\mathcal P)=7/10\), this proves
\[
    \mathfrak d_5\geq\frac7{10}.
\]

\providecommand{\bysame}{\leavevmode\hbox to3em{\hrulefill}\thinspace}
\providecommand{\MR}{\relax\ifhmode\unskip\space\fi MR }
% \MRhref is called by the amsart/book/proc definition of \MR.
\providecommand{\MRhref}[2]{%
  \href{http://www.ams.org/mathscinet-getitem?mr=#1}{#2}
}
\providecommand{\href}[2]{#2}

\end{document}